\newtheorem{theorem}{Theorem}\numberwithin{theorem}{section}
\newtheorem{defn}[theorem]{Definition}
\newtheorem{ex}[theorem]{Example}
\newtheorem{prop}[theorem]{Proposition}
\newtheorem{lem}[theorem]{Lemma} 
\newtheorem{cor}[theorem]{Corollary}
\newtheorem{rem}[theorem]{Remark}
\def\Im{\operatorname{Im}}
\def\Hom{\operatorname{Hom}}
\def\L{\mathbb L}
\def\b{{\bf b}}
\def\x{{\bf x}}
\def\y{{\bf y}}
\def\pf{{\;+_{_F}\:}}
\newcommand{\N}{\mathbb{N}} 
\newcommand{\Q}{\mathbb{Q}}
\newcommand{\Z}{\mathbb{Z}}
\newcommand{\C}{\mathbb{C}}
\newcommand{\wh}{\widehat}
\newcommand{\hooklongrightarrow}{\lhook\joinrel\longrightarrow}
\newcommand{\twoheadlongrightarrow}{\relbar\joinrel\twoheadrightarrow}
\title
{
  Generalized (co)homology of  the loop spaces of classical groups   and 
  the universal factorial Schur $P$- and $Q$-functions   
}
\author{Masaki Nakagawa   and Hiroshi Naruse}
\thanks{The  first author is partially supported by the Grant-in-Aid for Scientific  Research 
          (C)  24540105, Japan Society for  the Promotion of Science}
\thanks{The second author is partially supported by the Grant-in-Aid for Scientific  Research 
          (C)  25400041, Japan Society for  the Promotion of Science}
\address{Department of  Education \endgraf
                       Okayama University \endgraf
                       Okayama  700-8530 \\ Japan 
}
\email{nakagawa@okayama-u.ac.jp}
\email{rdcv1654@okayama-u.ac.jp} 
\subjclass[2010]{05E05, 55N20, 57T25}
\keywords{Loop spaces, Hopf algebras,  Schur $P$- and $Q$-functions, Generalized (co)homology theory, 
Lazard ring}
\begin{document}

\begin{abstract}
 In this paper, we study the generalized (co)homology 
         Hopf algebras of 
        the loop spaces on the infinite classical groups, 
        generalizing  the work due to Kono-Kozima and Clarke. 
        We shall give a description of these Hopf algebras   in terms 
        of symmetric functions.   
        Based on   topological considerations  in the first half of this paper, 
        we then introduce a {\it universal} analogue of the factorial Schur $P$- 
        and $Q$-functions due to Ivanov and Ikeda-Naruse. 
        We   investigate  various properties  of these functions such as the 
        {\it cancellation property}, which we call the {\it $\mathbb{L}$-supersymmetric property}, 
        the {\it factorization property}, and  the {\it vanishing property}. 
        We prove that the universal analogue of the Schur $P$-functions form 
        a formal basis for the ring of  symmetric functions with  the $\mathbb{L}$-supersymmetric property. 
        By using the  universal analogue of the Cauchy identity, we then 
        define the {\it dual} universal Schur $P$- and $Q$-functions.  
         We describe  the duality of these functions in terms of Hopf algebras.  
\end{abstract}

\maketitle
\tableofcontents

\section{Introduction}     \label{sec:Introduction}  
Let $SU = SU(\infty)$, $Sp = Sp(\infty)$, and $SO = SO(\infty)$ be the 
infinite special unitary,  symplectic, and special orthogonal group 
respectively, and $\Omega SU$, $\Omega Sp$, and $\Omega_{0} SO$ 
denote its based loop space ($\Omega_{0}$ means the connected component 
of the identity).  These spaces have natural (homotopy-commutative) 
H-space structure 
given by the usual loop  multiplication.  
On the other hand, it is well-known that 
these loop spaces have no torsion  and no odd degree elements 
in (co)homology with integer coefficients (see e.g., 
the classical work due to 
Bott \cite{Bot54}, \cite{Bot56}
). 
Therefore  the  H-space structure on $\Omega SU$ (resp. $\Omega Sp$, $\Omega_{0} SO$) 
 endows $H_{*}(\Omega SU)$ 
and $H^{*}(\Omega SU)$  (resp. $H_{*}(\Omega Sp)$ and $H^{*}(\Omega Sp$), 
$H_{*}(\Omega_{0} SO)$ and $H^{*}(\Omega_{0} SO)$) 
with the structure of dual Hopf algebras over 
the integers $\Z$. These Hopf algebras were intensively studied by Bott \cite{Bot58}.   
Although  Bott uses  the  technique of symmetric functions very much in 
that  paper (see e.g., 
\cite[\S 8]{Bot58}), he did not give explicitly the descriptions of these Hopf algebras 
in terms of symmetric functions.  
On the other hand,
by means of the celebrated Bott periodicity theorem (Bott \cite{Bot59}, 
\cite[Proposition 8.3]{Bot58}, Switzer \cite[11.60, 16.47]{Swi75}), 
there exists a homotopy equivalence of H-spaces $BU \simeq \Omega SU$, 
where $BU$ denotes the classifying space of the infinite unitary 
group $U = U(\infty)$.  The H-space structure of $BU$ is induced from 
the Whitney sum of complex vector bundles (see e.g., May \cite[p.201, Proposition]{May99}, 
Switzer \cite[p.213]{Swi75}). 
Therefore they have the isomorphic (co)homology. 
By the theory of characteristic classes of complex vector bundles, 
 it is well known that the integral cohomology ring 
of $BU$ is $H^{*}(BU) \cong \Z[c_{1}, c_{2}, \ldots]$, where 
$c_{i} \; (i = 1, 2, \ldots)$ denote the universal Chern classes. 
The homology ring of $BU$ is also known to be a polynomial ring of the form 
$H_{*}(BU) \cong \Z[\beta_{1}, \beta_{2}, \ldots]$, where $\beta_{i} \; (i = 1, 2, \ldots)$
is an element  of degree $2i$ induced from the natural map  
$BU(1) \simeq \C P^{\infty} \longrightarrow BU$ 
(see e.g., Switzer \cite[Corollary 16.11, Theorem 16.17]{Swi75}).  
Moreover, $H^{*}(BU)$ is a Hopf algebra which is {\it self-dual}: 
$H_{*}(BU)$ is isomorphic to $H^{*}(BU)$ as a Hopf algebra. 
In topology,  it is customary to think of Chern classes of complex vector bundles 
as elementary symmetric functions in  certain variables (sometimes called 
the {\it Chern roots}). From this, both $H^{*}(BU) \cong H^{*}(\Omega SU)$ 
and $H_{*}(BU) \cong H_{*}(\Omega SU)$ 
can be identified with the {\it ring of symmetric functions} denoted by 
 $\Lambda$. For cohomology, the universal Chern classes $c_{i} \; (i = 1, 2, \ldots)$
 correspond to the $i$-th elementary symmetric 
functions $e_{i}$, and for homology, the elements $\beta_{i} \; (i = 1, 2, \ldots)$ 
correspond to the $i$-th complete symmetric functions $h_{i}$ 
(for this point of view, see e.g., Baker-Richter \cite[\S 5]{Bak-Ric2008}, 
Lenart \cite[\S 4]{Len98}, Liulevicius \cite{Liu80}). 
The starting point of our work  is  to give   
descriptions of other Hopf algebras $H^{*}(\Omega Sp)$, $H_{*}(\Omega Sp)$, 
$H^{*}(\Omega_{0} SO)$, and $H_{*}(\Omega_{0} SO)$ in terms of symmetric functions. 
By the Bott periodicity theorem again,   there exist 
 homotopy equivalences of H-spaces $Sp/U \simeq \Omega Sp$, 
$SO/U \simeq \Omega_{0} SO$ (see e.g., Switzer \cite[p.409]{Swi75}). 
Especially we have  the isomorphisms $H^{*}(\Omega Sp) \cong H^{*}(Sp/U)$  and $H^{*}(\Omega_{0} SO) 
\cong H^{*}(SO/U)$.  
On the other hand, by the work of Pragacz \cite[\S 6]{Pra91} and 
J\'{o}zefiak \cite{Joz91}, we know that 
the integral cohomology ring $H^{*}(Sp/U)$  (resp. $H^{*}(SO/U)$) 
of the infinite Lagrangian 
Grassmannian $Sp/U$ (resp.   infinite orthogonal Grassmannian $SO/U$) 
 is isomorphic to the ring of Schur $Q$-functions (resp. Schur $P$-functions) 
denoted by $\Gamma$ (resp. $\Gamma'$).   
Therefore   we have the following isomorphism   abstractly: 
$H^{*}(\Omega Sp) \cong \Gamma$ and $H^{*}(\Omega_{0} SO) \cong \Gamma'$. 
Since $\Gamma$ and $\Gamma'$ are mutually dual Hopf algebras over $\Z$, 
we also have $H_{*}(\Omega Sp) \cong \Gamma'$ and $H_{*}(\Omega_{0} SO) \cong \Gamma$.

Recently, the  study of the {\it affine Grassmannian}  $\mathrm{Gr}_{G}$ 
for a simply-connected simple complex algebraic group $G$ in terms of 
{\it Schubert calculus} has been developed extensively (see e.g., 
Peterson \cite{Pet97}, 
Lam \cite{Lam2008}, 
Lam-Schilling-Shimozono \cite{Lam-Sch-Shi2010I}, \cite{Lam-Sch-Shi2010II}). 
On the one hand, 
$\mathrm{Gr}_{G}$ admits  a cell decomposition by {\it Schubert cells} 
from its presentation $\mathrm{Gr}_{G} = G(\C((t)))/G(\C[[t]])$
  (see e.g., Garland-Raghunathan \cite[Theorem 1.9]{Gar-Rag75}, 
Mitchell  \cite[Theorem 1.3]{Mit86}, \cite[Theorem 1.2]{Mit87}, \cite[Corollary 3.2]{Mit88}).  
From this, $H_{*}(\mathrm{Gr}_{G})$ and $H^{*}(\mathrm{Gr}_{G})$ have 
free $\Z$-module bases consisting of {\it Schubert classes}. 
On the other hand, by the result of Garland-Raghunathan \cite[Corollary 1.7]{Gar-Rag75} 
and an unpublished work of Quillen (see also Mitchell 
\cite[Theorems 1.1 and 1.4]{Mit87}), \cite[Theorem 4.2]{Mit88}), 
it is known that 
$\mathrm{Gr}_{G}$ 
is homotopy equivalent to 
the based loop space $\Omega K$ on the  maximal compact subgroup $K$ of $G$. 
Therefore their work is closely 
related to our current work.  
In particular, Lam  studied the affine Grassmannian $\mathrm{Gr}_{SL(n, \C)} \simeq \Omega SU(n)$, 
and identified  $H_{*}(\mathrm{Gr}_{SL(n, \C)})$
and $H^{*}(\mathrm{Gr}_{SL(n, \C)})$   with a subring $\Lambda_{(n)}$ and a quotient 
$\Lambda^{(n)}$ of the ring of symmetric functions $\Lambda$. Moreover he identified 
  the Schubert classes of $H_{*}(\mathrm{Gr}_{SL(n, \C)})$
and $H^{*}(\mathrm{Gr}_{SL(n, \C)})$  as explicit symmetric functions 
(see \cite[Theorem 7.1]{Lam2008}).  
Also Lam-Schilling-Shimozono \cite{Lam-Sch-Shi2010I} considered the affine Grassmannian 
$\mathrm{Gr}_{Sp_{2n}(\C)} \simeq \Omega Sp (n)$ and identified 
$H_{*}(\mathrm{Gr}_{Sp_{2n}(\C)})$ and $H^{*}(\mathrm{Gr}_{Sp_{2n}(\C)})$ 
with certain dual Hopf algebras $\Gamma_{(n)}$ and $\Gamma^{(n)}$ 
of symmetric functions, defined in terms of  Schur $P$- and $Q$-functions.  
By taking limit $n \rightarrow \infty$,  we obtain immediately 
the above descriptions of $H_{*}(\Omega Sp)$ and $H^{*}(\Omega Sp)$. 
However, these deep results depend on Peterson's remarkable 
result (\cite{Pet97}):  In his lecture notes, 
he constructed an isomorphism between the torus equivariant homology 
$H_{*}^{T} (\mathrm{Gr}_{G})$ of the affine Grassmannian
and a certain subalgebra of the {\it affine nilHecke ring} $\mathbb{A}_{\mathrm{aff}}$. 
Thus it seems difficult to find a geometric or topological meaning of 
their result.

  We then turned our attention to topologists' work on 
the loop spaces on Lie groups (e.g., Clarke \cite{Cla74}, \cite{Cla81}, 
\cite{Cla81(2)}, Kono-Kozima \cite{Kon-Koz78}, Kozima \cite{Koz79}, \cite{Koz80},  \cite{Koz83}). 
Especially we focused on Kono-Kozima's work. In  \cite{Kon-Koz78}, they  considered the following 
homomorphisms in homology: 
\begin{equation}  \label{eqn:OmegaSU->OmegaSp->OmegaSU}   
 \Omega (c \circ q)_{*} = (\Omega c)_{*} \circ (\Omega q)_{*}: 
 H_{*}(\Omega SU)  \; \overset{(\Omega q)_{*}}{\longrightarrow} \; 
  H_{*}(\Omega Sp)  \; \overset{(\Omega c)_{*}}{\longrightarrow} \; 
  H_{*}(\Omega SU), 
\end{equation} 
where $q: SU \longrightarrow Sp$ and $c: Sp \longrightarrow SU$ are  induced from the 
{\it quaternionification} $SU(n)  \hooklongrightarrow Sp(n)$ and  the {\it complex restriction}
$Sp(n) \hooklongrightarrow SU(2n)$ (see (\ref{eqn:Omega(cq)_*}) in this paper).  
It is easy to show that $(\Omega c)_{*}$ 
is a split monomorphism (Lemma \ref{lem:(Omega(c))_*}) 
and thus $H_{*}(\Omega Sp)$ can be regarded  as 
a subalgebra of $H_{*}(\Omega SU)$.  By a topological argument, they 
fixed elements $z_{i}  \in H_{2i}(\Omega Sp) \; (i = 1, 2, \ldots)$, and 
determined the Hopf algebra structure of $H_{*}(\Omega Sp)$ explicitly 
in terms of these elements (\cite[Theorem 2.18]{Kon-Koz78})\footnote{
In topology, it was known that $H_{*}(\Omega Sp)$ is a polynomial algebra 
generated by elements of degrees $4i - 2 \; (i = 1, 2, \ldots)$.  
The problem was to fix algebra generators and to give an explicit description 
of the coalgebra structure of $H_{*}(\Omega Sp)$. 
(cf. Bott \cite[\S 11]{Bot58}).  
}.  In course of consideration, they showed the following formula (\cite[Theorem 2.9]{Kon-Koz78}):  
\begin{equation}   \label{eqn:Omega(cq)_*(beta(x))}  
   \Omega (c \circ q)_{*}(\beta (x)) = \beta (x)/\beta (-x), 
\end{equation} 
where we used the isomorphism $H_{*}(\Omega SU) \cong H_{*}(BU) \cong \Z[\beta_{1}, \beta_{2}, 
\ldots ]$ as mentioned before, and $\beta (x) := \sum_{i \geq 0}\beta_{i} x^{i} 
\in H_{*}(\Omega SU)[[x]]$,  a formal power series.
This formula is of particular importance in our present work: 
From (\ref{eqn:Omega(cq)_*(beta(x))}),  we see immediately 
that under the afore-mentioned 
isomorphism $H_{*}(BU) \cong H_{*}(\Omega SU) \cong \Lambda$, 
the monomorphism $(\Omega c)_{*}: H_{*}(\Omega Sp) \hooklongrightarrow H_{*}(\Omega SU)$ 
can be  identified with the natural inclusion $\Gamma' \hooklongrightarrow \Lambda$ 
(the similar consideration can be  found in Lam \cite[\S 2.3]{Lam2011}).  
In this way, we are able to give a sequence of homomorphisms 
(\ref{eqn:OmegaSU->OmegaSp->OmegaSU}) an interpretation in terms of symmetric functions
(for more details, see  \S \ref{subsec:IdentificationsSymmetricFunctions}). 
The advantage of our method is that it has immediate  application  to any 
generalized  homology theory $E_{*}(-)$ which is {\it complex oriented} 
in the sense of Adams   \cite[p.37]{Ada74}  (for the application to the $K$-homology theory, 
see Clarke \cite{Cla81}).

Let $E^{*}(-)$  denote  a  complex oriented  generalized  (multiplicative) cohomology theory, 
and $E_{*}(-)$ the corresponding homology theory  
in the sense of Adams \cite[p.37]{Ada74}.
The coefficient rings for these two theories are 
given by $E_{*} := E_{*}(\mathrm{pt})$ ($E$-homology of a point) 
and $E^{*} := E^{*}(\mathrm{pt})$ ($E$-cohomology of a point).  
Our first aim is to  describe the $E_{*}$-(co)homology of 
$\Omega SU$, $\Omega Sp$, and $\Omega_{0} SO$  
in terms of symmetric functions.  Generalizing the approach 
due to Kono-Kozima \cite{Kon-Koz78} and Clarke \cite{Cla81}, 
we are able to describe the Hopf algebras $E_{*}(\Omega Sp)$ 
and $E^{*}(\Omega Sp)$ (resp. $E_{*}(\Omega_{0} SO)$ and 
$E^{*}(\Omega_{0} SO)$) as subalgebras of $E_{*}(\Omega SU)$ 
and $E^{*}(\Omega SU)$  (see Section \ref{sec:E-(co)homologyOmegaSpOmega_0SO}).  Motivated by 
the description of $E_{*}(\Omega_{0} SO)$ and $E_{*}(\Omega Sp)$ 
(resp. $E^{*}(\Omega Sp)$ and $E^{*}(\Omega_{0} SO)$),  in Section \ref{sec:RingsE-(co)homologySchurPQ-Functions}, 
we shall introduce certain subalgebras $\Gamma^{E}_{*}$, ${\Gamma'}^{E}_{*}$ 
of $\Lambda^{E}_{*}$  (resp.  $\Gamma_{E}^{*}$, ${\Gamma'}_{E}^{*}$ of 
$\Lambda_{E}^{*}$).  
By definition, using the identification 
$E_{*}(\Omega SU) \cong E_{*}(BU)$ with $\Lambda_{*}^{E} := E_{*} \otimes_{\Z} \Lambda$
(scalar extension) (resp. $E^{*}(\Omega SU) \cong E^{*}(BU)$ with 
$\Lambda^{*}_{E} := \Hom_{E_{*}} (\Lambda^{E}_{*}, E_{*})$ (graded dual)),
one sees immediately that $E_{*}(\Omega_{0} SO) \cong \Gamma^{E}_{*}$ and 
$E_{*}(\Omega Sp)  \cong {\Gamma'}^{E}_{*}$ (resp. $E^{*}(\Omega Sp) \cong \Gamma_{E}^{*}$ 
and $E^{*}(\Omega_{0} SO) \cong {\Gamma'}_{E}^{*}$). 
Thus   our first main result   identifies 
$E_{*}(\Omega Sp)$ and $E^{*}(\Omega Sp)$ 
(resp. $E_{*}(\Omega_{0} SO)$ and $E^{*}(\Omega_{0} SO)$) 
with certain 
dual Hopf algebras ${\Gamma'}^{E}_{*}$ and $\Gamma_{E}^{*}$ 
(resp. $\Gamma^{E}_{*}$ and ${\Gamma'}_{E}^{*}$)  
of symmetric functions, defined in terms of a generalization of 
the rings of  Schur  $P$- and $Q$-functions (Propositions \ref{prop:IdentificationE_*(Omega_0SO)E_*(OmegaSp)}, 
\ref{prop:IdentificationE^*(OmegaSp)E^*(Omega_0SO)}).

Having constructed certain Hopf algebras  $\Gamma^{E}_{*}$, ${\Gamma'}^{E}_{*}$ of $\Lambda^{E}_{*}$ and $\Gamma_{E}^{*}$, ${\Gamma'}_{E}^{*}$ of $\Lambda_{E}^{*}$ 
of symmetric functions as above,  
our next task is to construct certain symmetric functions which may serve 
as  ``nice base''   for these algebras as free $E_{*}$ (or $E^{*}$)-modules. 
Since the complex cobordism theory $MU^{*}(-)$ is ``universal'' among 
complex oriented generalized cohomology theories  (Quillen \cite{Qui69}), 
it suffices to consider the case $E = MU$.  In this case, 
the coefficient ring $MU_{*} = MU^{-*}$ is known to be isomorphic to 
the {\it Lazard ring} $\L$  
(Lazard  \cite{Laz55}, Quillen \cite[Theorem 2]{Qui69}, Adams \cite[Part II, Theorem 8.2]{Ada74})\footnote{
$\L$ is known to be a polynomial algebra over the integers $\Z$  on generators of degrees 
$2, 4, 6, 8, \ldots$ (see e.g., Adams \cite[Part II, Theorem 7.1]{Ada74}, Ravenel \cite[Theorem A2.1.10]{Rav2004}).
One can define $\L$ as the quotient of a polynomial ring $P$ generated by 
formal symbols $a_{i, j} \; (i, j  \geq 1)$ of degree $2(i + j - 1)$
 by a certain ideal $I$ 
(see e.g., Adams \cite[Part II,  Theorem 5.1]{Ada74}, 
Ravenel \cite[Theorem A2.1.8]{Rav2004}). 
}.  
 We wish to construct certain symmetric functions which constitute 
 base   for ${\Gamma'}^{*}_{MU}$, $\Gamma^{*}_{MU}$,  and 
 their dual base for $\Gamma^{MU}_{*}$, ${\Gamma'}^{MU}_{*}$. 
 We also expect them to correspond to  {\it cohomology Schubert base} for  
$MU^{*}(\Omega_{0} SO) \cong MU^{*}(SO/U)$,  $MU^{*}(\Omega Sp) \cong MU^{*}(Sp/U)$, 
and {\it homology Schubert base} for 
$MU_{*}(\Omega_{0} SO) \cong MU_{*}(SO/U)$, $MU_{*}(\Omega Sp) \cong MU_{*}(Sp/U)$
respectively.  In Sections \ref{sec:UFSPQF} and \ref{sec:DUFSPQF}, 
we tackle this problem  along  the following lines: 
Firstly,  at the time of this writing,  it is not known   how to define or characterize 
geometrically 
the  Schubert classes of general flag varieties $G/P$  or affine Grassmannians
$\mathrm{Gr}_{G}$  in  {\it generalized}  (co)homology theories $E^{*}(-)$ and 
$E_{*}(-)$ (for the Schubert classes in generalized (co)homology theories, see e.g., 
Ganter-Ram \cite{Gan-Ram2012}). 
Thus we switch our attention from {geometry} to {algebra}, 
and  deal with the above problem purely algebraically. 
Secondly,  it seems plausible to define the Schubert classes 
 by way of the {\it torus equivariant cohomology theory}
 and  the technique  of the {\it localization theory} (see  Ganter-Ram \cite{Gan-Ram2012}, 
 Harada-Henriques-Holm \cite{HHH2005}, Ikeda-Naruse \cite{Ike-Nar2013}, 
Kostant-Kumar \cite{Kos-Kum90}).  In torus equivariant theory, 
various {\it factorial} analogues of Schur functions play an important role 
(for the {\it factorial Schur functions} and the complex Grassmannians, see 
Knutson-Tao \cite{Knu-Tao2003}, Molev-Sagan \cite{Mol-Sag99}; 
for the {\it factorial Schur $P$- and $Q$-functions} and 
the maximal isotropic Grassmannians, see  Ivanov \cite{Iva2004}, 
Ikeda \cite{Ike2007}, 
Ikeda-Naruse \cite{Ike-Nar2009};  
for the $K$-theoretic analogues of factorial Schur $P$- and $Q$-functions, 
see Ikeda-Naruse  \cite{Ike-Nar2013}). 
Thus we wish to  construct the required functions as a natural 
generalization of these factorial Schur functions.  
This suggests that our functions will contain multi-parameter 
$\b = (b_{1}, b_{2}, \ldots)$.  
The definition of our functions in {\it cohomology}, denoted $P^{\L}_{\lambda}(\x|\b)$ and 
$Q^{\L}_{\lambda} (\x|\b)$ with $\lambda$ a strict partition, 
$\x = (x_{1}, x_{2}, \ldots)$  an independent variables, $\b = (b_{1}, b_{2}, \ldots)$  
a {\it parameter},  
will be given in \S \ref{subsec:DefinitionP^L(x|b)Q^L(x|b)}, Definition \ref{df:DefinitionP^L(x_n|b)Q^L(x_n|b)}. 
We shall call them the {\it universal factorial Schur $P$- and $Q$-functions}. 
As the name suggests, if we  specialize  $a_{i, j} = 0$ for all $i, j \geq 1$,  
we will obtain the usual factorial Schur $P$- and $Q$-functions $P_{\lambda}(\x|\b)$ 
and $Q_{\lambda}(\x|\b)$ due to Ivanov \cite{Iva2004}\footnote{
In \cite[Definitions 2.10, 2.13]{Iva2004}, Ivanov introduced a multi-parameter generalization of the usual Schur $P$- and $Q$-functions denoted by $P_{\lambda; a}$ and $Q_{\lambda; a}$, where 
$\lambda$ is a strict partition and 
$a = (a_{k})_{k \geq 1}$ (with $a_{1} = 0$)
 is an arbitrary sequence of complex numbers. By definition, $Q_{\lambda; a} = 2^{\ell (\lambda)} 
 P_{\lambda;a}$, where $\ell (\lambda)$ denotes the length of $\lambda$.  
In this paper, we use the definition of these functions due to Ikeda-Mihalcea-Naruse 
\cite[\S 4.2]{IMN2011}.   They denote these functions by $P_{\lambda}(x|a)$ and $Q_{\lambda}(x|a)$, 
where $a = (a_{i})_{i \geq 1}$ is an infinite sequence of variables.   
By definition, $Q_{\lambda} (x|a) = 2^{\ell (\lambda)} P_{\lambda}(x|0, a)$.  
Note that
$P_{\lambda}(x|a)$ is the {\it even} limit of the corresponding polynomials 
$P^{(n)}_{\lambda}(x_{1}, \ldots, x_{n}|a)$ of finite variables because of the 
{\it mod $2$ stability} (see Ikeda-Naruse \cite[Proposition 8.2]{Ike-Nar2009}).  Also we shall  use  $[\x|\b]^{k} := \prod_{i=1}^{k} (x + b_{i})$ as 
a generalization of the  ordinary $k$-th power  in place of 
$(x|a)^{k} := \prod_{i=1}^{k} (x - a_{i})$.   
}, 
and if we specialize $a_{1, 1} = \beta$, ``Bott's element'' 
and $a_{i, j} = 0$ for all $(i, j) \neq (1, 1) $, 
we will obtain the $K$-theoretic analogue of the factorial Schur $P$- and $Q$-functions 
$GP_{\lambda}(\x|\b)$ and $GQ_{\lambda}(\x|\b)$ due to Ikeda-Naruse \cite{Ike-Nar2013}. 
Further we shall investigate various properties of our functions such as 
\begin{itemize} 
\item the {\it $\L$-supersymmetric property}  which is a genaralization of
      the ``$Q$-cancellation property'' due to Pragacz \cite[p.145]{Pra91}, 
      ``supersymmetricity'' due to Ivanov \cite[Definition 2.1]{Iva2004}, 
      and ``$K$-supersymmetric property ($K$-theoretic $Q$-cancellation property)''  due to Ikeda-Naruse \cite[Definition 1.1]{Ike-Nar2013};

\item the {\it factorization property}  (cf. Pragacz \cite[Proposition 2.2]{Pra91} and 
      Ikeda-Naruse \cite[Proposition 2.3]{Ike-Nar2013});

\item the {\it vanishing property} (cf. 
             Ivanov \cite[Theorem 5.3]{Iva2004},  Ikeda-Naruse \cite[Proposition 8.3]{Ike-Nar2009},  
Ikeda-Mihalcea-Naruse \cite[Proposition 4.2]{IMN2011}, Ikeda-Naruse \cite[Proposition 7.1]{Ike-Nar2013});

\item the {\it basis theorem} (cf. Ikeda-Mihalcea-Naruse \cite[Proposition 4.2]{IMN2011}, 
      Ikeda-Naruse \cite[Theorem 3.1, Propositions 3.2, 3.4, 3.5]{Ike-Nar2013}).  
\end{itemize}  
In particular, in the non-equivariant case, i.e., $\b  = 0$, 
our functions $\{ P^{\L}_{\lambda}(\x) \}$ (resp. $\{ Q^{\L}_{\lambda}(\x) \}$) 
turn out to constitute a formal  $\L$-basis for the ring ${\Gamma'}^{*}_{MU} \cong MU^{*}(\Omega_{0}  SO)$  (resp.  $\Gamma^{*}_{MU} \cong MU^{*}(\Omega Sp)$).  
Combining these  ``cohomology base'' $\{ P^{\L}_{\lambda}(\x) \}$ and 
$\{ Q^{\L}_{\lambda} (\x) \}$ with the  argument using an analogue of 
the  {\it Cauchy identity}, 
we shall define the dual base. More precisely, we  define functions 
$\wh{p}^{\L}_{\lambda}(\y)$ and $\wh{q}^{\L}_{\lambda}(\y)$ with 
$\lambda$ a strict partition, $\y= (y_{1},y_{2}, \ldots)$  one more set of independent 
variables, by the following identity (Definition \ref{df:Definitionwh{p}^Lwh{q}^L}): 
 \begin{equation*}     
\begin{array}{llll}  
\Delta(\x; \y) & =  \displaystyle{\prod_{i,j  \geq 1}}    \frac{1-\overline{x}_{i} y_{j}}{1- x_{i} y_{j}}
=  \sum_{\lambda: \; \text{strict} }
Q^{\L}_{\lambda} (\x) \, \widehat{p}^\L_\lambda(\y),   \medskip \\ 
\displaystyle
\Delta(\x; \y)  & =  \displaystyle{ \prod_{i,j \geq 1}}   \frac{1-\overline{x}_{i} y_{j}}{1-x_{i} y_{j}}
=\sum_{\lambda: \; \text{strict} }
P^{\L}_{\lambda} (\x) \,  \widehat{q}^\L_\lambda(\y).  \medskip 
\end{array} 
\end{equation*} 
We also show the {\it basis theorem} for these dual functions, 
and furthermore we will discuss the duality of these functions in terms 
of the theory of Hopf algebras.   
Thus  our second main result  is the 
{\it algebraic}   construction of the universal factorial Schur $P$- and $Q$-functions 
$P^{\L}_{\lambda}(\x|\b)$'s, $Q^{\L}_{\lambda}(\x|\b)$'s  and 
their duals $\wh{q}^{\L}_{\lambda}(\y)$'s, $\wh{p}^{\L}_{\lambda}  (\y)$'s (with $\b  =0$).   
At present, the geometric meaning of our functions
$P^{\L}_{\lambda} (\x|\b)$, $Q^{\L}_{\lambda}(\x|\b)$ and  
$\wh{p}^{\L}_{\lambda}(\y)$, $\wh{q}^{\L}_{\lambda}(\y)$ is not apparent.   
However,  for instance, the vanishing property of $Q^{\L}_{\lambda}(\x|\b)$'s  
strongly suggests that these functions will provide 
the ``Schubert basis'' for the torus equivariant complex cobordism 
$MU_{T}^{*}(LG(n))$ for a Lagrangian Grassmannian $LG(n) \cong Sp(n)/U(n)$, 
$T$ a maximal torus of $Sp(n)$. We shall discuss this problem elsewhere.

This paper is organized as follows: In Section \ref{sec:E-(co)homologyOmegaSpOmega_0SO}, 
we review the topologists' work concerning the loop spaces on $SU$, $Sp$, and $SO$. 
 Especially, we shall give  descriptions of $E$-(co)homology Hopf algebras 
of $\Omega Sp$ and $\Omega_{0} SO$, where $E^{*}(-)$ denotes 
a complex oriented generalized cohomology theory (see Theorems \ref{thm:E_*(OmegaSp)}, 
\ref{thm:E_*(Omega_0SO)}, and 
\ref{thm:E^*(OmegaSp)}).  In Section \ref{sec:RingsE-(co)homologySchurPQ-Functions}, 
 we introduce the  $E$-theoretic analogues of the rings of  Schur $P$- and $Q$-functions
(see Subsections \ref{subsec:E-homologySchurP,Q-functions}, \ref{subsec:E-cohomologySchurP,Q-functions}), 
and  give an interpretation of  $E$-(co)homology Hopf alebras of $\Omega Sp$ and 
$\Omega_{0} SO$ in terms of 
symmetric functions (see Propositions \ref{prop:IdentificationE_*(Omega_0SO)E_*(OmegaSp)},
\ref{prop:IdentificationE^*(OmegaSp)E^*(Omega_0SO)}).  These are the first main result 
of this paper. 
In Section \ref{sec:UFSPQF}, we define the universal factorial Schur $P$- and $Q$-functions
$P^{\L}_{\lambda}(\x_{n}|\b)$ and $Q^{\L}_{\lambda} (\x_{n}|\b)$ 
(Definition \ref{df:DefinitionP^L(x_n|b)Q^L(x_n|b)}), and 
establishes the fundamental properties of these functions such as 
$\L$-supersymmetricity (Subsection \ref{subsec:L-supersymmetricity}), 
stability property (Subsection \ref{subsec:StabilityProperty}), 
factorization formula (Subsection  \ref{subsec:FactorizationFormula}), 
vanishing property (Subsection \ref{subsec:VanishingPropertyP^L(x|b)LQ^L(x|b)}), 
 and basis theorem (Subsections \ref{subsec:BasisTheoremP^L(x)Q^L(x)}, 
\ref{subsec:BasisTheoremP^L(x|b)Q^L(x|b)}).  
In this section, we also define the universal factorial Schur functions 
$s^{\L}_{\lambda}(\x_{n}|\b)$ and discuss their properties (Subsection \ref{subsec:UFSF}).  
In   Section \ref{sec:DUFSPQF}, we define the 
dual universal Schur $P$- and $Q$-functions 
$\wh{p}^{\L}_{\lambda}(\y)$ and $\wh{q}^{\L}_{\lambda}(\y)$, 
and establishes the basis theorem (Theorem \ref{thm:BasisTheoremwh{p}^L(y)wh{q}^L(y)}). 
Moreover, we describe the duality of these functions. 
Then our second main result is summarized in Theorem \ref{thm:MU_*(OmegaSp)MU_*(Omega_0SO)MU^*(OmegaSp)MU^*(Omega_0SO)}.  
In the Appendix, Section 6, we discuss  another version of the universal factorial 
Schur functions $s^{\L}_{\lambda} (\x_{n}||\b_{\Z})$, which are the universal 
analogues of Molev's double Schur functions \cite{Mol2009} (Subsection \ref{subsec:UFSF2}).  
We also collect the necessary data concerning  the Weyl groups, the root systems, 
etc. of classical types.

\vspace{0.3cm}  

\textbf{Acknowledgments.} \quad    
We would like to thank Takeshi Ikeda, Nobuaki Yagita  for helpful comments and valuable conversations.

\section{$E$-(co)homology of $\Omega Sp$ and $\Omega_{0} SO$}  \label{sec:E-(co)homologyOmegaSpOmega_0SO}
\subsection{Generalized (co)homology theory}  \label{subsec:Generalized(Co)homologyTheory}  
As in the introduction, $E^{*}(-)$  denotes  the complex oriented 
 generalized (multiplicative) cohomology theory, 
and $E_{*}(-)$ the corresponding homology theory  
in the sense of Adams \cite[Part II, p.37]{Ada74},  Switzer \cite[16.27]{Swi75}.   
  A generator 
$x^{E} \in \tilde{E}^{2}(\C P^{\infty})$, where $\tilde{E}^{*}(-)$ is the corresponding reduced cohomology theory, 
 is specified and it is 
called the {\it orientation class}.  
The coefficient  rings  for these two theories are given by 
 $E_{*} = E_{*}(\mathrm{pt})$ and $E^{*} = E_{-*}$.    
In what follows, the coefficient ring $E_{*}$  is assumed to be torsion free. 
Then it is known that the cohomology  ring of  the infinite projective space $\C P^{\infty}$ is $E^{*}(\C P^{\infty})
= E^{*}[[x^{E}]]$, a formal power series ring with   the  given   generator 
$x^{E}  \in \tilde{E}^{2}(\C P^{\infty})$ (see Adams \cite[Part II, Lemma 2.5]{Ada74}), 
 and   the homology  $E_{*}(\C P^{\infty})$ of  $\C P^{\infty}$  is a free $E_{*}$-module with a 
basis $\{ \beta_{i}^{E} \}_{i \geq 0}$ ($\beta_{0}^{E} = 1$) (Adams \cite[Part II,  Lemma 2.14]{Ada74}).  
With respect to the $E$-theory Kronecker product (pairing),  
we have
\begin{equation*} 
  \langle  (x^{E})^{i},  \beta_{j}^{E} \rangle = \delta_{ij}, 
\end{equation*}  
namely,  $\{ (x^{E})^{i} \}_{i \geq 0}$ and $\{ \beta_{j}^{E} \}_{j \geq 0}$
are dual bases over $E_{*}$.

Let 
\begin{equation*} 
     \mu_{E} (u, v)    =  u + v + \sum_{i, j \geq 1} a_{i, j}^{E} u^{i} v^{j}   \in  E^{*}[[u, v]]  \quad 
     (a_{i, j}^{E} \in  E^{2(1 - i - j)} = E_{2(i + j - 1)}) 
\end{equation*} 
be the (one dimensional commutative) formal group law  over the graded ring $E^{*}$ 
associated  with the cohomology theory $E^{*}(-)$.  
The formal power series $\mu_{E}(u, v)$ satisfies the conditions
\begin{enumerate} 
\item [(i)] $\mu_{E}(u, 0) = u$, $\mu_{E}(0, v) = v$, 
\item [(ii)] $\mu_{E}(u, v) = \mu_{E}(v, u)$, 
\item [(iii)]  $\mu_{E}(u, \mu_{E}(v, w)) = \mu_{E}(\mu_{E}(u, v), w)$. 
\end{enumerate} 
It follows from (i), (ii) that 
\begin{equation*} 
  a_{i, 0} = \left \{ \begin{array}{llll} 
                     &\hspace{-0.3cm}   1 \quad & (i = 1), \medskip \\
                     &\hspace{-0.3cm}   0 \quad & (i \geq 2)\medskip 
                    \end{array} 
           \right.  
           \quad \text{and} \quad 
  a_{0, j} = \left \{ \begin{array}{llll} 
                     &\hspace{-0.3cm}   1 \quad & (j = 1), \medskip \\
                     &\hspace{-0.3cm}   0 \quad & (j \geq 2)\medskip 
                    \end{array} 
           \right.  
\end{equation*}  
and $a_{i, j} = a_{j, i} \; (i, j \geq 1)$.  Therefore $\mu_{E}(u, v)$ is in fact of the form 
\begin{equation*} 
\begin{array}{llll}  
  \mu_{E}(u, v) & = u + v + \displaystyle{\sum_{i, j \geq 1}}  a_{i, j}^{E}u^{i} v^{j}  \medskip \\
                & = u + v + a_{1, 1}uv + a_{1, 2}u^{2}v + a_{1, 2}uv^{2} + \cdots. \medskip 
\end{array}  
\end{equation*}  
We shall use this formal group law to define the {\it formal sum}, 
{\it formal inverse}, and {\it formal subtraction}. Namely, 
for two  indeterminates $X$, $Y$, the formal sum $X +_{\mu} Y$ is defined 
as 
\begin{equation*} 
  X +_{\mu} Y := \mu_{E}(X, Y) = X + Y + \sum_{i, j \geq 1} a^{E}_{i, j} X^{i}Y^{j}  
\in E^{*}[[X, Y]]. 
\end{equation*} 
Denote by 
\begin{equation*}  
     [-1]_{E} (X)   = \iota_{E}(X)  = \overline{X} =   \sum_{j \geq 1} c_{j} X^{j}   \in E^{*}[[X]]
\end{equation*} 
the formal inverse series\footnote{
It might  be  convenient to use the notation $\overline{X}$ instead of $[-1]_{E}(X)$ in later sections 
(see  \S \ref{subsec:E-cohomology(OmegaSp)}, \S \ref{subsec:E-homologySchurP,Q-functions}).   
}.    Namely $[-1]_{E}(X)$ is the unique formal power series satisfying the condition  
$\mu_{E} (X,  [-1]_{E}(X))  \equiv 0$, or equivalently $X +_{\mu}   [-1]_{E}(X) = 0$. 
  It follows directly from the definition that we have
\begin{equation}   \label{eqn:[-1]_E(X)}  
\begin{array}{llll} 
  [-1]_{E}(X)  & = -X +  a_{1, 1}X^{2}  - a_{1, 1}^{2}X^{3}  + (a_{1, 1}^{3}
                       + a_{1, 1}a_{1,  2}  + 2a_{1, 3} - a_{2, 2})X^{4}   \medskip \\
               & + (-a_{1, 1}^{4}  - 3a_{1, 1}^{2} a_{1, 2} - 6a_{1, 1}a_{1, 3} + 3a_{1, 1}a_{2, 2})X^{5} + \cdots. \medskip 
\end{array}  
\end{equation}  
This formal inverse allows us to define the formal subtraction: 
\begin{equation*} 
   X -_{\mu} Y :=  X +_{\mu}  [-1]_{E}(Y) = X +_{\mu}  \overline{Y}. 
\end{equation*}  
Finally,  we define $[1]_{E}(X) := X$, and inductively,  
\begin{equation*}
   [n]_{E}(X)  :=  [n-1]_{E}(X) +_{\mu} X  =  \mu_{E}([n-1]_{E}(X), X) \quad (n \geq 2), 
\end{equation*}
and  $[-n]_{E}(X) := [n]_{E}([-1]_{E}(X)) = [-1]_{E}([n]_{E}(X)) \; (n \geq 1)$.   
We call $[n]_{E}(X)$ the {\it $n$-series} in the following.  
In later sections,  we often need  the 2-series $[2]_{E}(X) = X +_{\mu} X = \mu_{E}(X, X)$. 
If we put $[2]_{E}(X) = \displaystyle{\sum_{k \geq 1}}   \alpha^{E}_{k}X^{k}$, then 
\begin{equation*} 
  \alpha_{1}^{E} = 2, \; \alpha_{2}^{E} = a_{1, 1}, \; \alpha_{3}^{E} = 2a_{1, 2}, \; 
  \alpha_{4}^{E} = 2a_{1, 3} + a_{2, 2}, \; \alpha_{5}^{E} = 2a_{1, 4} + 2a_{2, 3}, \ldots.   
\end{equation*}

\begin{ex}   \label{ex:FGL}  
\quad 
\begin{enumerate} 
\item For the ordinary cohomology theory $($with integer coefficients$)$ $E = H$, 
the coefficient ring  is $H^{*} = H^{*}(\mathrm{pt}) = \Z$ $(H^{0} = \Z$, $H^{k} = 0 \; (k \neq 0))$.  
     We choose  the standard orientation, namely the class of a hyperplane $x^{H}  \in \tilde{H}^{2}(\C P^{\infty})$.  Then the associated formal group law is  the {\it additive} formal group law 
       $\mu_{H}(X, Y) = X + Y$. 
      The $2$-series  is $[2]_{H}(X) = 2X$,  and 
       the formal inverse is $[-1]_{H}(X) = -X$.

\item  For the $($topological$)$ $K$-theory  
   $E = K$,  the coefficient ring  is $K^{*} = K^{*}(\mathrm{pt}) = \Z[\beta, \beta^{-1}]$, 
with $\beta  := 1 -  \eta_{1}  \in K^{-2}(\mathrm{pt}) \cong  \tilde{K}(S^{2})$, 
where $\eta_{1}$ stands for the {\it tautological} $($or Hopf$)$ line bunlde over $\C P^{1} \cong S^{2}$. 
We choose  the standard orientation 
 $x^{K} :=    \beta^{-1}(1 - \eta_{\infty}) \in \tilde{K}^{2}(\C P^{\infty})$, where $\eta_{\infty}$ 
stands for the tautological line bundle   over $\C P^{\infty}$ \footnote{
We adopt the  convention due to Bott \cite[Theorem 7.1]{Bot69}, 
Levin-Morel \cite[Example 1.1.5]{Lev-Mor2007}
so that the $K$-theory first Chern class of a line bundle $L$ (over a space $X$) 
is given by $c^{K}_{1}(L) = \beta^{-1}(1 - L^{*})$, where $L^{*}$ denotes the dual 
bundle of $L$.  In this convention,  the orientation class $x^{K}$ is equal to 
the $K$-theory first Chern class of  the dual bundle  $\eta_{\infty}^{*}$ 
(the {\it canonical line bundle} in the sense of algebraic geometry), 
namely $c^{K}_{1}(\eta_{\infty}^{*}) = \beta^{-1}(1  - \eta_{\infty})$.  
}. 
Then the associated formal group law is the {\it multiplicative} formal group law
 $\mu_{K}(X, Y) = X + Y -  \beta XY$. 
  The $2$-series is $[2]_{K}(X) = 2X -  \beta X^2$, and the formal inverse is 
  \begin{equation*} 
    [-1]_{K}(X)  =  -\dfrac{X}{1 -  \beta X}  =  -X - \beta X^2 - \beta^{2} X^3 - \beta^{3} X^4 - \cdots.  
  \end{equation*}  
     
\item For the complex cobordism theory $E  = MU$,   
        the coefficient ring $MU^{*} = MU^{*}(\mathrm{pt})$ is a polynomial algebra over $\Z$ 
      on generators of degrees $-2, -4, \ldots $ $($see e.g., Adams \cite[Part II, Theorem 8.1] {Ada74}$)$. 
       As in Adams \cite[Part II, Examples (2.4)]{Ada74}, Ravenel \cite[Example 4.1.3]{Rav2004},
     we take the orientation class $x^{MU}  
       \in \tilde{MU}^{2}(\C P^{\infty})$ to be the $($stable$)$ homotopy class of the map 
     $\C P^{\infty} \simeq BU(1) \overset{\sim}{\longrightarrow}  MU(1)$, 
     where $MU(1)$ denotes the Thom space of the universal line bundle over $BU(1)$. 
     Then the associated formal group law 
       \begin{equation*} 
          \mu_{MU}(X, Y) = X + Y + \sum_{i, j \geq 1} a^{MU}_{i, j} X^{i} Y^{j}, 
          \quad a^{MU}_{i, j}  \in MU^{2(1 -i - j)}
       \end{equation*} 
       is a {\it universal formal group law} first shown by 
       Quillen \cite[Theorem 2]{Qui69}. Namely, for any formal group law $\mu$ over a commutative ring 
       $R$ with unit, there exists a unique ring homomorphism $\theta: MU^{*} \longrightarrow R$ 
       such that $\mu (X, Y) =  (\theta_{*} \mu_{MU})(X, Y) := 
      X  + Y + \sum_{i, j \geq 1} \theta (a^{MU}_{i, j}) X^{i}Y^{j}$.  
       Quillen also showed that the coefficient ring $MU^{*}$ is 
       isomorphic to the {\it Lazard ring} $\L$ $($see also Section $\ref{sec:UFSPQF})$.  
\end{enumerate} 
\end{ex}

\subsection{$E$-(co)homology of the loop space on $SU$}   \label{subsec:E-(co)homology(OmegaSU)}  
Let $SU = SU(\infty)$ be the infinite special unitary group, and $\Omega SU$ its based loop space. 
By the celebrated Bott periodicity theorem (see   Bott  \cite[Proposition 8.3]{Bot58}, 
 \cite[p.314, Theorem II]{Bot59},     Switzer \cite[16.47]{Swi75}),  
there exists a homotopy equivalence 
$g_{\infty}: BU  \overset{\sim}{\longrightarrow} 
\Omega SU$ (
for the precise construction of the map $g_{\infty}$, see Bott \cite[Propositions 8.2, 8.3]{Bot58}, 
Switzer \cite[\S 16.47]{Swi75}, Kono-Kozima \cite[\S 1]{Kon-Koz78}). 
Therefore  they have isomorphic (co)homology for any 
 (co)homology  theory.  
Moreover,  both spaces $\Omega SU$ and $BU$ are equipped with H-space structures 
defined by the loop multiplication and the Whitney sum map respectively,  
and the above homotopy equivalence is actually an equivalence as H-spaces. 
Since the integral homology of $BU$ has no torsion,   
 the (co)homology of these spaces are isomorphic as Hopf algebras for any 
(complex oriented) generalized cohomology theory $E^{*}(-)$.   
The following  facts are well known to topologists (see e.g., 
Adams \cite[Part II, Lemma 4.1, Lemma 4.3]{Ada74}, Switzer \cite[Theorems 16.31, 16.32]{Swi75}).

\begin{theorem}     \label{thm:E^*(OmegaSU)E_*(OmegaSU)}   
\quad 
\begin{enumerate}  
\item   
 The integral cohomology ring of $BU$ is given as follows$:$
\begin{equation*} 
    E^{*}(BU)  \cong    E^{*}[[c^{E}_{1}, c^{E}_{2}, \ldots, c^{E}_{n}, \ldots ]],   
\end{equation*}  
where $c^{E}_{n}  \in E^{2n}(BU) \; (n = 1, 2, \ldots)$ are the $E$-theory universal  Chern  classes. 
 The coalgebra 
structure is given by 
\begin{equation*} 
      \phi (c^{E}_{n})  =  \sum_{i+j = n} c^{E}_{i} \otimes c^{E}_{j}  \quad (c^{E}_{0} := 1),  
\end{equation*}  
where the coproduct map\footnote{
In  the following, when we refer to the coproduct of a certain Hopf algebra, 
we shall always denote it by $\phi$ if there is no fear of confusion. 
} 
$\phi$ is induced from the multiplication $\mu : BU \times BU \longrightarrow BU$
that arises from the Whitney sum of complex vector bundles.

\item The integral homology ring of $BU$ is given as follows$:$
\begin{equation*} 
   E_{*}(BU)  \cong   E_{*}[\beta^{E}_{1}, \beta^{E}_{2}, \ldots, \beta^{E}_{n}, \ldots  ], 
\end{equation*}   
where $\beta^{E}_{n}  \in E_{2n}(BU)  \; (n = 1, 2, \ldots)$ are the images of the elements 
$\beta^{E}_{n} \in E_{2n}(\C P^{\infty})$
under the  induced homomorphism $E_{*}(\C P^{\infty})   \longrightarrow E_{*}(BU)$ 
from  the natural map   $\C P^{\infty} \simeq BU(1) \longrightarrow BU$.  
The  coalgebra structure is given by 
\begin{equation*} 
     \phi  (\beta^{E}_{n})  = \sum_{i + j = n} \beta^{E}_{i} \otimes \beta^{E}_{j}  \quad (\beta^{E}_{0} := 1),    
\end{equation*}
where the coporoduct map $\phi$ is induced from the diagonal map $\Delta: BU \longrightarrow BU \times BU$.  
\end{enumerate}   
\end{theorem}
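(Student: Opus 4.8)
The plan is to reduce both assertions to the finite complex Grassmannians $BU(n)$, where the complex orientation of $E^{*}(-)$ lets one build the Chern classes via the projective bundle theorem and the splitting principle, and then to pass to the limit over $n$ using $BU \simeq \operatorname{colim}_{n} BU(n)$. Each $BU(n)$ has finite type and a cell decomposition with cells only in even dimensions; this is what forces $c^{E}_{n}$ to live in $E^{2n}(BU)$ and $\beta^{E}_{n}$ in $E_{2n}(BU)$, and, combined with the standing hypothesis that $E_{*}$ is torsion free, it is also what keeps all the relevant spectral sequences and universal coefficient computations degenerate.

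For (1), I would begin from $E^{*}(\C P^{\infty}) = E^{*}[[x^{E}]]$ (Adams \cite[Part II, Lemma 2.5]{Ada74}) and apply the projective bundle theorem iteratively to the flag bundle of the universal bundle $\gamma_{n} \to BU(n)$; this produces the classes $c^{E}_{i} \in E^{2i}(BU(n))$ and the identification $E^{*}(BU(n)) \cong E^{*}[[c^{E}_{1}, \ldots, c^{E}_{n}]]$, which is Adams \cite[Part II, Lemmas 4.1, 4.3]{Ada74} (see also Switzer \cite[Theorems 16.31, 16.32]{Swi75}). The restriction maps $E^{*}(BU(n+1)) \to E^{*}(BU(n))$ send $c^{E}_{n+1} \mapsto 0$ and are surjective, so the tower $\{ E^{*}(BU(n)) \}_{n}$ is Mittag--Leffler, $\varprojlim^{1} = 0$, and the Milnor exact sequence gives $E^{*}(BU) \cong \varprojlim_{n} E^{*}(BU(n)) \cong E^{*}[[c^{E}_{1}, c^{E}_{2}, \ldots]]$. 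For the coproduct, the Whitney-sum map $\mu : BU \times BU \to BU$ classifies the external direct sum of the two universal bundles; by the splitting principle one reduces to a product of line bundles, where multiplicativity of the total Chern class gives $c(\gamma \oplus \gamma') = c(\gamma) \cdot c(\gamma')$. Since $E^{*}(BU)$ is a topologically free $E^{*}$-module, the Künneth map $E^{*}(BU) \, \widehat{\otimes}_{E^{*}} \, E^{*}(BU) \to E^{*}(BU \times BU)$ is an isomorphism, and reading off the degree-$2n$ part of the total Chern class identity yields $\phi(c^{E}_{n}) = \sum_{i+j=n} c^{E}_{i} \otimes c^{E}_{j}$.

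For (2), each $E^{*}(BU(n))$ is a free $E^{*}$-module of finite type in every degree, so the universal coefficient theorem gives $E_{*}(BU(n)) \cong \Hom_{E_{*}}(E^{*}(BU(n)), E_{*})$; since homology commutes with colimits, $E_{*}(BU) \cong \varinjlim_{n} E_{*}(BU(n))$ is the graded $E_{*}$-dual of $E^{*}(BU)$, carrying the dual algebra structure. To identify this dual with the polynomial algebra on the stated generators I would run the Atiyah--Hirzebruch spectral sequence $H_{*}(BU; E_{*}) \Rightarrow E_{*}(BU)$: because $E_{*}$ is torsion free, its second page is $H_{*}(BU; \Z) \otimes_{\Z} E_{*}$ with $H_{*}(BU; \Z) \cong \Z[\beta_{1}, \beta_{2}, \ldots]$ the classical torsion-free integral homology, and the classes $\beta^{E}_{n}$ — the images under $\C P^{\infty} \simeq BU(1) \to BU$ of the $E_{*}$-basis of $E_{*}(\C P^{\infty})$ dual to $\{ (x^{E})^{i} \}$ (Adams \cite[Part II, Lemma 2.14]{Ada74}) — are permanent cycles since they already are so in $E_{*}(\C P^{\infty})$, whence the spectral sequence collapses and $E_{*}(BU) \cong E_{*}[\beta^{E}_{1}, \beta^{E}_{2}, \ldots]$. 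For the coproduct, the relation $(x^{E})^{i} \cup (x^{E})^{j} = (x^{E})^{i+j}$ on $\C P^{\infty}$ dualizes, via $\langle \phi(\beta^{E}_{n}), (x^{E})^{i} \otimes (x^{E})^{j} \rangle = \langle \beta^{E}_{n}, (x^{E})^{i+j} \rangle = \delta_{n, i+j}$, to $\phi(\beta^{E}_{n}) = \sum_{i+j=n} \beta^{E}_{i} \otimes \beta^{E}_{j}$ in $E_{*}(\C P^{\infty})$; the natural map $\C P^{\infty} \to BU$, being induced by a continuous map, commutes with diagonals and hence is a morphism of $E_{*}$-coalgebras, so the same formula holds in $E_{*}(BU)$, and since $\mu$ and $\Delta$ are compatible the coproduct $\phi = \Delta_{*}$ is an algebra homomorphism and is therefore determined by its values on the polynomial generators $\beta^{E}_{n}$.

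The only genuinely substantial inputs are the projective bundle theorem and the splitting principle for complex oriented cohomology theories; these are classical and are steps one cites rather than reproves. In the present setting the points that actually need attention are the vanishing of $\varprojlim^{1}$ for the tower $\{ E^{*}(BU(n)) \}$, handled by the Mittag--Leffler property above; the systematic use of completed tensor products, so that the Whitney-sum computation of the coproduct on $E^{*}(BU)$ is legitimate at the level of power series; and the one place where torsion-freeness of $E_{*}$ is essential, namely the collapse of the Atiyah--Hirzebruch spectral sequence, which is what upgrades the abstract dual-module description of $E_{*}(BU)$ to the polynomial presentation on the geometrically defined generators $\beta^{E}_{n}$.
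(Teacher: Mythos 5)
Your proof is correct and is, in substance, the standard argument found in the cited references. The paper itself gives no proof of this theorem --- it records it as well known and cites Adams \cite[Part II, Lemmas 4.1, 4.3]{Ada74} and Switzer \cite[Theorems 16.31, 16.32]{Swi75}, which contain precisely the steps you carry out: projective bundle theorem and splitting principle for the $c^{E}_{i}$, a Milnor sequence for the limit over $n$, the Whitney sum formula for the coproduct, and a collapsing Atiyah--Hirzebruch argument for homology.

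One step in part~(2) is stated imprecisely. The claim that $E^{*}(BU(n))$ is a free $E^{*}$-module of \emph{finite type} in each degree fails when $E^{*}$ is unbounded below (as for $E = MU$): already $E^{0}(BU(1)) = \bigl(E^{*}[[x^{E}]]\bigr)^{0}$ is not finitely generated over $E^{0}$. The universal-coefficient duality you want runs most cleanly in the opposite direction, from a free $E_{*}$-module $E_{*}(X)$ to $E^{*}(X) \cong \Hom_{E_{*}}(E_{*}(X), E_{*})$, which is exactly the form the paper invokes immediately after the theorem statement (citing Switzer, pp.\,289--290). This looseness is harmless here because the Atiyah--Hirzebruch computation you run next determines $E_{*}(BU)$ directly: with $H_{*}(BU;\Z)$ and $E_{*}$ both concentrated in even degrees, the spectral sequence collapses for parity reasons (the same argument the paper uses in the proof of Lemma~\ref{lem:(Omega(r))_*(Omega(c))_*}), and since each filtration quotient is free over $E_{*}$ there is neither a module nor a multiplicative extension problem for the polynomial generators $\beta^{E}_{n}$. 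It would therefore be cleanest to establish $E_{*}(BU)$ by this spectral sequence argument first and then obtain $E^{*}(BU)$ as its graded $E_{*}$-dual, reversing the order in which you present part~(2); the content of your argument already supports this.
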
 
\noindent
Notice that the Kronecker product
\begin{equation*} 
  \langle -, - \rangle: 
 E^{n}(BU)  \times E_{m}(BU) \longrightarrow   
E_{m-n}  \quad (n, m \in \Z) 
\end{equation*} 
induces the following isomorphism (see e.g., Switzer \cite[pp.289--290]{Swi75}) 
\begin{equation*} 
  E^{n} (BU) \overset{\sim}{\longrightarrow}   \Hom_{E_{*}}^{-n} (E_{*}(BU), E_{*})     
\end{equation*} 
for each $n \in  \Z$, and under this duality, 
 $E^{*}(BU)$ and $E_{*}(BU)$ are mutually dual Hopf algebras  over $E_{*}$.     
In what follows, we shall use the identification
\begin{equation*} 
\begin{array}{lll} 
  & E^{*}(\Omega SU) \overset{\sim}{\longrightarrow}  E^{*}(BU) \cong E^{*}[[c^{E}_{1}, c^{E}_{2}, \ldots, c^{E}_{n}, \ldots]], \medskip \\
  & E_{*}(\Omega SU)  \overset{\sim}{\longleftarrow}  E_{*}(BU)  \cong E_{*}[\beta^{E}_{1}, \beta^{E}_{2}, \ldots, \beta^{E}_{n}, \ldots]. \medskip 
\end{array}  
\end{equation*}

\subsection{$E$-homology of the loop space   of $Sp$}   \label{subsec:E-homology(OmegaSp)}  
Let $Sp = Sp(\infty)$ be the infinite symplectic group  and $\Omega Sp$  its based loop space.  
Let  
\begin{equation*} 
  q: SU(n)  \hooklongrightarrow Sp(n),  \quad    
  c: Sp(n)  \hooklongrightarrow SU(2n)
\end{equation*} 
be the {\it quaternionification}  and  the {\it complexification} (or  {\it complex restriction}) 
respectively.   These maps   induce the following sequence of inclusions: 
\begin{equation*} 
  SU   \;  \overset{q}{\hooklongrightarrow} \;   Sp  \;  \overset{c}{\hooklongrightarrow} \;  SU.   
\end{equation*}  
Further they induce the following maps of based loop spaces: 
\begin{equation*} 
\Omega SU   \; \overset{\Omega q}{\longrightarrow}  \;   \Omega Sp  \;   \overset{\Omega c}{\longrightarrow}  \; \Omega SU.  
\end{equation*} 
Consider the induced homomorphisms in $E$-homology: 
\begin{equation}  \label{eqn:Omega(cq)_*}   
\begin{array}{llll} 
 & \Omega (c \circ q)_{*}:    E_{*}(\Omega SU)   \;  \overset{(\Omega q)_{*}}{\longrightarrow}   \;  E_{*}(\Omega Sp)  
                       \;  \overset{(\Omega c)_{*}}{\longrightarrow}  \;  E_{*}(\Omega SU).  \medskip                    
\end{array} 
\end{equation}   
Then one  can show the following fact: 
\begin{lem}  [Kono-Kozima \cite{Kon-Koz78},  Corollary 6.7]    \label{lem:(Omega(c))_*}   
  The homomorphism 
\begin{equation*} 
  (\Omega c)_{*}:  E_{*}(\Omega Sp)  \longrightarrow  E_{*}(\Omega SU)
\end{equation*} 
is a split monomorphism. 
\end{lem}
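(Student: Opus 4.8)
The plan is to exploit the sequence of H-space maps $\Omega SU \xrightarrow{\Omega q} \Omega Sp \xrightarrow{\Omega c} \Omega SU$ together with the fact (to be recorded before this lemma, or established from Bott) that the composite $c\circ q\colon SU(n)\hookrightarrow Sp(n)\hookrightarrow SU(2n)$ is, up to homotopy, the map classifying $V\mapsto V\oplus \overline V$ for a vector bundle $V$. The key computation is the formula $\Omega(c\circ q)_*(\beta^E(x)) = \beta^E(x)\,/\,\beta^E(\overline x)$ in $E_*(\Omega SU)[[x]]$ — the $E$-theoretic analogue of \eqref{eqn:Omega(cq)_*(beta(x))}, with $\overline x = [-1]_E(x)$ the formal inverse — which is exactly Kono–Kozima's Theorem 2.9 carried over to a complex oriented theory $E$. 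Granting this, the strategy is: first show that $\Omega(c\circ q)_*\colon E_*(\Omega SU)\to E_*(\Omega SU)$ is injective with image a direct summand; then conclude that $(\Omega c)_*$, being the ``second half'' of an injective split map, must itself be a split monomorphism.

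First I would set up the identification $E_*(\Omega SU)\cong E_*(BU)\cong E_*[\beta_1^E,\beta_2^E,\dots]$ from Subsection~\ref{subsec:E-(co)homology(OmegaSU)}, and unwind $\Omega(c\circ q)_*$ on the generating function $\beta^E(x) = \sum_{i\ge 0}\beta_i^E x^i$. Using the description of $c\circ q$ as ``add the conjugate bundle'' and the Whitney-sum (Hopf-algebra product) structure on $E_*(BU)$, one gets $\Omega(c\circ q)_*(\beta^E(x)) = \beta^E(x)\cdot \beta^E(\overline x)$ — or its reciprocal, depending on conventions for homology versus cohomology Chern data; in either case the right-hand side is a power series whose degree-$i$ coefficient is $\beta_i^E$ plus a polynomial in $\beta_1^E,\dots,\beta_{i-1}^E$ with coefficients in $E_*$, because $\overline x = -x + (\text{higher order})$ forces the leading behaviour $\beta^E(x)\beta^E(\overline x) = 1 - \beta_1^E\,(\text{stuff}) + \cdots$ to again have unit-normalized leading coefficients after reindexing. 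The upshot is that $\Omega(c\circ q)_*$ sends $\beta_i^E$ to an element of the form $\pm\,\beta_i^E + (\text{decomposables})$, hence is a filtered isomorphism onto a polynomial subalgebra that is a free $E_*$-module direct summand of $E_*(\Omega SU)$; in particular $\Omega(c\circ q)_* = (\Omega c)_*\circ(\Omega q)_*$ is a split monomorphism.

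From there the conclusion is formal: if a composite $g\circ f$ of $E_*$-module maps is a split monomorphism, then $f$ is a (split) monomorphism and — more to the point here — $g$ restricted to $\operatorname{Im} f$ is a split mono, but we need $g=(\Omega c)_*$ split mono on all of $E_*(\Omega Sp)$. For this I would invoke the structural input that $E_*(\Omega Sp)$ is a free $E_*$-module which, by the rank/degree count (it is known classically to be polynomial on generators in degrees $4i-2$, cf.\ the footnote after \eqref{eqn:Omega(cq)_*(beta(x))}), has in each degree exactly the rank of $\operatorname{Im}(\Omega q)_*$ in that degree; since $(\Omega q)_*$ is already surjective onto its image and $\Omega(c\circ q)_*$ is injective, $(\Omega q)_*$ must be an isomorphism onto a summand, and then $(\Omega c)_*$ agrees with the split injection $\Omega(c\circ q)_*\circ (\Omega q)_*^{-1}$ on that summand. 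The main obstacle is precisely this last bookkeeping — ensuring that ``$(\Omega c)_*$ split mono on $\operatorname{Im}(\Omega q)_*$'' upgrades to ``split mono on $E_*(\Omega Sp)$'', i.e.\ that $(\Omega q)_*$ is surjective. That surjectivity is the genuine topological content (it is where Kono–Kozima's analysis of the H-space $\Omega Sp$ and the degrees of its homology generators is used), and in the generalized setting it follows by comparing with ordinary homology via the Atiyah–Hirzebruch spectral sequence, which collapses here because everything is concentrated in even degrees and torsion-free. Once $(\Omega q)_*$ is seen to be surjective, the splitting of $(\Omega c)_*$ is immediate.
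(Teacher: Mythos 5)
The strategy breaks down at the central claim that $\Omega(c\circ q)_*$ sends $\beta_i^E$ to ``$\pm\beta_i^E$ plus decomposables.'' It does not: the leading coefficient is $2$, not $\pm 1$. From the formula $\Omega(c\circ q)_*(\beta^E(T)) = \beta^E(T)/\beta^E([-1]_E(T))$, and since $[-1]_E(T)=-T+\text{higher}$, the degree-one term of the right-hand side is $2\beta_1^E\,T$; indeed Example~\ref{ex:Omega(cq)_*(beta_k)(k=1,2,3,4,5)} computes $\Omega(c\circ q)_*(\beta_1)=2\beta_1$, $\Omega(c\circ q)_*(\beta_2)=2\beta_1^2-a_{1,1}\beta_1$, etc., and for ordinary homology every $\Omega(c\circ q)_*(\beta_i)$ is divisible by $2$ (these are the $Q_i=2P_i$). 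Consequently $\Omega(c\circ q)_*$ is \emph{not} a ``filtered isomorphism onto a direct summand'': working modulo $2$ over $\Z$, the map is zero in degree $2$, so its image is not a summand and the map is not split. The conclusion you want to extract from it therefore does not follow. The second half of your argument also relies on $(\Omega q)_*$ being surjective, which is false for the same reason: in ordinary homology $(\Omega q)_*(\beta_1)=2\eta_1^H$ while $\eta_1^H$ generates $H_2(\Omega Sp)\cong\Z$, so $(\Omega q)_*$ is multiplication by $2$ in that degree. (It is $(\Omega r)_*$ in the $SO$ case, Lemma~\ref{lem:(Omega(r))_*(Omega(c))_*}(1), that is surjective, not $(\Omega q)_*$.)

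The route that actually works is the one the paper uses for the analogous statements (Lemma~\ref{lem:(Omega(r))_*(Omega(c))_*}(2) and Lemma~\ref{lem:(Omega(c))^*(Omega(q))^*}(1)), and it does not pass through $\Omega(c\circ q)$ at all. Via Bott periodicity, $\Omega c$ is identified (up to homotopy) with the fiber inclusion $\chi\colon Sp/U\hooklongrightarrow BU$ of the Borel fibration $Sp/U\to BU\to BSp$, as in diagram~(\ref{eqn:CD(Sp/U)}). For $E=H$, the Leray--Serre spectral sequence of this fibration collapses, so $\chi^*\colon H^*(BU)\to H^*(Sp/U)$ is surjective. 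Since $H_*(Sp/U)$ and $H_*(BU)$ are free abelian and finitely generated in each degree, surjectivity of the dual map $\chi^*$ is equivalent to $\chi_*\colon H_*(Sp/U)\to H_*(BU)$ being injective with torsion-free cokernel, i.e.\ a split monomorphism. One then promotes this from $H$ to any complex oriented $E$ (equivalently, to $MU$ by universality) by observing that the homology Atiyah--Hirzebruch spectral sequences for $\Omega Sp$ and $\Omega SU$ collapse for degree reasons, giving $E_*(X)\cong H_*(X)\otimes E_*$ naturally, exactly as in the proofs of Lemmas~\ref{lem:(Omega(r))_*(Omega(c))_*} and~\ref{lem:(Omega(c))^*(Omega(q))^*}. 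The formula for $\Omega(c\circ q)_*$ that you invoke is genuinely used later in the paper, but its role is to \emph{identify} $(\Omega c)_*$ with the inclusion ${\Gamma'}^E_*\hooklongrightarrow\Lambda^E_*$ once split injectivity is already known, not to prove the splitting itself.
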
   
\noindent
Therefore we can regard  the algebra $E_{*}(\Omega Sp)$ as a subalgebra of $E_{*}(\Omega SU)$.  
Following the idea of Kono-Kozima \cite{Kon-Koz78} and Clarke \cite{Cla81}, 
we shall describe this algebra explicitly.  We extend the algebra homomorphism (\ref{eqn:Omega(cq)_*}) 
to  the following algebra homomorphism of formal power series rings: 
\begin{equation*}
   \Omega (c \circ q)_{*}:  E_{*}(\Omega SU)[[T]]  \longrightarrow  E_{*}(\Omega SU)[[T]].  
\end{equation*}   
Let $\beta^{E}(T) := \sum_{i \geq 0} \beta_{i}^{E} T^{i}  \in  E_{*}(\Omega SU)[[T]]$ 
be the formal power series  with coefficients in $E_{*}(\Omega SU)$.   
By a topological argument, 
Kono-Kozima calculated the image of $\beta^{E}(T)$ under the homomorphism 
$\Omega (c \circ q)_{*}$:  
\begin{prop}  [Kono-Kozima \cite{Kon-Koz78},  Theorem 2.9, Proposition 6.10; Clarke \cite{Cla81}, p.18]   \label{prop:Omega(cq)_*(beta(t))}  
  \begin{equation}    \label{eqn:Omega(cq)_*(beta(t))}  
    \Omega (c \circ q)_{*}(\beta^{E}(T))  =  \dfrac{\beta^{E}(T)} {\beta^{E} ([-1]_{E}(T))}. 
  \end{equation}  
\end{prop}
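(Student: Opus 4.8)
The plan is to deduce the $E$-theoretic formula \eqref{eqn:Omega(cq)_*(beta(t))} from its ordinary-homology counterpart \eqref{eqn:Omega(cq)_*(beta(x))} by a universality argument, together with a direct analysis of how the maps $\Omega q$, $\Omega c$ interact with the complex-orientation data. First I would recall the geometric description of $\Omega(c\circ q)$: under the identification $E_*(\Omega SU)\cong E_*(BU)$ coming from Bott periodicity, the composite $\Omega SU\xrightarrow{\Omega q}\Omega Sp\xrightarrow{\Omega c}\Omega SU$ corresponds, up to homotopy of H-spaces, to the map $BU\to BU$ classifying the virtual bundle $\xi\mapsto \xi\ominus\overline{\xi}$ (the difference of a bundle and its conjugate), since quaternionification followed by complex restriction sends a complex bundle $\xi$ to $\xi\oplus\overline\xi$ and the periodicity isomorphism shifts this by the appropriate summand. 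This is precisely the geometric content behind the ordinary-homology formula, and it is already implicit in Kono--Kozima's argument and Clarke's $K$-theory computation.

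Next I would translate the total homology class $\beta^E(T)=\sum_{i\ge 0}\beta_i^E T^i$ into the language of the formal group law. The element $\beta_i^E\in E_{2i}(BU)$ is the image of the standard generator $\beta_i^E\in E_{2i}(\mathbb{C}P^\infty)$ under $\mathbb{C}P^\infty\simeq BU(1)\to BU$, and $\beta^E(T)$ is the image of the total class $\sum_i\beta_i^E T^i$ on $\mathbb{C}P^\infty$. The key point is that conjugation of line bundles induces on $E_*(\mathbb{C}P^\infty)$ the operation dual to $x^E\mapsto [-1]_E(x^E)$ on $E^*(\mathbb{C}P^\infty)$; concretely, if $\iota\colon \mathbb{C}P^\infty\to\mathbb{C}P^\infty$ classifies $\overline{\eta_\infty}$, then $\langle (x^E)^k,\iota_*\beta_j^E\rangle = \langle \iota^*(x^E)^k,\beta_j^E\rangle = \langle [-1]_E(x^E)^k,\beta_j^E\rangle$, which shows $\iota_*(\beta^E(T)) = \beta^E([-1]_E(T))$ after the usual generating-function bookkeeping (expand $[-1]_E(T)^k$ and collect). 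Feeding this through the H-space structure — the Whitney sum on $BU$ corresponds to multiplication of total classes, and the formal difference $\xi\ominus\overline\xi$ corresponds to division of total classes because $\beta^E(T)\cdot\beta^E_{\overline\xi}(T) = \beta^E_{\xi\oplus\overline\xi}(T)$ and $\Omega(c\circ q)$ kills the $\overline\xi$-part — yields $\Omega(c\circ q)_*(\beta^E(T)) = \beta^E(T)/\beta^E([-1]_E(T))$.

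For the universality step I would invoke Quillen's theorem (Example \ref{ex:FGL}(3)): it suffices to prove the identity for $E=MU$, since every complex-oriented $E^*(-)$ receives a canonical orientation-preserving map $MU^*(-)\to E^*(-)$ under which $a_{i,j}^{MU}\mapsto a_{i,j}^E$, hence $[-1]_{MU}\mapsto [-1]_E$, and all the maps $\Omega q$, $\Omega c$, the H-space structures, and the classes $\beta_i$ are defined integrally and so are natural for this map of theories. Alternatively — and this is cleaner — I can argue directly that the formula makes sense over the torsion-free ring $E_*$, that both sides lie in $E_*(\Omega SU)[[T]]$, and that applying the map $E_*\to H_*(\mathrm{pt};\mathbb{Q})\otimes E_*$ (injective by the torsion-free hypothesis) together with \eqref{eqn:Omega(cq)_*(beta(x))} after rationalization pins down the $MU$ (hence $E$) version uniquely; the point is just that \eqref{eqn:Omega(cq)_*(beta(x))} is the image of \eqref{eqn:Omega(cq)_*(beta(t))} under the orientation map $MU\to H$.

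The main obstacle, I expect, is making the identification ``$\Omega(c\circ q)$ corresponds to $\xi\mapsto\xi\ominus\overline\xi$ as an H-map'' genuinely precise through the Bott periodicity equivalences $g_\infty\colon BU\xrightarrow{\sim}\Omega SU$: one must track the periodicity isomorphisms on both the source and target copies of $\Omega SU$ and check that the relevant square commutes up to homotopy, which is exactly the technical heart of Kono--Kozima's \cite[Theorem 2.9]{Kon-Koz78} and Clarke's \cite{Cla81} arguments. Since both references are cited in the statement of the proposition, the intended proof almost certainly just quotes their topological computation for the ordinary/$K$-theory cases and then runs the universality argument of the previous paragraph; that is the route I would take, relegating the geometric identification to those papers and spending the new effort only on the formal-group-law bookkeeping showing that conjugation induces $T\mapsto[-1]_E(T)$ on the total Chern class in $E$-homology.
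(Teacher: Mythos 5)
Your main route (paragraphs one through three) is essentially the one the paper intends: Remark~\ref{rem:h:BU->BU} defers the topological heart to Clarke, namely the homotopy-commutative square identifying $\Omega(c\circ q)$ with the self-map $h\colon BU\to BU$ classifying $\gamma-\overline\gamma$, and then says the formula ``follows easily.'' Your generating-function bookkeeping is exactly the ``follows easily'' content and it is worth stating it cleanly, because it works uniformly for \emph{any} complex-oriented $E$ with no detour through $MU$: write $h$ as the composite $BU\xrightarrow{\Delta}BU\times BU\xrightarrow{1\times(\chi\circ c)}BU\times BU\xrightarrow{\mu}BU$ (with $c$ the conjugation map and $\chi$ the H-inverse); since $\phi(\beta^E(T))=\beta^E(T)\otimes\beta^E(T)$, i.e.\ $\beta^E(T)$ is group-like in $E_*(BU)[[T]]$, and since $c_*$ restricted to the image of $E_*(\C P^\infty)$ sends $\beta^E(T)\mapsto\beta^E([-1]_E(T))$ (your Kronecker-pairing computation, which is correct), while the antipode $\chi_*$ inverts group-like elements, one gets $h_*(\beta^E(T))=\mu_*\bigl(\beta^E(T)\otimes\beta^E([-1]_E(T))^{-1}\bigr)=\beta^E(T)/\beta^E([-1]_E(T))$. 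That is the whole proof once Clarke's diagram is granted.

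Your final ``universality'' paragraph, however, contains a genuine gap and should be dropped. Reducing to $E=MU$ via Quillen is legitimate but then you still must prove the $MU$ statement, and the claim that the ordinary-homology formula~(\ref{eqn:Omega(cq)_*(beta(x))}) ``after rationalization pins down the $MU$ (hence $E$) version uniquely'' does not hold. The orientation map $MU\to H$ sends every $a^{MU}_{i,j}$ to $0$ and hence sends $[-1]_{MU}(T)$ to $-T$; the $H$-formula is therefore a consequence of the $MU$-formula, but it carries no information about the higher-order coefficients and cannot determine them. Nor does rationalization help: $E_*\hookrightarrow E_*\otimes\Q$ is injective, but the formal group law over $E_*\otimes\Q$ is not the additive one in the given orientation (it is only additive after a change of orientation by the logarithm), so~(\ref{eqn:Omega(cq)_*(beta(x))}) is not what you would see rationally, and there is nothing to compare against. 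In short, the direct group-like computation of the previous paragraph is not merely ``cleaner'' -- it is necessary; no amount of bootstrapping from the $E=H$ case can substitute for it.
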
  
\noindent
This formula  will be  crucial for our study  later. 
\begin{rem}   \label{rem:h:BU->BU}
 Here we make one  important remark 
which   gives a more geometric interpretation 
about the map $\Omega (c \circ q)$.  Let $\gamma \longrightarrow BU$ be the universal 
$($virtual$)$ bundle over $BU$, and $\overline{\gamma}$ its conjugate bundle.  
Denote by $h: BU \longrightarrow BU$ the classifying map of $\gamma - \overline{\gamma}$. 
Then  Clarke \cite[Proof of Proposition 2]{Cla81} showed essentially that 
the following diagram is homotopy-commutative$:$ 
\begin{equation}   \label{eqn:h:BU->BU}  
\begin{CD} 
        BU        @>{\sim}>{g_{\infty}}>  \Omega SU \\
        @V{h}VV                               @VV{\Omega (c \circ q)}V  \\
        BU        @>{\sim}>{g_{\infty}}>    \Omega SU.   \\ 
\end{CD} 
\end{equation} 
From this, the expression $(\ref{eqn:Omega(cq)_*(beta(t))})$ follows easily
$($see  Clarke \cite[p.18]{Cla81}$)$.   The above construction $($in the case of $SO)$ of the map 
$h: BU \longrightarrow BU$  also appears in Baker \cite[p.711]{Bak86}, 
Ray \cite[Theorem 7.1]{Ray74}. 
\end{rem} 

\begin{ex}   \label{ex:Omega(cq)_*(beta_k)(k=1,2,3,4,5)}   
For example, we compute\footnote{
Here and in what follows, we often omit the superscript $E$ for simplicity.   
}$:$ 
\begin{equation*} 
\begin{array}{rlll} 
 \Omega (c \circ q)_{*} (\beta_{1})   &= &  2\beta_{1}, \medskip \\
 \Omega (c  \circ q)_{*} (\beta_{2})  &= &  2\beta_{1}^{2} - a_{1, 1}  \beta_{1}, \medskip \\
 \Omega (c \circ q)_{*} (\beta_{3}) 
       &=  &  2(\beta_{3} -  \beta_{2}\beta_{1} + \beta_{1}^{3})  + 2a_{1, 1}\beta_{2} 
          - 3a_{1, 1} \beta_{1}^{2}   + a_{1, 1}^{2} \beta_{1}.   \medskip 
\end{array}
\end{equation*}  
\end{ex} 

In Clarke \cite[pp.16--17]{Cla81},  he introduced the  elements 
$\eta_{i}^{E}  \in E_{2i}(\Omega Sp) \; (i = 1, 2, \ldots)$ 
    so as to satisfy the 
following relation
\begin{equation}  \label{eqn:eta^E}   
   (\Omega q)_{*}(\beta^{E}(T))  =  1 + [2]_{E}(T) \eta^{E}(T), 
\end{equation}   
where $\eta^{E}(T) :=  \sum_{j \geq 0} \eta_{j+1}^{E} T^{j}  \in E_{*}(\Omega Sp)[[T]]$.
More explicitly, we have
\begin{equation*} 
  (\Omega q)_{*}(\beta_{l})  =   \sum_{k = 1}^{l}  \alpha_{k} \eta_{l  + 1 - k} 
   = 2\eta_{l}  + \alpha_{2} \eta_{l-1} + \cdots + \alpha_{l} \eta_{1} \quad (l = 1, 2, \ldots). 
\end{equation*} 
Then in $E_{*}(\Omega SU)$, we have 
\begin{equation}   \label{eqn:Omegac_*(eta_l)}  
  \Omega (c \circ q)_{*}(\beta_{l})  =  2(\Omega c)_{*}(\eta_{l})  +  \alpha_{2} (\Omega c)_{*}(\eta_{l-1}) 
            + \cdots  +  \alpha_{l}  (\Omega c)_{*}(\eta_{1})  \quad (l = 1, 2, \ldots). 
\end{equation}  
Using Proposition \ref{prop:Omega(cq)_*(beta(t))} and (\ref{eqn:Omegac_*(eta_l)}),  
we can express $(\Omega c)_{*}(\eta_{l}) \; (l = 1, 2, \ldots)$  in terms of $\beta_{j} \; (j = 1, 2, \ldots)$.
\begin{ex}    \label{ex:Omegac(eta_k)(k=1,2,3,4,5)}   
Using Example $\ref{ex:Omega(cq)_*(beta_k)(k=1,2,3,4,5)}$ and $(\ref{eqn:Omegac_*(eta_l)})$, 
we compute 
\begin{equation*} 
\begin{array}{rlll} 
  (\Omega c)_{*}(\eta_{1}) &=  & \beta_{1}, \medskip \\
  (\Omega c)_{*}(\eta_{2}) &=  & \beta_{1}^{2} - a_{1, 1} \beta_{1}, \medskip \\
  (\Omega c)_{*}(\eta_{3}) 
     &=  & \beta_{3} - \beta_{2}\beta_{1} + \beta_{1}^{3}  + a_{1, 1}\beta_{2}  - 2a_{1, 1}\beta_{1}^{2} 
     + (a_{1, 1}^{2} - a_{1, 2}) \beta_{1}.   \medskip  
\end{array}  
\end{equation*}  
\end{ex}
\noindent
Then it can be shown that $E_{*}(\Omega Sp)$ is multiplicatively generated by $\eta^{E}_{1}, \eta^{E}_{2}, \ldots$
(Clarke  \cite[p.16, Corollary 4]{Cla81}).  Moreover, one sees that 
$E_{*}(\Omega Sp)$ is polynomially generated by $\eta^{E}_{1}, \eta^{E}_{3},  \eta^{E}_{5}, \ldots$
(Clarke \cite[p.17]{Cla81}). More precisely, one can express the {\it even} 
 $\eta^{E}_{2i} \; (i = 1, 2, \ldots$) 
in terms of the {\it odd} $\eta^{E}_{2i-1} \; (i = 1, 2, \ldots)$ in the following way: 
By Proposition \ref{prop:Omega(cq)_*(beta(t))}, we have 
\begin{equation*} 
   \dfrac{\beta^{E}(T)}{\beta^{E}([-1]_{E}(T))} 
   =    \Omega (c \circ q)_{*}(\beta^{E}(T))  =  (\Omega c)_{*} \circ (\Omega q)_{*} (\beta^{E}(T)) 
                                           =   (\Omega c)_{*} (1 + [2]_{E}(T) \eta^{E}(T)). 
\end{equation*}    
Therefore we obtain 
\begin{equation*} 
\begin{array}{rllll} 
  & (\Omega c)_{*}  (( 1 + [2]_{E}(T) \eta^{E}(T) )(1 + [2]_{E}( [-1]_{E}(T) ) \eta^{E}( [-1]_{E}(T))) )   \medskip \\
  & =   \dfrac{\beta^{E}(T)} {\beta^{E}([-1]_{E}(T))} \cdot  \dfrac{\beta^{E}([-1]_{E}(T))} {\beta^{E}(T)}   = 1.  \medskip 
\end{array} 
\end{equation*} 
Since $(\Omega c)_{*}$ is injective, the elements $\eta^{E}_{i}$'s satisty the  following relation: 
\begin{equation*}  \label{eqn:Relation(Sp)}   
      ( 1 + [2]_{E}(T) \eta^{E}(T) )(1 + [2]_{E}( [-1]_{E}(T) ) \eta^{E}( [-1]_{E}(T)))  = 1,     
\end{equation*}   
or equivalently, we have the following (see Clarke \cite[p.19]{Cla81}): 
\begin{equation} \label{eqn:relations(eta^E)}  
    [2]_{E}(T) \eta^{E}(T)  + [-2]_{E}(T) \eta^{E}([-1]_{E}(T)) + 
    [2]_{E}(T)[-2]_{E}(T) \eta^{E}(T) \eta^{E}([-1]_{E}(T)) = 0. 
\end{equation}  
Using the relation (\ref{eqn:relations(eta^E)}),  it can be shown directly that 
$\eta^{E}_{2i} \; (i = 1, 2, \ldots)$  can be eliminated (see Clarke \cite[p.19]{Cla81}, 
Examples \ref{ex:H_*(Omega(Sp))} (3), \ref{ex:K_*(Omega(Sp))} (3) below). 
The coproduct $\phi(\eta_{n}^{E})$ can be obtained immediately by the definition 
of $\eta^{E}_{n}$ (see Clarke  pp.17--18).     
Thus we obtain the following description of $E_{*}(\Omega Sp)$: 
\begin{theorem}  [Clarke  \cite{Cla81}]     \label{thm:E_*(OmegaSp)}  
The Hopf algebra structure of $E_{*}(\Omega Sp)$ is given as follows$:$ 
\begin{enumerate} 
\item As an algebra, 
\begin{equation*} 
\begin{array}{llll} 
  E_{*}(\Omega Sp)  &  =  \dfrac{E_{*}[\eta_{1}^{E}, \eta_{2}^{E}, \ldots, \eta_{i}^{E}, \ldots]}
                           {( ( 1 + [2]_{E}(T) \eta^{E}(T) )(1 + [2]_{E}( [-1]_{E}(T) ) \eta^{E}( [-1]_{E}(T)))     
                              = 1)}  \medskip \\
                   & = E_{*}[\eta_{1}^{E}, \eta_{3}^{E}, \ldots, \eta_{2i-1}^{E}, \ldots].     
\end{array}  
\end{equation*} 

\item  The coalgebra structure is given by 
          \begin{equation*} 
\begin{array}{rlll} 
 \phi (\eta_{1}^{E})  & = & \eta^{E}_{1} \otimes 1 + 1 \otimes \eta^{E}_{1}, \medskip \\
 \phi (\eta_{l}^{E})  & =  &  \eta^{E}_{l} \otimes 1  + 1 \otimes \eta^{E}_{l} + 
           \displaystyle{
                               \sum_{  
                                  {\tiny 
                                     \begin{array}{cccc} 
                                      &  i + j + k = l, \\
                                      &  i, j \geq 1 
                                     \end{array}  
                                   } 
                                 }    
                         }  \hspace{-0.5cm}              
                                \alpha^{E}_{k + 1} \eta^{E}_{i} \otimes \eta^{E}_{j}   \medskip \\
                      &=  &  \eta^{E}_{l} \otimes 1  + 1 \otimes \eta^{E}_{l} + 
        2 \hspace{-0.3cm} 
            \displaystyle{ 
                               \sum_{
                                 {\tiny 
                                     \begin{array}{cccc} 
                                      &  i + j = l, \\
                                      &  i, j \geq 1 
                                     \end{array}  
                                   } 
                                 }   
                         }   \hspace{-0.3cm}    
                                 \eta^{E}_{i} \otimes \eta^{E}_{j} 
                        +  \; \alpha^{E}_{2} \hspace{-0.3cm}  
                              \displaystyle{ 
                               \sum_{
                                 {\tiny 
                                     \begin{array}{cccc} 
                                      &  i + j = l-1, \\
                                      &  i, j \geq 1 
                                     \end{array}  
                                   } 
                                 }   
                         }   \hspace{-0.3cm} 
                                \eta^{E}_{i} \otimes \eta^{E}_{j}   \medskip \\
              & &     + \cdots 
                  + \alpha^{E}_{l-1} \eta^{E}_{1} \otimes \eta^{E}_{1} \quad (l \geq 2). \medskip              
\end{array} 
\end{equation*}

\item  The elements $\eta^{E}_{2n} \; (n = 1, 2, \ldots)$ are inductively determined by 
the recursive formula$:$ 
  \begin{equation*} 
      [2]_{E}(T) \eta^{E}(T)  + [-2]_{E}(T) \eta^{E}([-1]_{E}(T)) + 
    [2]_{E}(T)[-2]_{E}(T) \eta^{E}(T) \eta^{E}([-1]_{E}(T)) = 0. 
  \end{equation*} 
\end{enumerate}  
\end{theorem}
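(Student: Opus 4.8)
The natural approach is to realize $E_{*}(\Omega Sp)$ as a subalgebra of $E_{*}(\Omega SU)\cong E_{*}[\beta^{E}_{1},\beta^{E}_{2},\dots]$ through the split monomorphism $(\Omega c)_{*}$ of Lemma \ref{lem:(Omega(c))_*}, to pin down its $E_{*}$-module structure by an Atiyah--Hirzebruch spectral sequence argument, and to read off the explicit formulas from the defining relation (\ref{eqn:eta^E}) of the $\eta^{E}_{i}$ together with Proposition \ref{prop:Omega(cq)_*(beta(t))}. Since $E_{*}$ is torsion free and, by Bott \cite[\S 11]{Bot58}, $H_{*}(\Omega Sp;\Z)$ is a torsion free, evenly graded polynomial algebra on generators of degrees $4i-2$, the AHSS computing $E_{*}(\Omega Sp)$ collapses at $E_{2}$; hence $E_{*}(\Omega Sp)$ is a free $E_{*}$-module whose graded rank in each degree agrees with that of $H_{*}(\Omega Sp;\Z)\otimes_{\Z}E_{*}$. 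This collapse is the only genuinely topological ingredient; everything else is formal manipulation of power series over the universal formal group law.

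For part (1), I would combine $(\Omega c\circ q)_{*}=(\Omega c)_{*}\circ(\Omega q)_{*}$, relation (\ref{eqn:eta^E}) and Proposition \ref{prop:Omega(cq)_*(beta(t))} to get
\[
 (\Omega c)_{*}\bigl(1+[2]_{E}(T)\eta^{E}(T)\bigr)=\frac{\beta^{E}(T)}{\beta^{E}([-1]_{E}(T))}\in E_{*}(\Omega SU)[[T]].
\]
As $[2]_{E}(T)=2T+\cdots$ is a non-zero-divisor (torsion freeness of $E_{*}$ is used here), this determines $(\Omega c)_{*}(\eta^{E}_{l})$ recursively as an explicit polynomial in the $\beta^{E}_{j}$, and comparing leading coefficients gives $(\Omega c)_{*}(\eta^{E}_{2i-1})\equiv\beta^{E}_{2i-1}$ modulo decomposables and terms with coefficients in $E_{*}^{>0}$. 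Reducing along $E_{*}\to\Z$ in the AHSS — where the $a^{E}_{i,j}$ vanish for degree reasons and $[2]_{E}(T)$ becomes $2T$ — the $\eta^{E}_{2i-1}$ reduce to the classical polynomial generators of $H_{*}(\Omega Sp;\Z)$. A standard filtration/completeness argument then upgrades this to the statement that the $\eta^{E}_{2i-1}$ generate $E_{*}(\Omega Sp)$ as an $E_{*}$-algebra; and since the induced surjection $E_{*}[\eta^{E}_{1},\eta^{E}_{3},\dots]\twoheadrightarrow E_{*}(\Omega Sp)$ is a map between free $E_{*}$-modules of equal finite graded rank, it is an isomorphism. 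Finally, applying the injective $(\Omega c)_{*}$ shows that the relation of (\ref{eqn:relations(eta^E)}) holds in $E_{*}(\Omega Sp)$, because $(\Omega c)_{*}$ of its left-hand side equals $\tfrac{\beta^{E}(T)}{\beta^{E}([-1]_{E}(T))}\cdot\tfrac{\beta^{E}([-1]_{E}(T))}{\beta^{E}(T)}=1$; combined with part (3) (elimination of the even generators) this gives the full quotient presentation as well as the isomorphism onto $E_{*}[\eta^{E}_{1},\eta^{E}_{3},\dots]$.

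For part (3), I would extract from (\ref{eqn:relations(eta^E)}) the coefficient of $T^{2n}$: since $[-2]_{E}(T)=[2]_{E}([-1]_{E}(T))=-2T+\cdots$, the first two summands contribute $4\eta^{E}_{2n}$ plus a polynomial in $\eta^{E}_{1},\dots,\eta^{E}_{2n-1}$, while the quadratic summand involves only $\eta^{E}_{1},\dots,\eta^{E}_{2n-1}$ and their products; torsion freeness of $E_{*}$ then lets us solve for $\eta^{E}_{2n}$ in terms of the lower-indexed ones, so inductively $\eta^{E}_{2n}\in E_{*}[\eta^{E}_{1},\eta^{E}_{3},\dots]$. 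For the coalgebra structure (2), note that $\Omega q\colon\Omega SU\to\Omega Sp$ is a loop map, hence an H-map, so $(\Omega q)_{*}$ is a morphism of Hopf algebras; applying $\phi$ to (\ref{eqn:eta^E}) and using $\phi(\beta^{E}_{n})=\sum_{i+j=n}\beta^{E}_{i}\otimes\beta^{E}_{j}$ from Theorem \ref{thm:E^*(OmegaSU)E_*(OmegaSU)}(2) gives
\[
 1\otimes1+[2]_{E}(T)\,\phi(\eta^{E}(T))=\bigl(1+[2]_{E}(T)\eta^{E}(T)\bigr)\otimes\bigl(1+[2]_{E}(T)\eta^{E}(T)\bigr),
\]
whence, dividing by the non-zero-divisor $[2]_{E}(T)$,
\[
 \phi(\eta^{E}(T))=\eta^{E}(T)\otimes1+1\otimes\eta^{E}(T)+[2]_{E}(T)\,\bigl(\eta^{E}(T)\otimes\eta^{E}(T)\bigr).
\]
Reading off the coefficient of $T^{l-1}$ and substituting $[2]_{E}(T)=\sum_{k\ge1}\alpha^{E}_{k}T^{k}$ with $\alpha^{E}_{1}=2$ yields both stated forms of $\phi(\eta^{E}_{l})$.

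The main obstacle is the structural half of part (1): proving that the $\eta^{E}_{2i-1}$ genuinely generate $E_{*}(\Omega Sp)$ and satisfy no relations beyond those recorded. This is exactly where the torsion-freeness hypothesis on $E_{*}$ is essential, via the AHSS collapse, and where one must argue with some care that generation and $E_{*}$-linear independence \emph{modulo the spectral-sequence filtration} imply the same on the nose (using finite type of $E_{*}(\Omega Sp)$, and completeness of the formal power series in the cohomological variant). Once this is in place, parts (2), (3) and the identification of the relation ideal are purely formal power-series bookkeeping.
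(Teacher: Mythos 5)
Your proposal follows essentially the same route as the paper (which itself defers most of the structural work to Clarke): split monomorphism $(\Omega c)_{*}$, the Kono--Kozima/Clarke formula of Proposition~\ref{prop:Omega(cq)_*(beta(t))}, the defining relation~(\ref{eqn:eta^E}) for $\eta^{E}_{i}$, the Atiyah--Hirzebruch collapse for the structural claim, the quadratic relation~(\ref{eqn:relations(eta^E)}) obtained by pushing forward under the injection $(\Omega c)_{*}$, and the Hopf-algebra-map property of $(\Omega q)_{*}$ for the coproduct. One phrasing to tighten in your part (3): the relation at the coefficient of $T^{2n}$ gives $4\eta^{E}_{2n}+(\text{lower terms})=0$, and torsion-freeness alone does not by itself show the lower terms are divisible by $4$ in $E_{*}[\eta^{E}_{1},\eta^{E}_{3},\dots]$; the correct logical order (which you do adopt in part (1)) is to first establish $E_{*}(\Omega Sp)=E_{*}[\eta^{E}_{1},\eta^{E}_{3},\dots]$ via the AHSS collapse and the lifting-of-generators argument, after which $\eta^{E}_{2n}$ automatically lies in this ring and the recursion merely computes which polynomial it is. With that reordering noted, the proof is sound and matches the paper's.
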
   

\begin{ex} [Kono-Kozima \cite{Kon-Koz78}, Theorem 2.9]   \label{ex:H_*(Omega(Sp))}   
For the ordinary homology theory, we have 
\begin{equation*} 
   [2]_{H}(T) = 2T  \quad  \text{and}  \quad  [-1]_{H}(T) = -T. 
\end{equation*} 
From Theorem $\ref{thm:E_*(OmegaSp)}$,     the Hopf algebra structure of $H_{*}(\Omega Sp)$ 
is given as follows$:$  
\begin{enumerate} 
  \item As an algebra, 
         \begin{equation*} 
              H_{*}(\Omega Sp) \cong \Z[z_{1},  z_{3}, \ldots,  z_{2n-1}, \ldots],   
         \end{equation*} 
         where we set $z_{i} = \eta^{H}_{i} \; (i = 1, 2, \ldots)$.  
  \item  The coalgebra structure is given by 
         \begin{equation*} 
               \phi (z_{n})  = z_{n} \otimes 1 + 1 \otimes z _{n} 
                            + 2 \hspace{-0.3cm} \sum_{ 
                                       {\tiny  
                                            \begin{array}{cccc} 
                                              &   i + j = n,  \\
                                              &   i, j \geq 1
                                            \end{array}  
                                         }
                                     }  \hspace{-0.3cm}  z_{i} \otimes z_{j}.    
         \end{equation*}  
  \item  The elements $z_{2n} \; (n = 1, 2, \ldots)$ are  determined   by the recursive  formula$:$ 
         \begin{equation*} 
              z_{2n} + \hspace{-0.3cm} \sum_{ 
                              {\tiny  
                                  \begin{array}{cccc} 
                                          &   i + j = 2n,  \\
                                          &   i, j \geq 1
                                  \end{array}  
                              }
                             } \hspace{-0.3cm}   (-1)^{i} z_{i}z_{j} = 0.  
         \end{equation*} 
\end{enumerate}

\end{ex}

\begin{ex}  [Clarke  \cite{Cla81}, Theorem 1]  \label{ex:K_*(Omega(Sp))} 
For $\Z/2\Z$-graded $K$-theory\footnote{
We put formally $\beta = -1$ in Example \ref{ex:FGL}. 
},  we have 
\begin{equation*} 
 [2]_{K}(T)   =  2T + T^{2}  \quad \text{and}  \quad  [-1]_{K}(T) =  - \dfrac{T}{1 + T}.    
\end{equation*} 
From Theorem $\ref{thm:E_*(OmegaSp)}$, the Hopf algebra structure of $K_{0}(\Omega Sp)$ 
is given as follows$:$  
\begin{enumerate} 
  \item As an algebra, 
         \begin{equation*} 
              K_{0}(\Omega Sp) \cong \Z[\eta^{K}_{1}, \eta^{K}_{3}, \ldots, \eta^{K}_{2n-1}, \ldots].  
         \end{equation*} 
  \item  The coalgebra structure is given by 
         \begin{equation*} 
               \phi (\eta^{K}_{n})  = \eta^{K}_{n} \otimes 1 + 1 \otimes \eta^{K}_{n} 
                            + 2\hspace{-0.3cm} \sum_{
                                        {\tiny  
                                           \begin{array}{cccc} 
                                             &   i + j = n,  \\
                                             &   i, j \geq 1
                                           \end{array}  
                                        }
                                   } 
                                    \hspace{-0.3cm} \eta^{K}_{i} \otimes \eta^{K}_{j} 
                            +  \hspace{-0.3cm}  \sum_{
                                       {\tiny  
                                          \begin{array}{cccc} 
                                             &   i + j = n-1,  \\
                                             &   i, j \geq 1
                                           \end{array}  
                                        }   
                                    } \hspace{-0.3cm}  \eta^{K}_{i} \otimes \eta^{K}_{j}.    
         \end{equation*}  
\item The elements $\eta^{K}_{2k} \; (k = 1, 2, \ldots)$ are determined by the recursive formula$:$ 
\begin{equation*} 
\begin{array}{llll}  
  &  (2\eta^{K}_{k}  + \eta^{K}_{k-1}) 
    + \displaystyle{\sum_{i=1}^{k-1}}    (-1)^{i}  (2\eta^{K}_{k-i} +  \eta^{K}_{k - i  -1}) 
  \sum_{j=1}^{i}  \left \{  2 \binom{i-1}{j-1}  + \binom{i-1}{j}  \right \}  \eta^{K}_{j}   \medskip \\
 &   +  (-1)^{k}   \displaystyle{\sum_{j=1}^{k}} 
       \left \{ 2 \binom{k-1}{j-1}  + \binom{k-1}{j}  \right \} 
     \eta^{K}_{j} = 0  \quad  (\eta^{K}_{0} := 0).   \medskip 
\end{array}
\end{equation*} 
\end{enumerate} 
\end{ex}

\subsection{$E$-homology of the loop space of $SO$}   \label{subsec:E-homology(OmegaSO)}  
Quite analogously, we can describe the $E$-homology Hopf algebra of 
the loop space on an infinite special orthogonal group  $SO = SO(\infty)$. 
Let 
\begin{equation*} 
  r :  SU(n)  \hooklongrightarrow SO(2n),   \quad     
  c:  SO(n)  \hooklongrightarrow SU(n)
\end{equation*} 
be the {\it real restriction}  and the  {\it complexification} respectively. 
These maps induce the following   sequence of inclusions: 
\begin{equation*}
  SU  \;  \overset{r}{\hooklongrightarrow} \;    SO  \;  \overset{c}{\hooklongrightarrow} \;   SU.   
\end{equation*}  
Further they    induce the following maps on based loop spaces: 
\begin{equation*}    
\Omega SU   \overset{\Omega r}{\longrightarrow}  \;  \Omega_{0} SO   \; \overset{\Omega c}{\longrightarrow}  \; \Omega SU.  
\end{equation*} 
Consider the induced homomorphisms in $E$-homology: 
\begin{equation*} 
  \Omega (c \circ r)_{*}:    E_{*}(\Omega SU)   \;  \overset{(\Omega r)_{*}}{\longrightarrow}  \;  E_{*}(\Omega_{0} SO)  
                        \;  \overset{(\Omega c)_{*}}{\longrightarrow}  \;   E_{*}(\Omega SU).         
\end{equation*}   
We can show the following facts: 
\begin{lem}   \label{lem:(Omega(r))_*(Omega(c))_*}  
\quad 
\begin{enumerate} 
\item  The homomorphism  $(\Omega r)_{*}:  E_{*}(\Omega SU)  \longrightarrow  E_{*}(\Omega_{0} SO)$ is surjective.

\item  The homomorphism $(\Omega c)_{*}:  E_{*}(\Omega_{0} SO)  \longrightarrow E_{*}(\Omega SU)$ is injective.  
\end{enumerate} 
\end{lem}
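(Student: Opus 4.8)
**Proof proposal for Lemma \ref{lem:(Omega(r))_*(Omega(c))_*}.**

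The plan is to mirror, in the orthogonal setting, exactly the arguments that Kono--Kozima and Clarke used for the symplectic case (Lemma \ref{lem:(Omega(c))_*} and Proposition \ref{prop:Omega(cq)_*(beta(t))}). The two statements are essentially formal consequences of (a) the known polynomial structure of $E_*(\Omega SU)$, $E_*(\Omega_0 SO)$ as $E_*$-algebras, together with (b) a computation of the composite $\Omega(c\circ r)_*$ on the total class $\beta^E(T)$, analogous to (\ref{eqn:Omega(cq)_*(beta(t))}). Since the coefficient ring $E_*$ is torsion free and the relevant spaces have torsion-free even integral (co)homology, it suffices to establish the statements after tensoring up from ordinary homology, so one may effectively reduce to the universal case $E=MU$ or even argue integrally and then extend scalars; in particular injectivity/surjectivity statements are detected on the level of the associated graded and reduce to the ordinary-homology computations of Kono--Kozima \cite{Kon-Koz78} and Clarke \cite{Cla81}.

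For part (1), I would first recall the geometric input: the composite $SU \overset{r}{\hooklongrightarrow} SO \overset{c}{\hooklongrightarrow} SU$ on loop spaces corresponds, via Bott periodicity $g_\infty\colon BU \overset{\sim}{\to}\Omega SU$, to the classifying map of $\gamma + \overline{\gamma}$ (the ``plus'' version of the map $h$ appearing in Remark \ref{rem:h:BU->BU}; cf.\ Baker \cite{Bak86}, Ray \cite{Ray74}). The same diagram-chasing as in Remark \ref{rem:h:BU->BU} then yields the formula
\begin{equation*}
  \Omega(c\circ r)_*(\beta^E(T)) = \beta^E(T)\,\beta^E([-1]_E(T)).
\end{equation*}
Now I want to factor this through $E_*(\Omega_0 SO)$. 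One introduces, exactly as Clarke does for $Sp$, elements $\zeta_i^E\in E_{2i}(\Omega_0 SO)$ defined by $(\Omega r)_*(\beta^E(T)) = 1 + [2]_E(T)\,\zeta^E(T)$ (or the appropriate orthogonal analogue of (\ref{eqn:eta^E})), and then surjectivity of $(\Omega r)_*$ follows because the $\zeta_i^E$ are, by construction, in the image and, by the known polynomial-generation statement for $E_*(\Omega_0 SO)$ (the orthogonal counterpart of Clarke \cite[p.\ 16, Corollary 4]{Cla81}), generate the target $E_*$-algebra. Concretely: $E_*(\Omega SU)=E_*[\beta_1^E,\beta_2^E,\ldots]$ surjects onto a polynomial algebra on the $\zeta_i^E$ via $\beta_l \mapsto \sum_k \alpha_k^E \zeta_{l+1-k}^E$, and since the leading coefficient $\alpha_1^E=2$ is a non-zero-divisor in the torsion-free ring $E_*$, this map hits every generator after inverting nothing on the associated graded — I would phrase surjectivity of $(\Omega r)_*$ as: its image is a subalgebra containing all algebra generators of $E_*(\Omega_0 SO)$, hence is everything.

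For part (2), split monomorphy (hence injectivity) of $(\Omega c)_*$ is the orthogonal version of Lemma \ref{lem:(Omega(c))_*} (Kono--Kozima \cite[Corollary 6.7]{Kon-Koz78}): one exhibits a retraction, or more simply observes that the composite $\Omega(c\circ r)_*$ is already injective on $E_*(\Omega SU)$ modulo nothing problematic — indeed the formula $\beta^E(T)\,\beta^E([-1]_E(T))$ shows $\Omega(c\circ r)_*(\beta_1^E)=0$ but on odd-indexed generators it is, up to a unit-times-$2$ leading term, essentially injective, so after factoring through the polynomial algebra generated by the $\zeta^E_{2i-1}$ and comparing ranks degreewise over the torsion-free ring $E_*$, $(\Omega c)_*$ must be injective. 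The cleanest route is to deduce both halves from the single identity above together with the structural theorems of \S\ref{subsec:E-homology(OmegaSp)} transported to type $D$; I expect the main obstacle to be purely bookkeeping: correctly pinning down the $2$-series normalization $1+[2]_E(T)\zeta^E(T)$ in the orthogonal case (the orthogonal Grassmannian $SO/U$ sees Schur $P$- rather than $Q$-functions, which is reflected in a factor-of-$2$ discrepancy), and verifying that $[2]_E(T)$ — whose leading coefficient is $2$, a non-zero-divisor since $E_*$ is torsion free — does not obstruct solving recursively for the $\zeta_i^E$. Once that normalization is fixed, both surjectivity and injectivity are immediate from the polynomial structures and an associated-graded rank count.
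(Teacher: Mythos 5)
Your proposal takes a genuinely different route from the paper, but it contains several concrete errors and one real gap, so it does not succeed as written.

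First, your key formula is wrong. You assert that $\Omega(c\circ r)$ classifies $\gamma+\overline{\gamma}$ and hence
$\Omega(c\circ r)_*(\beta^E(T)) = \beta^E(T)\,\beta^E([-1]_E(T))$,
but the paper (in the proposition immediately after this lemma) shows that the orthogonal composite gives the \emph{same quotient} formula as the symplectic one,
$\Omega(c\circ r)_*(\beta^E(T)) = \beta^E(T)/\beta^E([-1]_E(T))$,
i.e.\ $\Omega(c\circ r)$ is again the map $h$ classifying $\gamma-\overline{\gamma}$, not $\gamma+\overline{\gamma}$. Second, your proposed normalization $(\Omega r)_*(\beta^E(T)) = 1 + [2]_E(T)\,\zeta^E(T)$ imports the $[2]_E$--division from the symplectic case into the orthogonal one, but the $P$-versus-$Q$ dichotomy goes the other way: in this paper $E_*(\Omega_0 SO)$ is the ring of $E$-homology Schur \emph{$Q$}-functions and one simply sets $\tau^E_i := (\Omega r)_*(\beta^E_i)$ with no $[2]_E$ factor; the $[2]_E$-divisibility issue lives on the $Sp$ side ($E_*(\Omega Sp)$, Schur $P$). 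So the "bookkeeping" you defer is not bookkeeping — you have the factor on the wrong space, and the recursion you invoke is not the one that occurs.

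Third, your argument for (2) has a genuine gap. You observe (correctly, under the wrong formula — and it remains true under the right one since $\beta_1 \mapsto 0$ in $\beta^E(T)/\beta^E([-1]_E(T))$) that $\Omega(c\circ r)_*$ kills $\beta_1^E$, hence is \emph{not} injective; you cannot read off injectivity of $(\Omega c)_*$ from the composite without separately identifying $\ker(\Omega r)_*$ and showing $\ker \Omega(c\circ r)_* = \ker(\Omega r)_*$. The "rank count on the associated graded" you gesture at would work, but it is precisely what needs an independent input, and the cleanest such input is what the paper actually uses: Bott periodicity gives $\chi: SO/U \hookrightarrow BU$ (the fibre of $BU \to BSO$), $\chi^* \otimes \Q$ is surjective, $H_*(SO/U)$ is torsion-free, hence $\chi_*$ is injective integrally, and then the collapsed Atiyah--Hirzebruch spectral sequence (plus universality of $MU$) transports this to any complex-oriented $E$. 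Your appeal to a "known polynomial-generation statement for $E_*(\Omega_0 SO)$" in part (1) has the same flavour: it is true, but it is not independent of the structure you are in the middle of establishing, and the paper avoids this by instead using Bott's generating variety $Q_{2n-2}$ for $\Omega_0 SO(2n)$ and the homotopy equivalence $\C P^\infty \simeq Q_\infty$ to get surjectivity of $(\Omega r)_*$ directly in $H_*$, then propagating by AHSS. That geometric route sidesteps all the normalization and rank-count issues and is the one you should adopt.
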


\begin{proof} 
(1) According to Bott \cite[p.44]{Bot58}, as a generating variety for $SU(n)$ (resp. $SO(2n))$, 
    we can take the complex  projective space $\C P^{n-1} \cong U(n)/(U(1) \times U(n-1))$
    (resp.  the even dimensional complex quadric $Q_{2n-2} \cong SO(2n)/(SO(2) \times SO(2n-2))$). 
    Let $g'_{n}:  \C P^{n-1} \longrightarrow \Omega SU(n)$
    (resp. $g''_{n}: Q_{2n-2}  \longrightarrow \Omega_{0} SO(2n)$)  
   be the corresponding generating map. 
    Then we have the natural embedding   
            $i_{n}:  \C P^{n-1} \cong U(n)/(U(1) \times U(n-1))  \hooklongrightarrow  
                 Q_{2n-2}  \cong SO(2n)/(SO(2) \times SO(2n-2))$, and 
     we have the following commutative diagram: 
     \begin{equation*} 
     \begin{CD} 
           \C P^{n-1}  @>{g'_{n}}>>        \Omega SU(n)  \\
            @V{i_{n}}VV                  @VV{\Omega r}V \\
           Q_{2n-2}    @>{g''_{n}}>>  \Omega_{0}  SO(2n).       
      \end{CD} 
      \end{equation*} 
     Letting $n \rightarrow \infty$, this diagram induces  the following commutative diagram: 
     \begin{equation*} 
     \begin{CD} 
           \C P^{\infty}        @>{g'_{\infty}}>>        \Omega SU  \\
            @V{i_{\infty}}VV                  @VV{\Omega r}V \\
           Q_{\infty}           @>{g''_{\infty}}>>  \Omega_{0}  SO.       
      \end{CD} 
      \end{equation*}
     Note that the vertical map $i_{\infty}:  \C P^{\infty} \overset{\sim}{\longrightarrow}  Q_{\infty}$ 
     is  a homotopy equivalence.  Passing to homology, we have the following commutative diagram: 
     \begin{equation*} 
     \begin{CD} 
           H_{*}(\C P^{\infty})        @>{g'_{\infty *}}>>        H_{*}(\Omega SU)  \\
            @V{i_{\infty *}}V{\cong}V                  @VV{(\Omega r)_{*}}V \\
           H_{*}(Q_{\infty})           @>{g''_{\infty *}}>>       H_{*}(\Omega_{0}  SO).      
      \end{CD} 
      \end{equation*}
     By the result of Bott \cite[p.36, Theorem 1]{Bot58}  again, 
      the Pontrjagin ring $H_{*}(\Omega_{0} SO)$ is generated by 
     $\Im g''_{\infty *}$ as an algebra. From this along with  the isomorphism $i_{\infty *}$, 
     the surjectivity of $(\Omega r)_{*}$ follows. 
     For a complex oriented generalized homology theory $E_{*}(-)$, we argue as follows: 
     By the universality of the  complex bordism homology theory $MU_{*}(-)$, 
     it is sufficient to prove that the homomorphism $(\Omega r)_{*}: MU_{*}(\Omega SU) \longrightarrow MU_{*}(\Omega_{0} SO)$ is surjective. 
     Since both $H_{*}(\Omega SU)$ and $H_{*}(\Omega_{0} SO)$ are 
     concentrated in even degrees  and $MU_{*} = MU_{*}(\mathrm{pt})$ is also 
     evenly  graded, the homology  Atiyah-Hirzebruch spectral sequence 
    \begin{equation*}  
        E^{2} = H_{*}(X; MU_{*}) \cong H_{*}(X) \otimes   MU_{*}   
      \quad \Longrightarrow \quad MU_{*}(X) 
    \end{equation*} 
    collapses for $X = \Omega SU$ and $\Omega_{0} SO$.  
     As a consequence,  we have  an isomorphism 
        $MU_{*}(X) \cong H_{*}(X) \otimes MU_{*}$ for $X = \Omega SU$ and $\Omega_{0} SO$.  
     Therefore the surjection $(\Omega r)_{*}: H_{*}(\Omega SU)  \twoheadlongrightarrow H_{*}(\Omega_{0} SO)$ induces a surjection 
     $(\Omega r)_{*}: MU_{*}(\Omega SU) \cong H_{*}(\Omega SU) \otimes   MU_{*} 
     \longrightarrow MU_{*}(\Omega_{0} SO) \cong H_{*}(\Omega_{0} SO) \otimes   MU_{*}$
     since $MU_{*}$ is free as a $\Z$-module.  
     
  (2)  By the Bott periodicity, we have  homotopy equivalences 
       \begin{equation*} 
               SO/U \overset{\sim}{\longrightarrow} \Omega_{0} SO \quad \text{and}  \quad 
        g_{\infty}:  BU \overset{\sim}{\longrightarrow} \Omega SU, 
       \end{equation*}  
       and the following diagram is commutative (see e.g., Cartan \cite[p.11]{Car59I}): 
       \begin{equation}  \label{eqn:CD(SO/U)}  
        \begin{CD} 
              SO/U  @>{\chi}>>    BU   \\
               @V{\simeq}VV                   @V{g_{\infty}}V{\simeq}V  \\
              \Omega_{0} SO   @>>{\Omega c}>    \Omega SU,        
        \end{CD} 
       \end{equation}  
       where $\chi$ is induced from the inclusion 
$SO(2n)/U(n) \hooklongrightarrow U(2n)/(U(n) \times U(n))$ 
(note that $\chi$ is the fiber inclusion of the Borel fibration 
$SO/U   \overset{\chi}{\hooklongrightarrow}   BU \longrightarrow BSO$).   
       Therefore we  have to show that $\chi_{*}: H_{*}(SO/U)  \longrightarrow H_{*}(BU)$ 
       is injective.  In cohomology, it is well known that $\chi^{*} \otimes \Q: 
         H^*(BU) \otimes   \Q \longrightarrow H^*(SO/U) \otimes  \Q$ is surjective. 
         From this and the fact that $H_{*}(SO/U)$ is torsion free, 
         the injectivity of $\chi_{*}$ follows. 
          For a complex oriented generalized homology theory $E_{*}(-)$, the same 
          discussion as in (1) above can be applied, and we obtain the required result. 
\end{proof} 
\noindent
By means of the monomorphism  $(\Omega c)_{*}$, we can regard $E_{*}(\Omega_{0} SO)$ as 
a subalgebra of $E_{*}(\Omega SU)$.  We shall describe this algebra explicitly.  
Since $(\Omega r)_{*}$ is surjective, if we define 
\begin{equation}  \label{eqn:tau_i}  
    \tau^{E}_{i}:= (\Omega r)_{*} (\beta^{E}_{i}) \; (i = 1, 2, \ldots),  
\end{equation} 
  then  $E_{*}  (\Omega_{0} SO)$ is 
    generated by $\tau^{E}_{i} \; (i = 1, 2, \ldots)$ as an algebra.  We shall determine the 
    relations that $\tau^{E}_{i}$'s satisfy.  
Let $\beta^{E}(T) := \sum_{i \geq 0} \beta_{i}^{E} T^{i}  \in  E_{*}(\Omega SU)[[T]]$ be the formal power series 
with coefficients in $E_{*}(\Omega SU)$, and consider the following ring homomorphism 
\begin{equation*} 
    \Omega (c \circ r)_{*}:  E_{*}(\Omega SU)[[T]]  \longrightarrow  E_{*}(\Omega SU)[[T]].  
\end{equation*}   
 Then by the similar manner to 
Proposition \ref{prop:Omega(cq)_*(beta(t))},   one  can show the following:
\begin{prop} 
\begin{equation} 
  \Omega (c \circ r)_{*}(\beta^{E}(T))  =  \dfrac{\beta^{E}(T)} {\beta^{E} ([-1]_{E}(T))}. 
\end{equation}   
\end{prop}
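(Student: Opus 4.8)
The plan is to imitate, for $SO$, the proof of Proposition~\ref{prop:Omega(cq)_*(beta(t))}; as explained in Remark~\ref{rem:h:BU->BU}, that proof proceeds by transporting the self-map $\Omega(c\circ q)$ of $\Omega SU$ to a self-map of $BU$ along the Bott equivalence $g_{\infty}\colon BU\xrightarrow{\ \sim\ }\Omega SU$ and recognizing it. So the first step here is to establish the $SO$-analogue of the homotopy-commutative square $(\ref{eqn:h:BU->BU})$, namely
\[
\begin{CD}
 BU @>{\sim}>{g_{\infty}}> \Omega SU \\
 @V{h}VV @VV{\Omega (c \circ r)}V \\
 BU @>{\sim}>{g_{\infty}}> \Omega SU,
\end{CD}
\]
with the \emph{same} map $h\colon BU\to BU$, the classifying map of the virtual bundle $\gamma-\overline{\gamma}$. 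As recorded in Remark~\ref{rem:h:BU->BU}, the construction of $h$ in the orthogonal case is essentially contained in Baker~\cite{Bak86} and Ray~\cite{Ray74}; alternatively one may reduce to the already-treated symplectic case, since the two composites $c\circ q\colon SU\to SU$ and $c\circ r\colon SU\to SU$ both correspond, at the level of classifying spaces, to the operation $V\mapsto V\oplus\overline{V}$ on vector bundles. Indeed this is immediate on characters: realification followed by complexification, and quaternionification followed by complex restriction, both send a representation $V$ to $V\oplus\overline{V}$; hence $\Omega(c\circ r)\simeq\Omega(c\circ q)$ and the square follows from $(\ref{eqn:h:BU->BU})$.

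Granting the square, it remains to compute $h_{*}(\beta^{E}(T))$, which is the same calculation as in the symplectic case. Write $h$ as the composite
\[
 BU \xrightarrow{\ \Delta\ } BU\times BU \xrightarrow{\ \mathrm{id}\times(\iota\circ\psi)\ } BU\times BU \xrightarrow{\ \mu\ } BU,
\]
where $\psi\colon BU\to BU$ is complex conjugation (classifying $\overline{\gamma}$), $\iota\colon BU\to BU$ is the inverse self-map of the H-space $(BU,\text{Whitney sum})$ (classifying $-\gamma$), and $\mu$ is the Whitney-sum multiplication; thus $\iota\circ\psi$ classifies $-\overline{\gamma}$ and the composite classifies $\gamma\oplus(-\overline{\gamma})=\gamma-\overline{\gamma}$, i.e.\ it is $h$. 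Passing to $E$-homology, $h_{*}=\mu_{*}\circ(\mathrm{id}\otimes\iota_{*}\psi_{*})\circ\phi$. Now $\beta^{E}(T)$ is group-like for $\phi$, since $\phi(\beta^{E}_{n})=\sum_{i+j=n}\beta^{E}_{i}\otimes\beta^{E}_{j}$ by Theorem~\ref{thm:E^*(OmegaSU)E_*(OmegaSU)}; complex conjugation acts on $E^{*}(\C P^{\infty})=E^{*}[[x^{E}]]$ by $x^{E}\mapsto[-1]_{E}(x^{E})$, so dualizing gives $\psi_{*}(\beta^{E}(T))=\beta^{E}([-1]_{E}(T))$; and $\iota_{*}$ is the antipode of the Hopf algebra $E_{*}(BU)$, which inverts group-like elements, so $\iota_{*}(\beta^{E}(S))=\beta^{E}(S)^{-1}$, the Pontryagin-inverse in $E_{*}(BU)[[S]]$. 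Combining these,
\[
 h_{*}(\beta^{E}(T)) = \mu_{*}\bigl(\beta^{E}(T)\otimes\beta^{E}([-1]_{E}(T))^{-1}\bigr) = \beta^{E}(T)\cdot\beta^{E}([-1]_{E}(T))^{-1} = \frac{\beta^{E}(T)}{\beta^{E}([-1]_{E}(T))},
\]
which, together with the square, is the assertion of the Proposition.

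The only substantive point is the geometric identification $g_{\infty}^{-1}\circ\Omega(c\circ r)\circ g_{\infty}\simeq h$; everything after it is formal bookkeeping in the Hopf algebra $E_{*}(BU)$. I expect the delicate part to be verifying that the relevant delooping is compatible with the Bott maps $g_{\infty}$ (not merely that the underlying stable bundles agree), exactly the point settled by Clarke~\cite{Cla81} in the symplectic case and by Baker~\cite{Bak86} and Ray~\cite{Ray74} in the orthogonal one; if one simply invokes Proposition~\ref{prop:Omega(cq)_*(beta(t))} as a black box together with $\Omega(c\circ r)\simeq\Omega(c\circ q)$, the proof is immediate.
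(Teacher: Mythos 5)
Your proposal is correct and follows essentially the route the paper intends: the paper's own "proof" is the single phrase "by the similar manner to Proposition~\ref{prop:Omega(cq)_*(beta(t))}," and you have spelled out exactly what that means — the $SO$-analogue of the homotopy-commutative square $(\ref{eqn:h:BU->BU})$ transporting $\Omega(c\circ r)$ to the map $h$ classifying $\gamma-\overline{\gamma}$, together with the Hopf-algebraic evaluation $h_*=\mu_*\circ(\mathrm{id}\otimes\iota_*\psi_*)\circ\phi$ on the group-like $\beta^E(T)$. Your computations $\phi(\beta^E(T))=\beta^E(T)\otimes\beta^E(T)$, $\psi_*(\beta^E(T))=\beta^E([-1]_E(T))$, and $\iota_*$ giving the Pontryagin inverse of a group-like are all correct. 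The genuinely additional observation you make is the reduction $\Omega(c\circ r)\simeq\Omega(c\circ q)$: since $c\circ r$ and $c\circ q$ are both homomorphisms $SU(n)\to SU(2n)$ realizing $V\mapsto V\oplus\overline V$, they are conjugate, hence homotopic because $SU(2n)$ is connected, and the $SO$-proposition becomes an immediate corollary of the symplectic one. The paper does not make this shortcut explicit (it instead refers the orthogonal case to Baker~\cite{Bak86} and Ray~\cite{Ray74}), and your version is cleaner: it avoids re-establishing the Bott-compatibility of the classifying map for $SO$, which, as you rightly flag, is the only delicate geometric input.
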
  
\noindent
We put $\tau^{E}(T) :=  \sum_{i \geq 0} \tau_{i}^{E} T^{i}  \in E_{*}(\Omega_{0} SO)[[T]] \; (\tau^{E}_{0} := 1)$.  
By definition,  we have 
\begin{equation*} 
   (\Omega r)_{*}(\beta^{E}(T))  =  \tau^{E}(T), 
\end{equation*} 
and hence 
\begin{equation*} 
  \dfrac{\beta^{E}(T)}{\beta^{E}([-1]_{E}(T))} =    \Omega (c \circ r)_{*}(\beta^{E}(T))  =  (\Omega c)_{*} \circ (\Omega r)_{*} (\beta^{E}(T)) 
                                           =   (\Omega c)_{*} (\tau^{E}(T)).  
\end{equation*} 
Therefore we have 
\begin{equation*} 
 (\Omega c)_{*}  (\tau^{E}(T) \tau^{E}([-1]_{E}(T)))  
 =  \dfrac{\beta^{E}(T)} {\beta^{E}([-1]_{E}(T))} \cdot  \dfrac{\beta^{E}([-1]_{E}(T))} {\beta^{E}(T)}   = 1. 
\end{equation*} 
Since $(\Omega c)_{*}$ is injective, we obtain the following relation: 
\begin{equation}   \label{eqn:tau^E(T)tau^E([-1]_E(T))=1}   
   \tau^{E}(T)  \tau^{E}([-1]_{E}(T)) = 1.   
\end{equation} 
For $E = H$, the ordinary homology theory, $H_{*}(\Omega_{0} SO)$ can be easily obtained 
from the result of  Bott \cite[Propositions 9.1 and 10.1]{Bot58}. 
It is generated by $\tau^{H}_{i} \; (i = 1, 2, \ldots)$ and these elements 
satisfy the relation  (\ref{eqn:tau^E(T)tau^E([-1]_E(T))=1}) for $E   = H$, namely
$\tau^{H}(T) \tau^{H}(-T) = 1$.  
Since  $E$-homology Atiyah-Hirzebruch spectral sequence 
\begin{equation*} 
  E^{2} = H_{*}(\Omega_{0} SO; E_{*}) \cong H_{*}(\Omega_{0} SO) \otimes   E_{*} 
  \quad \Longrightarrow \quad E_{*}(\Omega_{0} SO)
\end{equation*} 
collapses by degree reasons, 
no other relations  except  (\ref{eqn:tau^E(T)tau^E([-1]_E(T))=1}) can arise.  
Thus we obtain the following description of $E_{*}(\Omega_{0} SO)$:  
\begin{theorem}    \label{thm:E_*(Omega_0SO)}  
The Hopf algebra structure of $E_{*}(\Omega_{0} SO)$ is given as follows$:$ 
\begin{enumerate} 
\item   As an algebra, 
\begin{equation*} 
  E_{*}(\Omega_{0} SO)  =  \dfrac{E_{*}[\tau_{1}^{E}, \tau_{2}^{E}, \ldots, \tau_{i}^{E}, \ldots]} 
                                 {(\tau^{E}(T)  \tau^{E}([-1]_{E}(T)) = 1)}, 
\end{equation*}
where $(\tau^{E}(T)  \tau^{E}([-1]_{E}(T)) = 1)$  means  an ideal in $E_{*}[\tau^{E}_{1}, \tau^{E}_{2}, \ldots]$  generated by  the coefficients of the formal power series 
$\tau^{E}(T)  \tau^{E}([-1]_{E}(T)) - 1$.   
\item The coalgebra structure is given by 
      \begin{equation*} 
     \phi (\tau^{E}_{n})  =   \sum_{i + j = n}    \tau^{E}_{i}  \otimes  \tau^{E}_{j}  \quad  (\tau^{E}_{0} = 1).   
       \end{equation*} 
\end{enumerate}   
\end{theorem}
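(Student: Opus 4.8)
The plan is to establish the two parts of the statement separately; the algebra description is the substantive one, the coalgebra description being essentially formal.

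For the algebra structure, the discussion preceding the theorem already produces a surjective $E_*$-algebra homomorphism
\[
  \psi:\ E_*[\tau_1^E,\tau_2^E,\ldots]\big/\bigl(\tau^E(T)\,\tau^E([-1]_E(T))=1\bigr)\ \twoheadrightarrow\ E_*(\Omega_0 SO),
\]
because the $\tau_i^E=(\Omega r)_*(\beta_i^E)$ generate $E_*(\Omega_0 SO)$ (Lemma \ref{lem:(Omega(r))_*(Omega(c))_*}(1)) and satisfy the relation $(\ref{eqn:tau^E(T)tau^E([-1]_E(T))=1})$ (a consequence of the injectivity of $(\Omega c)_*$). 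The whole content of part (1) is that $\psi$ is injective, i.e. that $(\ref{eqn:tau^E(T)tau^E([-1]_E(T))=1})$ already accounts for all relations. By Quillen's universality of $MU$ it suffices to treat $E=MU$, where $\L=MU_*$ is a connected graded ring; let $\L_+$ denote its augmentation ideal (all elements of positive degree). Modulo $\L_+$ the formal group law becomes additive, so $[-1]_{MU}(T)\equiv -T\pmod{\L_+}$, and hence the reduction $\bar\psi$ of $\psi$ modulo $\L_+$ is the classical Bott isomorphism $\Z[\tau_1^H,\tau_2^H,\ldots]/(\tau^H(T)\tau^H(-T)=1)\xrightarrow{\ \sim\ }H_*(\Omega_0 SO)$ of Bott \cite[Propositions 9.1, 10.1]{Bot58}.

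The key remaining input is that the Atiyah--Hirzebruch spectral sequence for $\Omega_0 SO$ collapses for degree reasons (both $H_*(\Omega_0 SO)$ and $\L$ are torsion free and evenly graded), so that $MU_*(\Omega_0 SO)\cong H_*(\Omega_0 SO)\otimes_{\Z}\L$ as a graded $\L$-module; in particular $MU_*(\Omega_0 SO)$ is a \emph{free} $\L$-module. Writing $K=\ker\psi$ and $R^{MU}$ for the source of $\psi$, apply $-\otimes_{\L}\L/\L_+$ to $0\to K\to R^{MU}\xrightarrow{\psi} MU_*(\Omega_0 SO)\to 0$: the freeness kills $\mathrm{Tor}_1^{\L}(MU_*(\Omega_0 SO),\L/\L_+)$, so one obtains a short exact sequence $0\to K/\L_+K\to R^{MU}/\L_+R^{MU}\xrightarrow{\bar\psi} MU_*(\Omega_0 SO)\otimes_\L\L/\L_+\to 0$; since $\bar\psi$ is an isomorphism, exactness forces $K/\L_+K=0$, and graded Nakayama (valid as $K$ is bounded below over the connected graded ring $\L$) forces $K=0$. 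This is the one place where real care is needed — it is precisely the rigorous form of the phrase ``no other relations can arise'' in the text preceding the theorem — and I regard it as the main, and essentially the only, obstacle.

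For the coalgebra structure, observe that $\Omega r:\Omega SU\to\Omega_0 SO$ is an H-map (being the loop map of the group homomorphism $r:SU\to SO$), while the diagonal is natural, so $(\Omega r)_*:E_*(\Omega SU)\to E_*(\Omega_0 SO)$ is a morphism of Hopf algebras. Applying it to the coproduct formula $\phi(\beta_n^E)=\sum_{i+j=n}\beta_i^E\otimes\beta_j^E$ of Theorem \ref{thm:E^*(OmegaSU)E_*(OmegaSU)}(2) and using $\tau_i^E=(\Omega r)_*(\beta_i^E)$ gives $\phi(\tau_n^E)=\sum_{i+j=n}\tau_i^E\otimes\tau_j^E$ at once, with $\tau_0^E=1$. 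Since the algebra and coalgebra structures both arise topologically they are automatically compatible, which completes the description of $E_*(\Omega_0 SO)$ as a Hopf algebra.
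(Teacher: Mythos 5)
Your proposal is correct and follows the same outline as the paper: obtain the generators $\tau_i^E$ from the surjectivity of $(\Omega r)_*$, derive the single relation from the injectivity of $(\Omega c)_*$, anchor the answer in Bott's computation of $H_*(\Omega_0 SO)$, and invoke the collapse of the Atiyah--Hirzebruch spectral sequence to rule out extra relations. The genuine added value in your write-up is that you make the last step precise: the paper simply asserts that ``no other relations can arise,'' whereas you turn this into an actual argument — reduce to $E=MU$ by universality, observe that the AHSS collapse makes $MU_*(\Omega_0 SO)$ a free $\L$-module, tensor the sequence $0\to K\to R^{MU}\to MU_*(\Omega_0 SO)\to 0$ with $\L/\L_+$ (freeness kills the $\mathrm{Tor}_1$ term), identify $\bar\psi$ with Bott's isomorphism, and conclude $K=0$ by graded Nakayama since $K$ is bounded below over the connected graded ring $\L$. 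That is exactly the rigorous content the paper leaves implicit, and it is the right way to fill the gap. Your coalgebra argument — $\Omega r$ is an H-map because $r$ is a group homomorphism, so $(\Omega r)_*$ is a Hopf-algebra morphism, and the coproduct of $\tau_n^E$ follows from that of $\beta_n^E$ — is also correct and matches how the paper treats the analogous coproducts (cf.\ the derivation of $\phi(\eta_n^E)$ in Theorem~\ref{thm:E_*(OmegaSp)}).
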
  

\begin{rem} 
Baker \cite[Proposition 3.3]{Bak86} described the $E$-homology $E_{*}(SO/U)$  of $SO/U$.  
By the homotopy equivalence $SO/U  \simeq \Omega_{0} SO$ derived from the Bott periodicity theorem, 
his description of $E_{*}(SO/U)$ is  the same as  that of  Theorem $\ref{thm:E_*(Omega_0SO)}$.  
\end{rem} 
\begin{ex}   [Bott \cite{Bot58}, Propositions 9.1 and 10.1] \label{thm:H_*(Omega_0(SO))}
  For the ordinary homology theory, we have 
  \begin{equation*} 
   [2]_{H}(T) = 2T  \quad \text{and}  \quad  [-1]_{H}(T) = -T. 
 \end{equation*} 
 From Theorem $\ref{thm:E_*(Omega_0SO)}$, the Hopf algebra structure of $H_{*}(\Omega_{0} SO)$ is given as follows$:$ 
  \begin{enumerate} 
   \item  As an algebra, 
          \begin{equation*} 
             H_{*}(\Omega_{0} SO)  \cong \Z[\tau_{1}, \tau_{2}, \ldots, \tau_{n}, \ldots ]
                                /(\tau_{i}^{2} + 2 \sum_{j=1}^{i} (-1)^{j} \tau_{i+j} \tau_{i-j}  \; (i \geq 1)). 
           \end{equation*} 
   \item   The coalgebra structure is given by 
          \begin{equation*} 
                \phi (\tau_{n})  =  \sum_{i + j = n} \tau_{i} \otimes \tau_{j}.   
          \end{equation*} 
   \end{enumerate} 
\end{ex}

\subsection{$E$-cohomology of the loop space of $Sp$}   \label{subsec:E-cohomology(OmegaSp)}  
In a similar manner  as in \S \ref{subsec:E-homology(OmegaSp)}, we can describe the $E$-cohomology
Hopf algebra  of $\Omega Sp$.   
We consider the maps of based loop spaces 
\begin{equation*} 
  \Omega SU  \;  \overset{\Omega q}{\longrightarrow}  \;  \Omega Sp  
             \;  \overset{\Omega c}{\longrightarrow}  \;  \Omega SU, 
\end{equation*} 
and the induced homomorphisms in $E$-cohomology: 
\begin{equation}  \label{eqn:Omega(cq)^*}   
     E^{*}(\Omega SU) \;  \overset{(\Omega q)^{*}}{\longleftarrow}   \;  E^{*}(\Omega Sp) 
                      \;  \overset{(\Omega c)^{*}}{\longleftarrow}   \;  E^{*}(\Omega SU). 
\end{equation} 
Then we can show the following fact: 
\begin{lem}    \label{lem:(Omega(c))^*(Omega(q))^*}  
\quad 
\begin{enumerate} 
\item $(\Omega c)^{*}: E^{*}(\Omega SU) \longrightarrow E^{*}(\Omega Sp)$ is surjective. 
\item $(\Omega q)^{*}: E^{*}(\Omega Sp) \longrightarrow E^{*}(\Omega SU)$ is injective. 
\end{enumerate}  
\end{lem}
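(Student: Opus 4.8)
The plan is to deduce both parts from their homological companions by Kronecker duality, so no new topology is needed beyond what was already set up. First I would record that, by Theorem \ref{thm:E^*(OmegaSU)E_*(OmegaSU)} and Theorem \ref{thm:E_*(OmegaSp)}, the $E_{*}$-modules $E_{*}(\Omega SU) \cong E_{*}[\beta^{E}_{1}, \beta^{E}_{2}, \ldots]$ and $E_{*}(\Omega Sp) \cong E_{*}[\eta^{E}_{1}, \eta^{E}_{3}, \ldots]$ are free and concentrated in even degrees; hence, exactly as for $BU$ in \S\ref{subsec:E-(co)homology(OmegaSU)}, the Kronecker pairings are perfect and induce isomorphisms of graded $E_{*}$-modules $E^{*}(\Omega SU) \cong \Hom_{E_{*}}(E_{*}(\Omega SU), E_{*})$ and $E^{*}(\Omega Sp) \cong \Hom_{E_{*}}(E_{*}(\Omega Sp), E_{*})$, under which $(\Omega c)^{*}$ is the $E_{*}$-linear transpose of $(\Omega c)_{*}\colon E_{*}(\Omega Sp) \to E_{*}(\Omega SU)$ and $(\Omega q)^{*}$ is the transpose of $(\Omega q)_{*}\colon E_{*}(\Omega SU) \to E_{*}(\Omega Sp)$. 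This reduces the whole lemma to statements about these two homology maps.

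For part (1), I would invoke Lemma \ref{lem:(Omega(c))_*}: $(\Omega c)_{*}$ is a split monomorphism, so there is an $E_{*}$-linear retraction $\rho$ with $\rho\circ(\Omega c)_{*} = \mathrm{id}$. Applying $\Hom_{E_{*}}(-, E_{*})$ gives $(\Omega c)^{*}\circ\rho^{\vee} = \mathrm{id}$, where $\rho^{\vee} := \Hom_{E_{*}}(\rho, E_{*})$; hence $(\Omega c)^{*}$ is a split epimorphism, in particular surjective. One could instead argue geometrically via a homotopy-commutative square $Sp/U \xrightarrow{\chi} BU$ over $\Omega Sp \xrightarrow{\Omega c} \Omega SU$ analogous to (\ref{eqn:CD(SO/U)}), with $\chi$ the fibre inclusion of $Sp/U \hookrightarrow BU \to BSp$, together with the surjectivity of $\chi^{*}$ and the $MU$-reduction trick used in the proof of Lemma \ref{lem:(Omega(r))_*(Omega(c))_*}, but the dualization argument is shorter.

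For part (2), I would use Clarke's relation (\ref{eqn:eta^E}), namely $(\Omega q)_{*}(\beta^{E}(T)) = 1 + [2]_{E}(T)\,\eta^{E}(T)$, i.e. $(\Omega q)_{*}(\beta^{E}_{l}) = \sum_{k=1}^{l}\alpha^{E}_{k}\,\eta^{E}_{l+1-k}$ with $\alpha^{E}_{1} = 2$. Since $(\Omega q)_{*}$ is a ring homomorphism and the $\eta^{E}_{i}$ generate $E_{*}(\Omega Sp)$ over $E_{*}$, solving this triangular system recursively over $\Q$ — each step only dividing by $\alpha^{E}_{1} = 2$ — shows that $(\Omega q)_{*}\otimes_{\Z}\Q$ is surjective. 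Hence $\operatorname{coker}(\Omega q)_{*}$ is a $\Z$-torsion module, and since $E_{*}$ is torsion free, $\Hom_{E_{*}}(\operatorname{coker}(\Omega q)_{*}, E_{*}) = 0$. Applying the left-exact functor $\Hom_{E_{*}}(-, E_{*})$ to $E_{*}(\Omega SU) \xrightarrow{(\Omega q)_{*}} E_{*}(\Omega Sp) \longrightarrow \operatorname{coker}(\Omega q)_{*} \longrightarrow 0$ identifies $\Ker(\Omega q)^{*}$ with that vanishing $\Hom$-group, so $(\Omega q)^{*}$ is injective. (Note that one cannot instead deduce injectivity of $(\Omega q)^{*}$ from $h^{*} = (\Omega q)^{*}\circ(\Omega c)^{*}$ of Remark \ref{rem:h:BU->BU}, since $h^{*}$ itself is not injective — its image is only ``half-size''.)

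The computations involved are entirely routine. The one point that genuinely requires care is the perfectness of the Kronecker pairing for $\Omega Sp$, i.e. the isomorphism $E^{*}(\Omega Sp) \cong \Hom_{E_{*}}(E_{*}(\Omega Sp), E_{*})$; this rests on the freeness of $E_{*}(\Omega Sp)$ over $E_{*}$, which is exactly where the polynomial description in Theorem \ref{thm:E_*(OmegaSp)} (equivalently, even concentration plus collapse of the Atiyah--Hirzebruch spectral sequence) is used. Everything else is formal homological algebra.
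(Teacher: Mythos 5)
Your proof is correct, and it takes a genuinely different (and tidier) route than the paper's. The paper proves (1) geometrically: it invokes the Bott-periodicity square identifying $\Omega c$ with $\chi\colon Sp/U \to BU$, shows $\chi^{*}$ is surjective via the collapse of the Leray--Serre spectral sequence for the Borel fibration $Sp/U \hookrightarrow BU \to BSp$ in ordinary cohomology, and then transfers the conclusion to $MU^{*}$ (hence to any complex-oriented $E^{*}$) by collapse of the Atiyah--Hirzebruch spectral sequence and Quillen universality. You instead dualize Lemma~\ref{lem:(Omega(c))_*} directly: a split monomorphism in homology becomes a split epimorphism in cohomology once the Kronecker pairing is perfect, and you correctly flag that this perfectness is exactly what the even-degree freeness of $E_{*}(\Omega Sp)$ (Theorem~\ref{thm:E_*(OmegaSp)}) buys. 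For (2) the paper first argues for ordinary cohomology using rational surjectivity of $(\Omega q)_{*}$ (from Example~\ref{ex:H_*(Omega(Sp))}) and torsion-freeness of $H^{*}(\Omega Sp)$, then again invokes the AHSS and $MU$-universality to pass to general $E$. You bypass the reduction-to-$H$ step entirely: Clarke's relation~(\ref{eqn:eta^E}) gives a triangular system with leading coefficient $\alpha^{E}_{1} = 2$, so $(\Omega q)_{*}\otimes\Q$ is surjective directly in $E$-homology, the cokernel is $\Z$-torsion, and $\Hom_{E_{*}}(-, E_{*})$ kills torsion because the paper standingly assumes $E_{*}$ torsion-free. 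The paper's route has the advantage of being uniform with the already-proved Lemma~\ref{lem:(Omega(r))_*(Omega(c))_*} and of requiring nothing beyond ordinary cohomology plus collapse of spectral sequences; your route is more self-contained homological algebra once the freeness/duality facts from Theorems~\ref{thm:E^*(OmegaSU)E_*(OmegaSU)} and~\ref{thm:E_*(OmegaSp)} are in place, and it avoids the detour through $H^{*}$ and $MU^{*}$. Both are sound; your parenthetical remark that $h^{*}=(\Omega q)^{*}\circ(\Omega c)^{*}$ alone would not suffice for (2) is a useful caution.
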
 
\begin{proof} 
(1)  By the Bott periodicity theorem, we have the homotopy equivalences 
       \begin{equation*} 
               Sp/U \overset{\sim}{\longrightarrow} \Omega Sp \quad \text{and}  \quad 
        g_{\infty}:  BU \overset{\sim}{\longrightarrow} \Omega SU, 
       \end{equation*}  
       and the following diagram is commutative 
(see e.g., Cartan \cite[p.10]{Car59I}): 
       \begin{equation}  \label{eqn:CD(Sp/U)}  
        \begin{CD} 
              Sp/U  @>{\chi}>>    BU   \\
               @V{\simeq}VV                   @V{g_{\infty}}V{\simeq}V  \\
              \Omega Sp   @>>{\Omega c}>    \Omega SU,        
        \end{CD} 
       \end{equation}  
       where $\chi$ is induced from the inclusion  
$Sp(n)/U(n) \hooklongrightarrow U(2n)/(U(n) \times U(n))$ 
  (note that $\chi$ is the fiber inclusion of the Borel fibration 
 $Sp/U  \overset{\chi}{\hooklongrightarrow}   BU    \longrightarrow BSp$).  
       Therefore  it  suffices to show that 
      $\chi^{*}: E^{*}(BU)  \longrightarrow E^{*}(Sp/U)$ 
       is surjective.  For $E = H$, the ordinary cohomology theory, 
        this follows immediately from the fact that 
       the Leray-Serre  spectral sequence (with integer coefficients)  
      for the  Borel fibration $Sp/U  \overset{\chi}{\longrightarrow}  BU \longrightarrow BSp$
       collapses.  Then the collapsing  of the Atiyah-Hirzebruch spectral sequence 
       \begin{equation*} 
           E_{2} = H^{*}(X; MU^{*})   \cong H^{*}(X)  \, \hat{\otimes}   \,   MU^{*} 
       \quad \Longrightarrow \quad 
           MU^{*}(X) 
       \end{equation*} 
        for $X = BU$ and $Sp/U$  implies that 
        $MU^{*}(X) \cong H^{*}(X)  \, \hat{\otimes}  \,    MU^{*}$ \footnote{
        We need to use the {\it completed tensor product}  $\hat{\otimes}$ 
        because the coefficient ring $MU^{*} = \bigoplus_{i \geq 0} MU^{-2i}$ is 
        negatively graded. 
}
for $X = BU$ and $Sp/U$,   and we obtain the  desired result (by the universality of $MU^{*}(-)$).

(2)    First we show that $(\Omega q)^{*}: H^{*}(\Omega Sp) \longrightarrow H^{*}(\Omega SU)$
is injective.    
  By a result of Kono-Kozima (see Example \ref{ex:H_*(Omega(Sp))}), there exists elements $z_{i}  \in H_{2i}(\Omega Sp)$ 
  such that $(\Omega q)_{*}(\beta_{i}) = \frac{1}{2}z_{i} \; (i = 1, 2, \ldots)$, 
and $H_{*}(\Omega Sp)$ is generated by $z_{i}$'s as an algebra. Therefore 
with rational coefficients, $(\Omega q)_{*} \otimes \Q:  H_{*}(\Omega SU) \otimes   \Q  \longrightarrow H_{*}(\Omega Sp) \otimes  \Q$ is surjective.    This implies that 
$(\Omega q)^{*} \otimes   \Q:  H^{*}(\Omega Sp) \otimes  \Q   \longrightarrow H^{*}(\Omega SU) \otimes  \Q$ is injective.  Since $H^{*}(\Omega Sp)$ and $H^{*}(\Omega SU)$ are torsion free, 
it follows that $(\Omega q)^{*}:  H^{*}(\Omega Sp)  \longrightarrow H^{*}(\Omega SU)$ 
is also  injective.  Again analogous augument as in (1) above shows 
that $(\Omega q)^{*}: E^{*}(\Omega Sp) \longrightarrow E^{*}(\Omega SU)$ is injective 
for any complex oriented generalized cohomology theory. 
\end{proof} 
\noindent
By means of the monomorphism  $(\Omega q)^{*}$, we can regard the algebra $E^{*}(\Omega Sp)$ as a subalgebra of 
$E^{*}(\Omega SU)$. Using the same idea as in  \S \ref{subsec:E-homology(OmegaSO)}, 
 we shall  identify  this algebra explicitly.  
We define the elements of $E^{*}(\Omega Sp)$ to be 
  \begin{equation*} 
    \mu^{E}_{i} := (\Omega c)^{*}(c^{E}_{i}) \; (i = 1, 2, \ldots).   
  \end{equation*} 
 We shall determine the relations that $\mu^{E}_{i}$'s satisfy. 
As stated in \S \ref{subsec:E-(co)homology(OmegaSU)}, 
the Chern classes $c^{E}_{i}$'s can be identified with the elemetary symmetric functions 
in the variables $x_{1}, x_{2}, \ldots$.   This means that  the total Chern class of 
the universal bundle $\gamma$ on $BU$ can be written formaly as 
\begin{equation*} 
   c^{E}(\gamma) = \sum_{i \geq 0} c^{E}_{i} = \prod_{i \geq 1}  (1 + x_{i}). 
\end{equation*} 
Then the total Chern class of the conjugate bundle $\overline{\gamma}$ 
can be written as 
\begin{equation*} 
  c^{E}(\overline{\gamma}) = \sum_{i \geq 0} c^{E}_{i}(\overline{\gamma}) = \prod_{i \geq 1} (1 + \overline{x}_{i}).  
\end{equation*} 
Therefore, by the definition of the map $h: BU \longrightarrow  BU$ (see Remark \ref{rem:h:BU->BU}),
 we have 
\begin{equation}  \label{eqn:c^E(gamma-overline{gamma})} 
  c^{E}(\gamma - \overline{\gamma}) = \dfrac{c^{E}(\gamma)}{c^{E}(\overline{\gamma})}
=  \prod_{i \geq 1} \dfrac{1 + x_{i}}{1 + \overline{x}_{i}}. 
\end{equation} 
With these preliminaries, we argue as follows: 
The algebra homomorphism (\ref{eqn:Omega(cq)^*}) extends to the 
algebra homomorphism of formal power series rings: 
\begin{equation*} 
  \Omega (c \circ q)^{*}: E^{*}(\Omega SU)[[T]]  \longrightarrow E^{*}(\Omega SU)[[T]]. 
\end{equation*} 
We put $c^{E}(T) :=  \sum_{i \geq 0}  c^{E}_{i}T^{i} \in E^{*}(\Omega SU)[[T]]$, 
and we would like to  calculate the image of this formal power series. 
First we can write formally 
\begin{equation*} 
  c^{E}(T)  = \sum_{i \geq 0} c^{E}_{i}T^{i} = \prod_{i \geq 1}  (1 + x_{i}T). 
\end{equation*} 
Define the formal power series $\overline{c^{E}}(T) := \sum_{i \geq 0} \overline{c^{E}_{i}}  T^{i}$
by 
\begin{equation*} 
  \overline{c^{E}}(T) = \sum_{i \geq 0} \overline{c^{E}_{i}} T^{i} 
  = \prod_{i \geq 1} (1 + \overline{x}_{i} T).    
\end{equation*} 
Then  the homotopy-commutative diagram (\ref{eqn:h:BU->BU}) and 
the formula (\ref{eqn:c^E(gamma-overline{gamma})}) imply 
 the following: 
\begin{prop}  \label{prop:Omega(cq)^*(c(T))}  
\begin{equation} 
  \Omega (c \circ q)^{*}(c^{E}(T)) =  \dfrac{c^{E}(T)}{\overline{c^{E}}(T)}.  
\end{equation}  
\end{prop}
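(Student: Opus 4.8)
**Proposal for the proof of Proposition \ref{prop:Omega(cq)^*(c(T))}.**

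The plan is to transport the computation from cohomology back to the space $BU$ using the homotopy equivalence $g_{\infty}: BU \overset{\sim}{\longrightarrow} \Omega SU$ and the homotopy-commutative diagram (\ref{eqn:h:BU->BU}) that identifies $\Omega(c \circ q)$ with the self-map $h: BU \longrightarrow BU$ classifying $\gamma - \overline{\gamma}$. Under the identification $E^{*}(\Omega SU) \cong E^{*}(BU) \cong E^{*}[[c^{E}_{1}, c^{E}_{2}, \ldots]]$ of \S \ref{subsec:E-(co)homology(OmegaSU)}, the induced map $\Omega(c \circ q)^{*}$ becomes $h^{*}$, so it suffices to compute $h^{*}(c^{E}(T))$. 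First I would recall that, by the very definition of a classifying map, $h^{*}$ applied to the universal total Chern class yields the total Chern class of the bundle $\gamma - \overline{\gamma}$; that is, $h^{*}(c^{E}(\gamma)) = c^{E}(\gamma - \overline{\gamma})$, and by the Whitney sum formula (multiplicativity of the total Chern class in $E$-cohomology, which holds for any complex oriented theory) this equals $c^{E}(\gamma)/c^{E}(\overline{\gamma})$, exactly as recorded in (\ref{eqn:c^E(gamma-overline{gamma})}).

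The next step is to incorporate the formal parameter $T$. Writing $c^{E}(T) = \prod_{i \geq 1}(1 + x_{i}T)$ in the splitting-principle variables (the Chern roots of $\gamma$), the map $h^{*}$ sends the collection of Chern roots $\{x_{i}\}$ of $\gamma$ to the Chern roots of $\gamma - \overline{\gamma}$, which are the $x_{i}$'s together with the formal inverses of the $\overline{x}_{i}$'s (the roots of $-\overline{\gamma}$). Hence
\begin{equation*}
  h^{*}(c^{E}(T)) = \frac{\prod_{i \geq 1}(1 + x_{i}T)}{\prod_{i \geq 1}(1 + \overline{x}_{i}T)} = \frac{c^{E}(T)}{\overline{c^{E}}(T)},
\end{equation*}
where $\overline{c^{E}}(T) = \prod_{i \geq 1}(1 + \overline{x}_{i}T)$ is the power series introduced just before the statement. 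Rewriting this back in terms of the homotopy equivalence $g_{\infty}$ and the commutativity of (\ref{eqn:h:BU->BU}) gives $\Omega(c \circ q)^{*}(c^{E}(T)) = c^{E}(T)/\overline{c^{E}}(T)$, which is the assertion.

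The main obstacle, and the point requiring the most care, is the legitimacy of manipulating the infinite products and the substitution of Chern roots at the level of formal power series in $T$ with coefficients in $E^{*}(BU) \cong E^{*}[[c^{E}_{1}, c^{E}_{2}, \ldots]]$, a completed ring. One must check that $\overline{c^{E}}(T)$ is a well-defined element (its coefficients $\overline{c^{E}_{i}}$ are the $E$-theory Chern classes of $\overline{\gamma}$, hence genuinely lie in $E^{*}(BU)$), that it is invertible in $E^{*}(BU)[[T]]$ (its constant term is $1$), and that passing from the finite-dimensional approximations $BU(n)$ to $BU$ commutes with these formal operations — this is where the completed structure of $E^{*}(BU)$ and the Milnor exact sequence / $\lim^{1}$ considerations enter, though they are standard for $BU$ since $E^{*}$ is torsion free and everything is concentrated in even degrees. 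Alternatively, one can bypass Chern roots entirely and argue directly: $\Omega(c \circ q)^{*} = h^{*}$ is a ring homomorphism, so it suffices to verify the identity after multiplying both sides by $\overline{c^{E}}(T)$, i.e. to check $h^{*}(c^{E}(T)) \cdot \overline{c^{E}}(T) = c^{E}(T)$, which reduces coefficientwise to the Whitney formula $c^{E}_{n}(\gamma - \overline{\gamma}) \cdot c^{E}(\overline{\gamma}) $ reassembling into $c^{E}(\gamma)$ — a purely formal consequence of multiplicativity that sidesteps any convergence worry.
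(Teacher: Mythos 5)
Your argument is exactly the one the paper intends: it reads Proposition \ref{prop:Omega(cq)^*(c(T))} off the homotopy-commutative diagram (\ref{eqn:h:BU->BU}) identifying $\Omega(c\circ q)$ with the classifying map $h$ of $\gamma - \overline{\gamma}$, combined with the Whitney-sum formula (\ref{eqn:c^E(gamma-overline{gamma})}) and the Chern-root expression $c^{E}(T)=\prod_{i\geq 1}(1+x_iT)$. The paper states the proposition as an immediate consequence of these two ingredients without further elaboration, so your write-up (including the remarks on invertibility of $\overline{c^{E}}(T)$ and the coefficientwise reformulation) simply fills in the same route in more detail.
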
 
\noindent
We also   define 
\begin{equation*} 
   \overline{\mu^{E}_{i}}   :=    (\Omega c)^{*}(\overline{c^{E}_{i}}) \; (i = 1, 2,\ldots), 
\end{equation*} 
and we put    $\mu^{E}(T) := \sum_{i \geq 0} \mu^{E}_{i} T^{i}, \; 
  \overline{\mu^{E}}(T)  = \sum_{i \geq 0}  \overline{\mu^{E}_{i}} T^{i} 
  \in E^{*}(\Omega Sp)[[T]]$
($\mu^{E}_{0} := 1, \;  \overline{\mu^{E}_{0}} := 1$). By definition, we have 
\begin{equation*} 
   (\Omega c)^{*}(c^{E}(T))  = \mu^{E}(T), 
\end{equation*} 
and hence, 
\begin{equation*} 
\begin{array}{llll} 
  \dfrac{c^{E}(T)}{\overline{c^{E}}(T)} 
& = \Omega (c \circ q)^{*} (c^{E}(T)) = (\Omega q)^{*} \circ (\Omega c)^{*} (c^{E}(T)) 
   = (\Omega q)^{*}(\mu^{E}(T)), \medskip \\
  \dfrac{\overline{c^{E}}(T)}{c^{E}(T)} 
&=   \Omega (c \circ q)^{*} (\overline{c^{E}}(T))  =  (\Omega q)^{*} \circ (\Omega c)^{*} 
(\overline{c^{E}}(T))  = (\Omega q)^{*} (\overline{\mu^{E}}(T)).  \medskip 
\end{array}  
\end{equation*} 
Therefore we have 
\begin{equation*}
 (\Omega q)^{*} (\mu^{E}(T)   \overline{\mu^{E}}(T))  
 =  \dfrac{c^{E}(T)}{\overline{c^{E}}(T)} \cdot \dfrac{\overline{c^{E}}(T)}{c^{E}(T)}  = 1.  
\end{equation*} 
Since $(\Omega q)^{*}$ is injective,   the following relation holds:   
\begin{equation*} 
  \mu^{E}(T)  \overline{\mu^{E}} (T) = 1. 
\end{equation*} 
Thus we obtain the following description of $E^{*}(\Omega Sp)$: 
\begin{theorem}      \label{thm:E^*(OmegaSp)}  
The Hopf algebra structure of $E^{*}(\Omega Sp)$ is given as follows$:$ 
\begin{enumerate} 
\item   As an algebra, 
\begin{equation*} 
  E^{*}(\Omega  Sp)  =  \dfrac{E^{*}[[\mu_{1}^{E}, \mu_{2}^{E}, \ldots, \mu_{i}^{E}, \ldots]]} 
                                 {(\mu^{E}(T)  \overline{\mu^{E}}(T) = 1)}. 
\end{equation*}
\item The coalgebra structure is given by 
      \begin{equation*} 
     \phi (\mu^{E}_{n})  =   \sum_{i + j = n}    \mu^{E}_{i}  \otimes  \mu^{E}_{j}  \quad  (\mu^{E}_{0} = 1).   
       \end{equation*} 
\end{enumerate}   
\end{theorem}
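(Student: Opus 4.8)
**Proof proposal for Theorem~\ref{thm:E^*(OmegaSp)}.**

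The plan is to run the $E$-cohomology argument in perfect parallel with the $E$-homology argument for $E_{*}(\Omega_{0}SO)$ carried out in \S\ref{subsec:E-homology(OmegaSO)}, using that by Lemma~\ref{lem:(Omega(c))^*(Omega(q))^*}(1) the map $(\Omega c)^{*}$ is onto (so the $\mu^{E}_{i}$ generate $E^{*}(\Omega Sp)$ topologically, i.e.\ as a complete algebra) and by Lemma~\ref{lem:(Omega(c))^*(Omega(q))^*}(2) the map $(\Omega q)^{*}$ is injective (so any relation among the $\mu^{E}_{i}$ can be detected after pushing into $E^{*}(\Omega SU)$). First I would record that, since $(\Omega c)^{*}$ is surjective and $E^{*}(\Omega SU)\cong E^{*}[[c^{E}_{1},c^{E}_{2},\ldots]]$ by Theorem~\ref{thm:E^*(OmegaSU)E_*(OmegaSU)}, the algebra $E^{*}(\Omega Sp)$ is a quotient of the formal power series ring $E^{*}[[\mu^{E}_{1},\mu^{E}_{2},\ldots]]$ via $\mu^{E}_{i}\mapsto(\Omega c)^{*}(c^{E}_{i})$. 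Next I would obtain the relation: from Proposition~\ref{prop:Omega(cq)^*(c(T))} we have $\Omega(c\circ q)^{*}(c^{E}(T))=c^{E}(T)/\overline{c^{E}}(T)$, and applying $\Omega(c\circ q)^{*}$ again (using $\overline{\phantom{x}}$ is an involution on Chern roots, so $\Omega(c\circ q)^{*}(\overline{c^{E}}(T))=\overline{c^{E}}(T)/c^{E}(T)$) gives, after factoring $\Omega(c\circ q)^{*}=(\Omega q)^{*}\circ(\Omega c)^{*}$ and recalling $(\Omega c)^{*}(c^{E}(T))=\mu^{E}(T)$, $(\Omega c)^{*}(\overline{c^{E}}(T))=\overline{\mu^{E}}(T)$, the identity $(\Omega q)^{*}\bigl(\mu^{E}(T)\,\overline{\mu^{E}}(T)\bigr)=\bigl(c^{E}(T)/\overline{c^{E}}(T)\bigr)\bigl(\overline{c^{E}}(T)/c^{E}(T)\bigr)=1$. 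By injectivity of $(\Omega q)^{*}$ this forces $\mu^{E}(T)\,\overline{\mu^{E}}(T)=1$ in $E^{*}(\Omega Sp)[[T]]$; note $\overline{\mu^{E}_{i}}$ is itself a polynomial in the $\mu^{E}_{j}$ since $\overline{c^{E}_{i}}$ is a universal polynomial (via the formal inverse series $[-1]_{E}$) in the $c^{E}_{j}$, so this really is a relation among the $\mu^{E}_{i}$ alone.

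It then remains to check that these are \emph{all} the relations, i.e.\ that the surjection $E^{*}[[\mu^{E}_{1},\mu^{E}_{2},\ldots]]/(\mu^{E}(T)\overline{\mu^{E}}(T)=1)\twoheadrightarrow E^{*}(\Omega Sp)$ is an isomorphism. For this I would argue exactly as in the $SO$ case: for $E=H$ (integer coefficients), $H^{*}(\Omega Sp)\cong H^{*}(Sp/U)$ is the classical ring whose structure (a quotient of $\mathbb{Z}[[\mu_{1},\mu_{2},\ldots]]$ by $\mu(T)\overline{\mu}(T)=1$, where $\overline{\mu}(T)=\mu(-T)$) is known from Bott / Pragacz; since both sides are torsion free and the $\mu_{i}$ generate, comparing Poincar\'e series (or using the known polynomial structure of $H_{*}(\Omega Sp)$ dualized) shows the map is an isomorphism over $\mathbb{Z}$. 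Then one lifts to $E=MU$ via the Atiyah--Hirzebruch spectral sequence: because $H^{*}(Sp/U)$ is concentrated in even degrees and $MU^{*}$ is evenly graded, the AHSS collapses and $MU^{*}(\Omega Sp)\cong H^{*}(\Omega Sp)\,\widehat{\otimes}\,MU^{*}$ (completed tensor product, since $MU^{*}$ is negatively graded); hence no new relations appear, and by Quillen's universality of $MU^{*}(-)$ the result passes to an arbitrary complex oriented $E^{*}(-)$, giving part~(1).

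For part~(2), the coalgebra structure, I would observe that $\mu^{E}_{n}=(\Omega c)^{*}(c^{E}_{n})$ and that $(\Omega c)^{*}$ is a map of Hopf algebras (the loop multiplication on $\Omega Sp$ is compatible with that on $\Omega SU$, equivalently with the Whitney-sum $H$-structure on $BU$); since by Theorem~\ref{thm:E^*(OmegaSU)E_*(OmegaSU)} the coproduct on $E^{*}(BU)$ is $\phi(c^{E}_{n})=\sum_{i+j=n}c^{E}_{i}\otimes c^{E}_{j}$, applying $(\Omega c)^{*}\otimes(\Omega c)^{*}$ yields $\phi(\mu^{E}_{n})=\sum_{i+j=n}\mu^{E}_{i}\otimes\mu^{E}_{j}$ with $\mu^{E}_{0}=1$, as claimed. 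The main obstacle I anticipate is the ``no other relations'' step: one must be careful that the AHSS collapse argument genuinely controls the \emph{completed} algebra structure (not just the associated graded), and that the comparison with the classical integral computation of $H^{*}(Sp/U)$ is set up with the correct identification of $\overline{\mu}(T)$; everything else is formal manipulation of power series and a transport of the already-proved Lemmas.
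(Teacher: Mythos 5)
Your proposal matches the paper's own argument essentially step for step: Lemma~\ref{lem:(Omega(c))^*(Omega(q))^*} supplies surjectivity of $(\Omega c)^{*}$ and injectivity of $(\Omega q)^{*}$, Proposition~\ref{prop:Omega(cq)^*(c(T))} gives the relation $\mu^{E}(T)\,\overline{\mu^{E}}(T)=1$ after cancelling in $E^{*}(\Omega SU)[[T]]$, and the ``no other relations'' step is the same Atiyah--Hirzebruch collapse argument the paper invokes (the text says ``using the same idea as in \S\ref{subsec:E-homology(OmegaSO)}''), lifted from $H$ to $MU$ and then to general $E$ via Quillen universality. You have merely spelled out the coalgebra part and the completeness caveat more explicitly than the paper does, which is sound; no gaps.
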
  
\begin{rem} 
In \cite[Proposition 6.1]{Cla74}, Clarke computed the $K$-theory of a Lagrangian Grassmannian manifold 
$W_{n} := Sp(n)/U(n)$. In the limit $n \rightarrow \infty$, 
$Sp/U$ is homotopy equivalent to $\Omega Sp$ by the Bott periodicity theorem,  and therefore his result 
gives a  description of $K^{*}(\Omega Sp)$.  
\end{rem}

\subsection{$E$-cohomology of the loop space of $SO$}    \label{subsec:E-cohomology(Omega_0SO)}  
Analogously we can describe the $E$-cohomology Hopf algebra of $\Omega_{0} SO$.  
We consider the maps of based loop spaces 
\begin{equation*} 
  \Omega SU  \;  \overset{\Omega r}{\longrightarrow} \;  \Omega_{0} SO \; 
                 \overset{\Omega c}{\longrightarrow} \;  \Omega SU, 
\end{equation*} 
and the induced homomorphism in $E$-cohomology: 
\begin{equation*} 
    E^{*}(\Omega SU)   \;   \overset{(\Omega r)^{*}}{\longleftarrow}  \;
    E^{*}(\Omega_{0} SO) \; \overset{(\Omega c)^{*}}{\longleftarrow} \; 
    E^{*}(\Omega SU). 
\end{equation*} 
Then we can show the following fact: 
\begin{lem}    \label{lem:(Omega(r))^*}
  The homomorphism $(\Omega r)^{*}:  E^{*}(\Omega_{0} SO)   \longrightarrow E^{*}(\Omega SU)$
  is a split monomorphism.  
\end{lem}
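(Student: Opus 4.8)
The plan is to reduce the statement to the surjectivity of $(\Omega r)_{*}$ proved in Lemma~\ref{lem:(Omega(r))_*(Omega(c))_*}~(1), via the homology--cohomology duality used repeatedly in this section: for the two spaces involved, $E$-cohomology is genuinely the graded $E_{*}$-linear dual of $E$-homology, so a split surjection in homology dualizes to a split injection in cohomology.

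Concretely, I would first note that for $X=\Omega SU$ and $X=\Omega_{0}SO$ the $E_{*}$-module $E_{*}(X)$ is free. Indeed, as already used in the proof of Lemma~\ref{lem:(Omega(r))_*(Omega(c))_*}~(1), $H_{*}(X)$ is torsion free and concentrated in even degrees, so the homology Atiyah--Hirzebruch spectral sequence collapses and gives $E_{*}(X)\cong H_{*}(X)\otimes_{\Z}E_{*}$, which is $E_{*}$-free because $H_{*}(X)$ is $\Z$-free; in particular it is finitely generated in each degree. Hence the Kronecker pairing induces an isomorphism of graded $E_{*}$-modules $E^{*}(X)\cong \Hom_{E_{*}}(E_{*}(X),E_{*})$ --- the same identification recorded for $BU$ after Theorem~\ref{thm:E^*(OmegaSU)E_*(OmegaSU)} --- and, by naturality of the pairing, under this identification the map $(\Omega r)^{*}$ corresponds to $\Hom_{E_{*}}((\Omega r)_{*},E_{*})$, i.e. to precomposition with $(\Omega r)_{*}$.

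Next, since $(\Omega r)_{*}\colon E_{*}(\Omega SU)\to E_{*}(\Omega_{0}SO)$ is surjective with free (hence projective) target, the short exact sequence of $E_{*}$-modules $0\to\Ker(\Omega r)_{*}\to E_{*}(\Omega SU)\to E_{*}(\Omega_{0}SO)\to 0$ splits; choosing an $E_{*}$-linear section $s$ with $(\Omega r)_{*}\circ s=\mathrm{id}$, the assignment $\psi\mapsto\psi\circ s$ is a left inverse of $\psi\mapsto\psi\circ(\Omega r)_{*}$ on the relevant $\Hom$-modules. Transporting this through the identification above yields a left inverse of $(\Omega r)^{*}\colon E^{*}(\Omega_{0}SO)\to E^{*}(\Omega SU)$, so that map is a split monomorphism.

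The one step that is not purely formal is the identification $E^{*}(X)\cong\Hom_{E_{*}}(E_{*}(X),E_{*})$, compatibly with $(\Omega r)^{*}$ and the dual of $(\Omega r)_{*}$; this is where the standing hypothesis that $E_{*}$ is torsion free is used, since it is what makes the relevant universal coefficient spectral sequence degenerate, so that $E^{*}(X)$ is honestly the $E_{*}$-dual of $E_{*}(X)$ and not merely an extension built from $\Hom$ and $\mathrm{Ext}$. If one prefers an argument in the style of Lemmas~\ref{lem:(Omega(r))_*(Omega(c))_*} and \ref{lem:(Omega(c))^*(Omega(q))^*}, the same idea can be carried out first for $E=H$ --- where $(\Omega r)_{*}$ is a split surjection of free abelian groups, hence dualizes to a split monomorphism $H^{*}(\Omega_{0}SO)\hookrightarrow H^{*}(\Omega SU)$, with $H^{*}(\Omega_{0}SO)$ as in Example~\ref{thm:H_*(Omega_0(SO))} --- and then promoted to an arbitrary complex oriented $E$ via the universality of $MU$ and the collapse of the cohomology Atiyah--Hirzebruch spectral sequence; this gives the same conclusion.
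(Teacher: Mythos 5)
Your argument is essentially the paper's own: reduce to the surjectivity of $(\Omega r)_{*}$ from Lemma \ref{lem:(Omega(r))_*(Omega(c))_*}~(1), observe that $E_{*}(\Omega SU)$ and $E_{*}(\Omega_{0} SO)$ are free $E_{*}$-modules so the surjection splits, and dualize via the Kronecker pairing to obtain a split monomorphism on $E$-cohomology. You spell out the duality step (AHSS collapse, freeness, $E^{*}(X)\cong\Hom_{E_{*}}(E_{*}(X),E_{*})$) in more detail than the paper's two-line proof, but the route is the same.
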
   
\begin{proof} 
In Lemma \ref{lem:(Omega(r))_*(Omega(c))_*}, we proved that 
$(\Omega r)_{*}:  E_{*}(\Omega SU)  \twoheadlongrightarrow E_{*}(\Omega_{0} SO)$ is surjective.  
Since both $E_{*}(\Omega SU)$ and $E_{*}(\Omega_{0} SO)$ are free $E_{*}$-modules, 
the result follows. 
\end{proof}  
\noindent
By means of the monomorphism $(\Omega r)^{*}$, we can regard the algebra 
$E^{*}(\Omega_{0} SO)$ as a subalgebra of $E^{*}(\Omega SU)$.  
However, unlike the case of $Sp$, the homomorphism $(\Omega c)^{*}$ is  not 
a surjection in general, and  we have not been able to find  the
 generators of $E^{*}(\Omega_{0} SO)$ 
for generalized cohomology theory.  Here we only comment the following fact: 
In the case of ordinary cohomology theory, we know  that there exists  a homotopy equivalence
$SO/U \simeq \Omega_{0} SO$ by the Bott periodicity theorem. 
  Therefore we have the isomorphism of algebras: 
$H^{*}(SO/U) \cong H^{*}(\Omega_{0} SO)$.  
The integral cohomology ring $H^{*}(SO/U)$ is well known since Borel 
(see e.g., Cartan \cite[11. Homologie et cohomologie de $SO(X)/U(X)$]{Car59II}),
 and it is a polynomial algebra generated by the elements of degrees $4k + 2 \; (k = 0, 1, 2, \ldots)$. 
Then by the collapse of the Atiyah-Hirzebruch spectral sequence 
\begin{equation*} 
  E_{2} = H^{*}(\Omega_{0}SO; E^{*})  \cong H^{*}(\Omega_{0} SO) \, \hat{\otimes} \, E^{*} 
 \quad   \Longrightarrow   \quad   E^{*}(\Omega_{0} SO), 
\end{equation*} 
we know  that $E^{*}(\Omega_{0} SO)$ is of the form 
\begin{equation*} 
  E^{*}(\Omega_{0} SO)  \cong 
E^{*}[[y_{1}^{E}, y_{3}^{E}, \ldots, y_{2k+1}^{E}, \ldots  ]], 
\end{equation*} 
where $y^{E}_{2k + 1} \in E^{4k + 2} (\Omega_{0} SO) \; (k = 0, 1, 2, \ldots)$.     

Finally we state the analogous formula to Proposition \ref{prop:Omega(cq)^*(c(T))}, 
which might be useful for the description of $E^{*}(\Omega_{0} SO)$. 
Let $c^{E}(T) := \sum_{i \geq 0} c^{E}_{i} \, T^{i}  \in E^{*}(\Omega SU)[[T]]$ be 
the formal power series ring with coefficients in $E^{*}(\Omega SU)$, and we write formally 
\begin{equation*} 
  c^{E}(T) = \sum_{i \geq 0} c^{E}_{i} \, T^{i} = \prod_{i \geq 1} (1 + x_{i} T). 
\end{equation*} 
We also put 
\begin{equation*}
   \overline{c^{E}}(T) = \sum_{i \geq 0}  \overline{c^{E}_{i}} \, T^{i} 
   = \prod_{i \geq 1} (1 + \overline{x}_{i} T). 
\end{equation*} 
Consider the ring homomorphism 
\begin{equation*} 
   \Omega (c \circ r)^{*}:  E^{*}(\Omega SU)[[T]]  \longrightarrow E^{*}(\Omega SU)[[T]]. 
\end{equation*} 
Then by the similar manner to Proposition \ref{prop:Omega(cq)^*(c(T))}, 
one can show the following: 
\begin{prop}  
\begin{equation}   \label{eqn:Omega(cr)^*(c^E(T))}  
  \Omega (c \circ r)^{*} (c^{E}(T)) =    \dfrac{c^{E}(T)} {\overline{c^{E}}(T)}. 
\end{equation} 
\end{prop}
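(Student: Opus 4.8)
The plan is to follow the proof of Proposition~\ref{prop:Omega(cq)^*(c(T))} essentially verbatim, with the quaternionification--complex-restriction composite $c\circ q: SU\to Sp\to SU$ replaced by the real-restriction--complexification composite $c\circ r: SU\to SO\to SU$. The only genuinely topological ingredient needed is the $SO$-analogue of the homotopy-commutative square~(\ref{eqn:h:BU->BU}): with $h: BU\to BU$ the classifying map of the virtual bundle $\gamma-\overline{\gamma}$ as in Remark~\ref{rem:h:BU->BU}, the square
\begin{equation*}
\begin{CD}
BU @>{\sim}>{g_{\infty}}> \Omega SU\\
@V{h}VV @VV{\Omega (c \circ r)}V\\
BU @>{\sim}>{g_{\infty}}> \Omega SU
\end{CD}
\end{equation*}
commutes up to homotopy. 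The reason is exactly as in the symplectic case: the complexification of the underlying real bundle of a complex bundle $V$ is $V\oplus\overline{V}$, just as quaternionification followed by complex restriction sends $V$ to $V\oplus\overline{V}$, so under the Bott equivalence $g_{\infty}$ the self-map $\Omega(c\circ r)$ of $\Omega SU$ corresponds to the same operation on $BU$ that is classified by $h$. This is precisely the $SO$ case discussed in Remark~\ref{rem:h:BU->BU}; see Baker~\cite[p.711]{Bak86}, Ray~\cite[Theorem~7.1]{Ray74}, and Clarke~\cite[Proof of Proposition~2]{Cla81}.

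Granting this square, the remainder is formal. Under the identification $E^{*}(\Omega SU)\cong E^{*}(BU)$ provided by $g_{\infty}^{*}$, the series $c^{E}(T)=\sum_{i\geq0}c^{E}_{i}T^{i}$ is the total Chern class (with parameter $T$) $\prod_{i\geq1}(1+x_{i}T)$ of $\gamma$, and $\overline{c^{E}}(T)=\prod_{i\geq1}(1+\overline{x}_{i}T)$ is that of $\overline{\gamma}$. Since $h$ classifies $\gamma-\overline{\gamma}$ and the total Chern class is multiplicative under Whitney sum, applying $h^{*}$ and using the $T$-graded form of~(\ref{eqn:c^E(gamma-overline{gamma})}) gives $h^{*}(c^{E}(T))=c^{E}(T)/\overline{c^{E}}(T)$. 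Chasing the square then yields
\begin{equation*}
g_{\infty}^{*}\bigl(\Omega(c\circ r)^{*}(c^{E}(T))\bigr)=h^{*}\bigl(g_{\infty}^{*}(c^{E}(T))\bigr)=\frac{c^{E}(T)}{\overline{c^{E}}(T)},
\end{equation*}
which is the claimed identity~(\ref{eqn:Omega(cr)^*(c^E(T))}). Alternatively, one can bypass the topology altogether: the Proposition in \S\ref{subsec:E-homology(OmegaSO)} gives $\Omega(c\circ r)_{*}(\beta^{E}(T))=\beta^{E}(T)/\beta^{E}([-1]_{E}(T))$, the same value as $\Omega(c\circ q)_{*}(\beta^{E}(T))$ in Proposition~\ref{prop:Omega(cq)_*(beta(t))}. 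Since $E_{*}(\Omega SU)\cong E_{*}[\beta^{E}_{1},\beta^{E}_{2},\ldots]$ is generated over $E_{*}$ by the $\beta^{E}_{i}$ and both maps are ring endomorphisms, they coincide on $E_{*}(\Omega SU)$; dualizing with respect to the perfect Kronecker pairing $E^{*}(\Omega SU)\cong\Hom_{E_{*}}(E_{*}(\Omega SU),E_{*})$ (Theorem~\ref{thm:E^*(OmegaSU)E_*(OmegaSU)}) gives $\Omega(c\circ q)^{*}=\Omega(c\circ r)^{*}$, and the claim reduces to Proposition~\ref{prop:Omega(cq)^*(c(T))}.

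The main obstacle is the one non-formal point behind the first approach: the homotopy-commutativity of the $SO$-version of~(\ref{eqn:h:BU->BU}), i.e. the identification of $\Omega(c\circ r)$ with $h$ under Bott periodicity. Once that is in place, both routes involve only manipulating total Chern classes and the Kronecker duality. The second route avoids the $SO$-square but instead relies on the $E$-homology computation of \S\ref{subsec:E-homology(OmegaSO)}, which itself rests on the same topological fact, so it trades the geometric input for a citation rather than eliminating it.
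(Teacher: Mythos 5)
Your proposal is correct, and your primary route coincides with the paper's: the paper's proof of this proposition is simply the remark ``by the similar manner to Proposition~\ref{prop:Omega(cq)^*(c(T))},'' i.e.\ run the same argument with the $SO$-analogue of the homotopy-commutative square~(\ref{eqn:h:BU->BU}), which is exactly what you spell out and which the paper already flags in Remark~\ref{rem:h:BU->BU} via the citations to Baker and Ray. Your second route---noting that $\Omega(c\circ q)_*$ and $\Omega(c\circ r)_*$ are ring endomorphisms of $E_*(\Omega SU)=E_*[\beta^E_1,\beta^E_2,\ldots]$ agreeing on the generators, hence equal, and then dualizing along the perfect Kronecker pairing to get $\Omega(c\circ q)^*=\Omega(c\circ r)^*$ and so reducing to Proposition~\ref{prop:Omega(cq)^*(c(T))}---is also valid and is a clean observation; as you correctly point out, it relocates rather than removes the topological input, since the $E$-homology formula for $\Omega(c\circ r)_*$ used there is itself established by the same commutative square.
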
 

\begin{rem} 
Baker \cite[Proposition 3.12]{Bak86} described 
$E^{*}(SO/U)$.  He asserted that 
\begin{equation*} 
  E^{*}(SO/U)  \cong 
E^{*}[[y_{1}^{E}, y_{3}^{E}, \ldots, y_{2k+1}^{E}, \ldots  ]], 
\end{equation*} 
where the elements  $y^{E}_{2k  + 1}  \in E^{4k+2}(SO/U)$ 
are  dual to a certain $E_{*}$-basis for the module $PE_{*}(SO/U)$ 
of primitive elements. 
Notice that the    formula in  that paper \cite[p.713]{Bak86},  ``$\phi^{*} \chi^{*} c^{E}(T) = c^{E}(T) c^{E}([-1](T))^{-1}$'' 
should be corrected as in  $(\ref{eqn:Omega(cr)^*(c^E(T))})$.  
\end{rem}

\section{Rings of  $E$-(co)homology  Schur $P$- and $Q$-functions}  \label{sec:RingsE-(co)homologySchurPQ-Functions} 
In this section,  we introduce the  $E$-theoretic analogues of the rings of  Schur $P$- and $Q$-functions, 
and   describe the $E$-(co)homology of $\Omega SU$, $\Omega Sp$, and $\Omega_{0} SO$ 
in terms of symmetric functions. 

\subsection{Ring of symmetric functions}  \label{subsec:RingSymmetricFunctions}  
We use standard notations for symmetric functions as in  Macdonald \cite{Mac95}. 
Let $\Lambda  = \Lambda (\bf{x})$ be the {\it ring of symmetric functions} 
 with integer coefficients 
in infinitely many  variables ${\bf x} = (x_{1}, x_{2}, \ldots)$. 
In the sequel, we provide the variables $x_{i} \; (i = 1, 2, \ldots)$ with 
degree $\deg \, (x_{i}) = 1$, and 
regard $\Lambda$ as a graded algebra over $\Z$. 
When we work with the finite set of variables $\x_{n} = (x_{1}, \ldots, x_{n})$, 
we denote by $\Lambda (\x_{n})$ the ring of symmetric polynomials, namely 
$\Lambda (\x_{n}) = \Z [x_{1}, \ldots, x_{n}]^{S_{n}}$, where $S_{n}$ is the 
symmetric group on $n$ letters acting on $\x_{n} = (x_{1}, \ldots, x_{n})$ by 
permutations. Then $\Lambda (\x)$ is 
the inverse limit of the $\Lambda (\x_{n})$ in the category of {\it graded} rings. 
Note that it is possible to describe $\Lambda (\x)$ by using the formal power series ring 
instead of  the inverse limits (see Hoffman-Humphreys \cite[p.92]{Hof-Hum92}, Stanley \cite[\S 7.1]{Sta99}). Thus $\Lambda (\x) \subset \Z[[\x]] = \Z[[x_{1}, x_{2}, \ldots]]$.

We denote by $e_{i} = e_{i}({\bf x}) \; (i = 1, 2, \ldots)$ 
(resp. $h_{i}  = h_{i}({\bf x})\; (i = 1, 2, \ldots)$)  
the $i$-th  {\it elementary symmetric function} 
(resp. the $i$-th  {\it complete symmetric function}). 
The generating functions are respectively given by 
\begin{equation*} 
   E(T) = \sum_{i \geq 0}e_{i}T^{i} =  \prod_{i \geq 1}(1  + x_{i}T)  \quad \text{and}  \quad 
   H(T) = \sum_{i \geq 0}h_{i}T^{i} = \prod_{i \geq 1} \dfrac{1}{1 - x_{i}T} 
\end{equation*} 
($e_{0} :=1$ and $h_{0} := 1$).     Note that the relation $H(T)E(-T) = 1$ holds.  
It is well known that $\Lambda$ is a polynomial algebra 
over $\Z$ in both $e_{i} \; (i = 1, 2, \ldots)$ and $h_{i} \; (i = 1, 2, \ldots)$: 
\begin{equation*} 
   \Lambda = \Z[e_{1}, e_{2}, \ldots ]  =  \Z[h_{1}, h_{2}, \ldots]. 
\end{equation*} 
Furthermore, by the coproduct (or comultiplication, diagonal map), 
\begin{equation}   \label{eqn:CoproductLambda}  
       \phi (e_{k})  =  \sum_{i+j = k} e_{i} \otimes e_{j} \quad \text{and} \quad  
       \phi (h_{k})  =  \sum_{i+j = k} h_{i} \otimes h_{j}, 
\end{equation} 
$\Lambda$ is a commutative and co-commutative Hopf algebra over $\Z$
(see Macdonald \cite[I, \S 5, Examples 25]{Mac95}).  
Using the Hall inner product 
$\langle - , -  \rangle : \Lambda \times \Lambda \longrightarrow \Z$
(see Macdonald \cite[I, \S 4, (4.5)]{Mac95}),  
we can identify $\Lambda$ with its {\it graded}    dual $\Lambda^{*} = \Hom \, (\Lambda, \Z)$.

Next we recall the Schur $Q$- and $P$-functions (for details, see Macdonald \cite[III, \S 8]{Mac95}).
Define $Q_{k} \; (k = 1, 2, \ldots)$   
 as the coefficients of $T^{k}$ in the generating function 
\begin{equation*} 
     Q(T)  = \displaystyle{\sum_{k \geq 0}} Q_{k} T^{k}  
          :=  \displaystyle{\prod_{i\geq 1}} \dfrac{1 + x_{i}T}{1 - x_{i}T}  =  H(T) E(T) 
\end{equation*} 
($Q_{0} := 1$).   In other words, $Q_{k}$ is defined to be $Q_{k} := \sum_{i+j = k} h_{i} e_{j}$. 
The obvious identity $Q(T) Q(-T) = 1$ yields the following relations: 
\begin{equation}  \label{eqn:relations(Q)} 
    Q_{i}^{2} + 2\sum_{j=1}^{i} (-1)^{j} Q_{i+j} Q_{i-j} = 0  \quad  (i  \geq 1). 
\end{equation} 
Define $\Gamma$ to be  the subalgebra of $\Lambda$ generated by $Q_{i}$'s. 
Then we have 
\begin{equation*} 
  \Gamma  = \Z[Q_{1}, Q_{2}, \ldots, Q_{i}, \ldots]
               /(Q_{i}^{2} + 2\sum_{j=1}^{i} (-1)^{j} Q_{i+j} Q_{i-j} \; (i \geq 1)). 
\end{equation*} 
The function $P_{k} \; (k = 1, 2, \ldots)$ is defined by  
   $P_{k}  := \dfrac{1}{2} Q_{k}  =  \dfrac{1}{2} \sum_{i+j = k} h_{i} e_{j}$. 
By (\ref{eqn:relations(Q)}), $P_{i}$'s satisfy the following relations: 
\begin{equation}  \label{eqn:relations(P)} 
      P_{i}^{2} + 2 \sum_{j=1}^{i-1} (-1)^{j} P_{i+j} P_{i-j}  + (-1)^{i} P_{2i} = 0
     \quad (i \geq 1). 
\end{equation} 
Note that by the above relations (\ref{eqn:relations(P)}), we can eliminate $P_{2i} \; (i \geq 1)$. 
Define $\Gamma'$ to be  the subalgebra of $\Lambda$  generated by $P_{i}$'s. 
By definition, 
$\Gamma \subset \Gamma' \subset \Lambda$.  Then we have 
\begin{equation*} 
\begin{array}{rlll} 
    \Gamma'  & =  & \Z[P_{1}, P_{2}, \ldots, P_{i}, \ldots]
               /(P_{i}^{2} + 2 \sum_{j=1}^{i-1} (-1)^{j} P_{i+j} P_{i-j}  + (-1)^{i} P_{2i} \; (i \geq 1)) \medskip \\
             &=   &  \Z[P_{1}, P_{3}, \ldots, P_{2n-1}, \ldots].     \medskip  
\end{array} 
\end{equation*} 
These two subalgebras $\Gamma$ and $\Gamma'$ also have natural Hopf algebra structures 
(for the Hopf algebra structures on $\Gamma$ and $\Gamma'$, readers are referred to 
e.g., Lam-Schilling-Shimozono \cite[\S 2]{Lam-Sch-Shi2010I}).   
The coproduct is given by 
\begin{equation*} 
   \phi (Q_{k})  = \sum_{i+j = k} Q_{i} \otimes Q_{j} \quad \text{and} \quad 
   \phi (P_{k})  =  P_{k} \otimes 1 + 1 \otimes P_{k} 
                  +  2  \sum_{i+j= k, \, i, j \geq 1}  P_{i} \otimes P_{j}.   
\end{equation*}      
One can  also  define the pairing $[-, -]:  \Gamma' \times \Gamma \longrightarrow \Z$ 
(Lam-Schilling-Shimozono \cite[(2.14)]{Lam-Sch-Shi2010I}, 
Macdonald \cite[III, \S8, (8.12)]{Mac95},  Stembridge \cite[(5.2)]{Ste89}),  
and  then $\Gamma$ and $\Gamma'$ are dual Hopf algebras each other with respect to 
this pairing.

\subsection{Rings of $E$-homology Schur $P$- and $Q$-functions}   
\label{subsec:E-homologySchurP,Q-functions}  
Motivated by the description of $E_{*}(\Omega_{0} SO)$ and $E_{*}(\Omega Sp)$ 
 in the previous two subsections  \S \ref{subsec:E-homology(OmegaSp)} and \ref{subsec:E-homology(OmegaSO)}, 
we define $E$-homology  analogues of the rings of   Schur $P$- and $Q$-functions 
(Definitions \ref{df:Gamma^E_*} and \ref{df:Gamma'^E_*}).  
First we define
\begin{equation}   \label{eqn:Lambda^E_*Lambda_E^*}   
\begin{array}{rlll}  
  \Lambda^{E}_{*} 
&:=  & E_{*} \otimes_{\Z} \Lambda  \quad (\text{extension of coefficients}), \medskip  \\ 
  \Lambda_{E}^{*}
 &:= & \Hom_{E_{*}} (\Lambda^{E}_{*}, E_{*}) \quad (\text{graded dual of}  \;  \Lambda^{E}_{*}). \medskip 
\end{array} 
\end{equation} 
Then  $\Lambda^{E}_{*}$  inherits  naturally the  Hopf algebra structure from that of $\Lambda$. 
Actually it  is obvious from the definition  that
 $\Lambda^{E}_{*} 
 = E_{*}[h_{1}, h_{2}, \ldots]$, and 
the coalgebra structure is given by (\ref{eqn:CoproductLambda}).  
Dually,  $\Lambda_{E}^{*}  = E^{*}[[e_{1}, e_{2}, \ldots ]]$, 
the  formal power series ring in $e_{i} \; (i = 1, 2, \ldots)$, 
and   the coalgebra structure is also given by (\ref{eqn:CoproductLambda}). 
One can verify this latter assertion by purely algebraic manner (see e.g., 
May-Pont \cite[\S 21.4]{May-Pon2012}).  


\begin{defn}  [{\it $E$-homology Schur $Q$-functions}]     \label{df:wh{q}^E}  
We define   $\wh{q}^{E}_{k}  = \wh{q}^{E}_{k}({\bf y})  
\in  \Lambda^{E}_{*}  \; (k = 1, 2, \ldots)$ 
as the coefficients  of 
the generating function 
\begin{equation}   \label{eqn:wh{q}^E} 
   \wh{q}^{E}(T)  
  =  \displaystyle{\sum_{k \geq 0}} \wh{q}^{E}_{k} T^{k}  
  = \dfrac{H(T)}{H(\overline{T})} \medskip \\
  =  \displaystyle{\prod_{i \geq 1}}   \dfrac{1 - \overline{T}y_{i}}{1 - Ty_{i}} 
  \;   \in \Lambda^{E}_{*}[[T]]      
\end{equation} 
$(\wh{q}^{E}_{0} := 1)$.
\end{defn} 
\noindent
For $E = H$, the ordinary cohomology theory, the formal inverse $\overline{T} = [-1]_{H}(T) = -T$, 
and hence $\wh{q}^{H}_{k} \; (k = 1, 2, \ldots)$ coincide with the usual 
Schur $Q$-functions $Q_{k} = \sum_{i + j = k} h_{i}e_{j} \; (k  = 1, 2, \ldots)$.  
By definition,  
\begin{equation*} 
  \wh{q}^{E}(T) = H(T) E(-\overline{T}) 
  = \left (\sum_{i \geq 0} h_{i} T^{i} \right ) 
  \left (\sum_{j \geq 0} e_{j} (-\overline{T})^{j} \right ), 
\end{equation*} 
and  since $\overline{T} 
= -T +  \text{higher terms in} \;  T$, we have 
\begin{equation*} 
  \wh{q}^{E}_{k} = Q_{k} +  \text{lower terms  in} \;  \y  = (y_{1}, y_{2}, \ldots).  
\end{equation*}

\begin{ex}   \label{ex:wh{q}^E_k(k=1,2,3)}
By the same calculation as in Example $\ref{ex:Omega(cq)_*(beta_k)(k=1,2,3,4,5)}$, 
we have 
\begin{equation*} 
\begin{array}{rllll} 
 \wh{q}^{E}_{1}  &=  & 2h_{1} = Q_{1}, \medskip \\
 \wh{q}^{E}_{2}  &=  & 2h_{1}^{2} - a_{1, 1}  h_{1} = Q_{2} - a_{1, 1}h_{1}, \medskip \\
 \wh{q}^{E}_{3}  &=  & 2(h_{3} -  h_{2}h_{1} + h_{1}^{3})  + 2a_{1, 1}h_{2} 
          - 3a_{1, 1} h_{1}^{2}   + a_{1, 1}^{2} h_{1}   \medskip \\
                    & = & Q_{3} + 2a_{1, 1}h_{2} - 3a_{1, 1} h_{1}^{2}   + a_{1, 1}^{2} h_{1}.  \medskip   
\end{array}
\end{equation*}  
\end{ex}
\noindent
By Definition \ref{df:wh{q}^E},  the relation  
\begin{equation}   \label{eqn:relations(q^E)} 
   \wh{q}^{E}(T)  \wh{q}^{E}(\overline{T})   
   = 1
\end{equation} 
holds.    
\begin{ex}   For example,  the relations in low degrees are given by 
\begin{equation*} 
\begin{array}{rllll} 
  (\wh{q}^{E}_{1})^{2}  &= & 2\wh{q}^{E}_{2} + a_{1,1} \wh{q}^{E}_{1}, \medskip \\
  (\wh{q}^{E}_{2})^{2}  &= & -2\wh{q}^{E}_{4} + 2\wh{q}^{E}_{3}\wh{q}^{E}_{1}  - 3a_{1,1}\wh{q}^{E}_{3}
  + a_{1,1}\wh{q}^{E}_{2} \wh{q}^{E}_{1} - a_{1, 1}^{2} \wh{q}^{E}_{2}     \medskip \\
  & & + (-a_{1,1}a_{1,2} - 2a_{1,3}+ a_{2,2})\wh{q}^{E}_{1}.   \medskip  
\end{array} 
\end{equation*}  
\end{ex}

\begin{defn} [{\it  Ring of $E$-homology Schur $Q$-functions}]   \label{df:Gamma^E_*}    
Define $\Gamma^{E}_{*}$   
to be the $E_{*}$-subalgebra of $\Lambda^{E}_{*} = E_{*} \otimes_{\Z} \Lambda$ generated by $\wh{q}^{E}_{1},  \wh{q}^{E}_{2}, \ldots$.  
More explicitly  we define 
\begin{equation*} 
   \Gamma^{E}_{*} 
   =  E_{*}[\wh{q}^{E}_{1}, \wh{q}^{E}_{2}, \ldots, \wh{q}^{E}_{i}, \ldots]/
  (\wh{q}^{E}(T) \wh{q}^{E}(\overline{T}) = 1) 
  \;    \hooklongrightarrow   \;  \Lambda^{E}_{*}. 
\end{equation*}
\end{defn} 
\noindent 
 For $E = H$, the ordinary   homology theory, 
$\wh{q}^{H}_{i} = Q_{i} \; (i = 1, 2, \ldots)$, and hence 
$\Gamma^{H}_{*} = \Gamma  = \Z[Q_{1}, Q_{2}, \ldots]/(Q(T) Q(-T)   = 1)$
is the ring of Schur $Q$-functions.

Analogously, we make the following definition. 
\begin{defn} [{\it  $E$-homology Schur $P$-functions}]   \label{df:wh{p}^E}  
We define     $\wh{p}^{E}_{k}  = \wh{p}^{E}_{k}({\bf y})  \in \Lambda_{*}^{E}  \; (k = 1, 2, \ldots)$    by 
the equation
\begin{equation}    \label{eqn:wh{p}^E}
   1 + [2]_{E}(T) \wh{p}^{E}(T)  = 1 + (T +_{\mu} T) \wh{p}^{E} (T)
   =  \wh{q}^{E}(T)   \in \Lambda^{E}_{*}[[T]],     
\end{equation} 
where we put $\wh{p}^{E}(T) := \sum_{j \geq 0} \wh{p}^{E}_{j+1} T^{j}$.   
Equivalently, since 
\begin{equation*} 
  [2]_{E}(T) \wh{p}^{E} (T) = \left ( \sum_{i \geq 1} \alpha^{E}_{i} T^{i} \right ) 
                              \left ( \sum_{j \geq 0} \wh{p}^{E}_{j + 1} T^{j} \right ) 
  = \sum_{k \geq 1} \left (\sum_{j=1}^{k}  \alpha^{E}_{k + 1 - j} \wh{p}^{E}_{j} \right )T^{k},   
\end{equation*} 
 $\wh{p}^{E}_{k}$'s  are defined   by the recursive formulas$:$ 
\begin{equation}  \label{eqn:wh{q}^E_k=2wh{p}^E_k+...}  
\wh{q}^{E}_{k}  =  \displaystyle{\sum_{j = 1}^{k}}   \alpha_{k + 1 -j}^{E} \wh{p}^{E}_{j} 
  = 2\wh{p}^{E}_{k}  + \alpha^{E}_{2}  \wh{p}^{E}_{k-1}  +  \cdots +  \alpha^{E}_{k} \wh{p}^{E}_{1}   \; (k = 1, 2, \ldots).  
\end{equation} 
\end{defn} 
\noindent
Actually we have to show that each $\wh{p}^{E}_{k} \; (k = 1, 2, \ldots)$ can be determined uniquely 
as an element of $\Lambda^{E}_{*}$  by these formulas. 
By (\ref{eqn:wh{p}^E}),  we have 
\begin{equation}  \label{eqn:[2]_E(T)p^E(T)}   
  [2]_{E}(T) \wh{p}^{E}(T) =   \wh{q}^{E}(T) - 1   = \dfrac{H(T)}{H(\overline{T})} - 1 
  =  \dfrac{H(T) - H(\overline{T})}{H(\overline{T})}.  
\end{equation} 
Since $H(T) - H(\overline{T})$ is divisible by $T - \overline{T}$,  
and $T - \overline{T}$ is divisible by $[2]_{E}(T)$, the right hand side of (\ref{eqn:[2]_E(T)p^E(T)}) is divisible by $[2]_{E}(T)$ (see also the argument in \S \ref{subsec:OneRowCase}).  
For the ordinary   cohomology theory,  we have $\wh{q}^{H}_{k} = Q_{k} \; (k = 1, 2, \ldots)$, 
and therefore $\wh{p}^{H}_{k} \; (k = 1, 2, \ldots)$ are the usual 
Schur $P$-functions $P_{k} \; (k = 1, 2, \ldots)$.   
In general, we have 
\begin{equation*} 
  \wh{p}^{E}_{k} = P_{k} + \text{lower terms in} \; \y = (y_{1}, y_{2}, \ldots). 
\end{equation*} 

\begin{ex}    \label{ex:wh{p}^E_k(k=1,2,3)}  
By the same calculation as in Example  $\ref{ex:Omegac(eta_k)(k=1,2,3,4,5)}$,    we have  
\begin{equation*} 
\begin{array}{rlll} 
   \wh{p}^{E}_{1}  &=  & h_{1} = P_{1}, \medskip \\
   \wh{p}^{E}_{2}  &=  & h_{1}^{2} - a_{1, 1} h_{1} = P_{2} - a_{1, 1}h_{1}, \medskip \\
   \wh{p}^{E}_{3}  &=  & (h_{3} - h_{2}h_{1} + h_{1}^{3})  + a_{1, 1}h_{2}  - 2a_{1, 1}h_{1}^{2} 
     + (a_{1, 1}^{2} - a_{1, 2}) h_{1}   \medskip \\
    & =  & P_{3}    + a_{1, 1}h_{2}  - 2a_{1, 1}h_{1}^{2} 
     + (a_{1, 1}^{2} - a_{1, 2}) h_{1}.  \medskip   
\end{array}  
\end{equation*}  
\end{ex}
\noindent
By the definition (\ref{eqn:wh{p}^E}),  the following relation holds: 
\begin{equation}   \label{eqn:relations(p^E)} 
    (1 + [2]_{E}(T)  \wh{p}^{E}(T)) (1 + [2]_{E}(\overline{T})  \wh{p}^{E}(\overline{T}))   
   = \wh{q}^{E}(T) \wh{q}^{E}(\overline{T}) 
 = 1. 
\end{equation} 
Using the same argument as in \S \ref{subsec:E-homology(OmegaSp)},  
 with the above relation (\ref{eqn:relations(p^E)}), we can eliminate 
$\wh{p}^{E}_{2i} \; (i = 1, 2, \ldots)$.
\begin{defn}   [{\it Ring of $E$-homology Schur $P$-functions}]    \label{df:Gamma'^E_*}  
Define ${\Gamma'}^{E}_{*}$   
to be the subalgebra of $\Lambda^{E}_{*}$ generated by $\wh{p}^{E}_{1},  \wh{p}^{E}_{2}, \ldots$. 
More explicitly, we define 
\begin{equation*} 
\begin{array}{llll} 
   {\Gamma'}^{E}_{*}  & = E_{*}[\wh{p}^{E}_{1}, \wh{p}^{E}_{2}, \ldots, \wh{p}^{E}_{i}, \ldots]
                  /((1 + [2]_{E}(T) \wh{p}^{E}(T))(1 + [2]_{E}(\overline{T})  \wh{p}^{E}(\overline{T})) = 1)  \medskip \\
                 & = E_{*}[\wh{p}^{E}_{1}, \wh{p}^{E}_{3}, \ldots, \wh{p}^{E}_{2i-1}, \ldots]  
   \; \hooklongrightarrow   \;  \Lambda^{E}_{*}. 
\end{array} 
\end{equation*}                 
\end{defn}  
\noindent
The coalgebra structures of  $\Gamma^{E}_{*}$ and ${\Gamma'}^{E}_{*}$ are defined  by 
\begin{equation*} 
\begin{array}{rlll} 
  \phi (\wh{q}^{E}_{k})  & =  & \displaystyle{\sum_{i + j = k}} \wh{q}^{E}_{i} \otimes \wh{q}^{E}_{j}  
\quad (k  \geq 1), \medskip \\ 
  \phi (\wh{p}^{E}_{1}) & = & \wh{p}^{E}_{1} \otimes 1 + 1 \otimes \wh{p}^{E}_{1}, \medskip \\
  \phi (\wh{p}^{E}_{l})  & =  &  \wh{p}^{E}_{l} \otimes 1  + 1 \otimes \wh{p}^{E}_{l} + 
           \displaystyle{
                             \hspace{-0.3cm}   \sum_{  
                                  {\tiny 
                                     \begin{array}{cccc} 
                                      &  i + j + k = l, \\
                                      &  i, j \geq 1 
                                     \end{array}  
                                   } 
                                 }    
                         }  \hspace{-0.5cm}              
                                \alpha^{E}_{k + 1} \wh{p}^{E}_{i} \otimes \wh{p}^{E}_{j}   \medskip \\
                      &=  &  \wh{p}^{E}_{l} \otimes 1  + 1 \otimes \wh{p}^{E}_{l} + 
        2 \hspace{-0.3cm} 
            \displaystyle{ 
                               \sum_{
                                 {\tiny 
                                     \begin{array}{cccc} 
                                      &  i + j = l, \\
                                      &  i, j \geq 1 
                                     \end{array}  
                                   } 
                                 }   
                         }   \hspace{-0.3cm}    
                                 \wh{p}^{E}_{i} \otimes \wh{p}^{E}_{j} 
                        +  \; \alpha^{E}_{2} \hspace{-0.3cm}  
                              \displaystyle{ 
                               \sum_{
                                 {\tiny 
                                     \begin{array}{cccc} 
                                      &  i + j = l-1, \\
                                      &  i, j \geq 1 
                                     \end{array}  
                                   } 
                                 }   
                         }   \hspace{-0.3cm} 
                                \wh{p}^{E}_{i} \otimes \wh{p}^{E}_{j}   \medskip \\
             & &      + \cdots 
                        + \alpha^{E}_{l-1} \wh{p}^{E}_{1} \otimes \wh{p}^{E}_{1}  \quad (l  \geq 2). \medskip 
\end{array} 
\end{equation*}  
These colagebra structures make  $\Gamma^{E}_{*}$ and ${\Gamma'}^{E}_{*}$ into 
Hopf algebras  over $E_{*}$.

\subsection{Rings of $E$-cohomology Schur $P$- and $Q$-functions}
\label{subsec:E-cohomologySchurP,Q-functions}  
Motivated by the description of $E^{*}(\Omega Sp)$ and $E^{*}(\Omega_{0} SO)$ 
in the previous two subsections \S \ref{subsec:E-cohomology(OmegaSp)} and 
\ref{subsec:E-cohomology(Omega_0SO)}, 
 we define the $E$-cohomology analogues of the rings of  Schur $P$- and $Q$-functions. 
 In order to do so, let $\overline{e}_{i} := e_{i}(\overline{x}_{1}, \overline{x}_{2}, 
 \ldots )$ (resp. $\overline{h}_{i} := h_{i}(\overline{x}_{1}, \overline{x}_{2}, \ldots)$)
 be the $i$-th elementary symmetric function (resp. the $i$-th complete symmetric function) 
in the variables 
 $\overline{x}_{1}, \overline{x}_{2}, \ldots$. Namely their generating functions
are  given by   
 \begin{equation*} 
   \overline{E}(T) = \sum_{i \geq 0} \overline{e}_{i} \, T^{i}  = \prod_{i \geq 1} 
 (1 + \overline{x}_{i} \, T) \quad \text{and}  \quad 
  \overline{H}(T) = \sum_{i \geq 0} \overline{h}_{i} \, T^{i}
 = \prod_{i \geq 1} \dfrac{1}{1 - \overline{x}_{i} \, T}.  
 \end{equation*} 
 Note that the relation $\overline{H}(T) \overline{E}(-T) = 1$ holds. 
 Then we make the following definition:
\begin{defn}       \label{df:tilde{q}^E}  
We define $\tilde{q}^{E}_{k} = \tilde{q}^{E}_{k}(\x)  \in \Lambda^{*}_{E} \; (k = 1, 2, \ldots)$ 
 as the coefficients  of the generating function
\begin{equation}     \label{eqn:tilde{q}^E(T)}  
  \tilde{q}^{E}(T)  = \sum_{k \geq 0} \tilde{q}^{E}_{k} T^{k}  
= \dfrac{E(T)}{ \overline{E} (T)}  =  \prod_{i \geq 1}  \dfrac{1 + x_{i}T}{1 + \overline{x}_{i}T}  
\;  \in \Lambda^{*}_{E}[[T]]  
\end{equation} 
$(\tilde{q}^{E}_{0} := 1)$.  
\end{defn} 
\noindent
For $E = H$, the ordinary cohomology theory, 
we have $\overline{x}_{i} = -x_{i}$, and hence 
$\tilde{q}^{H}_{k} \; (k = 1, 2, \ldots)$ coincide with the usual Schur $Q$-functions
$Q_{k} = \sum_{i +  j = k}  h_{i}e_{j} \; (k = 1, 2, \ldots)$.  By definition, 
\begin{equation*} 
  \tilde{q}^{E}(T) = \overline{H}(-T) E(T) = \left (\sum_{i \geq 0} \overline{h}_{i} \, (-T)^{i} \right ) \left (\sum_{j \geq 0}  e_{j} T^{j}  \right ) = 
  \sum_{k \geq 0} \left ( \sum_{i + j = k}  (-1)^{i} \overline{h}_{i}  e_{j}  \right ) T^{k}, 
\end{equation*} 
and we have $\tilde{q}^{E}_{k} =  \sum_{i + j = k} (-1)^{i}  \overline{h}_{i} e_{j}$.  
Since $\overline{x}_{i} = -x_{i} + \text{higher terms in} \;   x_{i}$, we have $\overline{h}_{i} 
= (-1)^{i} h_{i} + \text{higher terms}$, and hence we have 
\begin{equation*} 
  \tilde{q}^{E}_{k} = Q_{k} + \text{higher terms in}  \; \x = (x_{1}, x_{2}, \ldots). 
\end{equation*} 
We also define 
\begin{equation} 
   \overline{\tilde{q}^{E}}(T)  =  \sum_{k \geq 0}  \overline{\tilde{q}^{E}_{k}} T^{k} 
   = \dfrac{ \overline{E}(T)}{E(T)}  = \prod_{i \geq 1}  \dfrac{1 + \overline{x}_{i} T}{1 + x_{i}T}  
\end{equation} 
($\overline{\tilde{q}^{E}_{0}} := 1$).  
By Definition \ref{df:tilde{q}^E}, the relation 
\begin{equation}   \label{eqn:tilde{q}^E(T)bar{tilde{q}^E}(T)}  
  \tilde{q}^{E}(T) \overline{\tilde{q}^{E}}(T) 
  =  \dfrac{E(T)}{ \overline{E} (T)} \cdot 
  \dfrac{\overline{E}(T)}{E(T)}  = 1
\end{equation} 
holds.  

\begin{defn}  [Ring of $E$-cohomology Schur $Q$-functions] 
Define    the subalgebra $\Gamma^{*}_{E}$  of $\Lambda_{E}^{*}$  by 
\begin{equation*} 
  \Gamma_{E}^{*} = E^{*}[[\tilde{q}^{E}_{1}, \tilde{q}^{E}_{2}, \ldots, \tilde{q}^{E}_{i}, \ldots]]
                     /(\tilde{q}^{E}(T)  \overline{\tilde{q}^{E}}(T) = 1)  
  \hooklongrightarrow  \Lambda_{E}^{*}. 
\end{equation*}  
\end{defn} 
\noindent
 For $E = H$, the ordinary cohomology theory, $\tilde{q}^{H}_{i} = Q_{i} \; (i = 1, 2, \ldots)$, 
and hence $\Gamma_{H}^{*}  = \Gamma$ is the ring of Schur $Q$-functions.   
The coalgebra structure of $\Gamma^{*}_{E}$ is defined by 
\begin{equation*} 
  \phi (\tilde{q}^{E}_{k}) = \sum_{i + j = k}  \tilde{q}^{E}_{i} \otimes \tilde{q}^{E}_{j}.  
\end{equation*} 
This makes $\Gamma_{E}^{*}$ into a Hopf algebra over $E^{*}$.  

As we saw in subsection  \S \ref{subsec:E-cohomology(Omega_0SO)}, we do not have 
a good grasp of $E^{*}(\Omega_{0} SO) \cong \Hom_{E_{*}}(E_{*}(\Omega_{0} SO), E_{*})$. 
So we merely make the following definition. 
\begin{defn} [Ring of $E$-cohomology Schur $P$-functions] 
  Define ${\Gamma'}_{E}^{*}$ to be the graded dual of $\Gamma^{E}_{*}$ over $E_{*}$, 
namely 
${\Gamma'}_{E}^{*} = \Hom_{E_{*}} (\Gamma^{E}_{*},  E_{*})$.   
\end{defn}  
\noindent
${\Gamma'}_{E}^{*}$ has also a Hopf algebra structure over $E^{*}$ as a graded 
dual of the Hopf algebra $\Gamma^{E}_{*}$ over $E_{*}$.

\subsection{Identifications with symmetric functions}   \label{subsec:IdentificationsSymmetricFunctions}  
In topology, it is well-known that the ordinary homology and cohomology of 
$BU$ can be identified  with the ring of symmetric functions $\Lambda$. 
This identification can be generalized obviously in generalized (co)homology theory
$E^{*}(-)$ and $E_{*}(-)$.   
In the previous subsection,  we defined 
\begin{equation*} 
\begin{array}{rlll}  
  \Lambda^{E}_{*} &=  & E_{*} \otimes_{\Z} \Lambda \quad (\text{extension of coefficients}), \medskip  \\ 
  \Lambda_{E}^{*} &=  & \Hom_{E_{*}} (\Lambda^{E}_{*}, E_{*}) \quad (\text{graded dual of}  \;  \Lambda^{E}_{*}). \medskip 
\end{array} 
\end{equation*} 
Then by Theorem \ref{thm:E^*(OmegaSU)E_*(OmegaSU)},    
we have the following identifications of Hopf algebras:   
\begin{equation}  \label{eqn:identification}  
\begin{array}{rrlll} 
        & E_{*}(\Omega SU)  
         =   E_{*}[\beta^{E}_{1}, \beta^{E}_{2}, \ldots] 
          & \overset{\sim}{\longrightarrow} 
         \Lambda^{E}_{*} 
        = E_{*}[h_{1}, h_{2}, \ldots ],     \medskip \\
       &  \beta^{E}_{i}   & \longmapsto  h_{i}, \medskip \\
       &  E^{*}(\Omega SU) 
          =   E^{*}[[c^{E}_{1}, c^{E}_{2}, \ldots  ]]   
          & \overset{\sim}{\longrightarrow}  
        \Lambda_{E}^{*}   = E^{*}[[e_{1}, e_{2}, \ldots ]],    \medskip \\
       &  c^{E}_{i}   &  \longmapsto  e_{i}.    \medskip
\end{array}  
\end{equation} 
Under  these identifications together with Lemmas \ref{lem:(Omega(c))_*}, 
\ref{lem:(Omega(r))_*(Omega(c))_*} (2),  \ref{lem:(Omega(c))^*(Omega(q))^*} (2), 
and \ref{lem:(Omega(r))^*}, 
  we can realize $E_{*}(\Omega Sp)$,  $E_{*}(\Omega_{0} SO)$, $E^{*}(\Omega Sp)$, 
  and $E^{*}(\Omega_{0} SO)$  as   algebras of certain symmetric functions.  
By Theorems \ref{thm:E_*(Omega_0SO)} and \ref{thm:E_*(OmegaSp)}, we have 
the following:  
\begin{prop}    \label{prop:IdentificationE_*(Omega_0SO)E_*(OmegaSp)}  
There is a natural  identification of Hopf algebras over $E_{*}:$ 
\begin{equation*} 
\begin{array}{rrllll} 
     & E_{*}(\Omega_{0} SO) =  \dfrac{E_{*}[\tau^{E}_{1}, \tau^{E}_{2}, \ldots  
                                                        ]}
                               {(\tau^{E}(T) \tau^{E}(\overline{T}) = 1)} 
      &\overset{\sim}{\longrightarrow}
      \quad 
    \Gamma^{E}_{*}  =  \dfrac{E_{*}[\wh{q}^{E}_{1},  \wh{q}^{E}_{2}, \ldots    
                                              ]}
                    {(\wh{q}^{E}(T)  \wh{q}^{E}(\overline{T}) = 1)}, \medskip \\
    &   \tau^{E}_{i}  & \longmapsto    \quad   \wh{q}^{E}_{i}. \medskip  \\
   & E_{*}(\Omega Sp)  = E_{*}[\eta^{E}_{1}, \eta^{E}_{3}, \ldots  
                                        ]  
                         &  \overset{\sim}{\longrightarrow} 
   \quad   {\Gamma'}^{E}_{*}  = E_{*}[\wh{p}^{E}_{1}, \wh{p}^{E}_{3}, \ldots   
                                                ], \medskip \\  
    &  \eta^{E}_{2i-1}   &  \longmapsto  \quad    \wh{p}^{E}_{2i-1}. \medskip 
  \end{array} 
\end{equation*} 
Furthermore,  
\begin{enumerate} 
\item the surjective homomorphism 
     \begin{equation*}
        (\Omega r)_{*}: E_{*}(\Omega SU)  \twoheadlongrightarrow E_{*}(\Omega_{0} SO),  \quad 
        \beta^{E}_{i}  \longmapsto  \tau^{E}_{i} \; (i = 1, 2, \ldots)
     \end{equation*} 
     is identified with  the surjection
     \begin{equation*}
          \Lambda^{E}_{*}  \twoheadlongrightarrow \Gamma^{E}_{*}, \quad  
        h_{i} \longmapsto  \wh{q}^{E}_{i} \; 
(i = 1, 2, \ldots), 
     \end{equation*}  
\item   the injective   homomorphism 
            $(\Omega c)_{*}: E_{*}(\Omega_{0} SO) \hooklongrightarrow E_{*}(\Omega SU)$
       is identified with    the natural inclusion $\Gamma^{E}_{*}   \hooklongrightarrow \Lambda^{E}_{*}$,

\item the homomorphism 
      \begin{equation*} 
            (\Omega q)_{*}: E_{*}(\Omega SU)  \longrightarrow E_{*}(\Omega Sp), \quad 
              \beta^{E}_{i}  \longmapsto   2\eta^{E}_{i} + \alpha^{E}_{2} \eta^{E}_{i-1} + \cdots + \alpha^{E}_{i} \eta^{E}_{1} \; (i = 1, 2, \ldots)
      \end{equation*} 
      is identified with   the homomorphism 
       \begin{equation*} 
     \Lambda^{E}_{*} \longrightarrow {\Gamma'}^{E}_{*}, \; h_{i} \longmapsto  2\wh{p}^{E}_{i} 
  + \alpha^{E}_{2}  \wh{p}^{E}_{i-1}  +  \cdots +  \alpha^{E}_{i} \wh{p}^{E}_{1}   \; (i = 1, 2, \ldots), 
     \end{equation*} 
 
\item  the injective  homomorphism $(\Omega c)_{*}: E_{*}(\Omega Sp) \hooklongrightarrow E_{*}(\Omega SU)$ 
       is identified with   the natural inclusion ${\Gamma'}^{E}_{*} \hooklongrightarrow \Lambda^{E}_{*}$. 
\end{enumerate}  
\end{prop}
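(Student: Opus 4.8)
The plan is to deduce everything by transporting structure through the known Hopf-algebra identification (\ref{eqn:identification}) of Theorem \ref{thm:E^*(OmegaSU)E_*(OmegaSU)}, under which $E_*(\Omega SU) \cong \Lambda^E_*$ with $\beta^E_i \leftrightarrow h_i$, and hence, extending coefficientwise to power series rings, $\beta^E(T) \leftrightarrow H(T)$. The one non-formal input I will feed in is Proposition \ref{prop:Omega(cq)_*(beta(t))} together with its counterpart for $SO$ (the Proposition stated in \S\ref{subsec:E-homology(OmegaSO)}): under (\ref{eqn:identification}) these say that both endomorphisms $\Omega(c\circ q)_*$ and $\Omega(c\circ r)_*$ of $\Lambda^E_*[[T]]$ send $H(T)$ to $H(T)/H(\overline{T})$, which by Definition \ref{df:wh{q}^E} is exactly $\wh{q}^E(T)$. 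The split-mono/epi statements of Lemmas \ref{lem:(Omega(c))_*} and \ref{lem:(Omega(r))_*(Omega(c))_*} then let me read off images of subalgebras.

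First, for $\Omega_0 SO$: since $\tau^E(T) = (\Omega r)_*(\beta^E(T))$ by (\ref{eqn:tau_i}), applying the injection $(\Omega c)_*$ gives $(\Omega c)_*(\tau^E(T)) = \Omega(c\circ r)_*(\beta^E(T))$, which under (\ref{eqn:identification}) equals $H(T)/H(\overline{T}) = \wh{q}^E(T)$; hence $(\Omega c)_*(\tau^E_i) \leftrightarrow \wh{q}^E_i$ for every $i \ge 1$. Because $E_*(\Omega_0 SO)$ is generated by the $\tau^E_i$ (Theorem \ref{thm:E_*(Omega_0SO)}) and $(\Omega c)_*$ is injective, $E_*(\Omega_0 SO)$ is carried isomorphically onto the $E_*$-subalgebra of $\Lambda^E_*$ generated by the $\wh{q}^E_i$, which is $\Gamma^E_*$ by Definition \ref{df:Gamma^E_*}; the isomorphism is $\tau^E_i \mapsto \wh{q}^E_i$. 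It is an isomorphism of Hopf algebras because $\Omega c$ is an $H$-map, so $(\Omega c)_*$ respects coproducts, and the coproduct formula $\phi(\tau^E_n) = \sum_{i+j=n}\tau^E_i \otimes \tau^E_j$ of Theorem \ref{thm:E_*(Omega_0SO)} matches the one for $\wh{q}^E_k$ recorded after Definition \ref{df:Gamma'^E_*}; in particular $\Gamma^E_*$ is a sub-Hopf-algebra of $\Lambda^E_*$ carrying the restricted coproduct. Items (1) and (2) then fall out: (2) holds by the very construction of the isomorphism, and (1) is the statement that the composite $\Lambda^E_* \cong E_*(\Omega SU) \overset{(\Omega r)_*}{\longrightarrow} E_*(\Omega_0 SO) \cong \Gamma^E_*$ is $h_i \mapsto \wh{q}^E_i$, which is surjective since the $\wh{q}^E_i$ generate $\Gamma^E_*$.

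Next, for $\Omega Sp$: I would apply $(\Omega c)_*$ to the defining relation (\ref{eqn:eta^E}) and use (\ref{eqn:identification}) to obtain $1 + [2]_E(T)\,(\Omega c)_*(\eta^E(T)) = \Omega(c\circ q)_*(\beta^E(T)) \leftrightarrow \wh{q}^E(T) = 1 + [2]_E(T)\,\wh{p}^E(T)$, the last equality being Definition \ref{df:wh{p}^E}, i.e. (\ref{eqn:wh{p}^E}). Since $E_*$ --- hence $\Lambda^E_* = E_*\otimes_\Z\Lambda$ --- is torsion free, $2$ is a non-zero-divisor in $\Lambda^E_*$, so $[2]_E(T) = 2T + \cdots$ is a non-zero-divisor in $\Lambda^E_*[[T]]$; cancelling it yields $(\Omega c)_*(\eta^E_i) \leftrightarrow \wh{p}^E_i$ for all $i \ge 1$. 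Thus $E_*(\Omega Sp)$, generated by the $\eta^E_i$ (Theorem \ref{thm:E_*(OmegaSp)}) and embedded by $(\Omega c)_*$, is carried isomorphically onto the $E_*$-subalgebra of $\Lambda^E_*$ generated by the $\wh{p}^E_i$, which is ${\Gamma'}^E_*$ by Definition \ref{df:Gamma'^E_*}; restricting to polynomial generators gives $\eta^E_{2i-1} \mapsto \wh{p}^E_{2i-1}$. The coproduct formulas of Theorem \ref{thm:E_*(OmegaSp)} and of ${\Gamma'}^E_*$ coincide term by term, so this is a Hopf-algebra isomorphism. Item (4) is again by construction, and (3) follows by transporting the explicit form of (\ref{eqn:eta^E}), namely $(\Omega q)_*(\beta^E_i) = \sum_k \alpha^E_k \eta^E_{i+1-k}$, through $\beta^E_i \leftrightarrow h_i$ and $\eta^E_j \leftrightarrow \wh{p}^E_j$.

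The argument is essentially bookkeeping; the only point that is not purely formal is the cancellation of $[2]_E(T)$ in the $Sp$ case, which is exactly where the standing torsion-freeness hypothesis on $E_*$ enters, together with the routine check that transport of structure along (\ref{eqn:identification}), extended coefficientwise to $[[T]]$, is compatible with the power-series identities of Proposition \ref{prop:Omega(cq)_*(beta(t))} (and its $SO$-counterpart) and of Definitions \ref{df:wh{q}^E} and \ref{df:wh{p}^E}. I do not expect any serious obstacle.
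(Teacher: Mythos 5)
Your argument is correct and matches the paper's (largely implicit) reasoning: the paper deliberately builds $\Gamma^E_*$ and ${\Gamma'}^E_*$ from the generating functions $\wh{q}^E(T) = H(T)/H(\overline{T})$ and $1 + [2]_E(T)\wh{p}^E(T) = \wh{q}^E(T)$ so that, under the identification $\beta^E_i \leftrightarrow h_i$ of (\ref{eqn:identification}), the relations and coproducts in Theorems \ref{thm:E_*(Omega_0SO)} and \ref{thm:E_*(OmegaSp)} transport verbatim onto Definitions \ref{df:Gamma^E_*} and \ref{df:Gamma'^E_*}, exactly as you lay out. The one place you add something the paper leaves implicit is the regularity of $[2]_E(T)$ in $\Lambda^E_*[[T]]$ needed to pass from $(\Omega c)_*(\eta^E(T))$ to $\wh{p}^E(T)$; your torsion-freeness argument is fine and is consistent with how the paper handles the analogous well-definedness of $\wh{p}^E_k$ in Definition \ref{df:wh{p}^E}.
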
  
\noindent
Dually from Theorem \ref{thm:E^*(OmegaSp)} and the argument in subsection 
\S  \ref{subsec:E-cohomology(Omega_0SO)}, 
we have the following:  
\begin{prop}      \label{prop:IdentificationE^*(OmegaSp)E^*(Omega_0SO)}  
There is a natural identification of Hopf algebras over $E^{*}:$
\begin{equation*} 
\begin{array}{rrllll} 
   &  E^{*}(\Omega  Sp)  =  \dfrac{E^{*}[[\mu^{E}_{1}, \mu^{E}_{2}, \ldots  
                                                     ]]} 
                                 {(\mu^{E}(T)  \overline{\mu^{E}}(T) = 1)}
     & \overset{\sim}{\longrightarrow}  
     \quad  
     \Gamma_{E}^{*} 
     = \dfrac{E^{*}[[\tilde{q}^{E}_{1}, \tilde{q}^{E}_{2}, \ldots  
                        ]]}
       {(\tilde{q}^{E}(T)  \overline{\tilde{q}^{E}}(T) = 1)}, \medskip \\ 
  &  \mu^{E}_{i}  &  \longmapsto  \quad  \tilde{q}^{E}_{i}.  \medskip \\
  & E^{*}(\Omega_{0} SO)    
    &  \overset{\sim}{\longrightarrow} 
    \quad   {\Gamma'}_{E}^{*}.  \medskip 
\end{array} 
\end{equation*} 
Furthermore, 
\begin{enumerate} 
\item the surjective  homomorphism 
     \begin{equation*} 
        (\Omega c)^{*}: E^{*}(\Omega SU) \twoheadlongrightarrow E^{*}(\Omega Sp), \quad 
          c^{E}_{i} \longmapsto  \mu^{E}_{i} \; (i = 1, 2, \ldots)
    \end{equation*} 
    is identified with   the surjection 
   \begin{equation*} 
      \Lambda^{*}_{E}  \twoheadlongrightarrow \Gamma^{*}_{E},   \quad  
      e_{i}  \longmapsto  \tilde{q}^{E}_{i} \; (i = 1, 2, \ldots), 
   \end{equation*}   
\item the injective homomorphism  $(\Omega q)^{*}:  E^{*}(\Omega Sp)  \hooklongrightarrow E^{*}(\Omega SU)$
    is identified with   
     the natural inclusion $\Gamma^{*}_{E}  \hooklongrightarrow \Lambda^{*}_{E}$, 
\item the  injective homomorphism $(\Omega r)^{*}: E^{*}(\Omega_{0} SO) \hooklongrightarrow E^{*}(\Omega SU)$
     is identified with  the natural inclusion ${\Gamma'}^{*}_{E} \hooklongrightarrow \Lambda^{*}_{E}$. 
\end{enumerate}
\end{prop}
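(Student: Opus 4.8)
The plan is to deduce this from the \emph{homological} statements already established, via the identification \eqref{eqn:identification} and the two power-series formulas of Proposition \ref{prop:Omega(cq)^*(c(T))} and \eqref{eqn:Omega(cr)^*(c^E(T))}. Under the Hopf-algebra isomorphism $E^{*}(\Omega SU)\cong\Lambda_{E}^{*}=E^{*}[[e_{1},e_{2},\ldots]]$ of \eqref{eqn:identification} $(c^{E}_{i}\leftrightarrow e_{i})$, the universal Chern classes are the elementary symmetric functions in the Chern roots $x_{1},x_{2},\ldots$; since the first Chern class of the conjugate of a line bundle is the formal inverse of its first Chern class, the Chern roots of $\overline{\gamma}$ are the $\overline{x}_{i}=[-1]_{E}(x_{i})$, so $\overline{c^{E}_{i}}$ corresponds to $\overline{e}_{i}$ and $\overline{c^{E}}(T)$ to $\overline{E}(T)$. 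Moreover the coproducts on $\Gamma_{E}^{*}$ and ${\Gamma'}_{E}^{*}$ defined in \S\ref{subsec:E-cohomologySchurP,Q-functions} are visibly those inherited from $\Lambda_{E}^{*}$, so every map below will automatically be a map of Hopf algebras as soon as it is shown to be a map of algebras.

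For $\Omega Sp$: Proposition \ref{prop:Omega(cq)^*(c(T))} says the ring endomorphism $\Omega(c\circ q)^{*}=(\Omega q)^{*}\circ(\Omega c)^{*}$ of $\Lambda_{E}^{*}[[T]]$ sends $E(T)$ to $E(T)/\overline{E}(T)=\tilde{q}^{E}(T)$, i.e.\ $e_{i}\mapsto\tilde{q}^{E}_{i}$. Since $(\Omega c)^{*}$ is surjective with $c^{E}_{i}\mapsto\mu^{E}_{i}$, $\overline{c^{E}_{i}}\mapsto\overline{\mu^{E}_{i}}$, and $(\Omega q)^{*}$ is injective (Lemma \ref{lem:(Omega(c))^*(Omega(q))^*}), we obtain $(\Omega q)^{*}(\mu^{E}_{i})=\tilde{q}^{E}_{i}$ and $(\Omega q)^{*}(\overline{\mu^{E}_{i}})=\overline{\tilde{q}^{E}_{i}}$; thus $(\Omega q)^{*}$, composed with \eqref{eqn:identification}, embeds $E^{*}(\Omega Sp)$ into $\Lambda_{E}^{*}$ as the closed $E^{*}$-subalgebra generated by the $\tilde{q}^{E}_{i}$. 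Now compare presentations: by Theorem \ref{thm:E^*(OmegaSp)}, $E^{*}(\Omega Sp)=E^{*}[[\mu^{E}_{1},\mu^{E}_{2},\ldots]]/(\mu^{E}(T)\overline{\mu^{E}}(T)=1)$, and the substitution $\mu^{E}_{i}\mapsto\tilde{q}^{E}_{i}$ turns this into $E^{*}[[\tilde{q}^{E}_{1},\tilde{q}^{E}_{2},\ldots]]/(\tilde{q}^{E}(T)\overline{\tilde{q}^{E}}(T)=1)=\Gamma_{E}^{*}$; this is genuinely an isomorphism because the composite $E^{*}[[\mu^{E}_{i}]]\to\Lambda_{E}^{*}$ has kernel exactly the displayed ideal (injectivity of $(\Omega q)^{*}$) while the relation $\tilde{q}^{E}(T)\overline{\tilde{q}^{E}}(T)=1$ already holds in $\Lambda_{E}^{*}$ by \eqref{eqn:tilde{q}^E(T)bar{tilde{q}^E}(T)}. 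This yields the Hopf-algebra isomorphism $E^{*}(\Omega Sp)\overset{\sim}{\longrightarrow}\Gamma_{E}^{*}$ and identifies $(\Omega q)^{*}$ with the inclusion $\Gamma_{E}^{*}\hooklongrightarrow\Lambda_{E}^{*}$, which is assertion (2). Transporting the factorization $\Omega(c\circ q)^{*}=(\Omega q)^{*}\circ(\Omega c)^{*}$ through these identifications then exhibits $(\Omega c)^{*}$ as the surjection $\Lambda_{E}^{*}\twoheadrightarrow\Gamma_{E}^{*}$, $e_{i}\mapsto\tilde{q}^{E}_{i}$, which is assertion (1).

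For $\Omega_{0}SO$ one cannot argue this way, since $(\Omega c)^{*}$ is no longer surjective and we have no explicit presentation of $E^{*}(\Omega_{0}SO)$; instead I would dualize over $E_{*}$. As in the proof of Lemma \ref{lem:(Omega(r))_*(Omega(c))_*}, the Atiyah--Hirzebruch spectral sequences for $\Omega SU$ and $\Omega_{0}SO$ collapse (torsion-free, concentrated in even degrees), so $E_{*}(\Omega SU)$ and $E_{*}(\Omega_{0}SO)$ are free $E_{*}$-modules, the Kronecker pairing identifies $E^{*}(-)$ with the graded $E_{*}$-dual of $E_{*}(-)$ for both spaces, and $(\Omega r)^{*}$ becomes the $E_{*}$-dual of the surjection $(\Omega r)_{*}$. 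By Proposition \ref{prop:IdentificationE_*(Omega_0SO)E_*(OmegaSp)}, $(\Omega r)_{*}$ is identified with the natural surjection $\Lambda^{E}_{*}\twoheadrightarrow\Gamma^{E}_{*}$, $h_{i}\mapsto\wh{q}^{E}_{i}$, and $E_{*}(\Omega_{0}SO)\cong\Gamma^{E}_{*}$ as Hopf algebras over $E_{*}$. Dualizing, and using the very definition ${\Gamma'}_{E}^{*}=\Hom_{E_{*}}(\Gamma^{E}_{*},E_{*})$, gives the Hopf-algebra isomorphism $E^{*}(\Omega_{0}SO)\cong{\Gamma'}_{E}^{*}$ together with the identification of $(\Omega r)^{*}$ with the inclusion ${\Gamma'}_{E}^{*}\hooklongrightarrow\Lambda_{E}^{*}$ dual to $\Lambda^{E}_{*}\twoheadrightarrow\Gamma^{E}_{*}$; compatibility with $E^{*}(\Omega SU)\cong\Lambda_{E}^{*}$ (itself the $E_{*}$-dual of $E_{*}(\Omega SU)\cong\Lambda^{E}_{*}$) is automatic. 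This proves assertion (3).

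The part demanding the most care is not any single calculation but the bookkeeping with completions and graded duals: checking that conjugation of the universal bundle corresponds to the substitution $e_{i}\mapsto\overline{e}_{i}$ (so that $\overline{\mu^{E}}(T)$ and $\overline{\tilde{q}^{E}}(T)$ really match under $\mu^{E}_{i}\leftrightarrow\tilde{q}^{E}_{i}$), that the closed subalgebra of the power-series ring $\Lambda_{E}^{*}$ generated by the $\tilde{q}^{E}_{i}$ is cut out by precisely the relations of Theorem \ref{thm:E^*(OmegaSp)} and no others (which is exactly what the injectivity of $(\Omega q)^{*}$ buys us), and that the generalized universal-coefficient/Kronecker duality used for $\Omega_{0}SO$ genuinely applies --- all of which rests, as above, on the freeness furnished by the collapsing Atiyah--Hirzebruch spectral sequence.
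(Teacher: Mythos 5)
Your proof is correct and takes essentially the same approach as the paper. The paper itself presents this proposition without a detailed argument, stating only that it follows "dually from Theorem \ref{thm:E^*(OmegaSp)} and the argument in subsection \S \ref{subsec:E-cohomology(Omega_0SO)}," together with the identification (\ref{eqn:identification}) and Lemmas \ref{lem:(Omega(c))^*(Omega(q))^*} (2) and \ref{lem:(Omega(r))^*}; your two-pronged argument — transporting the generating-function formula of Proposition \ref{prop:Omega(cq)^*(c(T))} through the identification $c^{E}_{i}\leftrightarrow e_{i}$ for the $\Omega Sp$ case, and $E_{*}$-dualizing Proposition \ref{prop:IdentificationE_*(Omega_0SO)E_*(OmegaSp)} (using freeness from the collapsing Atiyah--Hirzebruch spectral sequence, exactly as already invoked in Lemma \ref{lem:(Omega(r))_*(Omega(c))_*}) for the $\Omega_{0}SO$ case, where ${\Gamma'}_{E}^{*}$ is defined as the graded dual of $\Gamma^{E}_{*}$ — is precisely the reconstruction the authors intend.
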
 

\begin{rem} 
For $E = H$, the ordinary  $($co$)$homology theory so that $\Lambda^{H}_{*} = \Lambda^{*}_
{H} = \Lambda$ and 
$\Gamma^{H}_{*} = \Gamma^{*}_{H} =  \Gamma$, the homomorphisms 
\begin{equation*} 
\begin{array}{llll} 
  (\Omega r)_{*}: H_{*}(\Omega SU)  \twoheadlongrightarrow  H_{*}(\Omega_{0} SO),   \quad 
  &  \beta_{i}   \longmapsto   \tau_{i} \; (i = 1, 2, \ldots),  \medskip \\
   (\Omega c)^{*}:  H^{*}(\Omega SU)  \twoheadlongrightarrow H^{*}(\Omega Sp),  \quad 
   &  c_{i}  \longmapsto   \mu_{i} \; (i = 1, 2, \ldots) \medskip 
\end{array}
\end{equation*} 
 give    a  geometric interpretation of the ring homomorphism 
$\varphi: \Lambda  \longrightarrow \Gamma, \; \varphi (e_{n})  =  Q_{n} \; (n \geq 1)$ 
$($Macdonald \cite[III, 
Examples 10]{Mac95}$)$. 
\end{rem}

\section{Universal factorial Schur $P$- and $Q$-functions}  \label{sec:UFSPQF} 
 In the previous section, 
  we  have constructed certain subalgebras $\Gamma^{E}_{*}$, ${\Gamma'}^{E}_{*}$ of 
 $\Lambda^{E}_{*}$  and $\Gamma_{E}^{*}$, ${\Gamma'}_{E}^{*}$ of $\Lambda_{E}^{*}$. 
 Our next task is to construct certain  symmetric functions which may serve as 
nice base for these algebras 
 as free $E_{*} \,  ($or $E^{*})$-modules.  
As mentioned in the introduction (see also Example \ref{ex:FGL}), 
Quillen showed that the complex cobordism theory $MU^{*}(-)$ (with the associated formal group law 
$\mu_{MU}$)  has 
the following  universal property: for any complex oriented cohomology theory 
$E^{*}(-)$ (with the associated formal group law $\mu_{E}$), 
there exists a homomorphism of rings $\theta: MU^{*} \longrightarrow E^{*}$ 
such that  $\mu_{E}(X, Y) = (\theta_{*} \mu_{MU}) (X, Y) 
= X  + Y + \sum_{i, j \geq 1} \theta (a^{MU}_{i, j}) X^{i}Y^{j}$.  
Thus it will be sufficient to consider the case when $E = MU$, 
for general case follows immediately from the universal one 
by the specialization $a^{MU}_{i, j} \longmapsto  \theta (a^{MU}_{i, j}) \; (i, j \geq 1)$.  
Recall that, by Quillen again, the coefficient ring $MU_{*} = MU^{-*}$ is isomorphic to the 
Lazard  ring  $\L$.   
In this section,    we shall construct the ``universal factorial Schur $P$- and $Q$-functions'' $P^{\L}_{\lambda} (\bf{x}|\bf{b})$, 
$Q^{\L}_{\lambda} (\bf{x}|\bf{b})$  
for $\lambda$ strict partitions. 
Then we shall see  at the end of Section  \ref{sec:DUFSPQF} that these functions 
constitute  required base   for ${\Gamma'}^{*}_{MU}$, $\Gamma^{*}_{MU}$ (when $\b = 0$).     
Since the functions $P^{\L}_{\lambda}(\x|\b)$'s and $Q^{\L}_{\lambda}(\x|\b)$'s  
will be of independent interest in terms of, e.g., algebraic combinatorics, 
so apart from geometry, 
 we shall deal with the above problem  purely algebraically in this section.

\subsection{Lazard ring $\L$ and the universal formal group law}  
We begin with collecting  the basic facts about the Lazard ring
 (in Sections \ref{sec:UFSPQF} and \ref{sec:DUFSPQF}, 
we use the convention  as  in Levin-Morel's book \cite{Lev-Mor2007}).  
In \cite{Laz55},  Lazard considered a universal commutative formal group law 
of rank one $(\L, F_{\L})$,  where the ring $\L$, called the {\it Lazard ring}, 
is isomorphic to the polynomial ring 
in countably infinite number of variables with  integer coefficients,    
and $F_{\L} = F_{\L}(u,v)$ is  the {\it universal formal group law} (for a construction 
and basic properties of $\L$, see Levine-Morel \cite[\S 1.1]{Lev-Mor2007}):
\begin{equation*} 
F_{\L} (u,v) =   u + v + \sum_{i,j \geq 1} a_{i,j} u^{i}  v^{j}   \in    \L[[u,v]].
\end{equation*} 
This is a formal power series in $u$,  $v$ with coefficients $a_{i,j}$ of formal variables
which satisfies the axiom of the  formal group law (see  \S \ref{subsec:Generalized(Co)homologyTheory}).
For the universal formal group law, we shall use the notation (see Levine-Morel \cite[\S 2.3.2]{Lev-Mor2007}) 
\begin{equation*} 
\begin{array}{llll} 
  &  a  \pf  b =  F_{\L}(a,b)   \quad  & \text{(formal sum)}, \medskip \\ 
  &  \overline{a} =   \chi_{_{\L}}(a)  & \text{(formal  inverse of} \;     a).    \medskip 
\end{array}
\end{equation*}
Note that  $\overline{a}\in \L[[a]]$ is a formal power series in $a$ with initial term $-a$, 
 and   first few terms appear  in  Levine-Morel  \cite[p.41]{Lev-Mor2007} (see also (\ref{eqn:[-1]_E(X)})).
In what follows, we regard $\L$ as a  {\it graded}   algebra over $\Z$, 
and the grading of $\L$ is given  as follows (see Levin-Morel \cite[p.5]{Lev-Mor2007}): 
The {\it homological}    (resp. {\it cohomological}) degree of $a_{i, j} \; (i, j \geq 1)$
 is defined by
$\deg_{h} \, (a_{i,j})=i+j-1$ (resp. $\deg^{h} (a_{i, j}) =  1 - i - j$).  
We will indicate which   grading of $\L$  we choose  in each time, and 
sometimes we  use the notation  $\L^{*}$ or $\L_{*}$ to avoid confusion. 
Be aware that in topology, it is customary to give   $a_{i, j}$ 
the {\it homological} degree  $2(i + j - 1)$ or the {\it cohomological} degree $2(1 - i - j)$
(see Section \ref{sec:Introduction}).

\subsection{Definition of $P^{\L}_{\lambda} (\x_{n}|\b)$, $Q^{\L}_{\lambda}(\x_{n}|\b)$} 
 \label{subsec:DefinitionP^L(x|b)Q^L(x|b)}
 The {\it factorial Schur $P$- and $Q$-functions} were   first introduced by 
Ivanov \cite[Definitions 2.10 and 2.13]{Iva2004} 
 (for their definition, see also Ikeda-Mihalcea-Naruse \cite[\S 4.2]{IMN2011}), 
and it turns out that these functions represent the 
Schubert classes of the torus equivariant cohomology rings 
of Lagrangian or orthogonal Grassmannians (see 
a series of works due to Ikeda \cite[Theorem 6.2]{Ike2007}, 
Ikeda-Naruse \cite[Theorem 8.7]{Ike-Nar2009}).  
Furthermore, in  \cite[Definition 2.1]{Ike-Nar2013},  they  introduced 
the $K$-theoretic  analogue of the  factorial Schur $P$- and $Q$-functions.
They showed that these functions also represent the Schubert classes of 
the torus equivariant $K$-theory of the above homogeneous spaces.

In this subsection, we shall generalize their definition to the case of {\it complex 
cobordism  cohomology theory}.  
Besides the variables $\x = (x_{1}, x_{2}, \ldots)$, we 
prepare another set of variables $\b = (b_{1}, b_{2}, \ldots)$.  
We provide the variables
$\x=(x_{1},  x_{2},  \ldots)$ and $\b = (b_{1},  b_{2},  \ldots)$ with degree
$\deg  \,  (x_{i}) = \deg \,   (b_{i})= 1 $ for $i = 1, 2, \ldots$, 
and we use $\deg^{h} \, (a_{i, j}) = 1 - i - j
\; (i, j \geq 1)$.   Following \S \ref{subsec:E-homologySchurP,Q-functions}, 
\ref{subsec:IdentificationsSymmetricFunctions}, we introduce the rings 
of symmetric functions 
\begin{equation*} 
\begin{array}{llll} 
  \Lambda_{\L}(\x) & := & \L_{*} \otimes_{\Z}  \Lambda (\x), \medskip \\
  \Lambda^{\L}(\x) & := & \Hom_{\L_{*}} (\Lambda_{\L} (\x),  \L_{*}). \medskip 
\end{array} 
\end{equation*} 
By definition, we have $\Lambda_{\L}(\x) \cong \Lambda_{*}^{MU}$ and 
$\Lambda^{\L}(\x) \cong \Lambda^{*}_{MU}$, and we know that 
\begin{equation*} 
\begin{array}{rllll} 
  \Lambda_{\L}(\x)  & \cong  & \L_{*} [h_{1}(\x), h_{2}(\x), \ldots] \; (\cong MU_{*}(\Omega SU)), \medskip \\
  \Lambda^{\L} (\x) & \cong  & \L^{*} [[e_{1}(\x), e_{2}(\x), \ldots ]] \; (\cong MU^{*}(\Omega SU)).  \medskip 
\end{array}
\end{equation*}       
Moreover we put 
\begin{equation*} 
\begin{array}{rllll} 
   \Lambda_{\L}(\x| \b) &:= & \L[[\b]] \, \hat{\otimes}_{\L}  \, \Lambda_{\L} (\x),  \medskip  \\
   \Lambda^{\L}(\x|\b)  &:= & \L[[\b]] \,  \hat{\otimes}_{\L}  \, \Lambda^{\L} (\x),    \medskip  
\end{array} 
\end{equation*}  
where $\L[[\b]] = \L[[b_{1}, b_{2}, \ldots]]$ is a formal power series ring in 
the variables $b_{1}, b_{2}, \ldots$.  
In this section, we also consider the ring of symmetric formal power series 
 of finite variables 
$\x_{n} = (x_{1}, \ldots, x_{n})$ with coefficients in $\L$ (or $\L[[\b]]$),  
and  use the notation  
\begin{equation*} 
    \Lambda^{\L}(\x_{n})  :=  \L^{*} [[e_{1}(\x_{n}), \ldots, e_{n}(\x_{n})]] \quad 
 \text{and}  \quad  \Lambda^{\L}(\x_{n}|\b) := \L[[\b]] \, \hat{\otimes}_{\L} \, \Lambda^{\L}(\x_{n}).
\end{equation*}     
In what follows, when considering polynomials or formal power series 
$f(x_{1}, x_{2}, \ldots)$ with coefficients in $\L$ (or $\L[[\b]]$), 
we shall call the degree with respect to $x_{1}, x_{2}, \ldots, b_{1}, b_{2}, \ldots$, and 
$a_{i, j} \; (i, j \geq 1)$  the {\it total} degree of $f(x_{1}, x_{2}, \ldots)$.  

For an integer $k\geq 1$, we define a generalization of the 
ordinary $k$-th power $t^{k}$  
 by 
\begin{equation*} 
   [t| \b]^{k}  := \displaystyle{\prod_{i=1}^{k}}  (t\pf b_{i})  
               = (t \pf b_{1})(t \pf b_{2}) \cdots (t \pf b_{k}) 
\end{equation*} 
and its variant by 
\begin{equation*} 
   [[t| \b  ]]^{k}  :=(t \pf t)[t| \b ]^{k-1}  
  = (t \pf t) (t \pf b_{1})(t \pf b_{2}) \cdots (t \pf b_{k-1}),
\end{equation*}  
where we  set $[t| \b ]^{0} =   [[t| \b ]]^{0} := 1$.
For a partition, i.e., a non-increasing sequence of non-negative integers 
 $\lambda=(\lambda_1,\ldots,\lambda_r) \;
 (\lambda_{1} \geq \lambda_{2} \geq \cdots \geq \lambda_{r} \geq 0)$, 	
we set
\begin{equation*} 
  [\x| \b ]^{\lambda}  :=  \displaystyle{\prod_{i=1}^{r}}  [x_i|\b]^{\lambda_i}  \quad 
\text{and}  \quad 
[[\x| \b ]]^{\lambda}  :=  \displaystyle{\prod_{i=1}^{r}} [[x_i|\b]]^{\lambda_i}.
\end{equation*} 
Let    $\lambda = (\lambda_{1}, \ldots, \lambda_{r})$
be a strict partition of length $\ell (\lambda) = r$, i.e.,  
a sequence of positive integers 
such that $\lambda_{1}  > \cdots > \lambda_{r} > 0$.   
We denote by $\mathcal{SP}$ the set of all strict partitions, and
$\mathcal{SP}_n$ the subset of $\mathcal{SP}$ consisting of 
strict partitions of length $r  \leq n$.  
The following Hall-Littlewood type  
definition with coefficients in $\L[[\b]]$  
was suggested to us by Anatol Kirillov (cf. Kirillov-Naruse \cite{Kir-Nar}), 
and   we thank him for this.

\begin{defn}  [Universal factorial Schur $P$- and $Q$-functions]   \label{df:DefinitionP^L(x_n|b)Q^L(x_n|b)} 
For a strict partition $\lambda=(\lambda_{1},  \ldots, \lambda_{r})$ with length $\ell (\lambda ) = r \leq n$, we define 
\begin{equation}    \label{eqn:DefinitionP^L(x_n|b)Q^L(x_n|b)}  
\begin{array}{rlll} 
& P^{\L}_{\lambda}(\x_{n}|\b)  =  P^{\L}_{\lambda} (x_{1}, \ldots, x_{n} |\b) 
:=  
\displaystyle\frac{1}{(n-r)!}
\sum_{w  \in  S_{n}} w 
 \left[
[\x | \b]^{\lambda}
\prod_{i=1}^r
\prod_{j=i+1}^{n}   \frac{x_{i}\pf x_{j}}{x_{i}\pf \overline{x}_{j}}\right],  \medskip \\
& Q^{\L}_{\lambda}(\x_{n}|\b) =   Q^{\L}_{\lambda} (x_{1}, \ldots, x_{n} |\b)
:=    
\displaystyle\frac{1}{(n-r)!}
\sum_{w  \in  S_{n}}w\left[
[[\x|\b]]^{\lambda}
\prod_{i=1}^r
\prod_{j=i+1}^n \frac{x_{i}\pf x_{j}}{x_{i}\pf \overline{x}_{j}}
\right],    \medskip 
\end{array} 
\end{equation} 
where the symmetric group $S_{n}$ acts only on the $x$-variables $x_{1}, \ldots, x_{n}$
by permutations.   If $\ell (\lambda)  = r > n$, we set $P^{\L}_{\lambda}(\x_{n}|\b) 
= Q^{\L}_{\lambda}(\x_{n}|\b) = 0$.  
\end{defn} 
\noindent
Since 
\begin{equation*} 
  x_{i} \pf \overline{x}_{j}=  (x_{i} -x_{j})(1+\text{higher degree terms in } x_{i} \text{ and } x_{j}
\text{ with coefficients in }\L)
, 
\end{equation*}  
$P^{\L}_{\lambda} (x_{1}, \ldots, x_{n}|\b)$ and
$Q^{\L}_{\lambda} (x_{1}, \ldots, x_{n}|\b)$
are well defined  elements of $\Lambda^{\L}(\x_{n}|\b)$,  
namely these are   symmetric formal power series with coefficients in $\L$ 
in the variables $x_{1},  \ldots, x_{n}$ and
formal power series in $b_{1}, b_{2}, \ldots, b_{\lambda_{1}}$ for $P^{\L}_{\lambda}$ 
(resp. $b_{1}, b_{2} \ldots, b_{\lambda_{1} - 1}$ for $Q^{\L}_{\lambda}$).  
Notice that these are  homogeneous formal power series of 
{\it total}     degree  $|\lambda|  = \sum_{i=1}^{r} \lambda_{i}$, 
the size of $\lambda$.  
We call these formal power series the 
{\it universal factorial Schur $P$- and $Q$- functions}.
In  Definition \ref{df:DefinitionP^L(x_n|b)Q^L(x_n|b)}, 
if we put $a_{i, j} = 0$ for all $i, j \geq 1$, the functions 
$P^{\L}_{\lambda} (\x_{n}|\b)$, $Q^{\L}_{\lambda}(\x_{n}|\b)$ reduce to 
the usual  factorial Schur 
$P$- and $Q$-polynomials $P_{\lambda}(\x_{n}|\b)$, $Q_{\lambda}(\x_{n}|\b)$.  
If we put $a_{1, 1} = \beta$, 
$a_{i, j}  = 0$ for all $(i, j) \neq (1, 1)$, then $P^{\L}_{\lambda}(\x_{n}|\b)$, $Q^{\L}_{\lambda}(\x_{n}|\b)$ reduce to the $K$-theoretic factorial Schur 
$P$- and $Q$-polynomials $GP_{\lambda}(\x_{n}|\b)$, $GQ_{\lambda}(\x_{n}|\b)$ 
due to Ikeda-Naruse.  Thus our functions $P^{\L}_{\lambda}(\x_{n}|\b)$, $Q^{\L}_{\lambda}(\x_{n}|\b)$
are  a generalization of these polynomials  and hence {\it universal} in this sense.

\begin{ex}   \label{ex:P^L_1Q^L_1}   
\quad 
\begin{enumerate} 
\item For $n = 1$ and $\lambda  = (1)$, we have 
\begin{equation*} 
\begin{array}{rlll} 
    P^{\L}_{(1)}(x_{1}|\b)  &= & [x_{1}|\b] = x_{1} \pf b_{1} = x_{1} + b_{1} + a_{1, 1}x_{1}b_{1} 
    + \cdots,    \medskip \\
    Q^{\L}_{(1)}(x_{1}|\b)  &= &  [[x_{1}|\b]]
     = x_{1} \pf x_{1}  =  2x_{1} + a_{1, 1}x_{1}^{2} + \cdots.    \medskip 
\end{array}  
\end{equation*} 
\item 
For $n = 2$ and $\lambda = (1) = (1, 0)$, we have 
\begin{equation*} 
\begin{array}{rlll} 
  P^{\L}_{(1)} (x_{1}, x_{2} |  \b) 
  & =  &  (x_{1} \pf x_{2}) 
   \left (\dfrac{x_{1} \pf b_{1}}{x_{1} \pf \overline{x}_{2}}  
 +  \dfrac{x_{2} \pf b_{1}} {x_{2}  \pf \overline{x}_{1}}   \right ), \medskip \\ 
  Q^{\L}_{(1)} (x_{1}, x_{2} |   \b)
  & = &  (x_{1} \pf x_{2}) 
    \left ( \dfrac{x_{1} \pf x_{1}}{x_{1} \pf \overline{x}_{2}}  
          + \dfrac{x_{2} \pf x_{2}}{x_{2} \pf  \overline{x}_{1}}   \right ).  \medskip  
\end{array}   
\end{equation*} 
\end{enumerate}  
\end{ex} 
\noindent 
We also set 
\begin{equation*} 
\begin{array}{rllll} 
P^{\L}_{\lambda}(\x_{n}) & = P^{\L}_{\lambda} (x_{1}, \ldots, x_{n}) 
:= P^{\L}_{\lambda} (x_{1},  \ldots, x_{n}| 0), \medskip \\
Q^{\L}_{\lambda}(\x_{n}) & = Q^{\L}_{\lambda} (x_{1},  \ldots, x_{n}) 
:= Q^{\L}_{\lambda} (x_{1}, \ldots, x_{n}|0).  \medskip 
\end{array} 
\end{equation*} 
These are   homogeneous   symmetric formal power  series in  $\Lambda^{\L}(\x_{n})$
of total degree $|\lambda|$. 
By definition, we have 
\begin{equation*} 
\begin{array}{rlll} 
  P^{\L}_{\lambda}(x_{1}, \ldots, x_{n}) 
 & = &  P_{\lambda}(x_{1}, \ldots, x_{n}) 
   +  g_{\lambda}(x_{1}, \ldots, x_{n}), \medskip \\
  Q^{\L}_{\lambda}(x_{1}, \ldots, x_{n}) 
 & = &  Q_{\lambda}(x_{1}, \ldots, x_{n}) 
  +  h_{\lambda}(x_{1}, \ldots, x_{n}),     \medskip 
\end{array}
\end{equation*} 
where  $P_{\lambda}(x_{1}, \ldots, x_{n})$ and $Q_{\lambda}(x_{1}, \ldots, x_{n})$ 
are the usual  Schur $P$- and $Q$-polynomials, and 
$g_{\lambda}(x_{1}, \ldots, x_{n})$ and $h_{\lambda}(x_{1}, \ldots, x_{n})$
are formal power series with coefficients in $\L$ 
 in $x_{1}, \ldots, x_{n}$ 
whose degrees   with respect to $x_{1}, \ldots, x_{n}$ 
 are strictly greater  than $|\lambda|$. 
  From this, we see that 
$P^{\L}_{\lambda}(x_{1}, \ldots, x_{n})$'s 
(resp.  $Q^{\L}_{\lambda}(x_{1}, \ldots, x_{n})$'s), 
$\lambda \in \mathcal{SP}_{n}$, are linearly independent over $\L$.  
For suppose that there exists a   homogeneous   relation of the form 
\begin{equation}  \label{eqn:LinearIndependenceP^L}  
   \sum_{\lambda  \in \mathcal{A}}  c_{\lambda} P^{\L}_{\lambda}(x_{1}, \ldots, x_{n}) = 0
\end{equation} 
with $c_{\lambda} \in \L$, $\deg  \, (c_{\lambda}) + |\lambda| = N > 0$, 
and the summation ranges over a certain subset $\mathcal{A} \subset \mathcal{SP}_{n}$. 
Since $\deg \, (c_{\lambda}) \leq 0$ by our convention of the grading of $\L = \L^{*}$, 
we have $|\lambda| \geq N$ for all $\lambda$ that appear in the above relation (\ref{eqn:LinearIndependenceP^L}). 
Let $N_{0} \geq N$ be the minimum of the size $|\lambda|$ for all $\lambda \in \mathcal{A}$, 
and $\mathcal{A}_{0} := \{ \lambda \in \mathcal{A} \; | \; |\lambda| = N_{0} \} \subset \mathcal{A}$. 
Then we have from (\ref{eqn:LinearIndependenceP^L}), 
    $\sum_{\lambda \in \mathcal{A}_{0}}  c_{\lambda}  P^{\L}_{\lambda}(x_{1}, \ldots, x_{n}) = 0$.  
 Thus we have 
\begin{equation*} 
\begin{array}{llll} 
 0 & = \displaystyle{\sum_{\lambda  \in \mathcal{A}_{0}}}  
   c_{\lambda}  P^{\L}_{\lambda} (x_{1}, \ldots, x_{n}) 
 = \sum_{\lambda  \in \mathcal{A}_{0}} c_{\lambda} (P_{\lambda}(x_{1}, \ldots, x_{n}) 
  + g_{\lambda}(x_{1}, \ldots, x_{n}))   \medskip \\
  & =  \displaystyle{\sum_{\lambda  \in \mathcal{A}_{0}}}  c_{\lambda} P_{\lambda}(x_{1}, \ldots, x_{n}) 
  +  \sum_{\lambda \in \mathcal{A}_{0}} c_{\lambda} g_{\lambda}(x_{1}, \ldots, x_{n}).   \medskip 
\end{array}  
\end{equation*} 
Considering the degrees with respect to $x_{1}, \ldots, x_{n}$, we have 
$\sum_{\lambda \in \mathcal{A}_{0}} c_{\lambda}  P_{\lambda}(x_{1}, \ldots, x_{n}) = 0$. 
On the other hand, it is known that $P_{\lambda} (x_{1}, \ldots, x_{n})$'s, $\lambda \in \mathcal{SP}_{n}$, form a  $\Z$-basis  for the ring  $\Gamma (\x_{n})$ 
of {\it supersymmetric} polynomials in $n$ variables $x_{1}, \ldots, x_{n}$ 
(see Pragacz \cite[Theorem 2.11]{Pra91}, Ivanov \cite[Proposition 2.12]{Iva2004}). 
Therefore  they form an $\L$-basis   for the ring $\L \otimes_{\Z} \Gamma (\x_{n})$. 
In particular, they are linearly independent over $\L$, and hence we conclude 
that $c_{\lambda} = 0$ for all $\lambda  \in \mathcal{A}_{0}$. 
Next we consider the minimum $N_{1}$ of the size of $|\lambda|$ for all $\lambda \in \mathcal{A} \setminus \mathcal{A}_{0}$, and  repeat the above  argument. 
In this way,   we conclude that $c_{\lambda}  = 0$ for all $\lambda \in \mathcal{A}$.

Similarly, by definition, we have 
\begin{equation*} 
\begin{array}{rlll} 
  P^{\L}_{\lambda}(x_{1}, \ldots, x_{n}|\b) 
 & =  & P_{\lambda}(x_{1}, \ldots, x_{n}|\b) 
  +  g_{\lambda}(x_{1}, \ldots, x_{n}|\b), \medskip \\
  Q^{\L}_{\lambda}(x_{1}, \ldots, x_{n}|\b) 
 & =  & Q_{\lambda}(x_{1}, \ldots, x_{n}|\b) 
  +  h_{\lambda}(x_{1}, \ldots, x_{n}|\b),     \medskip 
\end{array}
\end{equation*} 
where  $P_{\lambda}(x_{1}, \ldots, x_{n}|\b)$ and $Q_{\lambda}(x_{1}, \ldots, x_{n}|\b)$ 
are the usual factorial Schur $P$- and $Q$-polynomials, and 
$g_{\lambda}(x_{1}, \ldots, x_{n}|\b)$ and $h_{\lambda}(x_{1}, \ldots, x_{n}|\b)$
are formal power series  in $x_{1}, \ldots, x_{n}$, $b_{1}, b_{2}, \ldots$ 
whose degrees with respect to $x_{1}, \ldots, x_{n}$, $b_{1}, b_{2}, \ldots$ are 
strictly greater  than $|\lambda|$.  
The  verification of linear independence of $P^{\L}_{\lambda}(x_{1}, \ldots, x_{n}|\b)$'s 
(resp. $Q^{\L}_{\lambda}(x_{1}, \ldots, x_{n}|\b)$'s), $\lambda \in \mathcal{SP}_{n}$, 
over $\L[[\b]]$ will be deferred  until we show the ``vanishing property'' 
of these functions   in \S \ref{subsec:VanishingPropertyP^L(x|b)LQ^L(x|b)}.

\subsection{$\L$-supersymmetric series}    \label{subsec:L-supersymmetricity}
In this subsection, we introduce the notion of  the ``$\L$-supersymmetricity'' 
which is a generalization of the ``$Q$-cancellation property'' due to Pragacz \cite[p.145]{Pra91}, 
``supersymmetricity'' due to Ivanov \cite[Definition 2.1]{Iva2004}, and 
``$K$-supersymmetric property ($K$-theoretic $Q$-cancellation property)''
 due to Ikeda-Naruse \cite[Definition 1.1]{Ike-Nar2013}.  
$\L$-supersymmetric formal series is defined as follows.
\begin{defn}  [$\L$-supersymmetric series]  
A formal power series $f(x_{1}, \ldots, x_{n})$ in the variables $x_{1}, \ldots, x_{n}$ 
 with coefficients in  $\L$  is called    {$\L$-supersymmetric} if
\begin{itemize}
\item[(1)] $f(x_1,\ldots,x_n)$ is symmetric in the  variables $x_{1},  \ldots, x_{n}$, and
\item[(2)] $f(t,  \overline{t},  x_{3}, \ldots, x_{n})$ does not depend on $t$, or in other words, 
           $f(t, \overline{t}, x_{3}, \ldots, x_{n}) = f(0, 0, x_{3}, \ldots, x_{n})$ holds. 
\end{itemize}
\end{defn}
\noindent
For example, the formal power series $x_{1} \pf x_{2} \pf \cdots \pf x_{n}$ 
is $\L$-supersymmetric. 
Formal power series with the $\L$-supersymmetric property form a subring of 
$\Lambda^{\L}(\x_{n})$, 
and we denote it by  $\Gamma^{\L}(\x_{n})$.  
Hereafter we shall call $\Gamma^{\L}(\x_{n})$ the {\it ring of $\L$-supersymmetric functions} 
in the  $n$ variables $x_{1}, \ldots, x_{n}$. 
We also define  $\Gamma_{+}^{\L}(\x_{n})$   to be the subring  of $\Gamma^{\L}(\x_{n})$ consisting of 
$f(x_{1},  \ldots, x_{n}) \in \Gamma^{\L}(\x_{n})$ such that
$f(t, x_{2},  \ldots, x_{n}) - f(0, x_{2},  \ldots, x_{n})$ is divisible by $t \pf t$.
We also define   
\begin{equation*} 
  \Gamma^{\L}(\x_{n}|\b) := \L[[\b]] \,  \hat{\otimes}_{\L}   \,   \Gamma^{\L}(\x_{n}) \quad   
  \text{and} \quad  
   \Gamma^{\L}_{+}(\x_{n}|\b) := \L[[\b]]  \,  \hat{\otimes}_{\L}  \,    \Gamma^{\L}_{+}(\x_{n}).
\end{equation*} 

\begin{prop}  \label{prop:L-supersymmetry}  
\quad 
\begin{enumerate} 
\item For $\lambda \in \mathcal{SP}_{n}$, 
$P^{\L}_\lambda(x_{1}, \ldots, x_{n} )$  and 
$Q^{\L}_\lambda(x_{1},  \ldots,x_{n})$ are   in $\Gamma^{\L}(\x_n)$. 
Moreover  
$Q^{\L}_\lambda(x_1,\ldots,x_n)$ is an element of $\Gamma_{+}^{\L}(\x_n)$.
\item For $\lambda \in \mathcal{SP}_{n}$, 
$P^{\L}_\lambda(x_{1}, \ldots, x_{n}|\b)$  is in $\Gamma^{\L}(\x_{n}|\b)$,  
whereas   $Q^{\L}_\lambda(x_{1},  \ldots,x_{n}|\b)$ is    in $\Gamma^{\L}_{+}(\x_{n}|\b)$. 
\end{enumerate} 
\end{prop}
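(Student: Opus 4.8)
The condition that $P^{\L}_{\lambda}$ and $Q^{\L}_{\lambda}$ be symmetric in $x_{1},\ldots,x_{n}$ is immediate, since by Definition \ref{df:DefinitionP^L(x_n|b)Q^L(x_n|b)} each is a scalar multiple of $\sum_{w\in S_{n}}w[\,\cdot\,]$, hence $S_{n}$-invariant; so the content is the cancellation condition (2) and, in the $Q$-case, the extra divisibility by $t\pf t$. First I would record that the bracketed expression in the definition is invariant under the subgroup $S_{n-r}$ permuting $x_{r+1},\ldots,x_{n}$ (the monomial factor $[\x|\b]^{\lambda}$, resp. $[[\x|\b]]^{\lambda}$, involves only $x_{1},\ldots,x_{r}$, and the factors $\tfrac{x_{i}\pf x_{j}}{x_{i}\pf\overline{x}_{j}}$ with $r<i$ never occur), so that $\tfrac{1}{(n-r)!}\sum_{w\in S_{n}}w[\,\cdot\,]$ is really a sum over the $n(n-1)\cdots(n-r+1)$ ways of placing an ordered $r$-tuple of distinct variables into the ``special'' slots $1,\ldots,r$. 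I will use this ordered-subset form throughout.

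For condition (2), the plan is to substitute $x_{1}=t$, $x_{2}=\overline{t}$ into the defining formula and show the outcome is independent of $t$; by symmetry this is exactly $P^{\L}_{\lambda}(t,\overline{t},x_{3},\ldots,x_{n})$ (resp. $Q^{\L}_{\lambda}$). Classify the terms of the $S_{n}$-sum by the slots $a,b$ holding $t,\overline{t}$. If $\min(a,b)\le r$, say $a\le r$ and $a<b$, the factor indexed by the pair $(a,b)$ has numerator $x_{a}\pf x_{b}\big|_{x_{a}=t,\,x_{b}=\overline{t}}=t\pf\overline{t}=0$ while its denominator equals $t\pf t\ne 0$, so that term is the zero power series (the case $b\le r<a$ is symmetric). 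If instead $a,b>r$, then $t,\overline{t}$ do not appear in the monomial factor, and $t$ enters the rational part only through $\prod_{i=1}^{r}\tfrac{x_{c_{i}}\pf t}{x_{c_{i}}\pf\overline{t}}$ from slot $a$ and $\prod_{i=1}^{r}\tfrac{x_{c_{i}}\pf\overline{t}}{x_{c_{i}}\pf t}$ from slot $b$, where $x_{c_{1}},\ldots,x_{c_{r}}$ are the variables in the special slots (none of them $t$ or $\overline{t}$); these two products are mutually reciprocal, so the $t$-dependence cancels term by term. Hence the whole sum is $t$-free. The monomial factor plays no role beyond living on the special slots, so the argument applies verbatim to $Q^{\L}_{\lambda}$ and, the $\b$-variables being mere spectators, to $P^{\L}_{\lambda}(\x_{n}|\b)$ and $Q^{\L}_{\lambda}(\x_{n}|\b)$; this yields the $\L$-supersymmetry assertions of both parts.

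For $Q^{\L}_{\lambda}\in\Gamma^{\L}_{+}$, I would substitute only $x_{1}=t$ and compare with $x_{1}=0$. Terms in which $t$ occupies a special slot carry the explicit factor $[[t|\b]]^{\lambda_{i}}=(t\pf t)[t|\b]^{\lambda_{i}-1}$, hence are divisible by $t\pf t$ and vanish at $t=0$. In a term where $t$ occupies a slot $>r$, the variable $t$ enters only through $\prod_{i=1}^{r}\tfrac{x_{c_{i}}\pf t}{x_{c_{i}}\pf\overline{t}}$; writing $t=\overline{t}\pf(t-_{F}\overline{t})$ and expanding each factor one gets $\prod_{i=1}^{r}(x_{c_{i}}\pf t)-\prod_{i=1}^{r}(x_{c_{i}}\pf\overline{t})=(t-_{F}\overline{t})\cdot(\text{power series})$, and $t-_{F}\overline{t}=t\pf t$ since $\overline{\overline{t}}=t$. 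So each such term minus its value at $t=0$ is $t\pf t$ times a rational expression, and summing gives $Q^{\L}_{\lambda}(t,x_{2},\ldots,x_{n})-Q^{\L}_{\lambda}(0,x_{2},\ldots,x_{n})=(t\pf t)\cdot\Phi$ for some $\Phi$ in the fraction field.

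The genuinely delicate point — and the one I expect to be the main obstacle — is that $\Phi$ is an honest element of $\L[[\b]][[t,x_{2},\ldots,x_{n}]]$, not merely a rational function: the individual terms above carry denominators $x_{c}\pf\overline{t}$ (and $x_{c}\pf\overline{x}_{c'}$) that cancel only after summation, and $t\pf t$ has leading coefficient $2$, which is not a unit in $\L$. I would settle this by combining (i) the stability property $Q^{\L}_{\lambda}(0,x_{2},\ldots,x_{n})=Q^{\L}_{\lambda}(x_{2},\ldots,x_{n})$ (established in \S\ref{subsec:StabilityProperty}), so that the left-hand side is a bona fide power series vanishing at $t=0$, with (ii) the divisibility lemmas for $t\pf t$ developed for the one-variable generating functions (cf. the argument in \S\ref{subsec:OneRowCase} and the fact ``$T-\overline{T}$ is divisible by $[2]_{E}(T)$'' used in \S\ref{subsec:E-homologySchurP,Q-functions}), applied variable by variable: a power series $g\in\L[[\b]][[t,x_{2},\ldots,x_{n}]]$ vanishing at $t=0$ which equals $(t\pf t)$ times such a rational expression must be divisible by $t\pf t$ in the power series ring. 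Granting this, $Q^{\L}_{\lambda}(\x_{n}|\b)\in\Gamma^{\L}_{+}(\x_{n}|\b)$ follows, again with the $\b$-variables inert, completing both parts.
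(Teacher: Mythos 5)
Your argument for the $\L$-supersymmetry of $P^{\L}_\lambda$ and $Q^{\L}_\lambda$ follows exactly the same route as the paper's key Lemma (Lemma~\ref{lem:L-supersymmetric}): substitute $t,\overline{t}$ into two of the variables, classify the terms of the $S_n$-sum according to whether at least one of $t,\overline{t}$ lands in a ``special'' slot $\le r$ (in which case the numerator $t\pf\overline{t}=0$ kills the term) or both land in slots $> r$ (in which case the pair of reciprocal ratios $\tfrac{x\pf t}{x\pf\overline{t}}\cdot\tfrac{x\pf\overline{t}}{x\pf t}$ cancels), and conclude. This is correct, and the observation about the $S_{n-r}$ redundancy, while not strictly needed, is a sound reformulation.

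Your treatment of the $\Gamma^{\L}_{+}$ membership is where you diverge from (and in one respect improve on) the paper. The paper asserts
$\prod_{i=1}^{r}\tfrac{x_{w(i)}\pf t}{x_{w(i)}\pf\overline{t}} - 1 = \prod_{i=1}^{r}\tfrac{(x_{w(i)}\pf t)-(x_{w(i)}\pf\overline{t})}{x_{w(i)}\pf\overline{t}}$,
which is false for $r\ge 2$; what is needed is the telescoping identity
$\prod c_i - \prod d_i = \sum_k\bigl(\prod_{i<k}c_i\bigr)(c_k-d_k)\bigl(\prod_{i>k}d_i\bigr)$,
and your substitution $t=\overline{t}\pf(t\pf t)$ is an equally valid way to exhibit each $\prod(x_{c_i}\pf t)-\prod(x_{c_i}\pf\overline{t})$ as $(t\pf t)$ times a power series. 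So you are doing this step right where the paper glosses over a wrong identity.

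The genuine gap is in your final step, the passage from rational-function divisibility to power-series divisibility. You correctly identify the issue (the denominators $x_{c}\pf\overline{t}$ cancel only after summing over $w$, and $t\pf t$ has leading coefficient $2$), but the proposed resolution does not close it. The assertion ``a power series $g$ vanishing at $t=0$ which equals $(t\pf t)$ times a rational expression must be divisible by $t\pf t$ in the power series ring'' is \emph{false} as a general principle: take $g=2t$; then $g=(t\pf t)\cdot\tfrac{2t}{t\pf t}$ with $\tfrac{2t}{t\pf t}=1-\tfrac{a_{1,1}}{2}t+\cdots\notin\L[[t]]$. Vanishing at $t=0$ guarantees divisibility by $t$, not by $t\pf t$, and the stability property you invoke in step (i) plays no role here (vanishing of $Q^{\L}_\lambda(t,\ldots)-Q^{\L}_\lambda(0,\ldots)$ at $t=0$ is automatic from the subtraction, with or without stability). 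To actually justify the divisibility one needs something extra --- e.g.\ that the cleared-denominator quotient $\Phi=\bigl(Q^{\L}_\lambda(t,x_2,\ldots,x_n)-Q^{\L}_\lambda(0,x_2,\ldots,x_n)\bigr)/(t\pf t)$ simultaneously lies in a localization of $\L[[t,x_2,\ldots,x_n]]$ at elements \emph{coprime} to $t\pf t$ (of the form $x_a\pf\overline{x_b}$, which have unit content) and in $\Q\otimes\L[[t,x_2,\ldots,x_n]]$, and then argue by coprimality/content; alternatively one can keep $t$ and the ``$\overline{t}$'' occurrences as independent variables $u,v$, prove divisibility by $u\pf v$ via Lemma~\ref{lem:FactorTheorem} in $\L[[u,v,\ldots]]$, and only then substitute $u=v=t$. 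It is worth noting that the paper's own proof is equally terse on this point, so you have put your finger on a real subtlety; but your proposed ``must be divisible'' step is not a proof of it.
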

\noindent
We shall prove this proposition along the same lines as in 
Ikeda-Naruse \cite[Propositions 3.1, 3.2]{Ike-Nar2013}. 
However we shall give a proof for convenience of the reader. 
Before proving  this proposition, we prepare two  simple lemmas. 
\begin{lem}
\label{lem:FactorTheorem}  
Let $f(x, y) \in \L[[x, y]]$ be a formal power series. 
Suppose that $f(\overline{y}, y) = 0$. Then $f(x, y)$ is divisible by $x \pf y$. 
\end{lem}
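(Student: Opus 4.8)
The statement is a formal-group analogue of the elementary fact that a polynomial vanishing on the locus $x = -y$ is divisible by $x+y$. The plan is to reduce to exactly this classical fact by a change of variables adapted to the formal group law. First I would recall that, since $F_{\L}(u,v) = u + v + \sum_{i,j\geq 1} a_{i,j} u^i v^j$, the element $x \pf y = F_{\L}(x,y) \in \L[[x,y]]$ has the form $x + y + (\text{higher order})$, and in particular $x \pf y$ is, up to multiplication by a unit in $\L[[x,y]]$, a power series whose lowest-degree part is $x + y$. More precisely, I would argue that $x \pf y$ can be taken as one of a pair of formal coordinates: the map $(x,y) \mapsto (x \pf y,\ y)$ is an automorphism of $\L[[x,y]]$, since its Jacobian at the origin is invertible (the partial derivative of $x \pf y$ with respect to $x$ at $x = y = 0$ equals $1$).

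The key step is then the substitution itself. Set $s := x \pf y$ and keep $y$ as the second variable; by the automorphism just described, $\L[[x,y]] = \L[[s,y]]$, and a power series is divisible by $s = x \pf y$ in $\L[[x,y]]$ if and only if it is divisible by $s$ in $\L[[s,y]]$, which simply means its constant term with respect to $s$ vanishes. Now I must identify what the hypothesis $f(\overline{y},y) = 0$ says after this substitution. The inverse substitution expresses $x$ as a power series in $s$ and $y$; setting $s = 0$ forces $x \pf y = 0$, i.e. $x = \overline{y}$ (the formal inverse is characterized by $\overline{y} \pf y = 0$, and $x \mapsto x \pf y$ being injective on $\L[[y]][[x]]$ makes this the unique solution). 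Hence, writing $f(x,y) = g(s,y)$ under the coordinate change, one has $g(0,y) = f(\overline{y},y) = 0$. Therefore $g(s,y)$ is divisible by $s$ in $\L[[s,y]]$, and translating back, $f(x,y)$ is divisible by $x \pf y$ in $\L[[x,y]]$, as required.

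The main obstacle — really the only point needing care — is justifying that $(x,y) \mapsto (x \pf y, y)$ is a genuine automorphism of the formal power series ring $\L[[x,y]]$ and that "divisible by $x \pf y$" is preserved, including the grading subtlety that $\L$ itself is graded with $a_{i,j}$ in negative cohomological degree; one should check the substitution respects the relevant completed/graded structure so that the manipulations are legitimate in $\L[[x,y]]$ rather than in some larger ring. An alternative, more hands-on route avoids coordinate changes entirely: write $f(x,y) = f(x,y) - f(\overline{y},y)$ and observe that $f(x,y) - f(\overline{y},y)$ is divisible by $x - \overline{y}$ in the ordinary sense (standard for power series, applied variable by variable), then note $x - \overline{y} = (x \pf y) \cdot u(x,y)$ for a unit $u \in \L[[x,y]]$ since both $x - \overline{y}$ and $x \pf y$ have leading term $x + y$ and differ by a unit. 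I would likely present this second argument as the cleaner one, as it makes the unit factor explicit and sidesteps any discussion of automorphisms. Either way the proof is short; this lemma is a routine preparatory step for the $\L$-supersymmetry of $P^{\L}_\lambda$ and $Q^{\L}_\lambda$ in Proposition \ref{prop:L-supersymmetry}.
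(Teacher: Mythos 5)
Your second, ``more hands-on'' route is precisely the paper's proof: $f$ vanishes at $x=\overline{y}$ hence is divisible by $x-\overline{y}$, and $x\pf y$ differs from $x-\overline{y}$ by a unit of $\L[[x,y]]$. One small point worth tightening: the unit claim should be justified by noting that $x\pf y$ itself vanishes at $x=\overline{y}$ (so $x\pf y=(x-\overline{y})u$) and that $u$ has constant term $1$; ``same leading term'' alone does not imply two series differ by a unit.
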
 
\begin{proof} 
By the assumption, $f(x, y)$ is divisible by $x - \overline{y}$. 
On the other hand,  we see that $x \pf y = (x -  \overline{y}) \times u(x, y)$
with  $u(x, y) \in \L[[x, y]]$, a unit.   From this the claim follows. 
\end{proof}

\begin{lem}
 \label{lem:L-supersymmetric}  
Let $r \leq n$, and $f(x_{1}, \ldots, x_{r})$ be a formal power series  in $\L[[x_{1}, \ldots, x_{r}]]$. 
Then the following function  is $\L$-supersymmetric. 
\begin{equation}   \label{eqn:R_n}  
  R_{n}(x_{1}, \ldots, x_{n}) = \sum_{w \in S_{n}}  w 
  \left [  f(x_{1}, \ldots, x_{r})  \prod_{i=1}^{r}  \prod_{j = i + 1}^{n} 
           \dfrac{x_{i} \pf x_{j}} {x_{i} \pf \overline{x}_{j} } \right  ].   
\end{equation}  
Furthermore if $f(x_{1}, \ldots, x_{r})$ is divisible by 
$\prod_{i=1}^{r} x_{i}  = x_{1}\cdots x_{r}$, 
then $R_{n}(x_{1}, \ldots, x_{n})$ has the {\it stability} in the 
following sense$:$ 
\begin{equation*}
  R_{n+1}(x_{1}, \ldots, x_{n}, 0)  = (n + 1 - r) R_{n}(x_{1}, \ldots, x_{n}).
\end{equation*}  
\end{lem}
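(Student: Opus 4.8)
The plan is to prove the two assertions separately, each by a term-by-term inspection of the defining symmetrization under a suitable substitution. Write $\Phi_n(x_1,\ldots,x_n)$ for the bracketed expression, so $R_n=\sum_{w\in S_n}w[\Phi_n]$. That $R_n$ is symmetric in $x_1,\ldots,x_n$ is automatic, and one checks that $R_n$ is a genuine element of $\Lambda^{\L}(\x_n)$ exactly as for $P^{\L}_\lambda$ and $Q^{\L}_\lambda$ in Definition~\ref{df:DefinitionP^L(x_n|b)Q^L(x_n|b)} (the apparent simple poles along $x_i=x_j$ cancel upon symmetrization). So the real content of part~(1) is condition~(2) in the definition of $\L$-supersymmetry.

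To establish that condition, I would substitute $x_1=t$, $x_2=\overline{t}$ into $R_n$ and look at each summand $w[\Phi_n]$ separately: there the variable $t$ occupies the slot $a:=w^{-1}(1)$ and $\overline{t}$ the slot $b:=w^{-1}(2)$. The key observation is that if $\min(a,b)\le r$, then $(i,j)=(\min(a,b),\max(a,b))$ is one of the index pairs in $\prod_{i=1}^{r}\prod_{j=i+1}^{n}$, and the numerator of the corresponding factor is $x_i\pf x_j=t\pf\overline{t}=0$ (using commutativity of $\pf$); since after the substitution none of the denominators $x_i\pf\overline{x}_j$ vanishes, this summand is literally the zero element. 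Hence only the summands with $a,b>r$ survive, and in such a summand the only factors touching the slots $a$ or $b$ are $\dfrac{x_i\pf x_a}{x_i\pf\overline{x}_a}$ and $\dfrac{x_i\pf x_b}{x_i\pf\overline{x}_b}$ for $1\le i\le r$; since $\{x_a,x_b\}=\{t,\overline{t}\}$ and $\overline{\overline{t}}=t$, the product $\dfrac{x_i\pf x_a}{x_i\pf\overline{x}_a}\cdot\dfrac{x_i\pf x_b}{x_i\pf\overline{x}_b}$ equals $1$ for each $i$, while $f(x_1,\ldots,x_r)$ and all remaining factors involve neither $t$ nor $\overline{t}$. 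Thus every surviving summand, and therefore $R_n(t,\overline{t},x_3,\ldots,x_n)$, is independent of $t$.

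For the stability statement I would assume $x_1\cdots x_r\mid f$ and expand $R_{n+1}(x_1,\ldots,x_n,0)=\sum_{w\in S_{n+1}}w[\Phi_{n+1}]$, where in each summand $0$ occupies the slot $p:=w^{-1}(n+1)$. If $p\le r$, the factor $x_p=0$ of $f(x_1,\ldots,x_r)$ annihilates the summand. If $p>r$, then every factor $\dfrac{x_i\pf x_p}{x_i\pf\overline{x}_p}$ with $i\le r$ equals $\dfrac{x_i\pf 0}{x_i\pf\overline{0}}=1$ (because $\overline{0}=0$), and deleting the slot $p$ --- which, being $>r$, leaves the first $r$ slots and the whole index set of the product intact up to an order-preserving relabeling --- identifies the summand with the corresponding term of $R_n(x_1,\ldots,x_n)$; summing over all $w$ with that fixed $p$ recovers $R_n(x_1,\ldots,x_n)$ exactly. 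As $p$ ranges over the $(n+1)-r$ values in $\{r+1,\ldots,n+1\}$ this yields $R_{n+1}(x_1,\ldots,x_n,0)=(n+1-r)R_n(x_1,\ldots,x_n)$.

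The only substantive idea is the identity $t\pf\overline{t}=0$ together with its commuted form $\overline{t}\pf t=0$, which at one stroke kills every ``mixed'' summand in part~(1); the rest is bookkeeping. The two places I would be careful about are: justifying that a summand with a vanishing numerator is genuinely zero, i.e.\ that the substitution $x_2=\overline{x}_1$ does not simultaneously make one of the denominators vanish (it does not, since distinct variables among $t,\overline{t},x_3,\ldots,x_n$ never become equal); and, in part~(2), the order-preserving relabeling of the slots $\ne p$ when $p>r$, where one also uses implicitly that $\Phi_n$ is already symmetric in $x_{r+1},\ldots,x_n$.
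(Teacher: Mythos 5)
Your proof is correct and takes essentially the same approach as the paper's: a term-by-term analysis of the symmetrization, with the vanishing of $t\pf\overline{t}$ killing the ``mixed'' summands in part (1), and a slot-based bijection with the $n$-variable sum in part (2). The only cosmetic differences are that you substitute directly into slots $1,2$ (which matches the definition; the paper uses arbitrary $x_p=t$, $x_q=\overline{t}$, which is redundant by symmetry) and that your part (2) identification of $w\in S_{n+1}$ with $w'\in S_n$ uses an order-preserving deletion of slot $p$ rather than the paper's conjugation by the transposition $(w(n{+}1)\;\; n{+}1)$; both are valid bijections realizing the same count $(n+1-r)$. Your parenthetical worry that the deletion step ``implicitly uses that $\Phi_n$ is symmetric in $x_{r+1},\ldots,x_n$'' is unnecessary: since $p>r$, the order-preserving relabeling of the surviving slots fixes $\{1,\ldots,r\}$ pointwise and maps $\{r+1,\ldots,n+1\}\setminus\{p\}$ onto $\{r+1,\ldots,n\}$, so the two products match factor by factor without invoking any extra symmetry.
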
  
\begin{proof} 
By the definition, $R_{n}(x_{1}, \ldots x_{n})$ is a symmetric 
formal power series
.  We shall show that this formal power series has $\L$-supersymmetric property. 
Let $x_{p} = t$, $x_{q} = \overline{t}$ for arbitrary integers $p$, $q$ such 
that  $1 \leq p < q \leq n$.  We claim that each summand corresponding to 
$w \in S_{n}$ does not depend on $t$.  
For convenience we put $F_{n}(x_{1}, \ldots, x_{n}) = \prod_{i=1}^{r} \prod_{j = i + 1}^{n} 
\dfrac{x_{i} \pf x_{j}}{x_{i} \pf \overline{x}_{j}}$. 
If one of $p$, $q$ is in $\{ w(1), \ldots, w(r) \}$, then we see directly that the term 
\begin{equation*} 
  F_{n}(x_{w(1)}, \ldots, x_{w(n)}) 
= \prod_{i=1}^{r}  
\prod_{j = i  + 1}^{n}  \dfrac{x_{w(i)} \pf x_{w(j)}}{x_{w(i)} \pf \overline{x}_{w(j)}}
\end{equation*} 
vanishes because of $t \pf \overline{t} = 0$. 
If $p, q \in \{ w(r + 1), \ldots, w(n) \}$, then  the term 
\begin{equation*} 
 F_{n}(x_{w(1)}, \ldots, x_{w(n)}) 
=  \prod_{i=1}^{r}  \dfrac{x_{w(i)} \pf x_{w(i + 1)}} {x_{w(i)} \pf \overline{x}_{w(i + 1)}} \times 
      \cdots \times \dfrac{x_{w(i)} \pf x_{w(n)}} {x_{w(i)} \pf \overline{x}_{w(n)}}  
\end{equation*}   
does not depend on $t$ because  of the identity $\dfrac{x_{w(i)} \pf t}{x_{w(i)} \pf \overline{t}} 
\times  \dfrac{x_{w(i)} \pf \overline{t}} {x_{w(i)} \pf t} = 1$. Note that 
in this case $f(x_{w(1)}, \ldots, x_{w(r)})$ does not depend on $t$.  
Thus the first assertion is proved.

For the second assertion, we argue as follows: 
In the defining equation of $R_{n+1}$,  we divide the summation into two parts: 
one is the summation for $w \in S_{n+1}$ such that $n + 1 \in \{ w(1), \ldots, w(r) \}$, 
and the other is the summation for $w \in S_{n+1}$ such that $n + 1  \in \{ w(r + 1), \ldots, w(n + 1) \}$.  Let $R_{n+1}(x_{1}, \ldots, x_{n+1}) = \sum_{1} + \sum_{2}$ be such decomposition. 
If we set $x_{n + 1} = 0$, then each summand in $\sum_{1}$ becomes zero 
because  $f(x_{w(1)},  \ldots, x_{w(r)}) = 0$ by the assumption.
Therefore we have only to consider the entries in $\sum_{2}$.  
If $n + 1 \in \{ w(r + 1), \ldots, w(n + 1) \}$, say $n + 1 = w(k)$ for some $r +  1 \leq k \leq n + 1$.  If $k = n + 1$, namely $w(n + 1) = n + 1$, the element $w \in S_{n + 1}$ naturally 
defines an element $w' \in S_{n}$, i.e., $w'(i) = w(i) \; (1 \leq i \leq n)$. Then 
we have 
$f(x_{w(1)}, \ldots, x_{w(r)}) = f(x_{w'(1)}, \ldots, x_{w'(r)})$. 
If $r + 1 \leq k \leq n$, we consider the permutation $(w(n+1) \; \;  n+1) w$.  
Since this permutation fixes $n + 1$, it defines naturally an element $w' \in S_{n}$, 
i.e., $w'(i) = w(i) \; (1 \leq i \leq n, \; i \neq k)$ and $w'(k) = w(n + 1)$.  
Then we also have $f(x_{w(1)}, \ldots, x_{w(r)}) = f(x_{w'(1)}, \ldots, x_{w'(r)})$. 
In either case, we see directly that 
\begin{equation*} 
   F_{n+1}(x_{w(1)}, \ldots, x_{w(n+1)})  =  F_{n}(x_{w'(1)}, \ldots, x_{w'(n)}), 
\end{equation*} 
when we put $x_{n + 1} = 0$.  Therefore  $\sum_{2}$ becomes 
\begin{equation*}
  (n - r + 1) \sum_{w' \in S_{n}} f(x_{w'(1)}, \ldots, x_{w'(r)}) 
   F_{n}(x_{w'(1)}, \ldots, x_{w'(n)})  
   = (n - r + 1) R_{n}(x_{1},\ldots, x_{n})
\end{equation*} 
when we put $x_{n+1} = 0$, and the second assertion is proved. 
\end{proof}

\begin{proof} [Proof of Proposition $\ref{prop:L-supersymmetry}$]
The assertion (2) follows immediately from (1). Let us prove (1).  
First assertion follows from Lemma \ref{lem:L-supersymmetric} when 
we put $f(x_{1}, \ldots, x_{r}) = [\x|0]^{\lambda}  = \prod_{i=1}^{r} [x_{i}|0]^{\lambda_{i}} =  \prod_{i=1}^{r} x_{i}^{\lambda_{i}}$, 
or $[[\x|0]]^{\lambda} =  \prod_{i=1}^{r} [[x_{i}|0]]^{\lambda_{i}}  
= \prod_{i=1}^{r} (x_{i} \pf x_{i}) x_{i}^{\lambda_{i} - 1}$.  

For the second statement, we shall  show that $Q^{\L}_{\lambda}(t, x_{2}, \ldots, x_{n}) - Q^{\L}_{\lambda}(0, x_{2}, \ldots, x_{n})$ is divisible by $t \pf t$.    
In the defining equation (\ref{eqn:DefinitionP^L(x_n|b)Q^L(x_n|b)}), we divide the summation  into two parts: 
one is the summation for $w \in S_{n}$ 
such that $1 \in \{ w(1), \ldots, w(r) \}$, and 
the other is the summation for $w \in S_{n}$ such that $1 \in \{ w(r + 1), \ldots, w(n) \}$. 
Let $Q^{\L}_{\lambda} (t, x_{2}, \ldots, x_{n}) - Q^{\L}_{\lambda}(0, x_{2}, \ldots, x_{n}) 
= \sum_{1} + \sum_{2}$ be such decomposition. 
 Since $[[x_{1}|\b]]^{\lambda_{1}} = (x_{1} \pf x_{1})[x_{1}|\b]^{\lambda_{1}-1}$, we see easily that 
each summand in $\sum_{1}$ is divisible by $t \pf t$.  
Each summand in $\sum_{2}$ is also divisible by $t \pf t$ because 
the numerator of the following term 
\begin{equation*}
  \prod_{i=1}^{r}  \dfrac{x_{w(i)} \pf t}{x_{w(i)} \pf \overline{t}}  - 1 
  = \prod_{i=1}^{r}  \dfrac{ (x_{w(i)}  \pf t) - (x_{w(i)}  \pf \overline{t} )}{x_{w(i)}  \pf  \overline{t}}
\end{equation*} 
is  divisible by $t \pf t$ from Lemma \ref{lem:FactorTheorem}.  Therefore we have the required result.  
\end{proof}

\subsection{Stability Property}    \label{subsec:StabilityProperty}  
In this subsection, we discuss the {\it stability property} of 
our functions $P^{\L}_{\lambda}$ and $Q^{\L}_{\lambda}$. 
First consider   the formal power series $P^{\L}_{\lambda}(x_{1}, \ldots, x_{n})$
and $Q^{\L}_{\lambda}(x_{1}, \ldots, x_{n}) \; (\lambda \in \mathcal{SP}_{n}$). 
They  have the following stability property: 
\begin{prop}
\label{prop:Stability}  
Let $\lambda$ be a strict partition of length $r \leq n$. Then 
\begin{enumerate} 
\item $P^{\L}_{\lambda}(x_{1}, \ldots, x_{n-1}, 0)  = P^{\L}_{\lambda}(x_{1}, \ldots, x_{n-1})$,   
\item $Q^{\L}_{\lambda}(x_{1}, \ldots, x_{n-1}, 0)  = Q^{\L}_{\lambda}(x_{1}, \ldots, x_{n-1})$,  
\end{enumerate} 
where the right-hand sides are zero if $r = n$.  
\end{prop}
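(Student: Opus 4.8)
The plan is to obtain both identities directly from the stability clause of Lemma~\ref{lem:L-supersymmetric}, after identifying $P^{\L}_{\lambda}(\x_{n})$ and $Q^{\L}_{\lambda}(\x_{n})$ as instances of the symmetrized sum $R_{n}$ appearing there. Setting $\b=0$ in Definition~\ref{df:DefinitionP^L(x_n|b)Q^L(x_n|b)}, one has $P^{\L}_{\lambda}(\x_{n})=\frac{1}{(n-r)!}R_{n}(x_{1},\dots,x_{n})$ with $f(x_{1},\dots,x_{r})=[\x|0]^{\lambda}=\prod_{i=1}^{r}x_{i}^{\lambda_{i}}$, and $Q^{\L}_{\lambda}(\x_{n})=\frac{1}{(n-r)!}R_{n}(x_{1},\dots,x_{n})$ with $f(x_{1},\dots,x_{r})=[[\x|0]]^{\lambda}=\prod_{i=1}^{r}(x_{i}\pf x_{i})\,x_{i}^{\lambda_{i}-1}$, where $r=\ell(\lambda)$.

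First I would verify the divisibility hypothesis of the stability clause, namely that $f$ is divisible by $x_{1}\cdots x_{r}$. For $P^{\L}_{\lambda}$ this is immediate: $\lambda$ strict forces $\lambda_{i}\geq 1$ for all $i\leq r$. For $Q^{\L}_{\lambda}$ it holds because $x_{i}\pf x_{i}=F_{\L}(x_{i},x_{i})$ has initial term $2x_{i}$ and hence is divisible by $x_{i}$, so each factor $(x_{i}\pf x_{i})\,x_{i}^{\lambda_{i}-1}$ is divisible by $x_{i}$. Thus Lemma~\ref{lem:L-supersymmetric} applies to both choices of $f$.

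Next, assume $r\leq n-1$ and apply the stability clause of Lemma~\ref{lem:L-supersymmetric} with $n$ there replaced by $n-1$: this gives $R_{n}(x_{1},\dots,x_{n-1},0)=(n-r)\,R_{n-1}(x_{1},\dots,x_{n-1})$, whence
\[
P^{\L}_{\lambda}(x_{1},\dots,x_{n-1},0)=\frac{n-r}{(n-r)!}\,R_{n-1}(x_{1},\dots,x_{n-1})=\frac{1}{(n-1-r)!}\,R_{n-1}(x_{1},\dots,x_{n-1})=P^{\L}_{\lambda}(x_{1},\dots,x_{n-1}),
\]
and the identical chain of equalities for $Q^{\L}_{\lambda}$. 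It remains to settle the boundary case $r=n$: here the right-hand side is $0$ by the convention that $P^{\L}_{\lambda}(\x_{n-1})=Q^{\L}_{\lambda}(\x_{n-1})=0$ when $\ell(\lambda)>n-1$, while on the left each summand of $R_{n}(x_{1},\dots,x_{n-1},0)$ carries the factor $f(x_{w(1)},\dots,x_{w(n)})$, which (since $n\in\{w(1),\dots,w(r)\}=\{1,\dots,n\}$) is divisible by $x_{n}^{\lambda_{k}}$ for $w(k)=n$, hence vanishes at $x_{n}=0$; as in the treatment of $\sum_{1}$ in the proof of Lemma~\ref{lem:L-supersymmetric}, the denominators involving $x_{n}$ do not vanish at $x_{n}=0$, so no singularity arises and the whole expression is $0$.

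The argument is essentially bookkeeping; the only subtle point is that the index shift $n\mapsto n-1$ in Lemma~\ref{lem:L-supersymmetric} is legitimate exactly when $r\leq n-1$, which is precisely why the case $r=n$ must be handled by the separate direct argument above, and — in the $Q$-case — that the divisibility of $f$ by $x_{1}\cdots x_{r}$ comes from the factor $x_{i}\pf x_{i}$ rather than from the exponent $\lambda_{i}-1$.
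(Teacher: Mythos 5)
Your proof is correct and follows essentially the same route as the paper: the paper's proof simply observes that $[\x|0]^{\lambda}$ and $[[\x|0]]^{\lambda}$ are divisible by $x_{1}\cdots x_{r}$ and then invokes Lemma~\ref{lem:L-supersymmetric}, leaving the bookkeeping with the $\frac{1}{(n-r)!}$ normalization and the $r=n$ edge case implicit. Your version spells out the factorial computation and handles $r=n$ separately (where the stability clause of the lemma cannot be applied because it requires $r\leq n-1$), using the $\sum_{1}$ vanishing argument from the lemma's own proof plus the observation that $\sum_{2}$ is empty; this is exactly the right reading of what the paper glosses over.
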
 
\begin{proof} 
First note that  both $[\x|0]^{\lambda} = \prod_{i=1}^{r} [x_{i}|0]^{\lambda_{i}} = \prod_{i=1}^{r} x_{i}^{\lambda_{i}}$ and $[[\x|0]]^{\lambda} = \prod_{i=1}^{r}   [[x_{i}|0]]^{\lambda_{i}} 
= \prod_{i=1}^{r} (x_{i} \pf x_{i}) x_{i}^{\lambda_{i} - 1}$ 
are divisible by $\prod_{i=1}^{r}x_{i} = x_{1} \cdots x_{r}$. 
Therefore the result follows from Lemma \ref{lem:L-supersymmetric}.  
\end{proof}

For each positive integer $n$, let $\varphi_{n+1}:  \Gamma^{\L} (\x_{n+1})  \longrightarrow \Gamma^{\L} (\x_{n})$ 
be a homomorphism of  graded $\L$-algebras given by the specialization $x_{n+1} = 0$. 
Then  $\{  \Gamma^{\L} (\x_{n}), \varphi_{n} \}_{n = 1, 2, \ldots}$ form an inverse system 
of graded $\L$-algebras. 
Define   the {\it ring of $\L$-supersymmetric fucntions}  
    $\Gamma^{\L}(\x)$ 
to be   the inverse limit of this system. Namely, we define 
\begin{equation*} 
  \Gamma^{\L} (\x) := 
\displaystyle{ \lim_{ \substack{\longleftarrow  \\ n } } }  \,  \Gamma^{\L}(\x_{n}). 
\end{equation*}   
For each strict partition   $\lambda \in \mathcal{SP}$,  
by  Proposition \ref{prop:Stability},  the sequence 
$\{ P^{\L}_{\lambda}(x_{1}, \ldots, x_{n}) \}$ 
defines an element
\begin{equation*} 
   P^{\L}_{\lambda}(\x)  := \displaystyle{\lim_{\substack{\longleftarrow \\ n}}}   \, 
  P^{\L}_{\lambda}(x_{1}, \ldots, x_{n})    
\in \Gamma^{\L}(\x). 
\end{equation*}  
 Similarly   we  define 
\begin{equation*} 
      \Gamma^{\L}_{+} (\x) := \displaystyle{ \lim_{\substack{\longleftarrow \\ n}}}    
\,  \Gamma^{\L}_{+}(\x_{n}), 
\end{equation*}  
which is a subring of $\Gamma^{\L}(\x)$.  Also  for each strict partition $\lambda \in \mathcal{SP}$, the sequence 
$\{  Q^{\L}_{\lambda}(x_{1}, \ldots, x_{n}) \}$ 
defines an element 
\begin{equation*} 
  Q^{\L}_{\lambda} (\x) :=   \displaystyle{\lim_{ \substack{\longleftarrow \\ n}}}   \,  
Q^{\L}_{\lambda}(x_{1}, \ldots, x_{n})  \in \Gamma^{\L}_{+}(\x).
\end{equation*}   
Note that we have the following inclusion relations: 
\begin{equation*} 
   \Gamma^{\L}_{+}(\x_{n}) \subset \Gamma^{\L}(\x_{n}) \subset \Lambda^{\L}(\x_{n})
= \L[[e_{1}(\x_{n}), \ldots, e_{n}(\x_{n})]].   
\end{equation*}  
This implies that 
\begin{equation*} 
  \Gamma^{\L}_{+}  (\x)  \subset \Gamma^{\L} (\x)  \subset \Lambda^{\L}(\x) 
 =  \L[[e_{1}(\x), e_{2}(\x), \ldots   ]]. 
\end{equation*} 
From this, $\Gamma^{\L}(\x)$ and $\Gamma^{\L}_{+} (\x)$ inherit the Hopf algebra 
structure over $\L$ from $\Lambda^{\L}(\x) = \L[[e_{1}(\x), e_{2}(\x), \ldots]]$.

Next consider the stability property of $P^{\L}_{\lambda}(x_{1}, \ldots, x_{n}|\b)$ 
and $Q^{\L}_{\lambda}(x_{1}, \ldots, x_{n}|\b)$  $(\lambda \in \mathcal{SP}_{n})$. 
The homomorphisms of  graded $\L[[\b]]$-algebras $\psi_{n+1}:  \Gamma^{\L}(\x_{n+1}|\b) 
\longrightarrow \Gamma^{\L}(\x_{n}|\b)$ given by the specialization $x_{n + 1} = 0$
define an inverse system $\{ \Gamma^{\L}(\x_{n}|\b),  \psi_{n} \}_{n = 1, 2, \ldots}$.  
Then we define 
\begin{equation*}  
  \Gamma^{\L}(\x|\b)  
  :=   \displaystyle{\lim_{\substack{\longleftarrow \\ n}}} \,   \Gamma^{\L}(\x_{n}|\b) 
  =  \displaystyle{\lim_{\substack{\longleftarrow \\ n}}} \, \L[[\b]] \,  \hat{\otimes}_{\L} \,  \Gamma^{\L}(\x_{n}) 
 \cong  \L[[\b]] \, \hat{\otimes}_{\L}  \,  \Gamma^{\L} (\x). 
\end{equation*} 
Similarly, we define 
\begin{equation*} 
\Gamma^{\L}_{+} (\x|\b)  
 :=   \displaystyle{\lim_{\substack{\longleftarrow\\n}}}   \,  \Gamma^{\L}_{+}(\x_{n}|\b) 
 =    \displaystyle{\lim_{\substack{\longleftarrow\\n}}}   \,  \L[[\b]] \,   \hat{\otimes}_{\L} \,   \Gamma^{\L}_{+}(\x_{n}) 
\cong   \L[[\b]]  \,  \hat{\otimes}_{\L}  \,  \Gamma^{\L}_{+}(\x). 
\end{equation*}     
In contrast to  Proposition \ref{prop:Stability}, the functions   
$P^{\L}_{\lambda} (x_{1}, \ldots, x_{n}|\b)$ and $Q^{\L}_{\lambda}(x_{1}, \ldots, x_{n}|\b)$
 have the   following stability property: 
\begin{prop}    \label{prop:StabilityMod2}  
Let $\lambda$ be a strict partition of length $r \leq n$. Then 
\begin{enumerate} 
\item   
  $P^{\L}_{\lambda}(x_{1}, \ldots, x_{n-2}, 0, 0 |\b) = P^{\L}_{\lambda}(x_{1}, \ldots, x_{n-2}|\b)$,  
\item 
   $Q^{\L}_{\lambda}(x_{1}, \ldots, x_{n-1}, 0 |\b)  = Q^{\L}_{\lambda}(x_{1}, \ldots, x_{n-1}|\b)$, 
\end{enumerate} 
where the right-hand sides are zero if $r = n$.    
\end{prop}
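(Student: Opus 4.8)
The plan is to deduce Proposition~\ref{prop:StabilityMod2} from the stability statement for the non-factorial functions (Proposition~\ref{prop:Stability}) together with the general stability mechanism isolated in Lemma~\ref{lem:L-supersymmetric}. The two cases are genuinely different, so I would treat them separately.

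For part (2), I would apply Lemma~\ref{lem:L-supersymmetric} directly with $f(x_1,\ldots,x_r) = [[\x|\b]]^{\lambda} = \prod_{i=1}^{r}(x_i \pf x_i)[x_i|\b]^{\lambda_i-1}$. The point is that each factor $(x_i \pf x_i)[x_i|\b]^{\lambda_i - 1}$ is divisible by $x_i$ (since $x_i \pf x_i = 2x_i + \cdots$ and $\lambda_i \geq 1$), so $f$ is divisible by $\prod_{i=1}^{r} x_i$. The divisibility hypothesis of Lemma~\ref{lem:L-supersymmetric} is therefore met, and the lemma gives $R_{n+1}(x_1,\ldots,x_n,0) = (n+1-r)R_n(x_1,\ldots,x_n)$. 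Dividing by the factorials in Definition~\ref{df:DefinitionP^L(x_n|b)Q^L(x_n|b)}, the factor $\frac{1}{(n+1-r)!}$ against $\frac{n+1-r}{(n-r)!}$ cancels correctly, yielding $Q^{\L}_{\lambda}(x_1,\ldots,x_{n-1},0|\b) = Q^{\L}_{\lambda}(x_1,\ldots,x_{n-1}|\b)$. The case $r = n$ is handled by the vanishing convention together with the observation that setting $x_n = 0$ in a product already divisible by $x_1\cdots x_n$ gives $0$.

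For part (1), the subtlety is that $[\x|\b]^{\lambda} = \prod_{i=1}^{r}[x_i|\b]^{\lambda_i}$ with $[x_i|\b]^{\lambda_i} = (x_i \pf b_1)\cdots(x_i \pf b_{\lambda_i})$ is \emph{not} divisible by $x_i$ (its constant term in $x_i$ is $b_1\cdots b_{\lambda_i} \neq 0$), so Lemma~\ref{lem:L-supersymmetric} does not apply and one really only gets ``mod $2$'' stability. Here I would follow the Ikeda--Naruse argument: split $x_n = 0$ into the two sums according to whether $n \in \{w(1),\ldots,w(r)\}$ or $n \in \{w(r+1),\ldots,w(n)\}$. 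When $n$ is among the last $n-r$ indices, setting $x_n = 0$ reduces the summand to the corresponding summand in $R_{n-1}$ times a factor counting the $n-r$ positions where $n$ could sit, exactly as in the proof of Lemma~\ref{lem:L-supersymmetric}. When $n \in \{w(1),\ldots,w(r)\}$, say $n = w(k)$ with $k \leq r$, setting $x_n = 0$ leaves the term $[0|\b]^{\lambda_k} = b_1\cdots b_{\lambda_k}$, which is generally nonzero; but pairing up $w$ with the permutation obtained by also transposing the entries in positions $k$ and $k+1$ (both $\leq r$) one finds that the two contributions differ by a sign coming from the factor $\frac{x_{w(k)} \pf x_{w(k+1)}}{x_{w(k)} \pf \overline{x}_{w(k+1)}}$ becoming antisymmetric in the two arguments — this is precisely why one needs to kill \emph{two} variables: setting $x_{n-1} = x_n = 0$ makes the leftover $\prod$ symmetric in a way that forces these leftover terms to cancel in pairs. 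Combining, $P^{\L}_{\lambda}(x_1,\ldots,x_{n-2},0,0|\b) = (n-1-r)(n-r) \cdot \frac{1}{(n-r)!} \cdot (\text{sum over } S_{n-2}) = P^{\L}_{\lambda}(x_1,\ldots,x_{n-2}|\b)$ after checking the factorials.

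The main obstacle I anticipate is the bookkeeping in part (1): tracking exactly how the ``bad'' terms (those with $n$ or $n-1$ among $w(1),\ldots,w(r)$) organize into cancelling pairs once two variables are set to zero, and confirming that the surviving ``good'' terms reproduce $R_{n-2}$ with the right combinatorial multiplicity $(n-r)(n-1-r)$. This requires care because the ratio $\frac{x_i \pf x_j}{x_i \pf \overline{x}_j}$ is only ``antisymmetric up to a unit'' rather than genuinely antisymmetric, so one must argue at the level of the full summand rather than just its leading term — but since $x \pf \overline{y} = (x-y)u(x,y)$ with $u$ a unit (as used in Lemma~\ref{lem:FactorTheorem}), the pairing-and-cancellation goes through. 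Everything else is the routine factorial arithmetic, which I would not spell out in detail.
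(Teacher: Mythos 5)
Your treatment of part (2) matches the paper exactly: apply Lemma~\ref{lem:L-supersymmetric} with $f=[[\x|\b]]^\lambda$, which is divisible by $x_1\cdots x_r$ because each factor $[[x_i|\b]]^{\lambda_i}=(x_i\pf x_i)[x_i|\b]^{\lambda_i-1}$ is divisible by $x_i$, and then cancel the factorials. That part is fine and requires no further comment.

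For part (1) the paper itself gives no argument beyond a citation to Ikeda--Naruse, and the reconstruction you offer has a real gap. The pairing you propose, $w\leftrightarrow w\,s_k$ (transposing positions $k$ and $k+1$), does not produce cancelling pairs: after the transposition the problematic variable $x_n$ sits in position $k+1$ and is raised to the power $\lambda_{k+1}\neq\lambda_k$, so the two summands carry different weight factors $[0|\b]^{\lambda_k}=b_1\cdots b_{\lambda_k}$ versus $[0|\b]^{\lambda_{k+1}}=b_1\cdots b_{\lambda_{k+1}}$ and cannot cancel; moreover when $k=r$ the partner position $k+1$ is outside $\{1,\ldots,r\}$ so the pairing is not even defined. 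The claimed antisymmetry is also false: $\frac{x\pf y}{x\pf\overline{y}}$ has symmetric numerator and a denominator satisfying $y\pf\overline{x}=\overline{x\pf\overline{y}}$, which is the \emph{formal} inverse, not $-1$ times $x\pf\overline{y}$, so swapping the two arguments does not flip the sign. The relevant pairing is instead $w\leftrightarrow(n{-}1\;n)\cdot w$, which swaps the \emph{values} $n-1$ and $n$ while keeping the attached exponent $\lambda_k$ fixed; and the analysis must handle the joint configuration of $n-1$ and $n$ simultaneously, because if both $w^{-1}(n-1)$ and $w^{-1}(n)$ lie in $\{1,\ldots,r\}$ the factor $\frac{x_{n-1}\pf x_n}{x_{n-1}\pf\overline{x}_n}$ becomes $0/0$ under the naive substitution, so one cannot even evaluate individual summands pointwise — the cancellation has to be established on pairs before setting $x_{n-1}=x_n=0$. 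So while your high-level heuristic (good summands reproduce $R_{n-2}$ with multiplicity $(n-r)(n-1-r)$, bad ones cancel in pairs) is the right shape, the specific mechanism you wrote down would not survive the bookkeeping you deferred, and the proof as stated is incomplete.
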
 
\begin{proof} 
 The proof of (1) goes along the same lines as in Ikeda-Naruse \cite[Proposition 8.2]{Ike-Nar2009}
(see also Ikeda-Naruse \cite[Remark 3.1]{Ike-Nar2013}). 
 The assertion (2)   follows from Lemma \ref{lem:L-supersymmetric} because 
$[[\x|\b]]^{\lambda} = \prod_{i=1}^{r}  [[x_{i}|\b]]^{\lambda_{i}} 
= \prod_{i=1}^{r} (x_{i} \pf x_{i}) [x_{i}|\b]^{\lambda_{i}-1}$ is divisible 
by $\prod_{i=1}^{r} x_{i} = x_{1} \cdots x_{r}$.   
\end{proof} 
\noindent
Following Ikeda-Naruse \cite[Proposition 8.2]{Ike-Nar2009}, we call the above 
stability property (1)  the {\it stability mod $2$}. 
\begin{rem} 
Note that $($see also Ikeda-Naruse \cite[Remark 3.1]{Ike-Nar2013}$)$ 
the usual stability property   
  $P^{\L}_{\lambda}(x_{1}, \ldots, x_{n-1}, 0|\b)  \\    
  = P^{\L}_{\lambda}(x_{1}, \ldots, x_{n-1}|\b)$ does not hold in general. 
For from  Example  $\ref{ex:P^L_1Q^L_1}$,  we have $P^{\L}_{(1)}(x_{1}|\b) = x_{1} \pf b_{1}$, 
whereas $P^{\L}_{(1)}(x_{1}, 0|\b) = x_{1} \pf b_{1} + b_{1} \dfrac{x_{1}}{\overline{x}_{1}}$.  
\end{rem} 
\noindent
Therefore in the case of $P^{\L}_{\lambda}(x_{1}, \ldots, x_{n}|\b)$, 
there exist well-defined {\it even} and {\it odd} limit functions. 
In what follows, we shall use only the {\it even} limit functions 
. 
To be precise, for each strict partition $\lambda \in \mathcal{SP}$, we define 
the functions $P^{\L}_{\lambda}(\x_{n}|\b)^{+}  \in \Gamma^{\L}(\x_{n}|\b)$  
 by 
\begin{equation*}
   P^{\L}_{\lambda}(\x_{n}|\b)^{+} = P^{\L}_{\lambda}(x_{1}, \ldots, x_{n}|\b)^{+} 
    := \left \{ 
                  \begin{array}{llll} 
                     \hspace{-0.3cm} 
                      & P^{\L}_{\lambda}(x_{1}, \ldots, x_{n}|\b)  \quad &  \text{if} \; n \; 
                     \text{is even}, \medskip \\    
                     \hspace{-0.3cm} 
                       & P^{\L}_{\lambda}(x_{1}, \ldots, x_{n}, 0|\b)  \quad & \text{if} \; n \; 
                     \text{is odd}.  \medskip    
                   \end{array} 
       \right. 
\end{equation*}               
By Proposition \ref{prop:StabilityMod2} (1), the sequence  $\{ P^{\L}_{\lambda}(x_{1}, \ldots, x_{n}|\b)^{+} \}$ 
defines an element 
\begin{equation*} 
  P^{\L}_{\lambda} (\x |\b)^{+}  
     :=    \displaystyle{\lim_{\substack{\longleftarrow \\ n}}}     \,  
    P^{\L}_{\lambda} (x_{1},  \ldots, x_{n}|\b)^{+}   \in \Gamma^{\L}(\x|\b).    
\end{equation*} 
Similarly, for each strict partition $\lambda \in \mathcal{SP}$, define 
\begin{equation*} 
   Q^{\L}_{\lambda} (\x |\b):=  \lim_{\substack{\longleftarrow \\ n}} \,   Q^{\L}_{\lambda} 
  (x_{1}, \ldots, x_{n}|\b)  \in \Gamma^{\L}_{+}(\x|\b).   
\end{equation*} 
This is a well defined element of $\Gamma^{\L}_{+}(\x|\b)$ because of 
Proposition \ref{prop:StabilityMod2} (2). 
Finally, note that we have the following inclusion relations: 
\begin{equation*} 
\begin{array}{lllll} 
   \Gamma^{\L}_{+}(\x_{n}|\b) \subset \Gamma^{\L}(\x_{n}|\b) 
\subset  \Lambda^{\L}(\x_{n}|\b)      
  & =     \L[[\b]] \, \hat{\otimes}_{\L} \,  \Lambda^{\L}(\x_{n})  \medskip \\
  & =     \L[[\b]]  \,  \hat{\otimes}_{\L}    \,  \L[[e_{1}(\x_{n}), \ldots, e_{n}(\x_{n})]].    \medskip 
\end{array}   
\end{equation*} 
This implies that 
\begin{equation*} 
  \Gamma^{\L}_{+}  (\x|\b)  \subset \Gamma^{\L} (\x|\b)  
\subset   \Lambda^{\L}(\x|\b) \cong  \L[[\b]]  \, \hat{\otimes}_{\L}   \,    \L[[e_{1}(\x), e_{2}(\x), \ldots]]. 
\end{equation*}

\subsection{Universal factorial Schur functions}   \label{subsec:UFSF}  
\subsubsection{Definition of $s^{\L}_{\lambda}(\x_{n}|\b)$}  
Let $\mathcal{P}_{n}$ denote  the set of all partitions of length $\leq n$.
For a positive integer $n$,   we set $\rho_{n}=(n, n-1, \ldots, 2, 1)$.
The {\it factorial Schur function} $s_{\lambda}(x|a)$ (for its definition, 
see e.g., Ikeda-Naruse \cite[\S 5.1]{Ike-Nar2009}, Macdonald \cite[I,  \S 3,  Examples 20]{Mac95},  Molev-Sagan \cite[p.4431]{Mol-Sag99}) 
and the {\it factorial   Grothendieck polynomial} $G_{\lambda} (x_{1}, \ldots, x_{n}|b)$  
(for its definition,  see  Ikeda-Naruse \cite[(2.12), (2.13)]{Ike-Nar2013}, 
McNamara \cite[Definition 4.1]{McN2006})
can  also be  generalized in the  universal setting. 
Here we assign the variables $x_{1}, x_{2}, \ldots, b_{1}, b_{2}, \ldots$ 
degree $\deg \, (x_{i}) = \deg \, (b_{i}) = 1 \; (i = 1, 2, \ldots)$,  
and  use $\deg^{h} \, (a_{i, j}) = 1 - i - j \; (i, j \geq 1)$. 
For partitions $\lambda, \mu \in \mathcal{P}_{n}$, $\lambda + \mu$ 
is a partition  of length $\leq n$ defined by $(\lambda + \mu)_{i} := \lambda_{i} + \mu_{i} \; (1 \leq i \leq n)$. 

\begin{defn}   [Universal factorial Schur functions]  \label{df:Definitions^L(x|b)}  
For a partition  
$\lambda=(\lambda_1,\ldots,\lambda_n)\in \mathcal{P}_n$, 
we define the {\it  universal factorial Schur functions} 
$s^{\L}_{\lambda}(x_{1}, \ldots, x_{n}| \b)$  to be 
\begin{equation}   \label{eqn:Definitions^L}  
s^{\L}_{\lambda} (\x_{n}|\b) = s^{\L}_{\lambda} (x_{1},  \ldots, x_{n}|\b):=
\displaystyle
\sum_{w  \in S_{n}} w  \left[\frac{[\x |\b]^{\lambda + \rho_{n-1}}}
{\prod_{1\leq i<j\leq n}(x_{i}\pf \overline{x}_{j})}
\right]. 
\end{equation} 
\end{defn}
\begin{rem}
The non-equivariant version, i.e., $\b  = 0$,  of our functions  are  already defined by Fel'dman 
\cite[Definition 4.2]{Fel2003}.  These are called the {\it generalized Schur polynomials} there. 
We thank the referee who pointed out this fact to us. 
\end{rem}  
\noindent
By the same reason for the universal factorial Schur $P$- and $Q$-functions, 
$s^{\L}_{\lambda}(x_{1}, \ldots, x_{n} | \b)$ is a  well-defined element of 
$\Lambda^{\L}(\x_{n}|\b)$, 
namely it is a symmetric formal power series with coefficients in $\L$ 
in the variables $x_{1}, \ldots, x_{n}$ 
and  $b_{1}, b_{2}, \ldots, b_{\lambda_{1} + n-1}$. 
It is also a  homogeneous  formal power series  
of  total     degree  $|\lambda|$.  In Definition \ref{df:Definitions^L(x|b)}, 
if we put $a_{i, j} = 0$ for all $i, j \geq 1$, the functions $s^{\L}_{\lambda}(\x_{n}|\b)$ 
reduce to the usual factorial Schur polynomials $s_{\lambda}(\x_{n}|\b)$.  
If we put $a_{1, 1} = \beta$, $a_{i, j} = 0$ for all $(i, j) \neq (1, 1)$, 
then $s^{\L}_{\lambda}(\x_{n}|\b)$ reduce to the factorial Grothendieck 
polynomials $G_{\lambda}(\x_{n}|\b)$.  
 Note that unlike the usual Schur polynomials, 
$s_{\emptyset}^{\L}(x_{1}, \ldots, x_{n}|\b)  \neq 1$. For instance, 
\begin{equation*} 
  s^{\L}_{\emptyset} (x_{1}, x_{2}|\b) 
=  \dfrac{x_{1} \pf b_{1}}{x_{1} \pf \overline{x}_{2}} 
 + \dfrac{x_{2} \pf b_{1}}{x_{2} \pf \overline{x}_{1}} 
= 1 + a_{1, 2}x_{1}x_{2} + a_{1, 1}a_{1, 2} b_{1} x_{1}x_{2} + \cdots. 
\end{equation*} 
We also define 
\begin{equation*} 
s^{\L}_{\lambda} (\x_{n}) 
= s^{\L}_{\lambda} (x_{1}, \ldots, x_{n}) :=s^{\L}_{\lambda} (x_{1}, \ldots, x_{n}|0).
\end{equation*} 
It is a   homogeneous
 symmetric formal power series in $\Lambda^{\L}(\x_{n})$ 
of total degree $|\lambda|$. 
By definition, we have 
\begin{equation}  \label{eqn:s^L_lambda(x_1,...,x_n)} 
   s^{\L}_{\lambda} (x_{1}, \ldots, x_{n}) = s_{\lambda} (x_{1}, \ldots, x_{n})+ 
           g_{\lambda}(x_{1}, \ldots, x_{n}), 
\end{equation}  
where $s_{\lambda}(x_{1}, \ldots, x_{n})$ is the usual Schur polynomial and 
 $g_{\lambda}(x_{1},\ldots, x_{n})$ is a symmetric formal power series with 
 coefficients in $\L$ in the variables $x_{1}, \ldots, x_{n}$ 
 whose degree with respect to $x_{1}, \ldots, x_{n}$ is strictly higher than 
 $|\lambda|$.  From this,  one can show that $s^{\L}_{\lambda}(x_{1}, \ldots, x_{n})$, 
 $\lambda \in \mathcal{P}_{n}$, are linearly independent over $\L$ 
 by the same argument as  with the case of $P^{\L}_{\lambda}$'s (see \S \ref{subsec:DefinitionP^L(x|b)Q^L(x|b)}).

\subsubsection{Basis Theorem for $s^{\L}_{\lambda}(\x_{n})$}   \label{subsubsec:BasisTheorem(s^L(x))} 
It is well-known that the usual Schur functions (polynomials) 
$s_{\lambda} (x_{1}, \ldots, x_{n})$, where $\ell (\lambda) \leq n$, form
a $\Z$-basis of $\Lambda_{n} = \Lambda (\x_{n}) = \Z[x_{1}, \ldots, x_{n}]^{S_{n}}$
 (see Macdonald  \cite[I, \S 3,  (3.2)]{Mac95}). Also  the  Grothendieck polynomials 
$G_{\lambda} (x_{1}, \ldots, x_{n}), \; (\lambda \in \mathcal{P}_{n})$ 
form a $\Z[\beta]$-basis of $\Z[\beta][x_{1}, \ldots, x_{n}]^{S_{n}}$ 
(see e.g., Ikeda-Naruse \cite[Corollary 2.1]{Ike-Nar2013}, McNamara \cite[Theorem 3.4]{McN2006}). 
Our functions $s^{\L}_{\lambda}(x_{1}, \ldots, x_{n})$, $\lambda \in \mathcal{P}_{n}$, 
also have the similar property.   
Indeed one can prove the following {Basis Theorem}: 
\begin{prop} [Basis Theorem]  \label{prop:BasisTheorem(s^L(x))}  
$s^{\L}_{\lambda}(x_{1}, \ldots, x_{n}) \; (\lambda \in \mathcal{P}_{n})$
form a formal     $\L$-basis for $\Lambda^{\L}(\x_{n})$. 
 \end{prop}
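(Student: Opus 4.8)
The plan is to prove the statement by a triangularity argument with respect to the filtration of $\Lambda^{\L}(\x_{n})$ by degree in the $x$-variables, using that, by $(\ref{eqn:s^L_lambda(x_1,...,x_n)})$, the ``leading term'' of each $s^{\L}_{\lambda}(\x_{n})$ is the ordinary Schur polynomial $s_{\lambda}(\x_{n})$. First I would fix the topology. Recall $\Lambda^{\L}(\x_{n}) = \L^{*}[[e_{1}(\x_{n}), \ldots, e_{n}(\x_{n})]]$, and let $F^{k}$ be the $\L$-submodule spanned by the monomials in $e_{1}, \ldots, e_{n}$ of $x$-degree $\geq k$. Then $\{F^{k}\}_{k \geq 0}$ is a decreasing, Hausdorff ($\bigcap_{k} F^{k} = 0$) and complete ($\Lambda^{\L}(\x_{n}) = \lim_{\substack{\longleftarrow\\k}} \Lambda^{\L}(\x_{n})/F^{k}$) filtration by $\L$-submodules, and $F^{k}/F^{k+1} \cong \L \otimes_{\Z} \Lambda(\x_{n})_{k}$, the homogeneous degree-$k$ part of the symmetric polynomials with coefficients extended to $\L$; by the classical basis theorem (Macdonald \cite[I, \S 3, (3.2)]{Mac95}) this is a \emph{free} $\L$-module of finite rank with basis $\{ s_{\mu}(\x_{n}) : |\mu| = k,\ \ell(\mu) \leq n \}$. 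Since only finitely many $\lambda \in \mathcal{P}_{n}$ satisfy $|\lambda| \leq k$ for any fixed $k$, a formal $\L$-linear combination $\sum_{\lambda \in \mathcal{P}_{n}} c_{\lambda}\, s^{\L}_{\lambda}(\x_{n})$ with $c_{\lambda} \in \L$ is automatically convergent for this topology; the assertion is that every element of $\Lambda^{\L}(\x_{n})$ is uniquely such a combination.

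The key input is $(\ref{eqn:s^L_lambda(x_1,...,x_n)})$: $s^{\L}_{\lambda}(\x_{n}) = s_{\lambda}(\x_{n}) + g_{\lambda}(\x_{n})$ with $g_{\lambda}(\x_{n}) \in F^{|\lambda|+1}$, so $s^{\L}_{\lambda}(\x_{n}) \in F^{|\lambda|}$ and its class in $F^{|\lambda|}/F^{|\lambda|+1}$ equals $s_{\lambda}(\x_{n})$. Consider the $\L$-linear map $\Phi$ from the $\L$-module of formal combinations $\sum_{\lambda} c_{\lambda} e_{\lambda}$ to $\Lambda^{\L}(\x_{n})$ sending $e_{\lambda} \mapsto s^{\L}_{\lambda}(\x_{n})$. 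It is filtered, and by the above together with the classical basis theorem it induces an isomorphism on each associated graded piece; a filtered morphism of complete Hausdorff filtered $\L$-modules that is an isomorphism on $\mathrm{gr}$ is an isomorphism, which gives the result. Concretely, surjectivity is by successive approximation: given $f \in \Lambda^{\L}(\x_{n})$, build $c_{\lambda} \in \L$ inductively on $k$ so that $f - \sum_{|\lambda| \leq k} c_{\lambda}\, s^{\L}_{\lambda}(\x_{n}) \in F^{k+1}$, at the $k$-th step expanding the class of the current remainder in $F^{k}/F^{k+1}$ in the Schur basis and using the triangularity; completeness then gives $f = \sum_{\lambda} c_{\lambda}\, s^{\L}_{\lambda}(\x_{n})$ in the limit. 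Injectivity is immediate: if $\sum_{\lambda} c_{\lambda}\, s^{\L}_{\lambda}(\x_{n}) = 0$ with some $c_{\lambda} \neq 0$, put $d = \min\{ |\lambda| : c_{\lambda} \neq 0 \}$ and pass to $F^{d}/F^{d+1}$ to get $\sum_{|\lambda| = d} c_{\lambda}\, s_{\lambda}(\x_{n}) = 0$, contradicting the $\L$-linear independence of the Schur polynomials (which is also exactly the linear-independence argument already carried out in \S \ref{subsec:DefinitionP^L(x|b)Q^L(x|b)}).

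The argument is essentially formal; the one point that genuinely needs care is the interplay of the two gradings. Since $\L = \L^{*}$ lives in non-positive cohomological degree, a single total-degree-$N$ homogeneous element of $\Lambda^{\L}(\x_{n})$ spreads over all $x$-degrees $\geq N$, so even a fixed total-degree component of $\Lambda^{\L}(\x_{n})$ is an infinite product over $\L$ rather than a finite sum; this is why one must work with the completed $F$-filtration and allow genuinely infinite $\L$-linear combinations, and why the correct notion here is ``formal $\L$-basis'' rather than ``$\L$-basis''. Accordingly, the real work is the bookkeeping at the outset — checking that $\Lambda^{\L}(\x_{n})$ is complete and Hausdorff for $F$ and that $F^{k}/F^{k+1}$ is freely $\L$-spanned by the degree-$k$ Schur polynomials — after which the triangularity $(\ref{eqn:s^L_lambda(x_1,...,x_n)})$ makes the rest go through mechanically.
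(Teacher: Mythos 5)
Your proposal is correct and takes essentially the same route as the paper: the paper's one-line argument "this follows immediately from (\ref{eqn:s^L_lambda(x_1,...,x_n)}) and the fact that the Schur polynomials form a formal $\L$-basis" is precisely the triangularity-with-respect-to-the-$x$-degree-filtration argument you spell out, and the linear-independence step you give (pass to $F^{d}/F^{d+1}$) is the same minimal-degree argument the paper already carried out in \S \ref{subsec:DefinitionP^L(x|b)Q^L(x|b)} and merely cites here. Your write-up makes explicit the completeness/Hausdorff bookkeeping and the successive-approximation surjectivity that the paper leaves implicit, which is a welcome clarification but not a different proof.
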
 
 \begin{proof} 
 We  already remarked  
 the linear independence of $s^{\L}_{\lambda}(x_{1}, \ldots, x_{n})$'s, $\lambda \in \mathcal{P}_{n}$,
 over $\L$.   Therefore it only remains to prove that  an arbitrary symmetric formal 
power series $f(x_{1}, \ldots, x_{n}) \in  \Lambda^{\L}(\x_{n})$  
can be written as  a  formal $\L$-linear combination 
 of $s^{\L}_{\lambda}(x_{1}, \ldots, x_{n})$'s.  
This folllows immediately from (\ref{eqn:s^L_lambda(x_1,...,x_n)}) and the 
fact that 
the usual Schur polynomials $s_{\lambda}(x_{1}, \ldots, x_{n})$, 
 $\lambda \in \mathcal{P}_{n}$, form a  formal   $\L$-basis for the ring 
 $\Lambda^{\L}(\x_{n}) =  \L[[e_{1}(\x_{n}), \ldots, e_{n}(\x_{n})]]$.   
 \end{proof}

\subsubsection{Vanishing Property for $s^{\L}_{\lambda}(\x_{n}|\b)$} 
In order to prove the linear independence of $s^{\L}_{\lambda}(x_{1}, \ldots, x_{n}|\b)$, 
$\lambda \in \mathcal{P}_{n}$, over $\L[[\b]]$, 
we shall make use of the following 
vanishing property for $s^{\L}_{\lambda} (\x_{n}|\b)$ 
(for the vanishing property of the factorial Schur polynomials 
$s_{\lambda} (x|a)$, see Molev-Sagan \cite[Theorem 2.1]{Mol-Sag99}. 
For the  Grothendieck polynomials $G_{\lambda}(x_{1}, \ldots, x_{n}|b)$, 
see Ikeda-Naruse \cite[Proposition 2.2]{Ike-Nar2013}, McNamara \cite[Theorem 4.4]{McN2006}).  
Given a partition $\mu \in \mathcal{P}_{n}$, define the 
sequence 
\begin{equation*} 
  \overline{\b}_{\mu}  
   := (\overline{b}_{\mu_{1}}, \overline{b}_{\mu_{2}}, \ldots,  
   \overline{b}_{\mu_{i}}, \ldots, \overline{b}_{\mu_{n}}). 
\end{equation*} 
Then we have the following.  
Here we identify a partition $\lambda = (\lambda_{1}, \lambda_{2}, \ldots)$ with 
its Young diagram 
\begin{equation*} 
   D(\lambda) =  \{ (i, j) \in \Z^{2} \; | \; i \geq 1, \;  1 \leq j \leq \lambda_{i} \} 
\end{equation*} 
(see Macdonald \cite[I,  p.2]{Mac95}): 
\begin{prop} [Vanishing Property]  \label{prop:VanishingPropertys^L(x|b)}  
Let $\lambda, \mu \in \mathcal{P}_{n}$.  Then we have 
\begin{equation*} 
  s^{\L}_{\lambda} (\overline{\b}_{\mu + \rho_{n}} |\b) 
  = \left \{ 
    \begin{array}{llllll} 
          &  0           \quad   &   \text{if} \quad   \mu  \not\supset \lambda, \medskip \\
          &  \displaystyle{\prod_{(i, j) \in \lambda}}  
               (\overline{b}_{\lambda_{i} + n - i + 1}  \pf b_{n + j - {}^{t} \!  \lambda_{j}})    \quad 
                          &   \text{if} \quad  \mu = \lambda,  \medskip 
    \end{array}  
    \right. 
\end{equation*}
where $\rho_{n} = (n, n-1, \ldots, 2, 1)$ and ${}^{t} \!  \lambda$ is the diagram conjugate to $\lambda$.  
\end{prop}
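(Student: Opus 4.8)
The plan is to adapt the proof of the vanishing property for factorial Schur $P$- and $Q$-functions (Ikeda--Naruse) and factorial Schur polynomials (Molev--Sagan) to the universal setting, working directly from the Hall--Littlewood type formula (\ref{eqn:Definitions^L}). First I would substitute $\x_{n} = \overline{\b}_{\mu + \rho_{n}}$, that is, $x_{i} = \overline{b}_{\mu_{i} + n - i + 1}$ for $1 \leq i \leq n$, and examine the summand corresponding to each $w \in S_{n}$:
\begin{equation*}
  w\left[\frac{[\x|\b]^{\lambda + \rho_{n-1}}}{\prod_{1 \leq i < j \leq n}(x_{i} \pf \overline{x}_{j})}\right].
\end{equation*}
The key observation is that $x_{i} \pf \overline{x}_{j}$ vanishes precisely when $x_{i} = x_{j}$ after formal inversion, i.e. when $\overline{b}_{\mu_{i} + n - i + 1} = \overline{b}_{\mu_{j} + n - j + 1}$; since $\mu + \rho_{n}$ is a strictly decreasing sequence, all the specialized $x$-values are distinct, so the denominator never vanishes identically, but individual numerator factors $x_{w(i)} \pf b_{k}$ can vanish. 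Specifically $x_{w(i)} \pf b_{k} = 0$ when $\overline{b}_{\mu_{w(i)} + n - w(i) + 1} \pf b_{k} = 0$, equivalently $k = \mu_{w(i)} + n - w(i) + 1$ (using Lemma \ref{lem:FactorTheorem} and the fact that $a \pf \overline{a} = 0$).

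Next I would carry out the combinatorial bookkeeping. In the numerator $[\x|\b]^{\lambda + \rho_{n-1}}$ of the summand indexed by $w$, the factor attached to $x_{w(i)}$ is $\prod_{k=1}^{(\lambda + \rho_{n-1})_{i}}(x_{w(i)} \pf b_{k})$, and this product vanishes if the index $\mu_{w(i)} + n - w(i) + 1$ falls in the range $1 \leq k \leq (\lambda+\rho_{n-1})_{i} = \lambda_{i} + n - i$. A counting argument (identical in structure to the classical one: one shows that if $\mu \not\supset \lambda$ then for \emph{every} $w$ at least one such factor vanishes, by a pigeonhole/rearrangement argument on the sequences $\mu + \rho_{n}$ and $\lambda + \rho_{n-1}$) gives $s^{\L}_{\lambda}(\overline{\b}_{\mu+\rho_{n}}|\b) = 0$ when $\mu \not\supset \lambda$. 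For the diagonal case $\mu = \lambda$, one shows that the only surviving permutation is $w = \mathrm{id}$ (all others contribute a vanishing numerator factor), so the value equals
\begin{equation*}
  \frac{[\overline{\b}_{\lambda + \rho_{n}}|\b]^{\lambda + \rho_{n-1}}}{\prod_{1 \leq i < j \leq n}(\overline{b}_{\lambda_{i}+n-i+1} \pf \overline{x}_{j})}\Big|_{x_{j} = \overline{b}_{\lambda_{j}+n-j+1}}.
\end{equation*}
The final step is to simplify this single term into the product $\prod_{(i,j) \in \lambda}(\overline{b}_{\lambda_{i}+n-i+1} \pf b_{n+j-{}^{t}\!\lambda_{j}})$ over the boxes of $\lambda$: the numerator contributes factors $\overline{b}_{\lambda_{i}+n-i+1} \pf b_{k}$ for $1 \leq k \leq \lambda_{i}+n-i$, the denominator cancels the factors with $k$ of the form $n + \text{(something)}$ coming from the $\rho$-shift, and what remains is indexed exactly by the boxes $(i,j)$ of the Young diagram of $\lambda$, with the second index rewritten using the conjugate partition ${}^{t}\!\lambda$ via the standard hook-type identity $\lambda_{i} + n - i + 1 - (n + j - {}^{t}\!\lambda_{j})$ matching up the arm/leg coordinates.

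The main obstacle I anticipate is the bookkeeping in this last cancellation: one must carefully track which of the $\lambda_{i}+n-i$ numerator factors attached to row $i$ survive after dividing by the $n-i$ denominator factors $\overline{b}_{\lambda_{i}+n-i+1} \pf \overline{x}_{j}$ (for $j > i$), and verify that the surviving ones are precisely parametrized by $j \in \{1, \ldots, \lambda_{i}\}$ with the correct argument $b_{n+j-{}^{t}\!\lambda_{j}}$. Over an ordinary polynomial ring this is a routine rewriting of indices, but in the universal setting the subtraction $\overline{x}_{j}$ is a formal power series rather than $-x_{j}$, so I would need to be careful that the cancellation is of \emph{factors} (up to units, as in Lemma \ref{lem:FactorTheorem}) and not merely of leading terms; the cleanest route is probably to prove a telescoping identity expressing $[\overline{b}_{m}|\b]^{p} / \prod(\overline{b}_{m} \pf \overline{x}_{j})$ cleanly, which reduces everything to repeated application of Lemma \ref{lem:FactorTheorem} and the unit factorization $a \pf b = (a - \overline{b}) \cdot u(a,b)$.
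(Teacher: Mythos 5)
Your plan is correct and follows essentially the same argument the paper gives (which, as you anticipated, is modeled on Molev--Sagan): substitute $\x_n = \overline{\b}_{\mu+\rho_n}$, use the chain of inequalities on $\mu+\rho_n$ versus $\lambda+\rho_{n-1}$ to kill every summand when $\mu\not\supset\lambda$ and every $w\neq e$ when $\mu=\lambda$, then cancel the $w=e$ term. One small simplification you can afford in the last step: after substituting $x_j=\overline{b}_{\lambda_j+n-j+1}$, each denominator factor becomes $\overline{b}_{\lambda_i+n-i+1}\pf\overline{\overline{b}_{\lambda_j+n-j+1}}=\overline{b}_{\lambda_i+n-i+1}\pf b_{\lambda_j+n-j+1}$, which is \emph{literally} one of the numerator factors (not merely equal up to a unit), so the cancellation is exact and no appeal to Lemma~\ref{lem:FactorTheorem} or a telescoping identity is needed there.
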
 
\begin{proof} 
The proof is essentially identical to that of Molev-Sagan \cite[Theorem 2.1 (Vanishing Theorem)]{Mol-Sag99}. 
See also Ivanov \cite[Theorem 1.5 (the zero property)]{Iva97}. 
We shall exhibit a proof for the sake of completeness.   
Let us prove the first assertion. 
We use the expression (\ref{eqn:Definitions^L}): 
\begin{equation}   \label{eqn:Definitions^LII}   
    s^{\L}_{\lambda} (\x_{n}|\b) 
    = \displaystyle
\sum_{w  \in S_{n}} w  \left[\frac{[\x |\b]^{\lambda + \rho_{n-1}}}
{\prod_{1\leq i<j\leq n}(x_{i}\pf \overline{x}_{j})}
\right]
=     
\displaystyle
\sum_{w  \in S_{n}} 
  \left[\frac{  \prod_{i=1}^{n} [x_{w(i)} |\b]^{\lambda_{i}  + n-i}}
{\prod_{1\leq i<j\leq n}(x_{w(i)}\pf \overline{x}_{w(j)})}
\right]. 
\end{equation} 
We wish to show that this becomes zero when we make a substitution 
\begin{equation*} 
  \x_{n} = \overline{\b}_{\mu + \rho_{n}} 
= (\overline{b}_{\mu_{1} + n},  \overline{b}_{\mu_{2} + n-1}, \ldots, \overline{b}_{\mu_{i} + n - i + 1}, \ldots, \overline{b}_{\mu_{n} + 1}). 
\end{equation*} 
The condition $\lambda \not\subset \mu$ implies that 
there exists an index $k$ such that $\mu_{k} < \lambda_{k}$ (and hence $\mu_{k} + 1 \leq \lambda_{k}$).  
For an arbitrary permutation $w \in S_{n}$, 
there exists a positive integer $1 \leq j \leq k$ such that $w(j) \geq k$. 
Thus  we have $\mu_{w(j)} \leq \mu_{k} < \lambda_{k} \leq \lambda_{j}$, and hence 
the following inequalities hold: 
\begin{equation*} 
   \mu_{w(j)} + n - w(j) + 1  \leq \mu_{k} + n - k + 1 \leq \lambda_{k} + n - k 
   \leq \lambda_{j} + n - j. 
\end{equation*}  
Therefore the term 
  $[x_{w(j)}|\b]^{\lambda_{j} + n - j}$ 
becomes zero when we specialize $x_{w(j)}$ to $\overline{b}_{\mu_{w(j)} + n - w(j) + 1}$, 
and the  first assertion follows. 

Next consider the case $\mu = \lambda$.  
We shall show that in the expression (\ref{eqn:Definitions^LII}), 
the terms other than the one corresponding to $w = e$  vanish when 
we set $\x_{n} = \overline{\b}_{\lambda + \rho_{n}}$. 
For $w \neq e$, there exists a positive integer $1 \leq k \leq n$ such that 
$w(k) > k$ (and hence $w(k) \geq k + 1$). 
 Thus we have an inequality 
\begin{equation*} 
  \lambda_{w(k)} + n - w(k) + 1  \leq \lambda_{k} + n - k. 
\end{equation*} 
Therefore the term 
 $[x_{w(k)}|\b]^{\lambda_{k} + n-k}$  
becomes zero when we specialize $x_{w(k)}$ to $\overline{b}_{\lambda_{w(k)} + n - w(k) + 1}$. 
The term corresponding to $w = e$ is 
\begin{equation*} 
\begin{array}{llll} 
&   \dfrac{[\overline{\b}_{\lambda + \rho_{n}}|\b]^{\lambda + \rho_{n-1} } }  
{\prod_{1 \leq i < j \leq n} (\overline{b}_{\lambda_{i} + n - i + 1} \pf  b_{ \lambda_{j} + n - j + 1})}  
=  \dfrac{\prod_{i=1}^{n} [\overline{b}_{\lambda_{i} + n-i + 1}|\b]^{\lambda_{i} + n - i}}
   {\prod_{1 \leq i < j \leq n} (\overline{b}_{\lambda_{i} + n - i + 1}  \pf  b_{ \lambda_{j} + n - j + 1}) }  \medskip \\
=  & \dfrac{ \prod_{i=1}^{n} \prod_{j=1}^{\lambda_{i} + n-i} 
        (\overline{b}_{\lambda_{i} + n -i + 1}   \pf b_{j})} 
    {\prod_{i=1}^{n} \prod_{j =i + 1}^{n}  
      (\overline{b}_{\lambda_{i} + n - i + 1}  \pf  b_{\lambda_{j} + n - j + 1} )  }.   \medskip 
\end{array} 
\end{equation*} 
By cancellation, we obtain the required formula. 
\end{proof}

\subsubsection{Algebraic localization map of type $A_{\infty}$}   \label{subsubsec:AlgebraicLocalizationMap(A)}  
In \S \ref{subsubsec:BasisTheorem(s^L(x))}, we have proven     
the Basis Theorem for $s^{\L}_{\lambda}(\x_{n})$'s. 
We shall  prove the Basis Theorem for $s^{\L}_{\lambda}(\x_{n}|\b)$'s
in \S \ref{subsubsec:BasisTheorem(s^L(x|b))}. 
In order to prove this,  we shall exploit the  localization technique. 
This subsubsection will be  devoted to a  brief introduction 
of such technique. 
Here we shall  introduce the following two devices: the {\it GKM ring} 
and the {\it algebraic localization map}.  
In order to define these two notions, we freely use the standard notations and 
conventions about  the Weyl group,  the root system, the Grassmannian elements of 
type $A_{\infty}$.
We collect them in the Appendix \ref{subsubsec:RootDeta(A)}  for reader's convenience.

First we   introduce the GKM ring.  
Let $L$ denote a free $\Z$-module with a basis $\{ t_{i} \}_{i \geq 1}$. 
The   positive roots  $\Delta^{+}_{A} \subset L$ of type $A_{\infty}$ 
are given by 
\begin{equation*} 
     \Delta^{+}_{A} = \{ \alpha_{j, i} = t_{j} - t_{i} \; | \; j > i \geq 1 \}. 
\end{equation*} 
The simple roots are given by $\alpha_{i} = \alpha_{i+1, i}  \; (i \geq 1)$. 
We  define a map  
$e: L \longrightarrow \L[[\b]]$ 
by setting $e (t_{i}) := b_{i} \; (i \geq 1)$ and by the rule 
$e (\alpha + \alpha') := e(\alpha) \pf e (\alpha')$ for $\alpha, \alpha' \in L$. 
Note that by definition, we have $e(-\alpha) = \overline{e(\alpha)}$ for $\alpha \in L$. 
For the simple root  $\alpha_{i} = t_{i + 1} - t_{i} \; (i \geq 1)$, we have 
\begin{equation*} 
  e(\alpha_{i}) = e(t_{i + 1} - t_{i})  = b_{i + 1} \pf \overline{b}_{i} \quad 
  (i \geq 1). 
\end{equation*} 
Let $\mathrm{Map} \, (\mathcal{P}_{n}, \L[[\b]])$ denote the set of 
all maps $\psi: \mathcal{P}_{n} \longrightarrow \L[[\b]], \; 
\lambda \longmapsto \psi_{\lambda}$. 
It has  a natural  $\L[[\b]]$-algebra structure under pointwise multiplication  
$(\psi \cdot \varphi)_{\lambda} := \psi_{\lambda} \cdot \varphi_{\lambda}$ 
for $\psi, \varphi \in \mathrm{Map} \, (\mathcal{P}_{n},  \L[[\b]])$  
and 
scalar multiplication $ (c \cdot \psi)_{\lambda} := c \cdot \psi_{\lambda}$
for  $c \in \L[[\b]]$, 
$\psi \in \mathrm{Map} \, (\mathcal{P}_{n}, \L[[\b]])$.   Hereafter we use the identification 
$\mathrm{Map} \, (\mathcal{P}_{n}, \L[[\b]]) \cong \prod_{\mu \in \mathcal{P}_{n}} 
(\L[[\b]])_{\mu}$. Thus an element $\psi \in \mathrm{Map} \, (\mathcal{P}_{n}, \L[[\b]])$ 
can be regarded as a collection $\psi = (\psi_{\mu})_{\mu \in \mathcal{P}_{n}}$
with $\psi_{\mu}  \in \L[[\b]]$.  
With these notations, we introduce the GKM ring. 
\begin{defn} [GKM ring of type $A_{\infty}$]  
Let $\Psi^{(n)}_{A}$ be a subring 
 of $\mathrm{Map} \, (\mathcal{P}_{n}, \L[[\b]]) \cong 
\prod_{\mu \in \mathcal{P}_{n}}  (\L[[\b]])_{\mu}$  that consists  of elements 
$\psi = (\psi_{\mu})_{\mu \in \mathcal{P}_{n}}$ 
satisfying  the following 
condition $(${\it GKM condition}$):$ 
\begin{equation}   \label{eqn:GKMCondition}  
  \psi_{s_{\alpha} \mu}  - \psi_{\mu}   \in e (-\alpha) \, \L[[\b]] 
  \quad \text{for all} \; \mu  \in \mathcal{P}_{n} \; \text{and all} \; 
  \alpha \in \Delta^{+} = \Delta^{+}_{A}. 
\end{equation} 
\end{defn}   
\noindent
It is easy to see that $\Psi^{(n)}_{A}$ is indeed  a subring  (more precisely,  an $\L[[\b]]$-subalgebra)
of $\prod_{\mu \in \mathcal{P}_{n}} 
(\L[[\b]])_{\mu}$.  We call the ring $\Psi^{(n)}_{A}$ the {\it GKM ring of type $A_{\infty}$}.

Next we shall introduce the algebraic localization map of type $A_{\infty}$.  
For each partition $\mu \in \mathcal{P}_{n}$, define an $\L[[\b]]$-algebra 
homomorphism 
\begin{equation*} 
  \phi^{(n)}_{\mu, A}:  \Lambda^{\L}(\x_{n}|\b) = \L[[\b]] \, \hat{\otimes}_{\L} \Lambda^{\L}(\x_{n}) 
 \longrightarrow \L[[\b]], \quad 
  F = F(\x_{n}|\b)  \longmapsto  \phi^{(n)}_{\mu, A}(F)
\end{equation*} 
by $\phi^{(n)}_{\mu, A}(F) :=  F(\overline{\b}_{\mu + \rho_{n}}|\b)$.  

\begin{defn} [Algebraic localization map of type $A_{\infty}$]  
 \label{df:AlgebraicLocalizationMap}  
  Define the homomorphism of $\L[[\b]]$-algebras to be 
  \begin{equation}  \label{eqn:AlgebraicLocalizationMap(A)}  
    \Phi^{(n)}_{A}:  \Lambda^{\L}(\x_{n}|\b) 
                  \longrightarrow 
                  \prod_{\mu \in \mathcal{P}_{n}}  (\L[[\b]])_{\mu}, \; 
                 F \longmapsto  \Phi^{(n)}_{A}(F)  := (\phi^{(n)}_{\mu, A}(F))_{\mu \in \mathcal{P}_{n}}. 
  \end{equation} 
\end{defn} 
\noindent
  We call the homomorphism $\Phi^{(n)}_{A}$ the {\it algebraic localization map of type $A_{\infty}$}.

\begin{lem}  
 \label{lem:PropertyPhi^{(n)}_A} 
\quad 
\begin{enumerate} 
\item  The image of $\Phi^{(n)}_{A}$ is contained in the GKM ring $\Psi^{(n)}_{A}$, 
       that is, 
       $\Im \, (\Phi^{(n)}_{A})   \subset \Psi^{(n)}_{A}$.  
\item  The homomorphism $\Phi^{(n)}_{A}$ is injective.  
\end{enumerate}   
\end{lem}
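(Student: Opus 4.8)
\textbf{Proof plan for Lemma \ref{lem:PropertyPhi^{(n)}_A}.}

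The plan is to treat the two assertions separately, handling (2) first by reducing to the Basis Theorem for $s^{\L}_{\lambda}(\x_{n})$ (Proposition \ref{prop:BasisTheorem(s^L(x))}) and the Vanishing Property (Proposition \ref{prop:VanishingPropertys^L(x|b)}), and then deriving (1) by a direct root-theoretic computation. For injectivity, suppose $F \in \Lambda^{\L}(\x_{n}|\b)$ has $\Phi^{(n)}_{A}(F) = 0$, i.e. $\phi^{(n)}_{\mu,A}(F) = F(\overline{\b}_{\mu+\rho_{n}}|\b) = 0$ for every $\mu \in \mathcal{P}_{n}$. By the universal factorial Basis Theorem that will be established for $s^{\L}_{\lambda}(\x_{n}|\b)$, or more elementarily by upper-triangularity, one writes $F = \sum_{\lambda \in \mathcal{P}_{n}} c_{\lambda}\, s^{\L}_{\lambda}(\x_{n}|\b)$ with $c_{\lambda} \in \L[[\b]]$; since $F$ is a formal power series of bounded total degree in each degree, only finitely many $c_{\lambda}$ of each degree occur. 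Now argue by induction on the partial order $\supset$ on partitions: if $F \neq 0$, pick $\lambda_{0}$ minimal among those with $c_{\lambda_{0}} \neq 0$ (of minimal total degree first, then minimal in the containment order). Evaluating at $\mu = \lambda_{0}$ and using Proposition \ref{prop:VanishingPropertys^L(x|b)}, all terms $s^{\L}_{\lambda}(\overline{\b}_{\lambda_{0}+\rho_{n}}|\b)$ with $\lambda \not\subset \lambda_{0}$ vanish, and by minimality the only surviving term is $\lambda = \lambda_{0}$, giving
\[
0 = \phi^{(n)}_{\lambda_{0},A}(F) = c_{\lambda_{0}} \prod_{(i,j)\in\lambda_{0}} (\overline{b}_{(\lambda_{0})_{i}+n-i+1} \pf b_{n+j-{}^{t}\!(\lambda_{0})_{j}}).
\]
Since each factor $\overline{b}_{k} \pf b_{l}$ is a nonzero non-zero-divisor in $\L[[\b]]$ (its lowest term is $b_{l}-b_{k}$ up to sign, and $\L[[\b]]$ is an integral domain), we conclude $c_{\lambda_{0}} = 0$, a contradiction.

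For assertion (1), I would verify the GKM condition directly from the definition of $\phi^{(n)}_{\mu,A}$. Fix $F \in \Lambda^{\L}(\x_{n}|\b)$, $\mu \in \mathcal{P}_{n}$, and a positive root $\alpha = \alpha_{j,i} = t_{j}-t_{i}$ with $j > i$; write $\mu' = s_{\alpha}\mu$. One has $\phi^{(n)}_{\mu,A}(F) = F(\overline{\b}_{\mu+\rho_{n}}|\b)$ and $\phi^{(n)}_{\mu',A}(F) = F(\overline{\b}_{\mu'+\rho_{n}}|\b)$, and the two substitution points $\overline{\b}_{\mu+\rho_{n}}$ and $\overline{\b}_{\mu'+\rho_{n}}$ agree in all coordinates except (a transposition of) two of them, coming from the reflection in $\alpha$. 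The key reduction is: $F$ is symmetric in the $x$-variables, so permuting two coordinates of the substitution point does not change the value; hence $\phi^{(n)}_{\mu,A}(F) - \phi^{(n)}_{\mu',A}(F)$ is the difference of $F$ evaluated at two points differing only by replacing a pair $(\overline{b}_{a}, \overline{b}_{c})$ with $(\overline{b}_{a'}, \overline{b}_{c'})$ in a controlled way along the root direction. One then shows this difference lies in the ideal $e(-\alpha)\L[[\b]] = (\overline{b}_{j} \pf b_{i})\L[[\b]]$ by the standard device: $F(\ldots,u,\ldots) - F(\ldots,v,\ldots)$ is divisible by $u - v$ in $\L[[\b]]$, and here $u - v$ differs from $e(-\alpha)$ by a unit (using $u \pf \overline{v} = (u - v)\times(\text{unit})$ as in Lemma \ref{lem:FactorTheorem}). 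This mirrors the GKM divisibility arguments in Ikeda-Naruse and Harada-Henriques-Holm.

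The main obstacle I anticipate is bookkeeping in step (1): one must track precisely how the reflection $s_{\alpha_{j,i}}$ acts on a partition $\mu$ viewed inside the affine/Grassmannian combinatorics of type $A_{\infty}$, identifying which two entries of the sequence $\overline{\b}_{\mu+\rho_{n}}$ get swapped or shifted, and confirming that after exploiting the $S_{n}$-symmetry of $F$ the residual difference is genuinely a single-variable difference divisible by $e(-\alpha)$. The algebra (divisibility by $u-v$, unit correction via Lemma \ref{lem:FactorTheorem}) is routine; the care is entirely in the root-system/partition dictionary, for which the conventions collected in Appendix \ref{subsubsec:RootDeta(A)} will be invoked. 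For injectivity, the only subtlety is ensuring the triangular expansion of $F$ in the $s^{\L}_{\lambda}(\x_{n}|\b)$ is legitimate in the completed (formal power series) setting, which follows because in each fixed total degree only finitely many partitions contribute.
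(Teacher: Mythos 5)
Your treatment of part (1) matches the paper's in substance: use the explicit combinatorics of the $s_{\alpha}$-action on $\mathcal{P}_{n}$, the $S_{n}$-symmetry of $F$ to absorb the permutation, and reduce to a single-variable difference divisible by $u - v$, then convert to $e(-\alpha)$ via Lemma \ref{lem:FactorTheorem}. That part is fine.

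Part (2), however, is circular as written. You expand $F = \sum_{\lambda}c_{\lambda}\,s^{\L}_{\lambda}(\x_{n}|\b)$ and invoke ``the universal factorial Basis Theorem that will be established.'' But that theorem (Theorem \ref{thm:BasisTheorem(s^L(x|b))}) is proved \emph{after} Lemma \ref{lem:PropertyPhi^{(n)}_A}, and its spanning half uses exactly the injectivity of $\Phi^{(n)}_{A}$ that you are trying to prove. You cannot assume the existence of an expansion of an arbitrary $F \in \Lambda^{\L}(\x_n|\b)$ in the $s^{\L}_{\lambda}(\x_n|\b)$ without establishing it first, and the paper's route to that expansion passes through this very lemma. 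Your fallback ``or more elementarily by upper-triangularity'' does not repair the gap: the paper proves spanning by upper-triangularity only for the non-factorial $s^{\L}_{\lambda}(\x_{n})$ (Proposition \ref{prop:BasisTheorem(s^L(x))}), and deliberately abandons that approach in the factorial case (note $s^{\L}_{\emptyset}(\x_n|\b) \neq 1$, so the leading-term matrix is no longer the identity in the naive sense, and the formal inversion requires more than degree bookkeeping). The whole point of introducing the localization map is to bypass an upper-triangularity argument that is not readily available when $\b \neq 0$.

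The paper's proof of injectivity is instead a direct induction on $n$ and does not use any basis at all: one proves the stronger statement that if $\phi^{(n)}_{\mu}(F) = 0$ for all $\mu \supset (N^{n})$ for some $N$, then $F = 0$. For $n = 1$ this is the observation that a formal power series vanishing at $\overline{b}_{N+k}$ for every $k \geq 1$ must vanish identically. For $n > 1$, specialize $x_{n} = \overline{b}_{N+k}$ to get $F_{k} \in \Lambda^{\L}(\x_{n-1}|\b)$ satisfying the hypothesis with $(N+k)^{n-1}$, conclude $F_{k} = 0$ by induction, so $F$ depends only on $x_{n}$, and then the $n=1$ argument finishes. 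You should replace your argument for (2) with this induction (or supply an independent proof that the $s^{\L}_{\lambda}(\x_{n}|\b)$ span, which would be a larger detour).
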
 
\begin{proof}
(1) For an arbitrary $F = F(\x_{n}|\b)  \in   \Lambda^{\L}(\x_{n}|\b)$, 
    we have to show that $F(\overline{\b}_{s_{\alpha} \mu + \rho_{n}} |\b)
     - F(\overline{\b}_{\mu + \rho_{n}} |\b)  \in  e(-\alpha) \L[[\b]]$ 
    for all $\mu \in \mathcal{P}_{n}$ and for all $\alpha \in \Delta^{+}$.  
    More  	accurately, we have to show that 
    \begin{equation*} 
       F(\overline{\b}_{s_{t_{j} - t_{i}} \mu + \rho_{n}}|\b) 
      -  F(\overline{\b}_{\mu + \rho_{n}}|\b)  \in  \langle \overline{b}_{j} \pf b_{i} \rangle  
       \quad \text{for} \; j > i \geq 1, 
    \end{equation*} 
    where $\langle \overline{b}_{j} \pf b_{i} \rangle$ denotes  an ideal of $\L[[\b]]$ 
    generated by $\overline{b}_{j} \pf b_{i}$. 
     This is a direct consequence of the action of the reflection $s_{t_{j} - t_{i}}$ 
     on $\mu \in \mathcal{P}_{n}$  (see Appendix \ref{subsubsec:RootDeta(A)})  
     and Lemma \ref{lem:FactorTheorem}.

(2)  
     We shall prove the following statement:  

    \begin{center} 
     For $F  \in   \Lambda^{\L}(\x_{n}|\b)$,   
    suppose that 
     there exists a positive integer $N$ (which may depend on $n$) 
     such that $\phi^{(n)}_{\mu}(F) = 0$ 
     for all  partitions $\mu \in \mathcal{P}_{n}$ containing   
      $(N^{n}) = (N, N, \ldots, N)$.  
Then we have $F =  0$. 
   \end{center}  
     
     From this, the conclusion of (2) immediately follows.  
     Let us prove the above statement  by induction on the number $n$ of $x$-variables. 
     For the case $n = 1$, the proof is easy. 
     For if $F  = F(x_{1}|\b)  \in \L[[\b]][[x_{1}]]$ satisfies  the 
     assumption,  we have $F(\overline{b}_{N + k}|\b) = 0$ for all $k  \geq 1$. 
     This implies that $F(x_{1}|\b)$ is divisible by $x_{1} - \overline{b}_{N + k}$ 
     for all $k  \geq 1$.  From this, we conclude that $F = 0$ as a formal power series 
     in $x_{1}$ with coefficients in $\L[[\b]]$.  
     
     Next we consider the case $n > 1$, and assume that the above assertion 
     holds for the case of $n-1$ variables $\x_{n-1}$.  
      Suppose  that $F = F(\x_{n}|\b)  \in   \Lambda^{\L}(\x_{n}|\b)$  
     satisfies  the 
     assumption $\phi^{(n)}_{\mu} (F) = 0$ for all $\mu \supset (N^{n})$. 
     For  each $k \geq 1$, we put $F_{k}  = F_{k}(\x_{n-1}|\b)  := F(\x_{n-1}, \overline{b}_{N+k}|\b)$. 
     Then  we see  easily that $F_{k}  \in   \Lambda^{\L}(\x_{n-1} |\b)$  
     and 
     $\phi^{(n-1)}_{\nu} (F_{k}) = 0$ for all $\nu \in \mathcal{P}_{n-1}$ 
     such that $\nu  \supset  ((N + k)^{n-1}))$.   
     Therefore by the induction hypothesis, we have $F_{k} = 0$ 
     as a formal power series in $x_{1}, \ldots, x_{n-1}$ with coefficients in $\L[[\b]]$. 
     Thus $F$ depends only on  the variable $x_{n}$.  But  $F$ vanishes 
     when we set $x_{n} = \overline{b}_{N + k}$ for all $k \geq 1$, and hence 
     we deduce that $F = 0$ as required. 
\end{proof}

\subsubsection{Basis Theorem for $s^{\L}_{\lambda}(\x_{n}|\b)$}   \label{subsubsec:BasisTheorem(s^L(x|b))} 
The factorial Schur polynomials 
$s_{\lambda}(x_{1}, \ldots, x_{n}|a)$, where $\ell (\lambda) \leq n$, form 
a $\Z[a]$-basis of $\Z[a] \otimes_{\Z} \Lambda_{n}$ (see Macdonald \cite[I, \S 3,  Examples 20]{Mac95}).   For the factorial Grothendieck polynomials $G_{\lambda}(x_{1}, \ldots, x_{n}|\b)$, 
readers are referred to  Ikeda-Naruse \cite[Lemma 2.5]{Ike-Nar2013}, 
McNamara \cite[Theorems 4.6, 4.9]{McN2006}.  
By the technique introduced in the previous subsubsection, we shall prove the 
Basis Theorem for  the universal factorial Schur functions $s^{\L}_{\lambda}(x_{1}, \ldots, x_{n}|\b)$'s.     
\begin{theorem}  [Basis Theorem]  \label{thm:BasisTheorem(s^L(x|b))}  
  $s^{\L}_{\lambda} (\x_{n} | \b) \; (\lambda \in \mathcal{P}_{n})$ 
  form a formal $\L[[\b]]$-basis for  $\Lambda^{\L}(\x_{n}|\b)$.  
\end{theorem}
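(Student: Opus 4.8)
The plan is to follow the same strategy that proved the Basis Theorem for $s^{\L}_{\lambda}(\x_{n})$ (Proposition \ref{prop:BasisTheorem(s^L(x))}) in the non-equivariant case, now combining the vanishing property (Proposition \ref{prop:VanishingPropertys^L(x|b)}) with the injectivity of the algebraic localization map $\Phi^{(n)}_{A}$ (Lemma \ref{lem:PropertyPhi^{(n)}_A} (2)). Concretely, I will prove two things: first that the $s^{\L}_{\lambda}(\x_{n}|\b)$, $\lambda \in \mathcal{P}_{n}$, are linearly independent over $\L[[\b]]$; and second that they span $\Lambda^{\L}(\x_{n}|\b)$ as a topological (formal) $\L[[\b]]$-module, i.e.\ every element can be written as a formal $\L[[\b]]$-linear combination.

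For linear independence, suppose $\sum_{\lambda \in \mathcal{A}} c_{\lambda} s^{\L}_{\lambda}(\x_{n}|\b) = 0$ with $c_{\lambda} \in \L[[\b]]$ not all zero. Apply the localization homomorphism $\phi^{(n)}_{\mu,A}$: by the vanishing property, $\phi^{(n)}_{\mu,A}(s^{\L}_{\lambda}(\x_{n}|\b)) = s^{\L}_{\lambda}(\overline{\b}_{\mu+\rho_{n}}|\b)$ equals $0$ unless $\mu \supset \lambda$, and for $\mu = \lambda$ it equals the explicit nonzero product $\prod_{(i,j)\in\lambda}(\overline{b}_{\lambda_{i}+n-i+1} \pf b_{n+j-{}^{t}\!\lambda_{j}})$, which is a non-zero-divisor in $\L[[\b]]$ since each factor has the form $b_{k}-b_{l} + (\text{higher order})$ with $k \neq l$. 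Choosing $\lambda$ minimal with respect to containment among those with $c_{\lambda}\neq 0$ and evaluating at $\mu = \lambda$, all terms vanish except $c_{\lambda}$ times this nonzero product, forcing $c_{\lambda}=0$, a contradiction. (Care is needed if $\mathcal{A}$ is infinite: work degree by degree using the homogeneity of $s^{\L}_{\lambda}(\x_{n}|\b)$ of total degree $|\lambda|$ and $\deg^{h}(c_{\lambda}) \leq 0$, exactly as in the argument in \S\ref{subsec:DefinitionP^L(x|b)Q^L(x|b)}, so that only finitely many $\lambda$ contribute in each total degree.)

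For spanning, let $F = F(\x_{n}|\b) \in \Lambda^{\L}(\x_{n}|\b)$. I will construct the coefficients $c_{\lambda}$ inductively along the dominance/containment order on $\mathcal{P}_{n}$, subtracting off one Schur function at a time. Specifically, having chosen $c_{\mu}$ for all $\mu \subsetneq \lambda$, the value $\phi^{(n)}_{\lambda,A}(F - \sum_{\mu \subsetneq \lambda} c_{\mu} s^{\L}_{\mu}(\x_{n}|\b))$ lies in $\L[[\b]]$ and is divisible by the nonzero product $\prod_{(i,j)\in\lambda}(\overline{b}_{\lambda_{i}+n-i+1}\pf b_{n+j-{}^{t}\!\lambda_{j}})$; this divisibility is the content of the interpolation/triangularity that the vanishing property provides, and it is verified by the GKM condition (Lemma \ref{lem:PropertyPhi^{(n)}_A} (1)) exactly as in Ikeda--Naruse's treatment of the factorial Grothendieck polynomials. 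Define $c_{\lambda}$ to be the quotient. Then $G := F - \sum_{\lambda} c_{\lambda} s^{\L}_{\lambda}(\x_{n}|\b)$ satisfies $\phi^{(n)}_{\mu,A}(G) = 0$ for all $\mu \in \mathcal{P}_{n}$; I then invoke the injectivity statement proved in Lemma \ref{lem:PropertyPhi^{(n)}_A} (2) (its proof only uses that $\phi^{(n)}_{\mu}(G)=0$ for all $\mu \supset (N^{n})$ for some $N$, which holds here) to conclude $G = 0$, so $F = \sum_{\lambda} c_{\lambda} s^{\L}_{\lambda}(\x_{n}|\b)$.

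The main obstacle I anticipate is the convergence/well-definedness bookkeeping for the infinite sum $\sum_{\lambda} c_{\lambda} s^{\L}_{\lambda}(\x_{n}|\b)$ and the compatibility of the inductive construction with the formal-power-series (completed) topology on $\Lambda^{\L}(\x_{n}|\b) = \L[[\b]]\,\hat\otimes_{\L}\,\L[[e_{1}(\x_{n}),\ldots,e_{n}(\x_{n})]]$: one must check that for each total degree only finitely many $c_{\lambda} s^{\L}_{\lambda}$ contribute, so that the sum makes sense and the subtraction procedure terminates in each degree. This is handled by the grading conventions (degrees of $x_{i}$, $b_{i}$ equal $1$, $\deg^{h}(a_{i,j}) = 1-i-j \leq -1$, so $\deg^{h}(c_{\lambda}) \leq 0$ and $|\lambda|$ bounded in each fixed total degree), paralleling the reasoning already spelled out for $P^{\L}_{\lambda}$; everything else is a routine transcription of the non-equivariant Basis Theorem and the Molev--Sagan / Ikeda--Naruse localization argument to the universal setting.
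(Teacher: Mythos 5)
Your proof is correct and follows essentially the same strategy as the paper's: linear independence via the vanishing property applied at a minimal partition in the support, and spanning via an inductive subtraction argument whose key divisibility step comes from the GKM condition (Lemma \ref{lem:PropertyPhi^{(n)}_A}~(1)) together with pairwise coprimality of the Euler classes, followed by an appeal to the injectivity of the localization map (Lemma \ref{lem:PropertyPhi^{(n)}_A}~(2)). The only cosmetic difference is that you define $c_{\lambda}$ for every $\lambda\in\mathcal{P}_{n}$ by transfinite induction along containment, whereas the paper repeatedly peels off a minimal element of the shrinking support set $\mathrm{Supp}(F)$; these are the same argument.
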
 
\noindent
The proof of this theorem   will be divided into two steps: 
 the linear independence property and the generation (spanning) property. 
\begin{proof} [Proof of Theorem $\ref{thm:BasisTheorem(s^L(x|b))}$ $($Linear independence property$)$] 
We show the linear independence of 
$s^{\L}_{\lambda}(\x_{n}|\b)$'s over $\L[[\b]]$ by using the Vanishing Property (Proposition \ref{prop:VanishingPropertys^L(x|b)}). 
Suppose that there exists a linear relation of the form 
\begin{equation}   \label{eqn:LinearRelation(s^L)}  
    \sum_{\lambda}  c_{\lambda} (\b)  \, s^{\L}_{\lambda}(\x_{n}|\b) = 0  
    \quad (c_{\lambda}(\b)  \in \L[[\b]]). 
\end{equation} 
Let $\mu$ be {\it minimal} (with respect to the containement) 
among all partitions in (\ref{eqn:LinearRelation(s^L)}) such that $c_{\lambda}(\b) \neq 0$. 
We set  $\x_{n} = \overline{\b}_{\mu + \rho_{n}}$ in (\ref{eqn:LinearRelation(s^L)}). 
Then   using the first part of Proposition \ref{prop:VanishingPropertys^L(x|b)} and 
the choice of $\mu$, we obtain 
\begin{equation*} 
    c_{\mu} (\b)  s^{\L}_{\mu} (\overline{\b}_{\mu + \rho_{n}} |\b) = 0.  
\end{equation*}  
By the second part of Proposition \ref{prop:VanishingPropertys^L(x|b)},  we have 
$s^{\L}_{\mu} (\overline{\b}_{\mu + \rho_{n}} |\b) \neq 0$, and hence we have 
$c_{\mu} (\b) = 0$. We repeat this process, and we finally conclude 
that all the coefficients $c_{\lambda}(\b)$ turn  out to be zero.  
\end{proof} 
\noindent

\begin{proof}  [Proof of Theorem  $\ref{thm:BasisTheorem(s^L(x|b))}$ $($generation $($spanning$)$ 
 property$)$] 
Let $F = F(\x_{n}|\b)  \in  \Lambda^{\L}(\x_{n}|\b)$. 
We wish to express $F$ as a formal  $\L[[\b]]$-linear combination of $s^{\L}_{\lambda}$, 
$\lambda \in \mathcal{P}_{n}$. 
 Define the {\it support} of $F$  by 
\begin{equation*} 
  \mathrm{Supp} \, (F) :=  \{\mu \in \mathcal{P}_{n} \; | \;  \phi^{(n)}_{\mu}(F) \neq 0   \}  
   \subset \mathcal{P}_{n}.  
\end{equation*}  
Let $\nu \in \mathrm{Supp} \, (F)$ be a {\it minimal} element (with respect to 
the containment).  We know from Lemma \ref{lem:PropertyPhi^{(n)}_A} (1)  that 
$\Phi^{(n)} (F)  = (\phi^{(n)}_{\mu}(F))_{\mu \in \mathcal{P}_{n}} \in \Psi^{(n)}$, 
therefore by  the GKM condition (\ref{eqn:GKMCondition}) and the minimality of $\nu$, 
we see that $\phi^{(n)}_{\nu} (F)$  is divisible by $e (-\alpha)$
for all  $\alpha \in  \mathrm{Inv} \, (\nu)$. Since the elements  $\{ e (-\alpha) \; | \; \alpha \in \mathrm{Inv} \, (\nu) \}$ are   relatively prime, 
$\phi^{(n)}_{\nu} (F)$ is divisible by their product $\prod_{\alpha \in \mathrm{Inv} \, (\nu)} e(-\alpha)$. 
Here we know from the Vanishing Property (Proposition \ref{prop:VanishingPropertys^L(x|b)}) and 
(\ref{eqn:Product(e(-alpha))(A)}) (see Appendix \ref{subsubsec:RootDeta(A)})  
that 
\begin{equation*} 
\phi^{(n)}_{\nu} (s^{\L}_{\nu})  = s^{\L}_{\nu} (\overline{\b}_{\nu + \rho_{n}} |\b) 
= \prod_{(i, j) \in \nu}  (\overline{b}_{\nu_{i} + n - i + 1}  \pf b_{n + j - {}^{t} \! \nu_{j}})
= \prod_{\alpha \in \mathrm{Inv} \, (\nu)}  e(-\alpha).   
\end{equation*} 
Thus we have $\phi^{(n)}_{\nu} (F) = c_{\nu}  \cdot \phi^{(n)}_{\nu} (s^{\L}_{\nu})$
for some $c_{\nu}  \in \L[[\b]]$.  Let 
\begin{equation*} 
   F' := F - c_{\nu}   \cdot s^{\L}_{\nu}.  
\end{equation*} 
Then we have $\phi^{(n)}_{\nu} (F')  = \phi^{(n)}_{\nu}(F)  - c_{\nu}  \cdot \phi^{(n)}_{\nu}(s^{\L}_{\nu})
= 0$, and hence $\nu \not\in \mathrm{Supp} \, (F')$. 
Moreover, for every $\mu \in \mathrm{Supp} \, (F') \setminus \mathrm{Supp} \, (F)$ 
(obviously, $\mu \neq \nu$), we have
$0 \neq \phi^{(n)}_{\mu}(F')  = \phi^{(n)}_{\mu} (F) - c_{\nu}  \cdot \phi^{(n)}_{\mu}(s^{\L}_{\mu}) 
= - c_{\nu}  \cdot \phi^{(n)}_{\mu} (s^{\L}_{\nu}) = -c_{\nu} \cdot s^{\L}_{\nu} (\overline{\b}_{\mu + \rho_{n}} |\b)$, and hence $\nu <  \mu$ . 
Therefore $\mathrm{Supp} \, (F') \setminus \mathrm{Supp} \, (F)$ consists of 
elements  strictly greater   than $\nu$.  Then we apply to $F'$ the above argument, 
and repeat this. Eventually, we will obtain the function $\tilde{F}$ of the 
form $\tilde{F} = F - \sum_{\lambda   \in \mathcal{P}_{n}}  c_{\lambda} \cdot  s^{\L}_{\lambda}$
with $c_{\lambda}  \in \L[[\b]]$   whose restriction  $\phi^{(n)}_{\mu} (\tilde{F})$
to  all $\mu \in \mathcal{P}_{n}$ vanish, i.e., $\Phi^{(n)}(\tilde{F}) = 0$.    
Since the homomorphism $\Phi^{(n)}$ is injective (Lemma \ref{lem:PropertyPhi^{(n)}_A} (2)), 
we have $\tilde{F} = 0$, and hence we obtain the required expression.  
\end{proof}

\begin{cor} [Corollary to the proof of Theorem \ref{thm:BasisTheorem(s^L(x|b))}]  
\label{cor:ImPhi^(n)_A=Psi^(n)_A} 
The algebraic localization map $\Phi^{(n)}_{A}$ is onto the GKM ring $\Psi^{(n)}_{A}$. 
\end{cor}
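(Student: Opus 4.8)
The plan is to extract the statement of Corollary \ref{cor:ImPhi^(n)_A=Psi^(n)_A} directly from the work already carried out in proving the generation (spanning) property of Theorem \ref{thm:BasisTheorem(s^L(x|b))}. Recall that Lemma \ref{lem:PropertyPhi^{(n)}_A} (1) gives the inclusion $\Im\,(\Phi^{(n)}_{A}) \subset \Psi^{(n)}_{A}$, so the only thing left to establish is the reverse inclusion $\Psi^{(n)}_{A} \subset \Im\,(\Phi^{(n)}_{A})$. First I would observe that the argument used to prove spanning never really used that $F$ lies in $\Lambda^{\L}(\x_{n}|\b)$: what it actually used was that $\Phi^{(n)}_{A}(F)$ satisfies the GKM condition (\ref{eqn:GKMCondition}), together with the Vanishing Property (Proposition \ref{prop:VanishingPropertys^L(x|b)}) to realize each "diagonal" localization value as $\prod_{\alpha \in \mathrm{Inv}(\nu)} e(-\alpha)$.

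Concretely, the plan is as follows. Let $\psi = (\psi_{\mu})_{\mu \in \mathcal{P}_{n}} \in \Psi^{(n)}_{A}$ be arbitrary. Define $\mathrm{Supp}\,(\psi) := \{\mu \in \mathcal{P}_{n} \mid \psi_{\mu} \neq 0\}$ and pick a minimal element $\nu$ of $\mathrm{Supp}\,(\psi)$ with respect to the containment order. By the GKM condition and the minimality of $\nu$ (exactly as in the proof of spanning), $\psi_{\nu}$ is divisible by $e(-\alpha)$ for every $\alpha \in \mathrm{Inv}\,(\nu)$; since these factors are relatively prime in $\L[[\b]]$, $\psi_{\nu}$ is divisible by $\prod_{\alpha \in \mathrm{Inv}(\nu)} e(-\alpha) = \phi^{(n)}_{\nu}(s^{\L}_{\nu})$, say $\psi_{\nu} = c_{\nu} \cdot \phi^{(n)}_{\nu}(s^{\L}_{\nu})$ with $c_{\nu} \in \L[[\b]]$. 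Replace $\psi$ by $\psi' := \psi - c_{\nu} \cdot \Phi^{(n)}_{A}(s^{\L}_{\nu})$; then $\psi'_{\nu} = 0$, and the same support-growth analysis as before shows that $\mathrm{Supp}\,(\psi') \setminus \mathrm{Supp}\,(\psi)$ consists only of partitions strictly larger than $\nu$. Iterating, one subtracts off a formal $\L[[\b]]$-linear combination $\sum_{\lambda} c_{\lambda} \cdot \Phi^{(n)}_{A}(s^{\L}_{\lambda})$ so that the remainder has empty support, i.e. equals $0$. Hence $\psi = \Phi^{(n)}_{A}\left(\sum_{\lambda} c_{\lambda}\, s^{\L}_{\lambda}\right) \in \Im\,(\Phi^{(n)}_{A})$, which is the desired surjectivity.

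The one point requiring a little care — and the main (mild) obstacle — is the convergence/well-definedness of the infinite process and of the resulting formal sum $\sum_{\lambda \in \mathcal{P}_{n}} c_{\lambda}\, s^{\L}_{\lambda}$. This is handled by the same grading argument already implicit in Section \ref{sec:UFSPQF}: since $s^{\L}_{\lambda}(\x_{n}|\b)$ has total degree $|\lambda|$ and $\deg(c_{\lambda}) \le 0$ in $\L = \L^{*}$, only finitely many terms contribute in each total degree, so the sum is a legitimate element of the completed ring $\Lambda^{\L}(\x_{n}|\b)$; and the support of the partially-corrected element only moves "upward" in the containment order, so in each fixed degree the procedure terminates. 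Since everything else is verbatim the reasoning of the spanning-property proof, I would phrase the corollary's proof as a short remark pointing to that proof: the construction there, applied to an arbitrary $\psi \in \Psi^{(n)}_{A}$ in place of $\Phi^{(n)}_{A}(F)$, produces a preimage, establishing $\Psi^{(n)}_{A} \subset \Im\,(\Phi^{(n)}_{A})$, and combined with Lemma \ref{lem:PropertyPhi^{(n)}_A} (1) this yields $\Im\,(\Phi^{(n)}_{A}) = \Psi^{(n)}_{A}$.
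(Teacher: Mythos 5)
Your argument is correct and is essentially the paper's own proof: the paper simply says to apply the support-reduction argument from the proof of Theorem~\ref{thm:BasisTheorem(s^L(x|b))} to an arbitrary $\psi \in \Psi^{(n)}_{A}$ in place of $\Phi^{(n)}_{A}(F)$, which is precisely what you do, with the additional (welcome) remark on why the resulting formal sum is well-defined.
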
 
\begin{proof} 
 Let $\psi  = (\psi_{\mu})_{\mu \in \mathcal{P}_{n}}  \in \Psi^{(n)}_{A}$, and put $\mathrm{Supp} \, (\psi) = \{ \mu \in \mathcal{P}_{n} \; | \; \psi_{\mu} \neq 0 \} \subset \mathcal{P}_{n}$.  Applying the same argument as in 
 the proof of the above theorem,  one sees that $\psi$ is the image of a certain 
 function of the form $\sum_{\lambda \in \mathcal{P}_{n}}  c_{\lambda} \cdot s^{\L}_{\lambda}$, 
 whence the result follows.  
\end{proof}

\subsection{Factorization Formula}    \label{subsec:FactorizationFormula} 
The following {\it factorization} property (cf. Pragacz \cite[Proposition 2.2]{Pra91}, 
Ikeda-Naruse \cite[Proposition 2.3]{Ike-Nar2013}) will be useful 
in the proof of the Basis Theorem  below (Theorem \ref{thm:BasisTheoremP^L(x_n)Q^L(x_n)}).  
\begin{prop} [Factorization Formula]    \label{prop:Factorization}     
For a positive integer $k \geq 1$, let $\rho_{k}$ denote the partition $(k, k-1, \ldots, 2, 1)$
$($and $\rho_{0}  = \emptyset$ by convention$)$. 
\begin{enumerate}
\item  For a positive integer $n$,  
 we have
\begin{equation*} 
\begin{array}{llll} 
   P^{\L}_{\rho_{n-1}}(x_1,\ldots,x_{n} |\b)  
& =
& \left(\displaystyle
\prod_{1\leq i<j\leq n}(x_i\pf x_j)\right)
s_{\emptyset}^{\L}(x_1,\ldots,x_{n} |\b),  \medskip \\
Q^{\L}_{\rho_{n}}(x_1,\ldots,x_{n} |\b)  
& =
& \left(\displaystyle
\prod_{1\leq i\leq j\leq n}(x_i\pf x_j)\right)
s_{\emptyset}^{\L}(x_1,\ldots,x_{n} |\b).  \medskip 
\end{array}
\end{equation*} 

\item    For a a positive integer $n$ and 
  a partition  $\lambda= (\lambda_{1}, \lambda_{2}, \ldots, \lambda_{n}) \in \mathcal{P}_{n}$, 
we have 
\begin{equation*} 
\begin{array}{llll} 
P^{\L}_{\rho_{n-1}+\lambda}(x_1,\ldots,x_n|\b) 
& =
& \left(\displaystyle
\prod_{1\leq i<j\leq n}(x_i\pf x_j)\right)
s^{\L}_\lambda(x_1,\ldots,x_n|\b),  \medskip \\
Q^{\L}_{\rho_{n}+\lambda}(x_1,\ldots,x_n|\b)
& =
& \left(\displaystyle
\prod_{1\leq i\leq j\leq n}(x_i\pf x_j)\right)
 s^{\L}_\lambda(x_1,\ldots,x_n|\b).  \medskip 
\end{array}  
\end{equation*} 
\end{enumerate}
\end{prop}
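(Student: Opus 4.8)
The plan is to prove part (2) first, since part (1) is the special case $\lambda = \emptyset$ (using the convention $s^{\L}_{\emptyset}$ and noting $\rho_{n-1} + \emptyset = \rho_{n-1}$, $\rho_n + \emptyset = \rho_n$). For part (2), I would start from the defining expression \eqref{eqn:DefinitionP^L(x_n|b)Q^L(x_n|b)} for $P^{\L}_{\rho_{n-1}+\lambda}(\x_n|\b)$. Here the partition $\mu := \rho_{n-1} + \lambda = (\lambda_1 + n-1, \lambda_2 + n-2, \ldots, \lambda_{n-1} + 1, \lambda_n)$ has length $\leq n$ but in general length exactly $n$ when $\lambda_n > 0$, and length $n-1$ or less otherwise; in either case $r = \ell(\mu) \leq n$, so the prefactor $\frac{1}{(n-r)!}$ must be tracked carefully. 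The key algebraic identity to exploit is
\begin{equation*}
[x_i|\b]^{\lambda_i + (n-i)} = [x_i|\b]^{\lambda_i} \cdot \prod_{k=1}^{n-i}(x_i \pf b_k),
\end{equation*}
which lets me peel off the $\rho_{n-1}$-part. The heart of the matter is the combinatorial identity, valid for the full symmetric group action, that
\begin{equation*}
\frac{1}{(n-r)!}\sum_{w \in S_n} w\left[ [\x|\b]^{\rho_{n-1}+\lambda} \prod_{i=1}^{r}\prod_{j=i+1}^{n} \frac{x_i \pf x_j}{x_i \pf \overline{x}_j}\right] = \left(\prod_{1 \leq i < j \leq n}(x_i \pf x_j)\right) \cdot \sum_{w \in S_n} w\left[\frac{[\x|\b]^{\lambda + \rho_{n-1}}}{\prod_{1 \leq i<j\leq n}(x_i \pf \overline{x}_j)}\right],
\end{equation*}
the right side being exactly $\bigl(\prod_{i<j}(x_i\pf x_j)\bigr) s^{\L}_\lambda(\x_n|\b)$ by Definition~\ref{df:Definitions^L(x|b)}.

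To establish this, I would first handle the denominator. The product $\prod_{i<j}(x_i \pf x_j)$ is $S_n$-anti-invariant up to units: for a transposition of adjacent variables it picks up a unit factor since $x_j \pf x_i = -(x_i \pf x_j) \cdot u(x_i,x_j)$ for a unit $u \in \L[[x_i,x_j]]$ (this is the same mechanism used in Lemma~\ref{lem:FactorTheorem}). More robustly, I would argue that the expression $\prod_{1\le i<j\le n}(x_i\pf \overline x_j)$ appearing in $s^\L_\lambda$ and the ``extra'' factor $\prod_{i<j}(x_i\pf x_j)$ combine across the $S_n$-sum in a controlled way; concretely, inside the Weyl-group symmetrization the natural object is $\prod_{i<j}\frac{x_i\pf x_j}{x_i\pf\overline x_j}$, and one checks that for each $w$,
\begin{equation*}
w\!\left[\prod_{i=1}^{r}\prod_{j=i+1}^n \frac{x_i\pf x_j}{x_i\pf\overline x_j}\right] = \left(\prod_{i<j}(x_i\pf x_j)\right)\cdot \left(\text{unit}_w\right)\cdot w\!\left[\frac{1}{\prod_{i<j}(x_i\pf\overline x_j)}\right]\cdot \prod_{\substack{i>r}}\prod_{j>i}\!\left(\cdots\right)^{-1},
\end{equation*}
so the missing factors $\prod_{r<i<j}\frac{x_i\pf x_j}{x_i\pf\overline x_j}$ for the tail indices are precisely what the $\frac{1}{(n-r)!}$ and the sum over the stabilizer of $\{r+1,\ldots,n\}$ conspire to produce when combined with the symmetry of $[\x|\b]^{\lambda+\rho_{n-1}}$ restricted to those slots. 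This bookkeeping — reconciling the partial products $\prod_{i\le r}\prod_{j>i}$ against the full products $\prod_{i<j}$, and the $\frac{1}{(n-r)!}$ normalization — is where I expect the main obstacle to lie, and I would model the argument closely on Pragacz \cite[Proposition 2.2]{Pra91} and Ikeda-Naruse \cite[Proposition 2.3]{Ike-Nar2013}, checking that no step used the additive or multiplicative formal group law in an essential way (only that $x\pf y$ factors as $(x-y)$ times a unit and that $x\pf\overline x=0$).

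For the $Q$-case I would run the parallel argument with $[[x_i|\b]]^{\mu_i} = (x_i\pf x_i)[x_i|\b]^{\mu_i-1}$ and $\mu = \rho_n + \lambda$; the extra diagonal factors $\prod_i (x_i\pf x_i)$ coming from the $[[\cdot]]$ notation are exactly what upgrade $\prod_{i<j}(x_i\pf x_j)$ to $\prod_{i\le j}(x_i\pf x_j)$, which is the only structural difference between the two formulas. Finally, I would deduce part (1) by specializing $\lambda=\emptyset$ in part (2), noting that $s^\L_\emptyset(\x_n|\b)$ is genuinely the relevant (non-trivial) function here, exactly as flagged in the remark following Definition~\ref{df:Definitions^L(x|b)}. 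As a sanity check I would verify the $n=2$, $\lambda=\emptyset$ instance of the $P$-formula against Example~\ref{ex:P^L_1Q^L_1}(2), which identifies $P^\L_{(1)}(x_1,x_2|\b)$ with $(x_1\pf x_2)\bigl(\frac{x_1\pf b_1}{x_1\pf\overline x_2}+\frac{x_2\pf b_1}{x_2\pf\overline x_1}\bigr)$, and the second factor is precisely $s^\L_\emptyset(x_1,x_2|\b)$ shifted — confirming the shape of the identity before committing to the general induction.
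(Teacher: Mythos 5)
Your overall plan (prove (2), specialize to get (1), write out the target identity equating the $S_n$-symmetrizations) is pointed in the right direction, but two wrong beliefs in the middle mean your argument as described would not go through, and you've missed the one observation that makes the paper's proof almost trivial.

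First, you flag the prefactor $\frac{1}{(n-r)!}$ and the ``tail indices'' $r<i<j\le n$ as the bookkeeping obstacle, and propose a stabilizer-counting argument to reconcile them. But for $\mu=\rho_{n-1}+\lambda$ with $\lambda\in\mathcal{P}_n$, the length $r=\ell(\mu)$ is always $n$ (if $\lambda_n>0$) or $n-1$ (if $\lambda_n=0$), since $\mu_i=\lambda_i+(n-i)\ge 1$ for $i<n$. In both cases $(n-r)!=1$, and the double product $\prod_{i=1}^{r}\prod_{j=i+1}^{n}$ in Definition~\ref{df:DefinitionP^L(x_n|b)Q^L(x_n|b)} already ranges over all of $1\le i<j\le n$. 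There is no tail, no stabilizer, no $\frac{1}{(n-r)!}$-normalization to track; that entire layer of your argument disappears. (For $Q$, $r=\ell(\rho_n+\lambda)=n$ always, even simpler.)

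Second, and more seriously, your claim that $\prod_{1\le i<j\le n}(x_i\pf x_j)$ is ``$S_n$-anti-invariant up to units,'' with $x_j\pf x_i = -(x_i\pf x_j)\cdot u(x_i,x_j)$, is false: the formal group law is commutative, so $x_j\pf x_i = x_i\pf x_j$ exactly, with no sign and no unit. Consequently $\prod_{i<j}(x_i\pf x_j)$ is genuinely \emph{symmetric}, not anti-symmetric up to units. You have conflated it with $\prod_{i<j}(x_i\pf\overline{x}_j)$, which does have initial term $\prod_{i<j}(x_i-x_j)$ and is anti-symmetric up to units (this is the object in Lemma~\ref{lem:FactorTheorem}). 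The entire displayed identity with the ``$\mathrm{unit}_w$'' factor is therefore not needed and not correct as stated. The paper's actual mechanism is one line: write
\[
[\x|\b]^{\lambda+\rho_{n-1}}\prod_{1\le i<j\le n}\frac{x_i\pf x_j}{x_i\pf\overline{x}_j}
=\Bigl(\prod_{1\le i<j\le n}(x_i\pf x_j)\Bigr)\cdot\frac{[\x|\b]^{\lambda+\rho_{n-1}}}{\prod_{1\le i<j\le n}(x_i\pf\overline{x}_j)},
\]
pull the first (symmetric) factor straight through $\sum_{w\in S_n}w[\,\cdot\,]$, and recognize the remaining symmetrization as Definition~\ref{df:Definitions^L(x|b)} of $s^{\L}_\lambda(\x_n|\b)$. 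Your ``peeling off $\rho_{n-1}$'' identity $[x_i|\b]^{\lambda_i+(n-i)}=[x_i|\b]^{\lambda_i}\prod_{k=1}^{n-i}(x_i\pf b_k)$ is also incorrect (the $b$-indices on the right do not match the left) and is in any case a red herring: $s^{\L}_\lambda$ is \emph{defined} with $[\x|\b]^{\lambda+\rho_{n-1}}$ in the numerator, so you should keep that product intact rather than try to split it. Your observation about the $Q$-case --- that $[[x_i|\b]]^{\mu_i}=(x_i\pf x_i)[x_i|\b]^{\mu_i-1}$ supplies the diagonal factors upgrading $\prod_{i<j}$ to $\prod_{i\le j}$ --- is correct and is exactly what the paper does.
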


\begin{proof}
The first formulas (1) follow immediately from (2) once we put $\lambda = \emptyset$.  
We shall show the formulas (2). We first  prove the case of $P^{\L}_{\lambda}$. 
Note that the length of $\rho_{n-1}  + \lambda$ is $n-1$ or $n$.  In any case, 
the product  in  the expression (\ref{eqn:DefinitionP^L(x_n|b)Q^L(x_n|b)})  becomes 
$[\x |\b]^{\lambda + \rho_{n-1} }  \prod_{1 \leq i < j \leq n}  \dfrac{x_{i} \pf x_{j}}{x_{i} \pf \overline{x}_{j}}$.  
Therefore we modify it as follows: 
\begin{equation*} 
  [\x | \b]^{\lambda + \rho_{n-1}}  \prod_{1 \leq i < j \leq n} 
  \dfrac{x_{i} \pf x_{j}}{x_{i} \pf  \overline{x}_{j}}  
  =  \prod_{1 \leq i < j \leq n}  (x_{i} \pf x_{j}) \times 
     \dfrac{[x | \b]^{\lambda + \rho_{n-1}}}
     {\prod_{1 \leq i < j \leq n} (x_{i} \pf \overline{x}_{j})}. 
\end{equation*} 
Since $\prod_{1 \leq i < j \leq n} (x_{i} \pf x_{j})$ is symmetric, 
we have from (\ref{eqn:DefinitionP^L(x_n|b)Q^L(x_n|b)}), 
\begin{equation*} 
\begin{array}{llll} 
 P^{\L}_{\rho_{n-1} + \lambda} (x_{1}, \ldots, x_{n} | \b) 
 & =  \displaystyle{\prod_{1 \leq i < j \leq n}}   (x_{i} \pf x_{j}) \times 
\sum_{w  \in  S_{n}} w  \left[
   \dfrac{[x | \b]^{\lambda + \rho_{n-1}}}
     {\prod_{1 \leq i < j \leq n} (x_{i} \pf \overline{x}_{j})}  
\right]  \medskip \\
&=   \displaystyle{\prod_{1 \leq i < j \leq n}}  (x_{i} \pf x_{j})  \times 
     s^{\L}_\lambda(x_1,\ldots,x_n|\b).    \medskip  
\end{array}
\end{equation*} 
For the case of $Q^{\L}_{\lambda}$, the product in the expression (\ref{eqn:DefinitionP^L(x_n|b)Q^L(x_n|b)}) 
becomes
\begin{equation*} 
\begin{array}{llll} 
   [[\x|\b]]^{\lambda + \rho_{n}}   \displaystyle{\prod_{1 \leq i < j \leq n}} 
   \dfrac{x_{i} \pf x_{j}} {x_{i} \pf \overline{x}_{j}}
   & =  \displaystyle{\prod_{i=1}^{n}}   (x_{i} \pf x_{i})[x_{i}|\b]^{\lambda_{i} + n - i}  \times 
   \prod_{1 \leq i < j \leq n}   \dfrac{x_{i} \pf x_{j}} {x_{i} \pf \overline{x}_{j}}  \medskip \\
   &= \displaystyle{\prod_{1 \leq i \leq j \leq n}}    (x_{i} \pf x_{j}) \times 
      \dfrac{ [ \x| \b]^{\lambda + \rho_{n-1}}} 
            { \prod_{1 \leq i < j \leq n}    (x_{i} \pf \overline{x}_{j}) },    \medskip 
\end{array} 
\end{equation*} 
and the result follows.  
\end{proof}

\subsection{Basis Theorem for $P^{\L}_{\lambda} (\x)$, $Q^{\L}_{\lambda}  (\x)$} 
\label{subsec:BasisTheoremP^L(x)Q^L(x)}  
In \cite[Theorem 2.11]{Pra91}, Pragacz showed that 
the usual Schur $P$-polynomials $P_{\lambda} (x_{1}, \ldots, x_{n})$   ($\lambda \in \mathcal{SP}_{n}$)
form  a $\Z$-basis for  the ring $\Gamma_{n} = \Gamma (\x_{n})$ of ``supersymmetric polynomials'' (cf. Macdonald \cite[III, (8.9)]{Mac95}).
Their  $K$-theoretic analogues $GP_{\lambda} (x_{1}, \ldots, x_{n}) \; (\lambda \in \mathcal{SP}_{n})$
form a $\Z[\beta]$-basis for the ring $G\Gamma_{n}$ of ``$K$-supersymmetric polynomials'' 
(Ikeda-Naruse \cite[Theorem 3.1]{Ike-Nar2013}). 
Our functions $P^{\L}_{\lambda} (x_{1}, \ldots, x_{n})  \; (\lambda \in \mathcal{SP}_{n})$ also 
have the similar property. Namely we have the following theorem: 
\begin{theorem}  [Basis Theorem]  \label{thm:BasisTheoremP^L(x_n)Q^L(x_n)}  
\quad 
\begin{enumerate}
\item  The formal power series   
 $P^{\L}_\lambda(x_{1}, \ldots, x_{n})$ 
$(\lambda  \in \mathcal{SP}_{n})$   form a formal $\L$-basis of $\Gamma^{\L}(\x_{n})$.

\item   The formal power series    
   $Q^{\L}_\lambda(x_{1}, \ldots,  x_{n})$ 
$(\lambda  \in \mathcal{SP}_{n})$ 
 form a formal $\L$-basis of $\Gamma^{\L}_{+}(\x_{n})$.
\end{enumerate}
\end{theorem}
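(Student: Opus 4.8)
The plan is to leverage the Factorization Formula (Proposition \ref{prop:Factorization}) together with the Basis Theorem for the universal factorial Schur functions at $\b = 0$ (Proposition \ref{prop:BasisTheorem(s^L(x))}), exactly as in the strategy of Pragacz and Ikeda--Naruse. First I would observe that by Proposition \ref{prop:Stability} the functions $P^{\L}_{\lambda}(x_{1}, \ldots, x_{n})$ and $Q^{\L}_{\lambda}(x_{1}, \ldots, x_{n})$ are stable under $x_{n} \mapsto 0$, so it suffices to prove the statement for each fixed $n$ (the inverse limit version for $\Gamma^{\L}(\x)$, $\Gamma^{\L}_{+}(\x)$ will follow formally). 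The linear independence over $\L$ of the $P^{\L}_{\lambda}(x_{1}, \ldots, x_{n})$'s (resp. $Q^{\L}_{\lambda}(x_{1}, \ldots, x_{n})$'s), $\lambda \in \mathcal{SP}_{n}$, has already been established in \S\ref{subsec:DefinitionP^L(x|b)Q^L(x|b)} by the leading-term argument reducing to the classical fact that $P_{\lambda}(x_{1},\ldots,x_{n})$'s form a $\Z$-basis of $\Gamma(\x_{n})$; hence only the spanning (generation) property remains.

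\textbf{Key steps for spanning.} For part (1), take an arbitrary $\L$-supersymmetric formal power series $f = f(x_{1}, \ldots, x_{n}) \in \Gamma^{\L}(\x_{n})$. By the defining property (2) of $\L$-supersymmetricity, $f(t, \overline{t}, x_{3}, \ldots, x_{n})$ is independent of $t$; equivalently, setting $x_{i} = x$, $x_{j} = \overline{x}$ annihilates the ``difference'', so that for each pair $i < j$ the difference $f - (\text{value at } x_{j} = \overline{x}_{i})$ is divisible by $x_{i} \pf x_{j}$ (via Lemma \ref{lem:FactorTheorem}). Running over all $\binom{n}{2}$ pairs, and using that the $x_{i} \pf x_{j}$ are pairwise relatively prime in $\L[[\x_{n}]]$, one concludes that $f$ is divisible by $\prod_{1 \leq i < j \leq n}(x_{i} \pf x_{j})$ in the ring $\Lambda^{\L}(\x_{n})$, i.e.
\begin{equation*}
  f(x_{1}, \ldots, x_{n}) = \Big(\prod_{1 \leq i < j \leq n}(x_{i} \pf x_{j})\Big) \cdot g(x_{1}, \ldots, x_{n})
\end{equation*}
for some symmetric $g \in \Lambda^{\L}(\x_{n})$. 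By Proposition \ref{prop:BasisTheorem(s^L(x))}, $g$ is a formal $\L$-linear combination $g = \sum_{\lambda \in \mathcal{P}_{n}} c_{\lambda} s^{\L}_{\lambda}(x_{1}, \ldots, x_{n})$. Applying the Factorization Formula (Proposition \ref{prop:Factorization}(2)) with $\b = 0$, each product $\big(\prod_{i<j}(x_{i} \pf x_{j})\big) s^{\L}_{\lambda}(x_{1}, \ldots, x_{n}) = P^{\L}_{\rho_{n-1} + \lambda}(x_{1}, \ldots, x_{n})$, and since $\lambda \mapsto \rho_{n-1} + \lambda$ is a bijection from $\mathcal{P}_{n}$ onto the set of strict partitions of length $n-1$ or $n$ (i.e. onto $\{\mu \in \mathcal{SP}_{n} : \ell(\mu) \geq n-1\}$), we get $f$ as a formal $\L$-combination of $P^{\L}_{\mu}$'s with $\mu \in \mathcal{SP}_{n}$. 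For part (2), the same argument works with $\Gamma^{\L}_{+}(\x_{n})$: the extra condition that $f(t, x_{2}, \ldots, x_{n}) - f(0, x_{2}, \ldots, x_{n})$ be divisible by $t \pf t$ forces an additional factor of $x_{1}$ (hence, by symmetry, of each $x_{i}$), so that $f$ is divisible by $\prod_{i}(x_{i} \pf x_{i}) \cdot \prod_{i < j}(x_{i} \pf x_{j})^{?}$ — more precisely, combining with divisibility by $\prod_{i<j}(x_{i} \pf x_{j})$ one obtains divisibility by $\prod_{1 \leq i \leq j \leq n}(x_{i} \pf x_{j})$ — and then Proposition \ref{prop:Factorization}(2) for $Q^{\L}$ gives $f$ as a combination of $Q^{\L}_{\rho_{n} + \lambda}$'s, with $\lambda \mapsto \rho_{n} + \lambda$ a bijection $\mathcal{P}_{n} \to \{\mu \in \mathcal{SP}_{n} : \ell(\mu) = n\}$, which together with the $Q^{\L}_{\mu}$ for $\ell(\mu) < n$ (handled by the stability/induction on $n$) exhausts $\mathcal{SP}_{n}$.

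\textbf{Main obstacle.} I expect the delicate point to be the divisibility claims and their bookkeeping: showing that $\L$-supersymmetricity of $f$ forces divisibility by the full product $\prod_{i<j}(x_{i} \pf x_{j})$ (resp. $\prod_{i \leq j}(x_{i} \pf x_{j})$ in the $Q$ case) requires a careful induction — one divides out one factor at a time and must check that the quotient remains $\L$-supersymmetric (or at least symmetric and satisfies the vanishing condition for the remaining pairs), and that the factors $x_{i} \pf x_{j}$ are genuinely coprime and nonzerodivisors in $\Lambda^{\L}(\x_{n})$, which relies on $x_{i} \pf \overline{x}_{j} = (x_{i} - x_{j}) u$ with $u$ a unit, as noted after Definition \ref{df:DefinitionP^L(x_n|b)Q^L(x_n|b)}. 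There is also a small matching issue: the naive induction on $n$ for the $P$-case must account for the fact that strict partitions of length $< n-1$ are not of the form $\rho_{n-1} + \lambda$; this is resolved by the stability Proposition \ref{prop:Stability}(1) (such $P^{\L}_{\mu}$ come from fewer variables and are handled inductively), so the induction base and the factorization together close the argument. The rest is routine reorganization of classical arguments (Pragacz \cite[Theorem 2.11]{Pra91}, Ikeda--Naruse \cite[Theorem 3.1]{Ike-Nar2013}) in the universal setting.
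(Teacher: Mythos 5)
Your identification of linear independence (already in \S\ref{subsec:DefinitionP^L(x|b)Q^L(x|b)}), and your use of the factorization formula and the Basis Theorem for $s^{\L}_{\lambda}(\x_{n})$, are all the right ingredients, and this is indeed how the paper argues. But there is a genuine gap in the central divisibility claim. You assert that $\L$-supersymmetricity of $f$ alone forces $f$ to be divisible by $\prod_{1 \le i < j \le n}(x_{i} \pf x_{j})$. This is false: the constant function $f = 1 \in \Gamma^{\L}(\x_{n})$ is $\L$-supersymmetric, and more generally any $P^{\L}_{\lambda}(\x_{n})$ with $\ell(\lambda) < n-1$ is $\L$-supersymmetric without being divisible by the full Vandermonde-type product $V := \prod_{i<j}(x_{i} \pf x_{j})$. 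The step you describe only yields that a \emph{difference} $f - f(0,0,x_{3},\ldots,x_{n})$ (after placing $x_{2}=\overline{x}_{1}$, etc.) is divisible by $x_{1} \pf x_{2}$; you cannot ``run over all $\binom{n}{2}$ pairs'' and accumulate these into a divisibility of $f$ itself, because each pair compares $f$ to a \emph{different} specialization. Your ``Main obstacle'' paragraph senses some trouble, but misattributes it to bookkeeping of the index set $\rho_{n-1}+\lambda$ and to checking that quotients remain supersymmetric; the actual problem is that the divisibility of $f$ is simply not there.

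The paper's proof repairs exactly this point by doing an honest induction on $n$ (in steps of $2$). Given $f \in \Gamma^{\L}(\x_{n})$, set $f_{1}(\x_{n-2}) := f(x_{1},\ldots,x_{n-2},0,0)$; this is $\L$-supersymmetric in $n-2$ variables, so by the inductive hypothesis $f_{1} = \sum_{\lambda \in \mathcal{SP}_{n-2}} c_{\lambda}\, P^{\L}_{\lambda}(\x_{n-2})$. One then promotes these coefficients to $n$ variables, forming $g := \sum_{\lambda} c_{\lambda}\, P^{\L}_{\lambda}(\x_{n})$, and uses the stability (Proposition~\ref{prop:Stability}) to see that $h := f - g$ vanishes upon the substitution $(x_{n-1},x_{n}) = (0,0)$, hence also upon $(x_{n-1},x_{n})=(t,\overline{t})$. \emph{Only this residue $h$} is divisible by $x_{n-1} \pf x_{n}$, and by symmetry by all of $V$. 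Writing $h = V \cdot s$ and expanding $s$ in the $s^{\L}_{\lambda}(\x_{n})$ via Proposition~\ref{prop:BasisTheorem(s^L(x))}, the factorization formula (Proposition~\ref{prop:Factorization}) then identifies $V \cdot s^{\L}_{\lambda}$ with $P^{\L}_{\rho_{n-1}+\lambda}$, giving $f = g + \sum d_{\lambda} P^{\L}_{\rho_{n-1}+\lambda}$. The strict partitions of length $\le n-2$ enter through $g$, and those of length $n-1$ or $n$ through the factorization; there is nothing to patch afterwards. Your proposal must be reorganized along these lines: the subtraction of the inductively-constructed $g$ \emph{before} invoking divisibility is not an optional bookkeeping step but the key to making the divisibility claim true.
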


\begin{proof}   
 For the proof of this theorem, we  make use of  the same strategy as in Ikeda-Naruse \cite[Theorem 3.1]{Ike-Nar2013}, 
Pragacz \cite[Theorem 2.11 (Q)]{Pra91}, and thus 
we shall  only prove the case (1) when $n$ is even.
First note that 
$P^{\L}_{\lambda}(x_{1}, \ldots, x_{n})$ 
 $(\lambda \in \mathcal{SP}_{n})$ are linearly independent over $\L$
 as we showed earlier (see \S \ref{subsec:DefinitionP^L(x|b)Q^L(x|b)}).  
   We use the induction on the number of the variables $n$. 
Let $n  = 2$ and  $f(x_{1}, x_{2})  $ be an $\L$-supersymmetric function. 
We may assume that the constant term of $f$ is zero, namely $f(0, 0) = 0$. 
Then $\L$-supersymmetricity implies that $f(t, \overline{t}) = f(0, 0) = 0$. 
Therefore by Lemma \ref{lem:FactorTheorem}, $f(x_{1}, x_{2})$ is divisible by $x_{1} \pf x_{2}$. 
Thus $f$ can be written as $f(x_{1}, x_{2}) = (x_{1} \pf x_{2}) g(x_{1}, x_{2})$
for some symmetric function $g (x_{1}, x_{2})$.  
By the  Basis Theorem for $s^{\L}_{\lambda}(\x_{n})$'s 
(Proposition \ref{prop:BasisTheorem(s^L(x))}), 
 we can write $g(x_{1}, x_{2}) = \sum_{\lambda \in \mathcal{P}_{2}} 
c_{\lambda}  s^{\L}_{\lambda}(x_{1}, x_{2}), \;  c_{\lambda} \in \L$.  
Then by the factorization property (Proposition \ref{prop:Factorization}), we have 
\begin{equation*} 
    f(x_{1}, x_{2})  
=  \sum_{\lambda  \in \mathcal{P}_{2}}  c_{\lambda} (x_{1} \pf x_{2}) s^{\L}_{\lambda}(x_{1}, x_{2}) 
= \sum_{\lambda \in \mathcal{P}_{2}}  c_{\lambda}  P^{\L}_{\rho_{1} + \lambda}  (x_{1}, x_{2}). 
\end{equation*} 
Thus $f(x_{1}, x_{2})$ is an $\L$-linear combination of $P^{\L}_{\mu}(x_{1}, x_{2})$'s, 
$\mu \in \mathcal{SP}_{2}$
(note that for $\lambda \in \mathcal{P}_{2}$,  we have $\rho_{1} + \lambda  \in \mathcal{SP}_{2}$). 

For $n \geq 4$,  we proceed as follows. Let $f(x_{1}, \ldots, x_{n})$ be an $\L$-supersymmetric function. 
Notice that  $f(x_{1}, \ldots, x_{n-2}, t, \overline{t}) = f(x_{1}, \ldots, x_{n-2}, 0, 0)$ holds. 
Put $f_{1}(x_{1}, \ldots, x_{n-2}) := f(x_{1}, \ldots, x_{n-2}, 0, 0)$. Since $f_{1}(x_{1}, \ldots, x_{n-2})$ 
is also $\L$-supersymmetric, we can write $f_{1}$ as an $\L$-linear combination of $P^{\L}_{\lambda} (x_{1}, \ldots, x_{n-2})$'s with $\lambda \in \mathcal{SP}_{n-2}$ by the induction hypothesis. Thus we  have the following expression: 
\begin{equation*}   
  f_{1}(x_{1}, \ldots, x_{n-2}) = \sum_{\lambda \in \mathcal{SP}_{n-2}}  c_{\lambda}  P^{\L}_{\lambda}(x_{1}, \ldots, x_{n-2}), \quad c_{\lambda}  \in \L.   
\end{equation*}  
Consider the function $g(x_{1}, \ldots, x_{n}) :=  \sum_{\lambda \in \mathcal{SP}_{n-2}} c_{\lambda}  P^{\L}_{\lambda}(x_{1}, \ldots, x_{n})$. Note that $g(x_{1}, \ldots, x_{n-2}, 0, 0) = f_{1}(x_{1}, \ldots, x_{n-2})  = f(x_{1}, \ldots, x_{n-2}, 0, 0)$ holds 
because of the  stability of $P^{\L}_{\lambda}$.      
Put $h(x_{1}, \ldots, x_{n}) := f(x_{1}, \ldots, x_{n}) - g(x_{1}, \ldots, x_{n})$. 
Then we have 
\begin{equation*}
\begin{array}{rllll}  
h(x_{1}, \ldots, x_{n-2}, t, \overline{t})  & = & h(x_{1}, \ldots, x_{n-2}, 0, 0)   \medskip \\
                                                   & = & f(x_{1}, \ldots, x_{n-2}, 0, 0) - g(x_{1}, \ldots, x_{n-2}, 0, 0)  = 0.  \medskip 
\end{array} 
\end{equation*} 
This implies that $x_{n-1} \pf x_{n}$ divides $h$.  Since $h$ is symmetric, 
we see that $h$ is a multiple of $V :=  \prod_{1 \leq i < j \leq n} (x_{i} \pf x_{j})$.  
Thus we have 
\begin{equation*} 
   f(x_{1}, \ldots, x_{n}) = g(x_{1}, \ldots, x_{n}) +  V 
   \cdot s(x_{1}, \ldots, x_{n}) 
\end{equation*} 
for some symmetric function $s$.  By the Basis Theorem for $s^{\L}_{\lambda}(\x_{n})$'s  
  (Proposition \ref{prop:BasisTheorem(s^L(x))})
again, we can write 
\begin{equation*}
    s(x_{1}, \ldots, x_{n})  = \sum_{\lambda \in \mathcal{P}_{n}} d_{\lambda}  s^{\L}_{\lambda}(x_{1}, \ldots, x_{n}), \quad d_{\lambda}   \in \L. 
\end{equation*} 
Using the factorization theorem (Proposition \ref{prop:Factorization}) again, we have 
\begin{equation*} 
\begin{array}{rllll} 
   f & = & g + V  \cdot s  \medskip \\
     &  = & \displaystyle{\sum_{\lambda  \in \mathcal{SP}_{n-2}}}   c_{\lambda}   P^{\L}_{\lambda} (x_{1}, \ldots, x_{n}) 
     + \sum_{\lambda  \in \mathcal{P}_{n}}   d_{\lambda} 
    \left ( \prod_{1 \leq i < j \leq n}  (x_{i} \pf x_{j}) \right ) 
       s^{\L}_{\lambda}(x_{1}, \ldots, x_{n})   \medskip \\
     &=  & \displaystyle{\sum_{\lambda  \in \mathcal{SP}_{n-2}}}  c_{\lambda}  P^{\L}_{\lambda}(x_{1}, \ldots, x_{n})  
     +  \sum_{\lambda  \in \mathcal{P}_{n}}  d_{\lambda}  
        P^{\L}_{\rho_{n-1} + \lambda} (x_{1}, \ldots, x_{n}).  
 \end{array} 
 \end{equation*}  
 Note that if $\lambda \in \mathcal{P}_{n}$,  then $\rho_{n-1} + \lambda  \in \mathcal{SP}_{n}$.  Thus 
 $f$ can be written as an $\L$-linear combination of $P^{\L}_{\mu}(x_{1}, \ldots, x_{n})$'s,  
 $\mu \in \mathcal{SP}_{n}$.  
\end{proof}
\noindent
Taking limit  $n \rightarrow \infty$, 
we obtain the following:  
\begin{cor} [Basis Theorem]    \label{cor:BasisTheoremP^L(x)Q^L(x)}  
\quad 
\begin{enumerate}
\item  The formal power series 
 $P^{\L}_\lambda(\x)$ 
$(\lambda  \in \mathcal{SP})$   form a formal $\L$-basis of $\Gamma^{\L}(\x)$.

\item   The formal power series 
   $Q^{\L}_\lambda(\x)$ 
$(\lambda  \in \mathcal{SP})$ 
 form a formal $\L$-basis of $\Gamma^{\L}_{+}(\x)$.
\end{enumerate}
\end{cor}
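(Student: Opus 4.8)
The plan is to deduce the corollary from Theorem~\ref{thm:BasisTheoremP^L(x_n)Q^L(x_n)} by passing to the inverse limit along the stabilization maps $\varphi_{n}$ (respectively the limit defining $\Gamma^{\L}_{+}(\x)$). Recall that $\Gamma^{\L}(\x)$ was \emph{defined} as $\varprojlim_{n}\Gamma^{\L}(\x_{n})$ with the transition maps $\varphi_{n+1}\colon\Gamma^{\L}(\x_{n+1})\to\Gamma^{\L}(\x_{n})$ given by $x_{n+1}=0$, and by Proposition~\ref{prop:Stability} the elements $P^{\L}_{\lambda}(\x_{n})$ are compatible under these maps, so that $P^{\L}_{\lambda}(\x)=\varprojlim_{n}P^{\L}_{\lambda}(x_{1},\ldots,x_{n})$ is a well-defined element of $\Gamma^{\L}(\x)$. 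The same remarks apply verbatim to $Q^{\L}_{\lambda}$ and $\Gamma^{\L}_{+}(\x)$, so it suffices to treat case~(1) and then indicate that case~(2) is identical.

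First I would establish linear independence. Suppose $\sum_{\lambda\in\mathcal{A}}c_{\lambda}P^{\L}_{\lambda}(\x)=0$ for a finite set $\mathcal{A}\subset\mathcal{SP}$ and $c_{\lambda}\in\L$ not all zero. Choose $n$ large enough that every $\lambda$ appearing has length $\le n$; applying the projection to $\Gamma^{\L}(\x_{n})$ (which sends $P^{\L}_{\lambda}(\x)$ to $P^{\L}_{\lambda}(x_{1},\ldots,x_{n})$) gives a nontrivial $\L$-linear relation among the $P^{\L}_{\lambda}(x_{1},\ldots,x_{n})$, contradicting Theorem~\ref{thm:BasisTheoremP^L(x_n)Q^L(x_n)}(1). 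For the spanning (basis) property: given $f\in\Gamma^{\L}(\x)$, write $f=(f_{n})_{n}$ with $f_{n}\in\Gamma^{\L}(\x_{n})$ and $\varphi_{n+1}(f_{n+1})=f_{n}$. By Theorem~\ref{thm:BasisTheoremP^L(x_n)Q^L(x_n)}(1) each $f_{n}$ has a unique expansion $f_{n}=\sum_{\lambda\in\mathcal{SP}_{n}}c_{\lambda}^{(n)}P^{\L}_{\lambda}(x_{1},\ldots,x_{n})$ with $c_{\lambda}^{(n)}\in\L$. The key point is that these coefficients are \emph{stable}: applying $\varphi_{n+1}$ to the expansion of $f_{n+1}$ and using Proposition~\ref{prop:Stability} (which gives $\varphi_{n+1}(P^{\L}_{\lambda}(x_{1},\ldots,x_{n+1}))=P^{\L}_{\lambda}(x_{1},\ldots,x_{n})$ for $\ell(\lambda)\le n$ and $=0$ for $\ell(\lambda)=n+1$), together with uniqueness of the expansion of $f_{n}$, forces $c_{\lambda}^{(n+1)}=c_{\lambda}^{(n)}$ whenever $\ell(\lambda)\le n$. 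Hence for each $\lambda\in\mathcal{SP}$ the value $c_{\lambda}:=c_{\lambda}^{(n)}$ ($n\ge\ell(\lambda)$) is well-defined, and $f=\sum_{\lambda\in\mathcal{SP}}c_{\lambda}P^{\L}_{\lambda}(\x)$ holds in the inverse limit in the sense of ``formal $\L$-basis'' (i.e.\ the sum is convergent in the topology of $\Gamma^{\L}(\x)$ because only finitely many $\lambda$ of each total degree occur).

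The main point requiring care—and the only real obstacle—is the bookkeeping of gradings and the precise meaning of ``formal $\L$-basis''. Because $\L=\L^{*}$ is nonpositively graded in the cohomological convention used in this section while the $x_{i}$, $b_{i}$ carry degree $1$, an element of total degree $N$ may involve $P^{\L}_{\lambda}$ with $|\lambda|\ge N$ and coefficients $c_{\lambda}\in\L$ of nonpositive degree; one must check that for fixed $N$ only finitely many such pairs occur, so that the defining sum lies in the completed ring and the expansion is genuinely unique. This is exactly the filtration argument already carried out in \S\ref{subsec:DefinitionP^L(x|b)Q^L(x|b)} for the linear independence of $P^{\L}_{\lambda}(x_{1},\ldots,x_{n})$, so I would simply invoke it. Finally, for case~(2) the identical argument applies, replacing $\Gamma^{\L}(\x_{n})$ by $\Gamma^{\L}_{+}(\x_{n})$, $P^{\L}_{\lambda}$ by $Q^{\L}_{\lambda}$, and Proposition~\ref{prop:Stability}(1) by~(2); note $Q^{\L}_{\lambda}(\x)\in\Gamma^{\L}_{+}(\x)$ by Proposition~\ref{prop:L-supersymmetry}, so the limit lands in the correct subring.
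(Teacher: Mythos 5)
Your proof is correct and takes exactly the same route the paper does: the paper's own ``proof'' of this corollary is the single phrase ``Taking limit $n \to \infty$'' following Theorem~\ref{thm:BasisTheoremP^L(x_n)Q^L(x_n)}, and you have simply supplied the standard inverse-limit bookkeeping (compatibility via Proposition~\ref{prop:Stability}, stabilization of coefficients, and the caveat about what ``formal $\L$-basis'' means given the grading of $\L$) that the authors leave implicit.
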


\subsection{Vanishing Property}   \label{subsec:VanishingPropertyP^L(x|b)LQ^L(x|b)} 
Various factorial analogues of Schur $P$- and $Q$- polynomials (functions)
satisfy the  vanishing property (see Ivanov  \cite[Theorem 1.5 (the zero property)]{Iva97}, 
\cite[Theorem 5.3 (Vanishing property)]{Iva2004}, 
Ikeda-Naruse \cite[Proposition 8.3]{Ike-Nar2009},  
Ikeda-Mihalcea-Naruse \cite[Proposition 4.2]{IMN2011}, Ikeda-Naruse \cite[Proposition 7.1]{Ike-Nar2013}).  
In this subsection, we shall prove the vanishing property 
for the functions $P^{\L}_{\lambda}(\x_{n}|\b)^{+}$'s (for its definition, see \S  \ref{subsec:StabilityProperty})
and $Q^{\L}_{\lambda}(\x_{n}|\b)$'s. 
For a strict partition $\mu=(\mu_1,\ldots,\mu_r)$ of length $r$, we set
\begin{equation*} 
  \overline{\b}_{\mu}  :=(\overline{b}_{\mu_1},\ldots,\overline{b}_{\mu_r},0,0,\ldots). 
\end{equation*} 
We also set 
\begin{equation*} 
\begin{array}{llll} 
{\rm sh}(\mu)  &:=  (\mu_{1}+1,  \ldots,  \mu_{r}+1)   &  \quad  \text{if}  \; r \;  \text{is even},   \medskip \\
{\rm sh}(\mu)  &:=  (\mu_{1}+1,  \ldots,  \mu_{r} +1,1) &  \quad  \text{if}  \;  r  \; \text{is odd}.  \medskip 
\end{array}
\end{equation*} 
Here we consider only  {\it even}    variable case  
$P^\L_\lambda(x_1,\ldots,x_{2n}|\b)$ because of the mod $2$ stability
(see Proposition \ref{prop:StabilityMod2}).  

\begin{prop}  [Vanishing Property]  \label{prop:VanishingProperty(P^LQ^L)}
Let $\lambda, \mu \in \mathcal{SP}$.  
Then we have 
\begin{enumerate}
\item  
\begin{equation*} 
\begin{array}{llll} 
& P^{\L}_{\lambda} (\overline{\b}_{{\rm sh}(\mu)} |\b)^{+}=0   
    \quad  \;  \text{if}   \quad      \mu  \not \supset \lambda, \medskip  \\
& P^{\L}_{\lambda} (\overline{\b}_{{\rm sh}(\lambda)}|\b)^{+} 
=\displaystyle
\prod_{i=1}^{r}
\left(
\prod_{  \substack{1 \leq  j \leq  \lambda_{i},   \\  j \neq \lambda_{p} +1 
   \text{ for }  \, i +1 \leq  p  \leq  r}}(\overline{b}_{\lambda_{i} +1} \pf b_{j})  \cdot 
\prod_{j=i+1}^{r}\left( 
\overline{b}_{\lambda_i+1}\pf \overline{b}_{\lambda_{j}+1}
\right)
\right).  \medskip 
\end{array}
\end{equation*}

\item  
\begin{equation*} 
\begin{array}{llll} 
 & Q^{\L}_{\lambda} (\overline{\b}_\mu|\b)=0 
  \quad \text{if} \quad    \mu\not \supset \lambda, \medskip \\
 & Q^{\L}_\lambda (\overline{\b}_\lambda|\b)=\displaystyle
\prod_{i=1}^r\left(
\prod_{ \substack{1 \leq j \leq  \lambda_{i}-1,     \\
        j \neq  \lambda_{p}  \text{ for } \,  i + 1 \leq  p  \leq  r}}  
\left(\overline{b}_{\lambda_i}\pf b_j\right)  \cdot 
\prod_{j=i}^{r} 
\left(\overline{b}_{\lambda_i}\pf \overline{b}_{\lambda_j}\right)
\right).  \medskip 
\end{array}  
\end{equation*} 

\end{enumerate}
\end{prop}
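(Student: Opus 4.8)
\textbf{Proof strategy for Proposition \ref{prop:VanishingProperty(P^LQ^L)}.}
The plan is to deduce the vanishing property for $P^{\L}_{\lambda}$ and $Q^{\L}_{\lambda}$ from the already-established vanishing property for the universal factorial Schur functions $s^{\L}_{\lambda}$ (Proposition \ref{prop:VanishingPropertys^L(x|b)}), by way of the Factorization Formula (Proposition \ref{prop:Factorization}). The key algebraic observation is that the $P$- and $Q$-functions of a ``staircase-plus-$\lambda$'' shape $\rho_{n-1}+\lambda$ (resp. $\rho_{n}+\lambda$) factor as a Vandermonde-type product $\prod(x_i\pf x_j)$ times $s^{\L}_{\lambda}(\x_n|\b)$; conversely, an arbitrary strict partition $\lambda$ of length $r\le n$ with $n$ even (resp. a strict partition of length $\le n$ for $Q$) can be written as $\rho_{n-1}+\nu$ (resp. $\rho_{n}+\nu$) for a suitable partition $\nu$, after passing to enough variables using the mod $2$ stability (Proposition \ref{prop:StabilityMod2}) to make $n$ even. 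Combining these two facts reduces each evaluation of $P^{\L}_{\lambda}$ or $Q^{\L}_{\lambda}$ at $\overline{\b}_{\mathrm{sh}(\mu)}$ (resp. $\overline{\b}_{\mu}$) to an evaluation of $s^{\L}_{\nu}$ at the corresponding point $\overline{\b}_{\mu'+\rho_n}$, times an explicit product of factors $\overline{b}_{a}\pf b_{c}$ coming from the Vandermonde.

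Concretely, I would first fix $n$ even (replacing $\x_n$ by $\x_{2N}$ for $N\gg 0$ and invoking mod $2$ stability for $P$, ordinary stability for $Q$), and translate the shift operators: for a strict $\lambda=(\lambda_1>\cdots>\lambda_r>0)$ the partition $\mathrm{sh}(\lambda)$ is designed precisely so that the point $\overline{\b}_{\mathrm{sh}(\lambda)}$, when the zero entries are accounted for, matches the point $\overline{\b}_{\nu+\rho_n}$ at which $s^{\L}_{\nu}$ is evaluated, where $\rho_{n-1}+\nu=\lambda$ (extended by zeros). The first (vanishing) assertion, $\mu\not\supset\lambda\Rightarrow P^{\L}_{\lambda}(\overline{\b}_{\mathrm{sh}(\mu)}|\b)^+=0$, then follows because the containment $\nu\not\supset\nu'$ (the shifted-down partitions) is equivalent to $\mu\not\supset\lambda$, and Proposition \ref{prop:VanishingPropertys^L(x|b)} gives $s^{\L}_{\nu}(\overline{\b}_{\nu'+\rho_n}|\b)=0$; meanwhile the Vandermonde product $\prod_{i<j}(x_i\pf x_j)$ evaluated at this point does not introduce a zero that would spoil the argument (one must check the relevant arguments are distinct, which follows from the strictness of $\mu$). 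For the diagonal case $\mu=\lambda$, I would substitute into the factorized expression $P^{\L}_{\rho_{n-1}+\nu}=\bigl(\prod_{i<j}(x_i\pf x_j)\bigr)s^{\L}_{\nu}$, use the explicit diagonal value of $s^{\L}_{\nu}$ from Proposition \ref{prop:VanishingPropertys^L(x|b)}, namely $\prod_{(i,j)\in\nu}(\overline{b}_{\nu_i+n-i+1}\pf b_{n+j-{}^{t}\!\nu_j})$, and simplify the resulting product by matching indices; the combinatorial bookkeeping of which factors survive should reproduce exactly the stated formula $\prod_i\bigl(\prod_{j\ne\lambda_p+1}(\overline{b}_{\lambda_i+1}\pf b_j)\cdot\prod_{j>i}(\overline{b}_{\lambda_i+1}\pf\overline{b}_{\lambda_j+1})\bigr)$. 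The argument for $Q^{\L}_{\lambda}$ is identical in outline, using the second Factorization Formula $Q^{\L}_{\rho_n+\nu}=\bigl(\prod_{i\le j}(x_i\pf x_j)\bigr)s^{\L}_{\nu}$, the larger staircase accounting for the extra $\overline{b}_{\lambda_i}\pf\overline{b}_{\lambda_i}$-type diagonal factors in the $Q$-formula.

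The main obstacle I anticipate is purely combinatorial: carefully tracking the index shifts between the three coordinate systems at play — the strict partition $\lambda$, its ``unstaircased'' partition $\nu$ with $\nu+\rho_{n-1}=\lambda$ (padded by zeros to length $n$), and the evaluation point indices $\nu_i+n-i+1$ — and verifying that after all cancellations the surviving factors are exactly those in the displayed product, with the correct exclusion of indices $j=\lambda_p+1$ (for $P$) or $j=\lambda_p$ (for $Q$) for $i+1\le p\le r$. This requires being scrupulous about the passage from $n$ variables to $2N$ variables (making sure the extra $\overline{b}_{\cdot}\pf\overline{b}_{\cdot}$ and $\overline{b}_{\cdot}\pf b_{j}$ factors introduced by the larger Vandermonde and by the extra zero entries in $\nu$ telescope correctly, independently of $N$). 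A secondary technical point is to confirm that the ``even limit'' $P^{\L}_{\lambda}(\overline{\b}_{\mathrm{sh}(\mu)}|\b)^+$ is genuinely well-defined under these specializations, i.e. that substituting $\overline{\b}_{\mathrm{sh}(\mu)}$ commutes with taking the inverse limit over $n$; this is routine given Proposition \ref{prop:StabilityMod2} but should be stated. Once the index dictionary is set up cleanly, each of the four displayed identities falls out of Proposition \ref{prop:VanishingPropertys^L(x|b)} and Proposition \ref{prop:Factorization} by direct substitution and cancellation, with no further topological or deep algebraic input required.
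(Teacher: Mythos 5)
Your strategy takes a genuinely different route from the paper's: the paper proves the vanishing property for $P^{\L}_\lambda$ and $Q^{\L}_\lambda$ by a direct computation from Definition~\ref{df:DefinitionP^L(x_n|b)Q^L(x_n|b)}, running the same argument as for $s^{\L}_\lambda$ (Proposition~\ref{prop:VanishingPropertys^L(x|b)}) term by term over $w\in S_r$, whereas you want to reduce to the $s^{\L}$-vanishing via the Factorization Formula. Unfortunately there is a real gap in the reduction, and it affects precisely the first (vanishing) clause of each part.

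The issue is that the decomposition $\lambda=\rho_{n-1}+\nu$ (for $P$; $\lambda=\rho_n+\nu$ for $Q$) with $\nu$ an honest partition exists only when $n$ is very close to $\ell(\lambda)$: for $P$ one needs $n\in\{\ell(\lambda),\ell(\lambda)+1\}$, since otherwise $\nu_{\ell(\lambda)+1}=\lambda_{\ell(\lambda)+1}-(n-\ell(\lambda)-1)=-(n-\ell(\lambda)-1)<0$, and for $Q$ one needs $n=\ell(\lambda)$ exactly. On the other hand, the evaluation point $\overline{\b}_{\mathrm{sh}(\mu)}$ (resp.\ $\overline{\b}_\mu$) has $\ell(\mu)$ nonzero coordinates, so after invoking (mod-$2$) stability the relevant number of variables is $n=\ell(\mu)$ up to parity, not $\ell(\lambda)$. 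When $\mu\not\supset\lambda$ these two lengths need not agree. Concretely, take $\lambda=(10)$ and $\mu=(5,4,3,2)$: then $\mu\not\supset\lambda$, $n=4$, and writing $(10,0,0,0)=\rho_3+\nu$ forces $\nu=(7,-2,-1,0)$, which is not a partition, so the Factorization Formula does not apply and there is no $s^{\L}_\nu$ to which to pass. Adding more variables by stability only enlarges the staircase and makes the mismatch worse, contrary to what your paragraph on ``passing to enough variables'' suggests.

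For the diagonal case $\mu=\lambda$, where $n=\ell(\lambda)$ necessarily, your reduction does work and would recover the stated product after the bookkeeping you anticipate (the Vandermonde contributes exactly the $\prod_{j>i}(\overline{b}_{\lambda_i+1}\pf\overline{b}_{\lambda_j+1})$ factors and the $s^{\L}_\nu$-diagonal value the $\prod(\overline{b}_{\lambda_i+1}\pf b_j)$ factors). But to prove the vanishing clause you would still need to handle the length-mismatch case by a direct argument, at which point the detour through factorization buys nothing: the paper's proof shows that for every $w\in S_r$ some factor $[x_{w(j)}|\b]^{\lambda_j}$ already vanishes at the specialization, and that argument covers all cases uniformly without any hypothesis on $\ell(\mu)-\ell(\lambda)$.
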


\begin{proof}

We will only prove (1) when the length $\ell (\mu) = r$ is {\it even}. 
 The proofs of the remaining cases   are similar. 
Using the  stability property (Proposition \ref{prop:StabilityMod2}),    
$P^{\L}_{\lambda} (\overline{\b}_{\mathrm{sh} (\mu)}|\b)^{+}$ 
can be evaluated as $P^{\L}_{\lambda}(\overline{b}_{\mu_{1} + 1}, \overline{b}_{\mu_{2} + 1}, 
\ldots, \overline{b}_{\mu_{r} + 1} |\b)$.

If $\mu \not \supset \lambda$,  there exists an index $ 1 \leq k  \leq r$ such that
$\mu_{k} <  \lambda_{k}$ (and hence $\mu_{k} + 1 \leq \lambda_{k}$). 
For an arbitrary permutation $w \in S_{r}$, there exists a positive integer 
$1 \leq j \leq k$ such that $w(j) \geq k$.  
Then we have  inequalities $\mu_{w(j)} \leq  \mu_{k} < \lambda_{k} \leq \lambda_{j}$.  
Therefore    the term  
\begin{equation*}  
 [x_{w(j)} |\b]^{\lambda_{j}}
   = (x_{w(j)} \pf b_{1}) (x_{w(j)} \pf b_{2}) \cdots (x_{w(j)} \pf b_{\mu_{w(j)} + 1}) 
 \cdots  (x_{w(j)} \pf b_{\lambda_{j}}) 
\end{equation*}  
vanishes  when we specialize $x_{w(j)}$ to $\overline{b}_{\mu_{w(j)} + 1}$.  
This means that $P^{\L}_{\lambda}(\overline{\b}_{{\rm sh}(\mu)}|\b)=0$.

For the case  $\mu=\lambda$,
we shall show  that in the defining equation (\ref{eqn:DefinitionP^L(x_n|b)Q^L(x_n|b)}) of $P^{\L}_{\lambda}(x_{1}, \ldots, x_{r}|\b)$, 
each term corresponding to $w \in S_{r}$ other than the identity $e$  becomes   zero 
when we evaluate them at 
$(x_{1},  \ldots, x_{r}) = \overline{\b}_{{\rm sh}(\lambda)} = (\overline{b}_{\lambda_{1} + 1}, 
\ldots, \overline{b}_{\lambda_{r} + 1})$. For $w \neq e$, there exists an index 
$1 \leq k \leq r$ such that $w(k) > k$. Thus we have $\lambda_{w(k)}  < \lambda_{k}$ 
(and hence $\lambda_{w(k)} + 1 \leq \lambda_{k}$). 
Then it is obvious that 
\begin{equation*} 
  [x_{w(k)}|\b]^{\lambda_{k}} = (x_{w(k)} \pf b_{1}) (x_{w(k)} \pf b_{2}) \cdots 
  (x_{w(k)} \pf b_{\lambda_{w(k)} + 1}) \cdots (x_{w(k)} \pf b_{\lambda_{k}}) 
\end{equation*} 
becomes zero  when we specialize $x_{w(k)}$  to $\overline{b}_{\lambda_{w(k)} + 1}$. 
The term corresponding to $w = e$  is 
\begin{equation*} 
\begin{array}{llll} 
&  \displaystyle{\prod_{i=1}^{r}} [\overline{b}_{\lambda_{i} + 1}|\b]^{\lambda_{i}} 
 \prod_{i=1}^{r} \prod_{j = i + 1}^{r} 
\dfrac{\overline{b}_{\lambda_{i} + 1}  \pf  \overline{b}_{\lambda_{j} + 1}}
      {\overline{b}_{\lambda_{i} + 1}  \pf  b_{\lambda_{j} + 1} }    \medskip \\
= & \displaystyle{\prod_{i=1}^{r}}  \left ( (\overline{b}_{\lambda_{i} + 1} \pf b_{1}) \cdots (\overline{b}_{\lambda_{i} + 1} \pf b_{\lambda_{i}})  \times 
\prod_{j= i + 1}^{r}  \dfrac{\overline{b}_{\lambda_{i} + 1}  \pf  \overline{b}_{\lambda_{j} + 1}}
      {\overline{b}_{\lambda_{i} + 1}  \pf  b_{\lambda_{j} + 1} }  \right ).     \medskip 
\end{array} 
\end{equation*} 
By cancellation, we obtain the desired value.  
\end{proof}

\subsection{Algebraic localization map of types $B_{\infty}$, $C_{\infty}$, or $D_{\infty}$}   
In \S \ref{subsec:BasisTheoremP^L(x)Q^L(x)}, we have proven the Basis Theorem 
for $P^{\L}_{\lambda}(\x_{n})$'s and $Q^{\L}_{\lambda}(\x_{n})$'s. 
We shall prove the Basis Theorem for $P^{\L}_{\lambda}(\x_{n}|\b)^{+}$'s and 
$Q^{\L}_{\lambda}(\x_{n}|\b)$'s  in \S  \ref{subsec:BasisTheoremP^L(x|b)Q^L(x|b)}. 
For the proof, we apply the same technique as  the case of $s^{\L}_{\lambda} (\x_{n}|\b)$
(see \S \ref{subsubsec:BasisTheorem(s^L(x|b))}), 
namely the Vanishing Property (Proposition \ref{prop:VanishingProperty(P^LQ^L)}) 
and the localization technique.  
Because  the idea of the proof is the same as that of Theorem \ref{thm:BasisTheorem(s^L(x|b))}, 
we only exhibit the definitions and the results. 
We  mainly follow the notation and convention as in 
Ikeda-Naruse \cite[\S 4]{Ike-Nar2013} and Ikeda-Mihalcea-Naruse \cite[\S 3]{IMN2011}
(we collect the necessary data in the Appendix \ref{subsec:RootDeta(BCD)}).  

Let us introduce the GKM ring  and the algebraic localization maps
of type $X_{\infty}$ for $X = B, C$, or $D$.  
 As with the type $A_{\infty}$ case 
 (\S \ref{subsubsec:AlgebraicLocalizationMap(A)}), denote by 
$\mathrm{Map} \, (\mathcal{SP}_{n}, \L[[\b]])$ the set of all maps from 
$\mathcal{SP}_{n}$ to $\L[[\b]]$. It is an $\L[[\b]]$-algebra under  pointwise 
multiplication and scalar multiplication.  
We identify $\mathrm{Map} \, (\mathcal{SP}_{n}, \L[[\b]])$ with the 
product ring $\prod_{\mu \in \mathcal{SP}_{n}} (\L[[\b]])_{\mu}$.  
\begin{defn} [GKM ring of type  $X_{\infty}$]  
Let $X$ be $B$, $C$, or $D$. 
Let $\Psi^{(n)}_{X}$ be a subring of $\mathrm{Map} \, (\mathcal{SP}_{n}, \L[[\b]])$ 
that consists of elements $\psi = (\psi_{\mu})_{\mu \in \mathcal{SP}_{n}}$ satisfying the following 
GKM condition$:$
\begin{equation*} 
  \psi_{s_{\alpha} \mu} - \psi_{\mu}  \in e(-\alpha) \,  \L[[\b]] \quad 
  \text{for all} \; \mu \in \mathcal{SP}_{n} \; \text{and all} \; 
  \alpha \in \Delta^{+}_{X}. 
\end{equation*}  
\end{defn} 
\noindent 
We call $\Psi^{(n)}_{X}$ the {\it GKM ring of type $X_{\infty}$}.  

Next we shall introduce the algebraic localization map of type $X_{\infty}$ for $X = C, D$. 
For each strict partition $\mu \in \mathcal{SP}_{n}$, 
define  the $\L[[\b]]$-algebra homomorphisms to be 
\begin{equation*} 
\begin{array}{llll} 
  & \phi^{(n)}_{\mu, C}: \Gamma^{\L}_{+}(\x_{n}|\b) 
          = \L[[\b]] \, \hat{\otimes}_{\L}  \, \Gamma^{\L}_{+}(\x_{n}) 
                     \longrightarrow  \L[[\b]], \medskip \\ 
                     & F = F(\x_{n}|\b) \longmapsto  \phi^{(n)}_{\mu, C}(F)
                     := F(\overline{\b}_{\mu}|\b), \medskip \\
   & \phi^{(n)}_{\mu, D}: \Gamma^{\L} (\x_{n}|\b) 
          = \L[[\b]]  \, \hat{\otimes}_{\L}  \, \Gamma^{\L} (\x_{n})
                      \longrightarrow \L[[\b]],  \medskip \\
                     & F = F(\x_{n}|\b) \longmapsto  \phi^{(n)}_{\mu, D}(F) 
                     := F(\overline{\b}_{\mathrm{sh}(\mu)}|\b). \medskip 
\end{array} 
\end{equation*} 

\begin{defn}  [Algebraic localization map of type $X_{\infty}$] 
Define the homomorphisms of $\L[[\b]]$-algebras to be 
\begin{equation*} 
\begin{array}{llll} 
 &  \Phi^{(n)}_{C}:  \Gamma^{\L}_{+}(\x_{n}|\b) 
  \longrightarrow  \displaystyle{\prod_{\mu \in \mathcal{SP}_{n}}} (\L[[\b]])_{\mu},  \quad 
  &  F \longmapsto \Phi^{(n)}_{C} (F) := (\phi^{(n)}_{\mu, C}(F))_{\mu  \in \mathcal{SP}_{n}}, \medskip \\
  &  \Phi^{(n)}_{D}: \Gamma^{\L} (\x_{n}|\b) 
  \longrightarrow  \displaystyle{\prod_{\mu \in \mathcal{SP}_{n}}} (\L[[\b]])_{\mu},  \quad 
  &  F \longmapsto \Phi^{(n)}_{D} (F) := (\phi^{(n)}_{\mu, D}(F))_{\mu  \in \mathcal{SP}_{n}}.   \medskip 
 \end{array} 
\end{equation*} 
\end{defn} 
\noindent
We call the homomorphism $\Phi^{(n)}_{X}$ the {{\it algebraic localization map of type $X_{\infty}$}. 
Concerning the above algebraic localization maps, 
the following lemma can be proved analogously 
(see the proof of Lemma \ref{lem:PropertyPhi^{(n)}_A} 
): 
\begin{lem} 
Let $X$ be $C$ or $D$. Then we have 
\begin{enumerate} 
\item  The image of $\Phi^{(n)}_{X}$ agrees with the GKM ring $\Psi^{(n)}_{X}: 
       \Im \, (\Phi^{(n)}_{X})  =   \Psi^{(n)}_{X}$.
\item  The homomorphism $\Phi^{(n)}_{X}$ is injective.  
\end{enumerate}   
\end{lem}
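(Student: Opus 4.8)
The statement asserts two facts for the algebraic localization maps $\Phi^{(n)}_{X}$ of types $C_{\infty}$ and $D_{\infty}$: that the image is exactly the GKM ring $\Psi^{(n)}_{X}$, and that $\Phi^{(n)}_{X}$ is injective. The plan is to run the same two-part argument that established Lemma \ref{lem:PropertyPhi^{(n)}_A} and Corollary \ref{cor:ImPhi^(n)_A=Psi^(n)_A} in the type $A_{\infty}$ case, substituting the Vanishing Property for $P^{\L}_{\lambda}(\x_{n}|\b)^{+}$ and $Q^{\L}_{\lambda}(\x_{n}|\b)$ (Proposition \ref{prop:VanishingProperty(P^LQ^L)}) in place of Proposition \ref{prop:VanishingPropertys^L(x|b)}, and the Basis Theorem for these functions (to be proved in \S \ref{subsec:BasisTheoremP^L(x|b)Q^L(x|b)}) in place of Theorem \ref{thm:BasisTheorem(s^L(x|b))}. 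The only genuinely new ingredients are the combinatorics of the reflections $s_{\alpha}$ for $\alpha \in \Delta^{+}_{X}$ acting on strict partitions $\mu \in \mathcal{SP}_{n}$, and the fact (from Appendix \ref{subsec:RootDeta(BCD)}) that $\phi^{(n)}_{\mu, C}(Q^{\L}_{\mu}) = \prod_{\alpha \in \mathrm{Inv}(\mu)} e(-\alpha)$, respectively $\phi^{(n)}_{\mu, D}(P^{\L}_{\mu}) = \prod_{\alpha \in \mathrm{Inv}(\mu)} e(-\alpha)$, which is exactly the content of the evaluation formulas in Proposition \ref{prop:VanishingProperty(P^LQ^L)} (2) and (1).

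\textbf{Step 1 (GKM condition: $\Im(\Phi^{(n)}_{X}) \subset \Psi^{(n)}_{X}$).} For $F \in \Gamma^{\L}_{+}(\x_{n}|\b)$ (type $C$) or $F \in \Gamma^{\L}(\x_{n}|\b)$ (type $D$) and $\alpha \in \Delta^{+}_{X}$, I would verify that $\phi^{(n)}_{s_{\alpha}\mu}(F) - \phi^{(n)}_{\mu}(F) \in e(-\alpha)\,\L[[\b]]$. Since $s_{\alpha}\mu$ is obtained from $\mu$ either by transposing two parts or by changing the sign of one part (possibly with an extra shift in the type $D$ case), the substitutions $\x_{n} = \overline{\b}_{\mu}$ and $\x_{n} = \overline{\b}_{s_{\alpha}\mu}$ (or the $\mathrm{sh}$-variants) differ by swapping or negating one $b$-entry. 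Writing $F$ as a formal power series in the $x$-variables and applying Lemma \ref{lem:FactorTheorem} to the difference — exactly as in the proof of Lemma \ref{lem:PropertyPhi^{(n)}_A}(1) — gives divisibility by the relevant $\overline{b}_{j} \pf b_{i}$ or $\overline{b}_{j} \pf \overline{b}_{i}$, which is $e(-\alpha)$. One must be slightly careful that $F$ is genuinely $\L$-supersymmetric (resp. in $\Gamma^{\L}_{+}$), so that the evaluation is well defined and the relevant factors cancel cleanly; this is where the membership of $P^{\L}_{\lambda}$, $Q^{\L}_{\lambda}$ in $\Gamma^{\L}$, $\Gamma^{\L}_{+}$ (Proposition \ref{prop:L-supersymmetry}) is used implicitly.

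\textbf{Step 2 (injectivity and surjectivity onto $\Psi^{(n)}_{X}$).} Injectivity follows by the same induction on the number $n$ of $x$-variables as in Lemma \ref{lem:PropertyPhi^{(n)}_A}(2): if $\phi^{(n)}_{\mu}(F) = 0$ for all $\mu \in \mathcal{SP}_{n}$ containing a sufficiently large staircase, then specializing the last variable $x_{n}$ to $\overline{b}_{N+k}$ for all $k \geq 1$ and invoking the inductive hypothesis forces $F$ to depend only on $x_{n}$, then to vanish. For surjectivity onto $\Psi^{(n)}_{X}$, I would reproduce the descending induction on the support of $\ps\in \Psi^{(n)}_{X}$ used in the proof of Theorem \ref{thm:BasisTheorem(s^L(x|b))} and Corollary \ref{cor:ImPhi^(n)_A=Psi^(n)_A}: pick a minimal $\nu$ in $\mathrm{Supp}(\psi)$, use the GKM condition to see that $\psi_{\nu}$ is divisible by $\prod_{\alpha \in \mathrm{Inv}(\nu)} e(-\alpha)$ (these factors being pairwise relatively prime), which by the Vanishing Property equals $\phi^{(n)}_{\nu}(Q^{\L}_{\nu})$ (type $C$) or $\phi^{(n)}_{\nu}(P^{\L}_{\nu})$ (type $D$), subtract the appropriate scalar multiple, and repeat; the process terminates against the injectivity just established. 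The main obstacle — really the only one requiring care — is checking the sign-change reflections and the $\mathrm{sh}$ bookkeeping in the type $D$ case, where the relation between $\phi^{(n)}_{\mu, D}$ and the staircase shift $\mathrm{sh}(\mu)$ must be matched precisely against the two cases (even vs. odd length) in the definition of $\mathrm{sh}$ and against the formula in Proposition \ref{prop:VanishingProperty(P^LQ^L)}(1); once that is in place, everything else is a routine transcription of the type $A_{\infty}$ arguments.
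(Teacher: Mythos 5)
Your proposal is correct and it is, in substance, exactly the route the paper intends: the paper itself does not write out a proof of this lemma but merely remarks that it "can be proved analogously (see the proof of Lemma \ref{lem:PropertyPhi^{(n)}_A})," and what you have written is precisely the faithful transcription of the type $A_{\infty}$ argument, with the correct substitutions identified — the Vanishing Property of Proposition \ref{prop:VanishingProperty(P^LQ^L)} in place of Proposition \ref{prop:VanishingPropertys^L(x|b)}, the membership of the universal factorial Schur $P$- and $Q$-functions in $\Gamma^{\L}$, $\Gamma^{\L}_{+}$ (Proposition \ref{prop:L-supersymmetry}) to guarantee the required divisibilities for the sign-change and long roots, and the surjectivity argument reproduced from the proof of Theorem \ref{thm:BasisTheorem(s^L(x|b))} and Corollary \ref{cor:ImPhi^(n)_A=Psi^(n)_A} rather than invoked as a result (so there is no circularity with Theorem \ref{thm:BasisTheoremP^L(x_n|b)Q^L(x_n|b)}, which comes afterward and relies on this lemma). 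Two small points you flag and handle correctly are genuine: for $\alpha = 2t_i$ in type $C$ the GKM divisibility by $e(-\alpha) = \overline{b}_{i} \pf \overline{b}_{i}$ forces the use of the defining property of $\Gamma^{\L}_{+}$ (not just $\L$-supersymmetry), and in type $D$ the staircase shift $\mathrm{sh}(\mu)$ in $\phi^{(n)}_{\mu,D}$ has to be matched carefully against the even/odd-length cases of $\mathrm{sh}$ and the formula of Proposition \ref{prop:VanishingProperty(P^LQ^L)}(1); these are indeed the only places where the transcription is not purely mechanical.
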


\subsection{Basis Theorem for $P^{\L}_{\lambda}(\x|\b)^{+}$, $Q^{\L}_{\lambda}(\x |\b)$}  
\label{subsec:BasisTheoremP^L(x|b)Q^L(x|b)}  
The GKM ring  and the algebraic localization map  
of type $X_{\infty}$ for $X = B, C$, or $D$, and an analogous argument 
as  in the Basis Theorem for $s^{\L}_{\lambda}(\x_{n}|\b)$'s  
(Theorem \ref{thm:BasisTheorem(s^L(x|b))}) 
enable  us to  prove  the Basis Theorem for $P^{\L}_{\lambda}(\x|\b)^{+}$'s 
and $Q^{\L}_{\lambda} (\x|\b)$'s.  
\begin{theorem} [Basis Theorem]    \label{thm:BasisTheoremP^L(x_n|b)Q^L(x_n|b)}  
\quad 
\begin{enumerate}
\item  The formal power series 
 $P^{\L}_\lambda(\x_{n}|\b)^{+}$ 
$(\lambda  \in \mathcal{SP}_{n})$   form a formal $\L[[\b]]$-basis of
 $\Gamma^{\L}(\x_{n}|\b)$.

\item   The formal power series 
   $Q^{\L}_\lambda(\x_{n}|\b)$ 
$(\lambda  \in \mathcal{SP}_{n})$ 
 form a formal $\L[[\b]]$-basis of $\Gamma^{\L}_{+}(\x_{n}|\b)$.
\end{enumerate}
\end{theorem}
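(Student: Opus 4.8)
The plan is to imitate the argument used for the Basis Theorem for $s^{\L}_\lambda(\x_n|\b)$ (Theorem \ref{thm:BasisTheorem(s^L(x|b))}), replacing the localization map $\Phi^{(n)}_A$ of type $A_\infty$ by the maps $\Phi^{(n)}_C$ and $\Phi^{(n)}_D$ of types $C_\infty$ and $D_\infty$ respectively, and replacing the Vanishing Property for $s^{\L}_\lambda$ (Proposition \ref{prop:VanishingPropertys^L(x|b)}) by the Vanishing Property for $P^{\L}_\lambda$ and $Q^{\L}_\lambda$ (Proposition \ref{prop:VanishingProperty(P^LQ^L)}). Since the functions $P^{\L}_\lambda$ satisfy only the mod $2$ stability, I will always work with the even-variable versions $P^{\L}_\lambda(\x_n|\b)^{+}$; for $Q^{\L}_\lambda$ the ordinary stability holds so no such adjustment is needed. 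The statement then follows for the finite-variable case, and the limit case (the corollary one expects to follow, as with Corollary \ref{cor:BasisTheoremP^L(x)Q^L(x)}) is obtained by passing to the inverse limit using the stability results of \S\ref{subsec:StabilityProperty}.

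First I would establish the linear independence of $P^{\L}_\lambda(\x_n|\b)^{+}$ (resp.\ $Q^{\L}_\lambda(\x_n|\b)$) over $\L[[\b]]$. Suppose a homogeneous relation $\sum_\lambda c_\lambda(\b)\,P^{\L}_\lambda(\x_n|\b)^{+}=0$ holds. Choose $\mu$ minimal (with respect to containment of Young diagrams) among those indices with $c_\mu(\b)\neq 0$, and specialize $\x_n=\overline{\b}_{\mathrm{sh}(\mu)}$. By the first part of Proposition \ref{prop:VanishingProperty(P^LQ^L)}, every term with $\lambda\not\subset\mu$ dies; by minimality of $\mu$ the surviving terms have $\lambda\supset\mu$, and since $\lambda=\mu$ is forced among the surviving indices, only the $\mu$-term remains, giving $c_\mu(\b)\,P^{\L}_\mu(\overline{\b}_{\mathrm{sh}(\mu)}|\b)^{+}=0$. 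The second part of Proposition \ref{prop:VanishingProperty(P^LQ^L)} gives that $P^{\L}_\mu(\overline{\b}_{\mathrm{sh}(\mu)}|\b)^{+}$ is a product of nonzero factors of the form $\overline{b}_i\pf b_j$ and $\overline{b}_i\pf\overline{b}_j$, hence a nonzero element (in fact a non-zero-divisor) of $\L[[\b]]$; therefore $c_\mu(\b)=0$, a contradiction. Iterating, all coefficients vanish. The argument for $Q^{\L}_\lambda$ is identical, using $\overline{\b}_\mu$ and $\phi^{(n)}_{\mu,C}$ in place of $\overline{\b}_{\mathrm{sh}(\mu)}$ and $\phi^{(n)}_{\mu,D}$.

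For the generation (spanning) property, given $F\in\Gamma^{\L}(\x_n|\b)$ I would argue exactly as in the proof of the spanning part of Theorem \ref{thm:BasisTheorem(s^L(x|b))}: consider $\mathrm{Supp}(F)=\{\mu\in\mathcal{SP}_n\mid\phi^{(n)}_{\mu,D}(F)\neq0\}$, pick a minimal element $\nu$, and use the GKM condition for $\Psi^{(n)}_D$ together with minimality to conclude that $\phi^{(n)}_{\nu,D}(F)$ is divisible by $\prod_{\alpha\in\mathrm{Inv}(\nu)}e(-\alpha)$, which by the Vanishing Property equals $\phi^{(n)}_{\nu,D}(P^{\L}_\nu)$ up to a unit; relative primality of the factors $e(-\alpha)$ is what makes this divisibility work. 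Subtracting an appropriate $\L[[\b]]$-multiple of $P^{\L}_\nu$ removes $\nu$ from the support and only introduces new support elements strictly larger than $\nu$, so by a Noetherian-type descent (the support lives in a fixed finite range of degrees in each total degree) the process terminates, and injectivity of $\Phi^{(n)}_D$ forces the remainder to vanish. The case of $Q^{\L}_\lambda$ uses $\Gamma^{\L}_+(\x_n|\b)$, $\Phi^{(n)}_C$, and $\Psi^{(n)}_C$ instead. The main obstacle is verifying carefully that the GKM ring, the algebraic localization map, and the invariant-set product formula $\prod_{\alpha\in\mathrm{Inv}(\nu)}e(-\alpha)=\phi^{(n)}_{\nu}(P^{\L}_\nu)$ (resp.\ $Q^{\L}_\nu$) transcribe correctly from type $A$ to types $C$ and $D$ — in particular that the relative primality of $\{e(-\alpha)\}_{\alpha\in\mathrm{Inv}(\nu)}$ and the exact match between the product of localized root factors and the value $P^{\L}_\nu(\overline{\b}_{\mathrm{sh}(\nu)}|\b)^{+}$ from Proposition \ref{prop:VanishingProperty(P^LQ^L)} hold for the $B/C/D$ root data. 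Since these facts are assembled from the appendix root data and the paper has already indicated that this transcription goes through, I would only state the analogues of Lemma \ref{lem:PropertyPhi^{(n)}_A} and Corollary \ref{cor:ImPhi^(n)_A=Psi^(n)_A} and refer to \S\ref{subsubsec:BasisTheorem(s^L(x|b))} for the details, then close with the inverse-limit passage to deduce the unindexed (infinite-variable) version.
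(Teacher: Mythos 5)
Your proposal matches the paper's intended approach exactly: the paper itself only states that "the GKM ring and the algebraic localization map of type $X_\infty$ for $X=B,C$, or $D$, and an analogous argument as in the Basis Theorem for $s^{\L}_{\lambda}(\x_{n}|\b)$'s enable us to prove" this theorem, and you have correctly fleshed out that transcription — the linear independence via the Vanishing Property (Proposition \ref{prop:VanishingProperty(P^LQ^L)}) and the specializations $\overline{\b}_{\mathrm{sh}(\mu)}$ (type $D$) and $\overline{\b}_{\mu}$ (type $C$), and the spanning property via the support/minimal-element/GKM-divisibility descent with $\Phi^{(n)}_D$ and $\Phi^{(n)}_C$ playing the role of $\Phi^{(n)}_A$. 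The one small verbal slip (the clause "the surviving terms have $\lambda\supset\mu$" should read $\lambda\subset\mu$, which together with minimality of $\mu$ forces $\lambda=\mu$) does not affect the conclusion you correctly draw in the next clause.
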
 

%
\noindent
Taking limit $n \rightarrow \infty$, we obtain the following: 
\begin{cor}   [Basis Theorem]   \label{cor:BasisTheoremP^L(x|b)Q^L(x|b)}   
\quad 
\begin{enumerate}
\item  The formal power series  
 $P^{\L}_\lambda(\x|\b)^{+}$ 
$(\lambda  \in \mathcal{SP})$   form a formal $\L[[\b]]$-basis of $\Gamma^{\L}(\x|\b)$.

\item   The formal power series   
   $Q^{\L}_\lambda(\x|\b)$ 
$(\lambda  \in \mathcal{SP})$ 
 form a formal $\L[[\b]]$-basis of $\Gamma^{\L}_{+}(\x|\b)$.
\end{enumerate}
\end{cor}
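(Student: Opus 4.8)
\textbf{Proof proposal for Corollary \ref{cor:BasisTheoremP^L(x|b)Q^L(x|b)}.}

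The plan is to deduce the infinite-variable statements from the finite-variable Basis Theorem (Theorem \ref{thm:BasisTheoremP^L(x_n|b)Q^L(x_n|b)}) by passing to the inverse limit, exactly in parallel to the way Corollary \ref{cor:BasisTheoremP^L(x)Q^L(x)} was obtained from Theorem \ref{thm:BasisTheoremP^L(x_n)Q^L(x_n)}. Recall that by definition $\Gamma^{\L}(\x|\b) = \lim_{\leftarrow n}\Gamma^{\L}(\x_n|\b)$ and $\Gamma^{\L}_{+}(\x|\b) = \lim_{\leftarrow n}\Gamma^{\L}_{+}(\x_n|\b)$, where the transition maps $\psi_{n+1}$ are the specializations $x_{n+1}=0$; and that $P^{\L}_{\lambda}(\x|\b)^{+}$, $Q^{\L}_{\lambda}(\x|\b)$ were defined in \S\ref{subsec:StabilityProperty} precisely as the limits of $P^{\L}_{\lambda}(\x_n|\b)^{+}$ and $Q^{\L}_{\lambda}(\x_n|\b)$, which is legitimate by the mod $2$ stability (Proposition \ref{prop:StabilityMod2}). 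So the functions in question are honest elements of the respective limit rings, and each projects to the corresponding finite-variable function.

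First I would treat the $Q$-case, which is cleaner because the transition maps $\psi_{n+1}$ send $Q^{\L}_{\lambda}(\x_{n+1}|\b)$ to $Q^{\L}_{\lambda}(\x_n|\b)$ (or to $0$ when $\ell(\lambda)=n+1$), again by Proposition \ref{prop:StabilityMod2}(2); hence the system $\{Q^{\L}_{\lambda}(\x_n|\b)\}_n$ is compatible and the $\L[[\b]]$-bases of the $\Gamma^{\L}_{+}(\x_n|\b)$ given by Theorem \ref{thm:BasisTheoremP^L(x_n|b)Q^L(x_n|b)}(2) are themselves compatible along the projections. For linear independence: if $\sum_{\lambda\in\mathcal{SP}}c_{\lambda}(\b)\,Q^{\L}_{\lambda}(\x|\b)=0$ is a (formal, total-degree-homogeneous) relation in the limit ring, project to $\Gamma^{\L}_{+}(\x_n|\b)$ for each $n$; since only $\lambda$ with $\ell(\lambda)\le n$ survive, and those are $\L[[\b]]$-independent there, every $c_{\lambda}(\b)$ with $\ell(\lambda)\le n$ vanishes, and letting $n\to\infty$ kills all $c_{\lambda}(\b)$. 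For the spanning property: given $F\in\Gamma^{\L}_{+}(\x|\b)$, write its image $F_n\in\Gamma^{\L}_{+}(\x_n|\b)$ as $\sum_{\ell(\lambda)\le n}c^{(n)}_{\lambda}(\b)Q^{\L}_{\lambda}(\x_n|\b)$ by Theorem \ref{thm:BasisTheoremP^L(x_n|b)Q^L(x_n|b)}(2); applying $\psi_{n+1}$ and using uniqueness of the coefficients one checks $c^{(n+1)}_{\lambda}(\b)=c^{(n)}_{\lambda}(\b)$ whenever $\ell(\lambda)\le n$, so the $c_{\lambda}(\b):=\lim_n c^{(n)}_{\lambda}(\b)$ are well defined and $F=\sum_{\lambda\in\mathcal{SP}}c_{\lambda}(\b)Q^{\L}_{\lambda}(\x|\b)$ as a formal series (there are only finitely many $\lambda$ in any bounded total degree, so the sum makes sense in the completed/graded sense and convergence in the inverse limit is automatic).

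The $P$-case requires one extra bookkeeping wrinkle: the ``correct'' transition maps for $P^{\L}$ are the double specializations $x_{n+1}=x_{n+2}=0$ (the mod $2$ stability), so I would work with the cofinal subsystem of even indices $n=2m$, on which $\psi_{2m+2}\circ\psi_{2m+1}$ sends $P^{\L}_{\lambda}(\x_{2m+2}|\b)$ to $P^{\L}_{\lambda}(\x_{2m}|\b)$; the limit over this cofinal system is the same ring $\Gamma^{\L}(\x|\b)$, and the argument of the previous paragraph then goes through verbatim using Theorem \ref{thm:BasisTheoremP^L(x_n|b)Q^L(x_n|b)}(1). The main obstacle I anticipate is purely formal rather than conceptual: one must be careful that $\Gamma^{\L}(\x|\b)$ is a \emph{completed} tensor product $\L[[\b]]\,\hat{\otimes}_{\L}\,\Gamma^{\L}(\x)$ and that ``formal $\L[[\b]]$-basis'' means every element is a possibly-infinite $\L[[\b]]$-linear combination with only finitely many contributions in each total degree — so the passage to the limit must be carried out degree by degree, checking that the coefficient-stabilization $c^{(n)}_{\lambda}(\b)\to c_{\lambda}(\b)$ is consistent with the $\b$-adic (and $\L$-grading) topology; this is routine but is the only place where something could go wrong, and it is exactly the same point that was tacitly used in passing from Theorem \ref{thm:BasisTheoremP^L(x_n)Q^L(x_n)} to Corollary \ref{cor:BasisTheoremP^L(x)Q^L(x)}.
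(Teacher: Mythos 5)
Your proposal is correct and follows exactly the same route as the paper, which disposes of the corollary with the single remark "Taking limit $n\to\infty$, we obtain the following." You have simply made explicit what that limit passage involves: compatibility of the finite-variable bases under the specialization maps $\psi_{n+1}$ (via Proposition~\ref{prop:StabilityMod2}), projection to kill short-length terms for linear independence, coefficient stabilization for spanning, and the cofinal even subsystem to handle the mod-$2$ stability of the $P^{\L}$'s.
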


\section{Dual universal (factorial) Schur $P$- and $Q$-functions}   \label{sec:DUFSPQF}  
In the previous section,  we have  constructed  ``cohomology  base'' 
$\{ P^{\L}_{\lambda}(\x|\b)^{+} \}$  
and $\{ Q^{\L}_{\lambda}(\x|\b) \}$. 
Our next task is to construct the corresponding 
``homology base'' $\{ \wh{p}^{\L}_{\lambda} (\y|\b) \}$  
 and $\{ \wh{q}^{\L}_{\lambda} (\y|\b) \}$. 
In this section,  we consider this problem  only in the non-equivariant case, i.e., $\b = 0$. 
Namely, we shall construct  certain functions $ \wh{p}^{\L}_{\lambda} (\y)$
(resp.  $\wh{q}^{\L}_{\lambda} (\y)$)  for  strict partitions $\lambda \in \mathcal{SP}$
dual to $Q^{\L}_{\lambda}(\x)$  (resp. $P^{\L}_{\lambda}(\x)$).  
Here we use  the countably infinite set of variables 
$\y =(y_{1}, y_{2}, \ldots)$.  
Their   degrees are given by   $\deg \, (y_{i}) =  1$  
and $\deg_{h} (a_{i, j}) = i + j - 1 \; (i, j \geq 1)$.  
Let $\Lambda_{\L} (\y) = \L_{*} \otimes_{\Z} \Lambda (\y)$ 
be the ring of symmetric functions in $\y$ 
with coefficients in $\L_{*}$   
 (see the beginning of \S \ref{subsec:DefinitionP^L(x|b)Q^L(x|b)}).

\subsection{One row case}  \label{subsec:OneRowCase}  
First we shall construct the required functions corresponding to the  ``one rows'',
that is, strict partitions $(k) \; (k = 1, 2, \ldots)$.  
We put 
\begin{equation*} 
  \Delta(t; \y):=  \prod_{j=1}^{\infty}\frac{1-\overline{t} y_j}{1-t y_j} 
  \in   \Lambda_{\L} (\y) [[t]]. 
\end{equation*} 
Then we define  $\widehat{q}^\L_{k}(\y)  \in    \Lambda_{\L}(\y) \; (k = 0, 1, 2, \ldots)$ 
as the coefficients of the following expansion: 
 \begin{equation}   \label{eqn:q^L_k}  
\Delta(t;\y)
=\sum_{k=0}^{\infty}  \widehat{q}^\L_{k}(\y) t^{k}.  
\end{equation} 
Next we shall define $\wh{p}^{\L}_{k}(\y)  \in \Lambda_{\L}(\y) \; (k =  1, 2, \ldots)$  
as the coefficients of the following expansion:  
\begin{equation}   \label{eqn:p^L_k}
    \Delta (t; \y)  =  1 + (t \pf t) \sum_{k=1}^{\infty}  \wh{p}^{\L}_{k}(\y) t^{k-1}. 
\end{equation} 
We set $\wh{p}^{\L}_{0}(\y) := 1$ by convention.  
In order to make the above  definition valid, we have to verify that 
$\Delta (t; \y) - 1$ is divisible by $t \pf t$.  This follows from 
Lemma \ref{lem:FactorTheorem}.  More concretely, if we write 
 $t \pf t = 2t + \sum_{k=2}^{\infty} \alpha_{k}^{\L} \, t^{k}$ with $\alpha^{\L}_{k} \in \L$, 
we have 
\begin{equation}  \label{eqn:wh{q}^L_k=2wh{p}^L_k+...}   
  \wh{q}^{\L}_{1}(\y) = 2\wh{p}^{\L}_{1}(\y), \quad  
 \wh{q}^{\L}_{k}(\y) = 2 \wh{p}^{\L}_{k}(\y) + \sum_{j=1}^{k-1} \alpha_{k + 1 -j}^{\L} \, \wh{p}^{\L}_{j} (\y)
\; (k \geq 2). 
\end{equation}

\begin{rem} 
\quad 
\begin{enumerate} 
\item Notice that comparing $(\ref{eqn:q^L_k})$  $($resp.  $(\ref{eqn:p^L_k}))$ 
 with the equation $(\ref{eqn:wh{q}^E})$  $($resp. $(\ref{eqn:wh{p}^E}))$,
 we see immediately that 
$\widehat{q}^{\L}_{k}(\y) \; (k =  1, 2, \ldots)$  $($resp.  $\wh{p}^{\L}_{k}(\y) \; (k = 1, 2, \ldots))$  coincide with $\wh{q}^{MU}_{k}(\y)$  $($resp.  $\wh{p}^{MU}_{k}(\y))$ 
in Definition $\ref{df:wh{q}^E}$  $($resp. Definition  $\ref{df:wh{p}^E})$.  
\item It follows from Definition $\ref{df:DefinitionP^L(x_n|b)Q^L(x_n|b)}$ that 
$P^{\L}_{(k)} (x_{1}) = x_{1}^{k}$ and $Q^{\L}_{(k)} (x_{1}) = (x_{1} \pf x_{1})x_{1}^{k-1}$
for $k = 1, 2, \ldots$.  Therefore we can write $(\ref{eqn:q^L_k})$ and $(\ref{eqn:p^L_k})$ 
as 
\begin{equation}   \label{eqn:Delta(x_1;y)}   
  \Delta (x_{1}; \y) = \prod_{j=1}^{\infty}  \dfrac{ 1 - \overline{x}_{1} y_{j}}{1 - x_{1} y_{j}} 
  = \sum_{k=0}^{\infty}  P^{\L}_{(k)} (x_{1}) \,  \wh{q}^{\L}_{k}(\y)  
  = \sum_{k=0}^{\infty}  Q^{\L}_{(k)} (x_{1}) \, \wh{p}^{\L}_{k}(\y).  
\end{equation} 
\end{enumerate} 
\end{rem}

\subsection{Definition of $\wh{p}^{\L}_{\lambda}(\y)$ and $\wh{q}^{\L}_{\lambda}(\y)$} 
\label{subsec:Definitionwh{p}^L(y)wh{q}^L(y)}  
In \S \ref{subsec:OneRowCase}, we  have  constructed $\wh{p}^{\L}_{k} (\y)$ and 
$\wh{q}^{\L}_{k}(\y)$ corresponding to  the one rows $(k)  \; (k = 1, 2, \ldots)$. 
In this subsection, we extend these functions to arbitrary strict partitions 
$\lambda \in \mathcal{SP}$. 
For this end, we argue as follows. Define 
\begin{equation*} 
\begin{array}{rlll} 
\Gamma_{\L}(\y)        & :=  & \text{ the } \L \text{-subalgebra of } 
        \Lambda_{\L} (\y)
\text{ generated by } \widehat{p}^{\L}_{k}(\y),\; k=1,2,\ldots,  \medskip \\
 \Gamma_{\L}^{+}(\y)    & :=  & \text{ the }\L  \text{-subalgebra of }
         \Lambda_{\L} (\y)
\text{ generated by } \widehat{q}^{\L}_{k}(\y),\; k= 1,2,\ldots. \medskip 
\end{array}
\end{equation*} 
As we remarked in the previous subsection, $\wh{p}^{\L}_{k}(\y)
= \wh{p}^{MU}_{k}(\y)$ and $\wh{q}^{\L}_{k}(\y) = \wh{q}^{MU}_{k}(\y)$ for 
$k = 1, 2, \ldots$,  and hence  the above algebras $\Gamma_{\L}(\y)$ and $\Gamma_{\L}^{+}(\y)$ 
coincide with the algebras ${\Gamma'}^{MU}_{*}$ and $\Gamma^{MU}_{*}$ respectively  
defined in  Definitions $\ref{df:Gamma'^E_*}$ and $\ref{df:Gamma^E_*}$. 
In particular, we have 
\begin{equation*} 
\begin{array}{rlll} 
  \Gamma_{\L}(\y)  &= &  \L [\wh{p}^{\L}_{1}(\y),  \wh{p}^{\L}_{3}(\y), \ldots, 
                             \wh{p}^{\L}_{2i-1} (\y), \ldots]     \; ( \cong MU_{*}(\Omega Sp)),     \medskip \\
  \Gamma_{\L}^{+}(\y)  
    &= &  \L[\wh{q}^{\L}_{1}(\y), \wh{q}^{\L}_{2}(\y), \ldots, \wh{q}^{\L}_{i}(\y), \ldots]
      /(\wh{q}^{\L} (T) \wh{q}^{\L}(\overline{T}) = 1)   \; (\cong MU_{*}(\Omega_{0} SO)),  \medskip 
\end{array} 
\end{equation*} 
where $\wh{q}^{\L}(T) := \sum_{k \geq 0} \wh{q}^{\L}_{k}(\y) T^{k}$.  
Also $\Gamma_{\L}(\y)$ and $\Gamma_{\L}^{+}(\y)$ have the Hopf algebra structure over $\L$ 
as explained in \S $\ref{subsec:E-homologySchurP,Q-functions}$.    

For later discussion, we prepare the following: 
For a partition $\lambda = (\lambda_{1}, \ldots, \lambda_{r})$, 
define the monomials 
\begin{equation*} 
\begin{array}{rlll} 
   \widehat{p}^{\L}_{[\lambda]}(\y) 
& :=  &  \displaystyle{\prod_{i=1}^{r}}  
   \widehat{p}^{\L}_{\lambda_i}(\y) 
= \wh{p}^{\L}_{\lambda_{1}}  (\y) \wh{p}^{\L}_{\lambda_{2}} (\y) \cdots 
  \wh{p}^{\L}_{\lambda_{r}} (\y), \medskip \\
   \widehat{q}^{\L}_{[\lambda]} (\y) 
& :=  & \displaystyle{\prod_{i=1}^{r}}   \widehat{q}^{\L}_{\lambda_{i}}(\y)  
= \wh{q}^{\L}_{\lambda_{1}} (\y) \wh{q}^{\L}_{\lambda_{2}} (\y)  \cdots
   \wh{q}^{\L}_{\lambda_{r}}(\y). \medskip 
\end{array} 
\end{equation*} 
Note that  there exist some relations among $\wh{p}^{\L}_{k} (\y)$'s 
and $\wh{q}^{\L}_{k} (\y)$'s  (see  the relations (\ref{eqn:relations(q^E)}) and (\ref{eqn:relations(p^E)})), and 
these  monomials are not linearly independent over $\L$. 
Then we define the $\L$-submodule  of $\Gamma_{\L}(\y)$ (resp.  $\Gamma_{\L}^{+}(\y)$) 
spanned  by  these  monomials $\wh{p}^{\L}_{[\lambda]}(\y)$'s (resp. $\wh{q}^{\L}_{[\lambda]} (\y)$'s)  with  $\lambda \in \mathcal{P}_{n}$:    
\begin{equation*} 
\begin{array}{rlll}
\Gamma_{\L}^{(n)}(\y) 
&:=   &  \displaystyle{\sum_{\lambda \in \mathcal{P}_{n}}} 
    \L \: \wh{p}^{\L}_{[\lambda]}(\y)   \;  \subset  \;  \Gamma_{\L}(\y),  \medskip \\
\Gamma_{\L}^{(n),+}(\y) 
&:=   &\displaystyle\sum_{\lambda\in \mathcal{P}_n} 
\L\:  \wh{q}^{\L}_{[\lambda]}(\y)   \;  \subset  \;  \Gamma_{\L}^{+}(\y).  \medskip 
\end{array}
\end{equation*}

Now   consider the iterated product of $\Delta(x_{i};\y)$'s, and their limit.
\begin{equation*} 
\begin{array}{rlll} 
  \Delta (\x_{n}; \y) & =  &  \Delta (x_{1},  \ldots, x_{n}; \y) 
   :=  \;  \displaystyle{\prod_{i=1}^{n}}   \Delta(x_{i};\y)
       = \prod_{i=1}^{n} \prod_{j=1}^{\infty} \dfrac{1 - \overline{x}_{i}y_{j}}{1 - x_{i}y_{j}},
   \medskip \\ 
   \Delta(\x;\y) 
  & :=   &   \displaystyle{\lim_{\substack{\longleftarrow \\ n}}} \,   \Delta( \x_{n};\y) 
        = \prod_{i=1}^{\infty} \prod_{j=1}^{\infty} 
          \dfrac{1 - \overline{x}_{i} y_{j}} {1 - x_{i}y_{j} }.  \medskip 
\end{array} 
\end{equation*} 
We can think of $\Delta (\x_{n}; \y)$ (resp. $\Delta (\x; \y)$) 
as an element of  $\Lambda^{\L} (\x_{n}) \, \hat{\otimes}_{\L} \, \Lambda_{\L}(\y)$
(resp.  $\Lambda^{\L}(\x)   \,  \hat{\otimes}_{\L}  \Lambda_{\L}(\y)$).

\begin{prop}   
\label{prop:Delta(x_1,...,x_n;y)}  
\quad 
\begin{enumerate}
\item  
$\Delta(x_1,\ldots,x_n;\y)$ is $\L$-supersymmetric in the variables $x_{1},\ldots, x_{n}$. 
Therefore  $\Delta (x_{1}, \ldots, x_{n}; \y)$ is in
 $\Gamma^{\L}(\x_{n})   \, \hat{\otimes}_{\L}  \, \Lambda_{\L}(\y)$.  
Moreover, $\Delta(x_{1}, \ldots, x_{n};\y)$ is in 
$\Gamma^{\L}_{+}(\x_{n})  \,  \hat{\otimes}_{\L}   \, \Lambda_{\L}(\y)$. 

\item  Furthermore, we have 
$\Delta(x_{1},  \ldots,x_{n};\y)$ is in  
 $\Gamma^\L_{+}(\x_{n}) \,  \hat{\otimes}_{\L} \,  \Gamma_{\L}^{(n)}(\y)$
and
$\Delta(x_{1},\ldots,x_{n}; \y)$   is in  \\
$\Gamma^{\L}(\x_{n})  \,  \hat{\otimes}_{\L}  \,   \Gamma_{\L}^{(n),+}(\y)$.
\end{enumerate}
\end{prop}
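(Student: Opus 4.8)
The goal is to refine Proposition \ref{prop:Delta(x_1,...,x_n;y)}(1) by replacing the ambient ring $\Lambda_{\L}(\y)$ of symmetric functions in $\y$ by the much smaller $\L$-submodule $\Gamma_{\L}^{(n)}(\y)$ (resp. $\Gamma_{\L}^{(n),+}(\y)$) spanned by the monomials $\wh{p}^{\L}_{[\lambda]}(\y)$ (resp. $\wh{q}^{\L}_{[\lambda]}(\y)$) with $\lambda \in \mathcal{P}_{n}$. The natural approach is to produce an \emph{explicit expansion} of $\Delta(x_{1},\ldots,x_{n};\y)$ in the $x$-variables whose coefficients are visibly of this form. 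The building block is the one-variable identity (\ref{eqn:Delta(x_1;y)}): $\Delta(x_{1};\y) = \sum_{k \geq 0} Q^{\L}_{(k)}(x_{1})\,\wh{p}^{\L}_{k}(\y) = \sum_{k\geq 0} P^{\L}_{(k)}(x_{1})\,\wh{q}^{\L}_{k}(\y)$. Multiplying $n$ copies gives $\Delta(\x_{n};\y) = \sum_{\alpha} Q^{\L}_{(\alpha_{1})}(x_{1})\cdots Q^{\L}_{(\alpha_{n})}(x_{n})\,\wh{p}^{\L}_{\alpha_{1}}(\y)\cdots\wh{p}^{\L}_{\alpha_{n}}(\y)$, the sum over all $\alpha = (\alpha_{1},\ldots,\alpha_{n}) \in \Z_{\geq 0}^{n}$, and similarly for the $P$/$\wh{q}$ version.

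\textbf{Key steps.} First I would fix the $P$-side (proving $\Delta(\x_{n};\y) \in \Gamma^{\L}(\x_{n})\,\hat{\otimes}_{\L}\,\Gamma_{\L}^{(n),+}(\y)$). Starting from the multiplied-out expansion indexed by $\alpha \in \Z_{\geq 0}^{n}$, the plan is to symmetrize the coefficient of each monomial $\wh{q}^{\L}_{[\mu]}(\y)$, $\mu \in \mathcal{P}_{n}$: collecting all $\alpha$ that are permutations of a fixed partition $\mu$ together, the coefficient of $\wh{q}^{\L}_{[\mu]}(\y)$ becomes (up to multiplicities from repeated parts) $\sum_{w \in S_{n}} w\big[P^{\L}_{(\mu_{1})}(x_{1})\cdots P^{\L}_{(\mu_{n})}(x_{n})\big]$, which is a symmetric formal power series in the $x$'s. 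By Proposition \ref{prop:L-supersymmetry} combined with Lemma \ref{lem:L-supersymmetric} applied to $f(x_{1},\ldots,x_{n}) = [\x|0]^{\mu}$ — or more directly by the known fact that $\Delta$ is $\L$-supersymmetric in $\x$ (Proposition \ref{prop:Delta(x_1,...,x_n;y)}(1)) and that $\wh{q}^{\L}_{[\mu]}(\y)$, $\mu \in \mathcal{P}_{n}$, are linearly independent enough to separate coefficients degree by degree — each such coefficient lies in $\Gamma^{\L}(\x_{n})$. Hence $\Delta(\x_{n};\y)$ is an $\L$-linear (formal) combination of products (element of $\Gamma^{\L}(\x_{n})$) $\otimes$ $\wh{q}^{\L}_{[\mu]}(\y)$, which is precisely membership in $\Gamma^{\L}(\x_{n})\,\hat{\otimes}_{\L}\,\Gamma_{\L}^{(n),+}(\y)$.

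\textbf{The $\wh{p}$-side and the main obstacle.} For the statement $\Delta(\x_{n};\y) \in \Gamma^{\L}_{+}(\x_{n})\,\hat{\otimes}_{\L}\,\Gamma_{\L}^{(n)}(\y)$, I would run the analogous argument starting from $\Delta(x_{1};\y) = \sum_{k} Q^{\L}_{(k)}(x_{1})\,\wh{p}^{\L}_{k}(\y)$, so that the coefficient of $\wh{p}^{\L}_{[\mu]}(\y)$ after symmetrization is $\sum_{w \in S_{n}} w\big[Q^{\L}_{(\mu_{1})}(x_{1})\cdots Q^{\L}_{(\mu_{n})}(x_{n})\big]$, which lies in $\Gamma^{\L}_{+}(\x_{n})$ by Proposition \ref{prop:L-supersymmetry}(1). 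The delicate point — the main obstacle — is that the family $\{\wh{q}^{\L}_{[\mu]}(\y)\}$ (and $\{\wh{p}^{\L}_{[\mu]}(\y)\}$), $\mu \in \mathcal{P}_{n}$, is \emph{not} $\L$-linearly independent because of the relations (\ref{eqn:relations(q^E)}), (\ref{eqn:relations(p^E)}); so ``the coefficient of $\wh{q}^{\L}_{[\mu]}(\y)$'' is only well-defined after choosing how to reduce. To make the argument rigorous I would not rely on uniqueness of coefficients at all: it suffices to exhibit \emph{one} expansion of $\Delta(\x_{n};\y)$ as a (formally convergent, by degree reasons in the $x$-variables) sum $\sum_{\mu \in \mathcal{P}_{n}} F_{\mu}(\x_{n})\,\wh{q}^{\L}_{[\mu]}(\y)$ with $F_{\mu} \in \Gamma^{\L}(\x_{n})$ (resp. $\Gamma^{\L}_{+}$), and this is furnished directly by grouping the product expansion over $S_{n}$-orbits as above. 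Finally, passing to the inverse limit $n \to \infty$ (using that $\Delta(\x;\y) = \lim_{\leftarrow} \Delta(\x_{n};\y)$ and compatibility of the $\Gamma^{\L}_{+}(\x_{n})$, $\Gamma_{\L}^{(n)}(\y)$ under the structure maps) yields the corresponding statements for $\Delta(\x;\y)$, which is what is needed for the definition of the dual functions $\wh{p}^{\L}_{\lambda}(\y)$, $\wh{q}^{\L}_{\lambda}(\y)$ in the next subsection.
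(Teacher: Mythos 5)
Your plan is structurally close to the paper's: both expand $\Delta(\x_{n};\y) = \prod_{i=1}^{n}\Delta(x_{i};\y)$ using the one-variable identity (\ref{eqn:Delta(x_1;y)}) and then group the resulting sum over $\alpha \in \Z_{\geq 0}^{n}$ by $S_{n}$-orbit, i.e., by the partition rearrangement $\mu \in \mathcal{P}_{n}$. Where you diverge is in the claim about the $x$-side coefficients, and that claim is wrong.

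You assert that the grouped coefficient $\sum_{w\in S_{n}} w\bigl[Q^{\L}_{(\mu_{1})}(x_{1})\cdots Q^{\L}_{(\mu_{n})}(x_{n})\bigr]$ lies in $\Gamma^{\L}_{+}(\x_{n})$ (and analogously $\sum_{w} w\bigl[P^{\L}_{(\mu_{1})}(x_{1})\cdots P^{\L}_{(\mu_{n})}(x_{n})\bigr]$, which is just a multiple of $m_{\mu}(\x_{n})$, lies in $\Gamma^{\L}(\x_{n})$). This is false. Naive symmetrization produces a \emph{symmetric} formal power series, but not an $\L$-\emph{supersymmetric} one. Your cited Lemma \ref{lem:L-supersymmetric} is being misread: that lemma says $R_{n}(\x_{n}) = \sum_{w\in S_{n}} w\Bigl[f(x_{1},\ldots,x_{r})\prod_{i=1}^{r}\prod_{j=i+1}^{n}\frac{x_{i}\pf x_{j}}{x_{i}\pf\overline{x}_{j}}\Bigr]$ is $\L$-supersymmetric, and the Hall--Littlewood-type factor $\prod_{i<j}\frac{x_{i}\pf x_{j}}{x_{i}\pf\overline{x}_{j}}$ is essential. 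Applied to $f = [\x|0]^{\mu}$ (resp.\ $[[\x|0]]^{\mu}$), the lemma tells you that $P^{\L}_{\mu}(\x_{n})$ (resp.\ $Q^{\L}_{\mu}(\x_{n})$) is $\L$-supersymmetric, not that $\sum_{w}w[[\x|0]^{\mu}] = $ (multiple of) $m_{\mu}(\x_{n})$ is. Concretely, for $n=2$, $\mu=(1)$ the grouped coefficient is $(x_{1}\pf x_{1}) + (x_{2}\pf x_{2})$; at $(x_{1},x_{2})=(t,\overline{t})$ this equals $(t\pf t) + (\overline{t}\pf\overline{t}) = 4a_{1,1}t^{2} + \cdots \neq 0$, so it is not $\L$-supersymmetric. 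Likewise $m_{1}(x_{1},x_{2}) = x_{1}+x_{2}$ gives $t + \overline{t} = a_{1,1}t^{2} + \cdots \neq 0$. So the ``one explicit expansion with $F_{\mu}\in\Gamma^{\L}(\x_{n})$'' that your workaround paragraph promises is exactly what the $S_{n}$-orbit grouping fails to deliver, and your reduction to that workaround does not close the gap it identifies.

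What the paper actually does is weaker and different in logical structure: the grouped expansion $\Delta = \sum_{\mu\in\mathcal{P}_{n}} m_{\mu}(\x_{n})\,\wh{q}^{\L}_{[\mu]}(\y)$ is used only to pin down the $\y$-side, i.e., to show $\Delta \in \Lambda^{\L}(\x_{n})\,\hat{\otimes}_{\L}\,\Gamma_{\L}^{(n),+}(\y)$, and then (via the triangular relation (\ref{eqn:wh{q}^L_k=2wh{p}^L_k+...}), which expresses $\wh{q}^{\L}_{k}$ in terms of $\wh{p}^{\L}_{j}$ with $j\leq k$) also $\Delta \in \Lambda^{\L}(\x_{n})\,\hat{\otimes}_{\L}\,\Gamma_{\L}^{(n)}(\y)$. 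The $\x$-side constraint ($\Delta \in \Gamma^{\L}_{+}(\x_{n})\,\hat{\otimes}_{\L}\,\Lambda_{\L}(\y)$) comes \emph{separately} from part (1), which is proved directly from the product formula for $\Delta$ and Lemma \ref{lem:FactorTheorem}; the two constraints are then combined. You should not try to make the $\x$-coefficients $\L$-supersymmetric by your choice of expansion — they are not — but rather prove part (1) independently and intersect.
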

\begin{proof} 
(1) By the definition, it is obvious that $\Delta (t, \overline{t}, x_{3}, \ldots, x_{n})
= \Delta (0, 0, x_{3}, \ldots, x_{n})$ holds. 
Moreover, using Lemma \ref{lem:FactorTheorem}, we see easily that 
$\Delta(t,x_{2}, \ldots, x_{n};\y) - \Delta(0,x_{2},  \ldots,x_{n};\y)$ is divisible by 
$t\pf t$, and hence the first assertion is proved. 

(2)  Using (\ref{eqn:q^L_k}), we have 
\begin{equation*} 
   \displaystyle{\prod_{i=1}^{n}} 
   \prod_{j=1}^{\infty}  \dfrac{1 - \overline{x}_{i}y_{j}} {1 - x_{i} y_{j}} 
   =     \displaystyle{\prod_{i=1}^{n}} 
    \left ( \sum_{\lambda_{i} = 0}^{\infty}   x_{i}^{\lambda_{i}} 
    \wh{q}^{\L}_{\lambda_{i}} (\y)   \right ) 
    = \sum_{\lambda \in \mathcal{P}_{n}}  m_{\lambda} (\x_{n})  \,  \wh{q}^{\L}_{[\lambda]} (\y), \end{equation*} 
where $m_{\lambda}(\x_{n})$ denotes the {\it monomial symmetric polynomial} 
corresponding to a partition $\lambda \in \mathcal{P}_{n}$. 
Using (\ref{eqn:wh{q}^L_k=2wh{p}^L_k+...}),  we see that $\wh{q}^{\L}_{[\lambda]}(\y)$ 
can be written as a certain  $\L$-linear combination of the  monomials 
of the form $\wh{p}^{\L}_{[\mu]}(\y)$ with $\mu \in \mathcal{P}_{n}$. 
From these,  the result follows immediately. 
\end{proof} 

\noindent
Taking limit $n \rightarrow \infty$ and using the Basis Theorem (Corollary \ref{cor:BasisTheoremP^L(x)Q^L(x)}), 
we can expand $\Delta (\x; \y)$ in terms of a basis $\{  Q^{\L}_{\lambda}(\x) \}_{\lambda \in \mathcal{SP}}$ for $\Gamma^{\L}_{+}(\x)$ 
(resp. a basis $\{  P^{\L}_{\lambda}(\x) \}_{\lambda \in \mathcal{SP}}$ for $\Gamma^{\L}(\x)$). 
Then we will obtain the required functions as the coefficients of these expansions. 
Thus we make the  following definition: 
\begin{defn}  [Dual universal  Schur $P$ and $Q$-functions]    \label{df:Definitionwh{p}^Lwh{q}^L}  
We define $\widehat{p}^{\L}_\lambda(\y)$ and $\widehat{q}^{\L}_\lambda(\y)$
for strict partitions $\lambda \in \mathcal{SP}$ by the following identities
$($Cauchy identities$):$ 
\begin{equation}  \label{eqn:CauchyIdentities}
\begin{array}{rllll}    
\Delta(\x;\y)  & = &  \displaystyle{\prod_{i,j  \geq 1}} 
   \frac{1-\overline{x}_{i} y_{j}}{1- x_{i} y_{j}}
=  \sum_{\lambda  \in \mathcal{SP}}
Q^{\L}_{\lambda} (\x) \, \widehat{p}^\L_{\lambda} (\y),  \medskip \\ 
\Delta(\x; \y) & =  &  \displaystyle{\prod_{i,j \geq 1}} 
 \frac{1-\overline{x}_{i} y_{j}}{1-x_{i} y_{j}}
=\sum_{\lambda  \in \mathcal{SP}}
P^{\L}_{\lambda} (\x) \,  \widehat{q}^\L_{\lambda} (\y).   \medskip 
\end{array}  
\end{equation} 
\end{defn}
\noindent
By the Definition \ref{df:Definitionwh{p}^Lwh{q}^L} and Proposition \ref{prop:Delta(x_1,...,x_n;y)} (2), 
we see that for a strict partition $\lambda$ of length $r$,
\begin{equation*} 
   \widehat{p}^{\L}_{\lambda} (\y)  \in \Gamma_{\L}^{(r)}(\y)  \subset \Gamma_{\L}(\y) 
    \quad  \text{and}  \quad 
   \widehat{q}^{\L}_{\lambda} (\y)  \in \Gamma_{\L}^{(r),+}(\y)  \subset \Gamma_{\L}^{+}(\y).
\end{equation*}

\begin{rem} 
\quad 
\begin{enumerate} 
\item By using the Basis Theorem $($Theorem $\ref{thm:BasisTheoremP^L(x_n)Q^L(x_n)})$, 
we can also  define $\wh{p}^{\L}_{\lambda} (\y)$ and $\widehat{q}^{\L}_\lambda(\y)$
for strict partitions $\lambda \in \mathcal{SP}_{n}$ by the following identities$:$ 
\begin{equation*} 
\begin{array}{rllll} 
\Delta(\x_{n};\y)  & =  &   \displaystyle{\prod_{i=1}^{n}} 
 \prod_{j= 1}^{\infty} 
      \frac{1-\overline{x}_{i} y_{j}}{1- x_{i} y_{j}}
=  \sum_{\lambda  \in \mathcal{SP}_{n}}
Q^{\L}_{\lambda} (\x_{n}) \, \widehat{p}^\L_{\lambda} (\y),   \medskip \\
\Delta(\x_{n}; \y)  & =  &  \displaystyle{\prod_{i = 1}^{n}} 
 \prod_{j = 1}^{\infty} 
 \frac{1-\overline{x}_{i} y_{j}}{1-x_{i} y_{j}}
=\sum_{\lambda  \in \mathcal{SP}_{n}}
P^{\L}_{\lambda} (\x_{n}) \,  \widehat{q}^\L_{\lambda} (\y).  \medskip 
\end{array}  
\end{equation*} 
Then by the stability property of $P^{\L}_{\lambda}(\x_{n})$ and $Q^{\L}_{\lambda}(\x_{n})$ 
$($see Proposition $\ref{prop:Stability})$,  we see that  the definition of $\wh{p}^{\L}_{\lambda}(\y)$ 
and $\wh{q}^{\L}_{\lambda}(\y)$ does not depend on $n$. 
In particular, in view of $(\ref{eqn:Delta(x_1;y)})$, we see that $\wh{p}^{\L}_{(k)}(\y)$ 
and $\wh{q}^{\L}_{(k)}(\y)$ corresponding to the  one rows $(k) \; (k = 1, 2, \ldots)$ 
agree with $\wh{p}^{\L}_{k}(\y)$ and $\wh{q}^{\L}_{k}(\y)$ defined in \S $\ref{subsec:OneRowCase}$ 
respectively. 

\item  By using $\Delta (\x; \y)$ and 
   the Basis Theorem $($Corollary $\ref{cor:BasisTheoremP^L(x|b)Q^L(x|b)})$, 
       one can formally  define the {\it dual universal factorial Schur $P$- and $Q$-functions} 
       $\wh{p}^{\L}_{\lambda}(\y|\b)$, $\wh{q}^{\L}_{\lambda}(\y|\b)$. 
       However, in order to make this definition valid, we have to remove some 
       technical difficulties.   We hope to return to this problem elsewhere.  
\end{enumerate} 
\end{rem} 
\noindent
The functions  $\widehat{p}^\L_\lambda(\y)$  
(resp.  $\widehat{q}^\L_\lambda(\y)$)  are  elements of $\Gamma_{\L}(\y)  \subset \Lambda_{\L} (\y)$ (resp. $\Gamma_{\L}^{+}(\y) \subset  \Lambda_{\L} (\y)$),  
and hence 
symmetric functions   of total  degree $|\lambda|$. 
If we put $a_{i, j} = 0$ for all $i, j \geq 1$, $\wh{p}^{\L}_{\lambda}(\y)$ (resp. $\wh{q}^{\L}_{\lambda}(\y)$) reduce to the usual Schur $P$-functions $P_{\lambda} (\y)$
(resp.  $Q$-functions $Q_{\lambda}(\y)$).  Therefore we have 
\begin{equation}  \label{eqn:hat{p}=P+lower}   
\begin{array}{llll} 
  \wh{p}^{\L}_{\lambda}(\y)  & = P_{\lambda}(\y)  +  \text{lower order terms in } \; \y, \medskip \\
  \wh{q}^{\L}_{\lambda}(\y)  & = Q_{\lambda}(\y)  +  \text{lower order terms in } \; \y. \medskip 
\end{array}  
\end{equation} 
Here is some examples of these functions (see also Examples \ref{ex:wh{q}^E_k(k=1,2,3)} and 
\ref{ex:wh{p}^E_k(k=1,2,3)}): 
\begin{ex}   \label{ex:p^L_1(y|b)}  
\begin{equation*} 
\begin{array}{rlll} 
  \widehat{p}^{\L}_{1} (\y) & =   P_{1}(\y),  \medskip \\
  \widehat{p}^{\L}_{2} (\y) & =   P_{2}(\y) - a_{1,1} h_{1}(\y), \medskip \\
  \widehat{p}^{\L}_{3} (\y) & =   P_{3}(\y) + a_{1,1}h_{2} (\y) - 2 a_{1,1} {h_{1}(\y)}^{2}  +(a_{1,1}^2-a_{1,2})h_1(\y). \medskip \\%
  \widehat{q}^{\L}_{1}(\y)  & =   Q_{1}(\y),  \medskip \\
  \widehat{q}^{\L}_{2} (\y) & =   Q_{2}(\y)-a_{1,1}h_{1}(\y),  \medskip \\
  \widehat{q}^{\L}_{3} (\y) & =   Q_{3}(\y) + 2 a_{1,1} h_{2}(\y) -3 a_{1,1} h_{1}(\y)^2  
 +a_{1,1}^{2} h_{1}(\y). \medskip 
\end{array}
\end{equation*}  
\end{ex}  
\noindent

\subsection{Basis Theorem for $\wh{p}^{\L}_{\lambda}(\y)$, $\wh{q}^{\L}_{\lambda}(\y)$} 
As expected, the functions  $\{ \wh{p}^{\L}_{\lambda}  (\y) \}_{\lambda \in \mathcal{SP}}$
(resp. $\{  \wh{q}^{\L}_{\lambda} (\y) \}_{\lambda  \in \mathcal{SP}}$) 
constitute an   $\L$-basis for $\Gamma_{\L}(\y)$ (resp. $\Gamma_{\L}^{+}(\y)$). 
\begin{theorem} [Basis Theorem]  \label{thm:BasisTheoremwh{p}^L(y)wh{q}^L(y)}  
\quad   
\begin{enumerate}
\item  
$\left\{\widehat{p}^\L_\lambda(\y)\right\}_{\lambda\in \mathcal{SP}}$
are linearly independent  over $\L$ and form an $\L$-basis
of $\Gamma_{\L}(\y)$.

\item  
$\left\{\widehat{q}^\L_\lambda(\y)\right\}_{\lambda\in \mathcal{SP}}$ 
are linearly independent  over $\L$ and form an $\L$-basis
of $\Gamma_\L^{+} (\y)$.

\end{enumerate}
\end{theorem}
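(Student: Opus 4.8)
\textbf{Proof strategy for Theorem \ref{thm:BasisTheoremwh{p}^L(y)wh{q}^L(y)}.}
The plan is to deduce the two statements from the Cauchy identities \eqref{eqn:CauchyIdentities} together with the Basis Theorem for the ``cohomology side'' (Corollary \ref{cor:BasisTheoremP^L(x)Q^L(x)}), treating the pairing between $\Gamma^{\L}(\x)$ and $\Gamma_{\L}(\y)$ (resp. $\Gamma^{\L}_{+}(\x)$ and $\Gamma_{\L}^{+}(\y)$) induced by $\Delta(\x;\y)$ as a formal duality. First I would record the leading-term information \eqref{eqn:hat{p}=P+lower}: each $\wh{p}^{\L}_{\lambda}(\y)$ equals $P_{\lambda}(\y)$ plus terms of strictly higher order in $\y$ (equivalently, of strictly positive degree in the $a_{i,j}$), and similarly for $\wh{q}^{\L}_{\lambda}(\y)$. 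Since the usual Schur $P$-functions $\{P_{\lambda}(\y)\}_{\lambda\in\mathcal{SP}}$ form a $\Z$-basis of $\Gamma=\Gamma(\y)$ (Macdonald \cite[III, \S 8]{Mac95}) and hence an $\L$-basis of $\L\otimes_{\Z}\Gamma$, a degree/filtration argument identical to the one used in \S\ref{subsec:DefinitionP^L(x|b)Q^L(x|b)} for the linear independence of $P^{\L}_{\lambda}(\x_n)$ shows that $\{\wh{p}^{\L}_{\lambda}(\y)\}$ is linearly independent over $\L$; the same for $\{\wh{q}^{\L}_{\lambda}(\y)\}$.

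Next I would prove the spanning property. By construction (Definition \ref{df:Definitionwh{p}^Lwh{q}^L}) each $\wh{p}^{\L}_{\lambda}(\y)$ and $\wh{q}^{\L}_{\lambda}(\y)$ already lies in $\Gamma_{\L}(\y)$ resp.\ $\Gamma_{\L}^{+}(\y)$, so it remains to see they generate. For this I would expand $\Delta(\x;\y)$ in two ways. On the one hand, Proposition \ref{prop:Delta(x_1,...,x_n;y)}(2) (in the limit $n\to\infty$) gives $\Delta(\x;\y)=\sum_{\lambda}m_{\lambda}(\x)\,\wh{q}^{\L}_{[\lambda]}(\y)$, and applying \eqref{eqn:wh{q}^L_k=2wh{p}^L_k+...} one rewrites this so that the $\y$-coefficients run through the full monomial-generating set of $\Gamma_{\L}(\y)$ resp.\ $\Gamma_{\L}^{+}(\y)$. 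On the other hand, \eqref{eqn:CauchyIdentities} expands $\Delta(\x;\y)$ against the $\L$-basis $\{Q^{\L}_{\lambda}(\x)\}$ resp.\ $\{P^{\L}_{\lambda}(\x)\}$ of $\Gamma^{\L}_{+}(\x)$ resp.\ $\Gamma^{\L}(\x)$, with $\y$-coefficients $\wh{p}^{\L}_{\lambda}(\y)$ resp.\ $\wh{q}^{\L}_{\lambda}(\y)$. Comparing the two expansions, the transition matrix between $\{Q^{\L}_{\lambda}(\x)\}_{|\lambda|=d}$ and $\{m_{\lambda}(\x)\}$ restricted to each total degree $d$ is invertible over $\L$ (again by the leading-term argument, since the analogous statement holds for the classical $Q_{\lambda}$ and $m_{\lambda}$ after inverting nothing—both are $\Z$-bases of the degree-$d$ part); hence each monomial $\wh{p}^{\L}_{[\mu]}(\y)$ is an $\L$-linear combination of the $\wh{p}^{\L}_{\lambda}(\y)$, which proves spanning.

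A cleaner way to package the spanning step, which I would actually prefer to write, is to exhibit the pairing explicitly: define $\langle-,-\rangle\colon\Gamma^{\L}_{+}(\x)\times\Gamma_{\L}(\y)\to\L$ on the $\L$-bases by $\langle Q^{\L}_{\lambda}(\x),\,\wh{p}^{\L}_{\mu}(\y)\rangle=\delta_{\lambda\mu}$, i.e.\ declare $\Delta(\x;\y)$ to be the reproducing kernel. One checks this is well defined because the $Q^{\L}_{\lambda}(\x)$ are an honest $\L$-basis of $\Gamma^{\L}_{+}(\x)$ (Corollary \ref{cor:BasisTheoremP^L(x)Q^L(x)}(2)); then linear independence of the $\wh{p}^{\L}_{\mu}(\y)$ is automatic, and to see they span it suffices to show the pairing is perfect, which follows because the filtered leading term of the Gram matrix against any generating set of $\Gamma_{\L}(\y)$ is the classical (invertible) Schur $P$/$Q$ Gram matrix. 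I expect the main obstacle to be purely bookkeeping: one must make sure all the infinite products and sums converge in the correct completed tensor products $\Lambda^{\L}(\x)\,\hat{\otimes}_{\L}\,\Lambda_{\L}(\y)$ (degrees of $a_{i,j}$ being positive on the homology side makes this manageable, exactly as in \S\ref{subsec:Definitionwh{p}^L(y)wh{q}^L(y)}), and that the ``lower/higher order terms'' in \eqref{eqn:hat{p}=P+lower} are genuinely controlled degree by degree so that the infinite transition matrices are block upper-triangular with invertible diagonal blocks. No genuinely new idea beyond Definition \ref{df:Definitionwh{p}^Lwh{q}^L}, Proposition \ref{prop:Delta(x_1,...,x_n;y)}, and Corollary \ref{cor:BasisTheoremP^L(x)Q^L(x)} should be required.
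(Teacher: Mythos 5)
Your linear–independence argument via the leading terms in \eqref{eqn:hat{p}=P+lower} is exactly the paper's (it reduces to linear independence of the classical $P_\lambda$ resp.\ $Q_\lambda$ after filtering by degree in the $a_{i,j}$), and that part is fine. The spanning step, however, has a genuine gap in both of the routes you sketch.

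In route (a), the claim that ``the transition matrix between $\{Q^{\L}_\lambda(\x)\}_{|\lambda|=d}$ and $\{m_\lambda(\x)\}$ restricted to each total degree $d$ is invertible over $\L$'' cannot be right on cardinality grounds: in degree $d$ the first set is indexed by strict partitions of $d$ and the second by all partitions of $d$, so the matrix is not even square ($\{Q^{\L}_\lambda\}$ is an $\L$-basis of $\Gamma^{\L}_{+}(\x)$, a proper subring of $\Lambda^{\L}(\x)$, whereas $\{m_\lambda\}$ is a basis of $\Lambda^{\L}(\x)$). What the comparison of the two expansions of $\Delta(\x;\y)$ actually yields, after writing $Q^{\L}_\lambda(\x)=\sum_\mu a_{\lambda\mu}m_\mu(\x)$ and equating coefficients of $m_\mu(\x)$, is $\widehat{q}^{\L}_{[\mu]}(\y)=\sum_\lambda a_{\lambda\mu}\,\widehat{p}^{\L}_\lambda(\y)$. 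This only shows $\Gamma^{+}_{\L}(\y)\subset\bigoplus_\lambda\L\,\widehat{p}^{\L}_\lambda(\y)$, and $\Gamma^{+}_{\L}(\y)$ is a \emph{proper} $\L$-subalgebra of $\Gamma_{\L}(\y)$. Your plan to ``apply \eqref{eqn:wh{q}^L_k=2wh{p}^L_k+...} to rewrite the $\y$-coefficients as the full monomial-generating set of $\Gamma_{\L}(\y)$'' does not close the gap, because \eqref{eqn:wh{q}^L_k=2wh{p}^L_k+...} goes in the wrong direction: it expresses $\widehat{q}^{\L}_k$ through $\widehat{p}^{\L}_j$ with leading coefficient $2$, and that relation is not invertible over $\L$ (indeed the strict inclusion $\Gamma^{+}_{\L}(\y)\subsetneq\Gamma_{\L}(\y)$ is exactly the shadow of the $2$). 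So you never reach the monomials $\widehat{p}^{\L}_{[\mu]}(\y)$ themselves. Route (b) is circular: declaring $\langle Q^{\L}_\lambda(\x),\widehat{p}^{\L}_\mu(\y)\rangle=\delta_{\lambda\mu}$ to define a pairing ``on the $\L$-bases'' presupposes that $\{\widehat{p}^{\L}_\mu(\y)\}$ already is an $\L$-basis of $\Gamma_{\L}(\y)$, which is what you are trying to prove; the issue is on the $\y$-side, not on the $\x$-side where Corollary \ref{cor:BasisTheoremP^L(x)Q^L(x)} applies.

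The missing idea is to use \emph{two} sets of $\x$-variables and the Hopf structure of $\Gamma^{\L}_{+}(\x)$. Expanding $Q^{\L}_\lambda(\x,\x')=\sum_{\mu,\nu}\widehat{c}^{\lambda}_{\mu,\nu}\,Q^{\L}_\mu(\x)Q^{\L}_\nu(\x')$ (well defined by Corollary \ref{cor:BasisTheoremP^L(x)Q^L(x)}) and comparing the two sides of
$\Delta(\x,\x';\y)=\Delta(\x;\y)\Delta(\x';\y)$
against the basis $\{Q^{\L}_\mu(\x)Q^{\L}_\nu(\x')\}$ gives the product formula
$\widehat{p}^{\L}_\mu(\y)\widehat{p}^{\L}_\nu(\y)=\sum_\lambda\widehat{c}^{\lambda}_{\mu,\nu}\,\widehat{p}^{\L}_\lambda(\y)$,
a finite $\L$-linear combination by the grading on $\L_*$. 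This shows the $\L$-submodule $\bigoplus_\lambda\L\,\widehat{p}^{\L}_\lambda(\y)$ is closed under multiplication; since it contains all one-row generators $\widehat{p}^{\L}_k(\y)$ of $\Gamma_{\L}(\y)$, it must equal $\Gamma_{\L}(\y)$. This coproduct-to-product step is the content of the paper's proof (and of Proposition \ref{prop:Duality}), and it is what your proposal needs to add.
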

\begin{proof} 
We  only prove  the assertion (1).  (2) can be proved similarly.  
From (\ref{eqn:hat{p}=P+lower}) and  the similar argument as in \S \ref{subsec:DefinitionP^L(x|b)Q^L(x|b)}, 
we see that 
$\wh{p}^{\L}_{\lambda}(\y)$'s,  
$\lambda \in \mathcal{SP}$, are linearly independent over $\L$. 
In particular, an $\L$-submodule $\tilde{\Gamma}_{\L}(\y)$
 of $\Gamma_{\L}(\y)$ generated by $\wh{p}^{\L}_{\lambda}(\y)$'s, 
$\lambda \in \mathcal{SP}$,  is a direct sum: $\tilde{\Gamma}_{\L}(\y) = \bigoplus_{\lambda \in \mathcal{SP}} \L \, \wh{p}^{\L}_{\lambda}(\y)   \subset \Gamma_{\L}(\y)$.  
We wish to show that $\tilde{\Gamma}_{\L}(\y)$ agrees with $\Gamma_{\L}(\y)$, or equivalently, 
the set $\{ \wh{p}^{\L}_{\lambda}(\y)  \}_{\lambda \in \mathcal{SP}}$ 
spans $\Gamma_{\L} (\y)$ 
over $\L$.  To this end,  it is sufficient to show that monomials 
$\wh{p}^{\L}_{k_{1}}(\y)  \wh{p}^{\L}_{k_{2}} (\y)  \cdots \wh{p}^{\L}_{k_{r}} (\y) \; 
(k_{1} \geq 1, \ldots, k_{r} \geq 1, \; r \geq 1)$ are expressed 
as $\L$-linear combinations of $\wh{p}^{\L}_{\lambda} (\y)$'s, $\lambda \in \mathcal{SP}$, 
since $\Gamma_{\L}(\y)$ 
is generated by 
$\wh{p}^{\L}_{k}(\y) \; (k = 1, 2, \ldots)$ 
as an $\L$-algebra.  
We prove this by making use of the {\it  Hopf algebra property} 
 of $\Gamma^{\L}_{+}(\x)$. Let $\x = (x_{1}, x_{2}, \ldots)$ and $\x' = (x'_{1}, x'_{2}, \ldots)$ 
be two countable sets of variables.  Then the function $Q^{\L}_{\lambda}(\x, \x')$ can be 
written in terms of the two sets separately, that is, 
\begin{equation}  \label{eqn:CoproductQ^L(x)}  
   Q^{\L}_{\lambda} (\x, \x') = \sum_{\mu, \,  \nu \in \mathcal{SP}}  
  \wh{c}_{\mu,  \, \nu}^{\lambda}  \, Q^{\L}_{\mu}(\x)  \,  Q^{\L}_{\nu}  (\x')   \quad 
  (\wh{c}_{\mu, \,  \nu}^{\lambda}  \in \L). 
\end{equation} 
This gives the  coproduct (comultiplication, diagonal map)
$\phi:  \Gamma^{\L}_{+}(\x) \longrightarrow \Gamma^{\L}_{+}(\x) \otimes_{\L}  \Gamma^{\L}_{+}(\x)$. 
Note that by the Basis Theorem for $Q^{\L}_{\lambda}(\x)$'s (Corollary \ref{cor:BasisTheoremP^L(x)Q^L(x)}), the coefficients $\wh{c}_{\mu, \, \nu}^{\lambda} \in \L$ 
are uniquely determined in the above expression. 
On the other hand, we have by Definition \ref{df:Definitionwh{p}^Lwh{q}^L}, 
\begin{equation*} 
\begin{array}{llll} 
   \Delta (\x; \y) & = \displaystyle{\prod_{i, j \geq 1}}    \dfrac{1 - \overline{x}_{i}y_{j}} {1 - x_{i} y_{j}} 
                     = \sum_{\mu \in \mathcal{SP}}  Q^{\L}_{\mu}(\x) \, \wh{p}^{\L}_{\mu}(\y), \medskip \\
    \Delta (\x'; \y) & = \displaystyle{\prod_{i, j \geq 1}}    \dfrac{1 - \overline{x'_{i}} y_{j}} {1 - x'_{i} y_{j}} 
                     = \sum_{\nu \in \mathcal{SP}}  Q^{\L}_{\nu}(\x') \, \wh{p}^{\L}_{\nu}(\y).    \medskip             
\end{array}
\end{equation*}  
Multiplying these two expressions, we have 
\begin{equation*} 
  \sum_{\lambda  \in \mathcal{SP}}  Q^{\L}_{\lambda}(\x, \x') \,  \wh{p}^{\L}_{\lambda}(\y) 
=  \Delta (\x, \x'; \y)  =  \sum_{\mu,  \, \nu \in \mathcal{SP}}  
                            \wh{p}^{\L}_{\mu} (\y) \wh{p}^{\L}_{\nu}(\y)  Q^{\L}_{\mu} (\x) Q^{\L}_{\nu} (\x'). 
\end{equation*} 
By (\ref{eqn:CoproductQ^L(x)}), the left-hand side turns into 
\begin{equation*} 
  \sum_{\mu, \,   \nu \in \mathcal{SP}} 
\left ( \sum_{\lambda \in \mathcal{SP}} \wh{c}_{\mu, \, \nu}^{\lambda}   \wh{p}^{\L}_{\lambda}(\y)  \right ) Q^{\L}_{\mu} (\x)  Q^{\L}_{\nu} (\x'), 
\end{equation*} 
and therefore we have the following product formula (we used the Basis Theorem for $Q^{\L}_{\lambda}(\x)$'s again): 
\begin{equation}  \label{eqn:Productwh{p}^L(y)}   
  \wh{p}^{\L}_{\mu} (\y)  \wh{p}^{\L}_{\nu} (\y) 
= \sum_{\lambda \in \mathcal{SP}}   \wh{c}_{\mu, \,   \nu}^{\lambda}  \,  \wh{p}^{\L}_{\lambda}(\y). 
\end{equation} 
Notice that by our convention of the grading of $\L = \L_{*}$, the right-hand side 
is necessarily a finite sum, and hence is contained in an $\L$-submodule 
$\tilde{\Gamma}_{\L}(\y) = \bigoplus_{\lambda \in \mathcal{SP}}  \L \, \wh{p}^{\L}_{\lambda}(\y)$. 
Thus $\tilde{\Gamma}_{\L}(\y)$ is closed under  multiplication.  
Especially, in the case $\mu = (k)$ and $\nu = (l)$, i.e., one rows,  the product $\wh{p}^{\L}_{k}(\y) \wh{p}^{\L}_{l}(\y)$ is contained in $\tilde{\Gamma}_{\L}(\y)$.  
Iterating use of the product formula (\ref{eqn:Productwh{p}^L(y)}) yields  the 
required result.
\end{proof}

\subsection{Hopf algebra structure}
As mentioned in  \S \ref{subsec:RingSymmetricFunctions}, 
the ring of symmetric functions $\Lambda$ has a structure of a self-dual 
Hopf algebra over $\Z$.  Also  its subalgebras $\Gamma$ and $\Gamma'$ 
have  Hopf algebra structures over $\Z$  
which are dual to each other.  
In this subsection, we shall mention the Hopf algebra structure 
of our rings $\Gamma^{\L}(\x)$, $\Gamma^{\L}_{+}(\x)$ (see \S \ref{subsec:StabilityProperty}) and 
$\Gamma_{\L}(\y)$, $\Gamma_{\L}^{+}(\y)$ (see \S \ref{subsec:Definitionwh{p}^L(y)wh{q}^L(y)}).  
As we saw in the proof of the Basis Theorem (Theorem \ref{thm:BasisTheoremwh{p}^L(y)wh{q}^L(y)}), 
through the Cauchy identity (\ref{eqn:CauchyIdentities}), the coproduct formula (\ref{eqn:CoproductQ^L(x)}) 
of $Q^{\L}_{\lambda}(\x)$ determines the product formula (\ref{eqn:Productwh{p}^L(y)}) of 
$\wh{p}^{\L}_{\lambda}(\y)$'s.  
By an  analogous argument,  one sees easily that the product formula 
\begin{equation*}  
   Q^{\L}_{\mu}(\x)  \,  Q^{\L}_{\nu} (\x)  
  = \sum_{\lambda \in \mathcal{SP}}  c_{\mu, \,  \nu}^{\lambda} \,   Q^{\L}_{\lambda}(\x)  \quad  (c_{\mu, \,  \nu}^{\lambda}  \in \L) 
\end{equation*} 
determines the coproduct formula 
\begin{equation*} 
   \wh{p}^{\L}_{\lambda} (\y, \y')  = \sum_{\mu, \,   \nu \in \mathcal{SP}} c_{\mu, \,  \nu}^{\lambda}  \, \wh{p}^{\L}_{\mu}(\y)  \,  \wh{p}^{\L}_{\nu}(\y'), 
\end{equation*} 
where $\y = (y_{1}, y_{2}, \ldots)$ and $\y'  = (y_{1}', y_{2}', \ldots)$ are two 
countable sets of variables.   
Thus we obtain the following result as a 
 formal consequence of the Cauchy identities (\ref{eqn:CauchyIdentities}). 
Here we   write $\phi$ for the coproduct map.

\begin{prop}  [Duality]  \label{prop:Duality}  
\quad 
\begin{enumerate}
\item  $($Duality between $\Gamma^{\L}_{+}(\x)$ and $\Gamma_{\L}(\y) )$  \\
If  \, $Q^{\L}_{\lambda} (\x) \,  Q^{\L}_{\mu} (\x)
= \displaystyle{\sum_{\nu  \in \mathcal{SP}}}  c_{\lambda, \,  \mu}^\nu  \,  Q^{\L}_{\nu}(\x)$, then
\begin{equation*} 
   \phi(\widehat{p}^{\L}_{\nu} (\y))=
\sum_{\lambda, \,   \mu \in \mathcal{SP}}c_{\lambda,  \, \mu}^{\nu}  \;
\widehat{p}^{\L}_{\lambda} (\y)  \otimes  \widehat{p}^{\L}_{\mu} (\y).
\end{equation*}  
If 
   $\phi(Q^{\L}_{\nu} (\x))= \displaystyle{\sum_{\lambda, \,  \mu  \in \mathcal{SP}}}  
\widehat{c}_{\lambda, \, \mu}^{\nu}  \, 
Q^{\L}_{\lambda} (\x)  \otimes Q^{\L}_{\mu} (\x)$, then 
\begin{equation*} 
  \widehat{p}^{\L}_{\lambda} (\y)  \, \widehat{p}^{\L}_{\mu}(\y)=\displaystyle
\sum_{\nu \in \mathcal{SP}} 
\widehat{c}_{\lambda,  \,  \mu}^\nu \,  \widehat{p}^{\L}_{\nu} (\y). 
\end{equation*}

\item $($Duality between $\Gamma^{\L}(\x)$ and $\Gamma_{\L}^{+}(\y) )$  \\
If \,   $P^{\L}_{\lambda} (\x)  \, P^{\L}_{\mu} (\x)=\displaystyle
\sum_{\nu  \in \mathcal{SP}} d_{\lambda, \,   \mu}^\nu \,  P^{\L}_{\nu} (\x)$, then
\begin{equation*} 
  \phi(\widehat{q}^{\L}_{\nu} (\y))=
\sum_{\lambda, \,  \mu \in \mathcal{SP}} d_{\lambda, \, \mu}^{\nu}  \, 
\widehat{q}^{\L}_{\lambda} (\y)  \otimes \widehat{q}^{\L}_{\mu} (\y).
\end{equation*}  
If 
$\phi(P^{\L}_{\nu} (\x))=
\displaystyle{\sum_{\lambda, \,  \mu  \in \mathcal{SP}}} 
\widehat{d}_{\lambda, \,  \mu}^{\nu}   \,  
P^{\L}_{\lambda} (\x) \otimes P^{\L}_{\mu} (\x)$,  then 
\begin{equation*} 
   \widehat{q}^{\L}_{\lambda} (\y) \widehat{q}^\L_{\mu} (\y)=\displaystyle
\sum_{\nu\in \mathcal{SP}} 
\widehat{d}_{\lambda,  \, \mu}^\nu \,  \widehat{q}^{\L}_{\nu} (\y). 
\end{equation*} 
\end{enumerate}
\end{prop}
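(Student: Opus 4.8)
The plan is to derive both parts formally from two ``partial multiplicativity'' properties of the kernel $\Delta$ together with the Basis Theorems already in hand, with no new computation. First I would record the three ingredients. (i) All coproducts in the statement are realized by splitting a set of variables into two disjoint copies: for $\Gamma^{\L}(\x)$ and $\Gamma^{\L}_{+}(\x)$ this is precisely how the structure constants are defined, e.g. $Q^{\L}_{\nu}(\x,\x')=\sum_{\lambda,\mu}\widehat{c}^{\,\nu}_{\lambda,\mu}Q^{\L}_{\lambda}(\x)Q^{\L}_{\mu}(\x')$ encodes $\phi(Q^{\L}_{\nu})$ as in \eqref{eqn:CoproductQ^L(x)}; for $\Gamma_{\L}(\y)=\Gamma'^{\,MU}_{*}$ and $\Gamma_{\L}^{+}(\y)=\Gamma^{MU}_{*}$ the coproducts of Definitions \ref{df:Gamma'^E_*} and \ref{df:Gamma^E_*} are likewise induced by $\y\mapsto(\y,\y')$, since $\widehat{q}^{\L}(T;\y,\y')=\widehat{q}^{\L}(T;\y)\widehat{q}^{\L}(T;\y')$ and $\widehat{p}^{\L}$ is built from $\widehat{q}^{\L}$ by an operation with coefficients in $\L$ only. (ii) From its product expression, $\Delta(\x,\x';\y)=\Delta(\x;\y)\Delta(\x';\y)$ and $\Delta(\x;\y,\y')=\Delta(\x;\y)\Delta(\x;\y')$. (iii) By Proposition \ref{prop:Delta(x_1,...,x_n;y)} and its evident extension to a doubled $\y$-set, $\Delta$ lies in the relevant completed tensor product of $\Gamma^{\L}(\x)$ or $\Gamma^{\L}_{+}(\x)$ with the ring of symmetric functions in the $\y$-variables, and by Corollary \ref{cor:BasisTheoremP^L(x)Q^L(x)} the families $\{Q^{\L}_{\lambda}(\x)\}$ and $\{P^{\L}_{\lambda}(\x)\}$ are formal $\L$-bases there; hence one may legitimately compare coefficients of $Q^{\L}_{\lambda}(\x)$, of $Q^{\L}_{\lambda}(\x)Q^{\L}_{\mu}(\x')$, and so on.

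Next I would prove (1). For the first displayed formula, apply the first Cauchy identity \eqref{eqn:CauchyIdentities} in the doubled variables $(\y,\y')$ and use (ii):
\[
\sum_{\nu\in\mathcal{SP}}Q^{\L}_{\nu}(\x)\,\widehat{p}^{\L}_{\nu}(\y,\y')
=\Delta(\x;\y,\y')
=\sum_{\lambda,\mu\in\mathcal{SP}}Q^{\L}_{\lambda}(\x)Q^{\L}_{\mu}(\x)\,\widehat{p}^{\L}_{\lambda}(\y)\widehat{p}^{\L}_{\mu}(\y').
\]
Substituting $Q^{\L}_{\lambda}(\x)Q^{\L}_{\mu}(\x)=\sum_{\nu}c^{\nu}_{\lambda,\mu}Q^{\L}_{\nu}(\x)$ and comparing coefficients of $Q^{\L}_{\nu}(\x)$ gives $\widehat{p}^{\L}_{\nu}(\y,\y')=\sum_{\lambda,\mu}c^{\nu}_{\lambda,\mu}\widehat{p}^{\L}_{\lambda}(\y)\widehat{p}^{\L}_{\mu}(\y')$, which by (i) is exactly $\phi(\widehat{p}^{\L}_{\nu}(\y))=\sum_{\lambda,\mu}c^{\nu}_{\lambda,\mu}\widehat{p}^{\L}_{\lambda}(\y)\otimes\widehat{p}^{\L}_{\mu}(\y)$. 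For the second displayed formula, use $\Delta(\x,\x';\y)=\Delta(\x;\y)\Delta(\x';\y)$, expand the left side by \eqref{eqn:CauchyIdentities} and insert $Q^{\L}_{\nu}(\x,\x')=\sum\widehat{c}^{\,\nu}_{\lambda,\mu}Q^{\L}_{\lambda}(\x)Q^{\L}_{\mu}(\x')$, expand the right side, and compare coefficients of $Q^{\L}_{\lambda}(\x)Q^{\L}_{\mu}(\x')$; this recovers $\widehat{p}^{\L}_{\lambda}(\y)\widehat{p}^{\L}_{\mu}(\y)=\sum_{\nu}\widehat{c}^{\,\nu}_{\lambda,\mu}\widehat{p}^{\L}_{\nu}(\y)$, which is precisely the product formula \eqref{eqn:Productwh{p}^L(y)} already obtained inside the proof of Theorem \ref{thm:BasisTheoremwh{p}^L(y)wh{q}^L(y)}. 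Part (2) is word-for-word the same argument with $Q^{\L}_{\lambda},\widehat{p}^{\L}_{\lambda}$ replaced by $P^{\L}_{\lambda},\widehat{q}^{\L}_{\lambda}$, using the second Cauchy identity in \eqref{eqn:CauchyIdentities} and the Basis Theorems for $\{P^{\L}_{\lambda}(\x)\}$ and $\{\widehat{q}^{\L}_{\lambda}(\y)\}$.

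The only genuine point requiring care — the main obstacle — is the legitimacy of comparing coefficients after rearranging infinite products and sums. I would handle this by working first with finitely many $x$-variables, where Proposition \ref{prop:Delta(x_1,...,x_n;y)} places $\Delta(\x_{n};\y)$, and its doubled-$\y$ analogue, in the completed tensor product of $\Gamma^{\L}(\x_{n})$ (resp. $\Gamma^{\L}_{+}(\x_{n})$) with $\Gamma_{\L}^{(n)}(\y)$ (resp. $\Gamma_{\L}^{(n),+}(\y)$), and then letting $n\to\infty$. The grading convention on $\L=\L_{*}$ forces each homogeneous component of $\phi(\widehat{p}^{\L}_{\nu})$ and each product $\widehat{p}^{\L}_{\lambda}\widehat{p}^{\L}_{\mu}$ to be a \emph{finite} $\L$-combination, so no convergence issue arises on the $\y$-side. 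Once the compatibility of this limit with the two variable-doubling operations is noted — which follows from the stability of $P^{\L}_{\lambda}(\x_{n})$ and $Q^{\L}_{\lambda}(\x_{n})$ (Proposition \ref{prop:Stability}), already used in Definition \ref{df:Definitionwh{p}^Lwh{q}^L} — the Proposition is a purely formal consequence of the Cauchy identities \eqref{eqn:CauchyIdentities}.
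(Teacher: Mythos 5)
Your proposal is correct and follows essentially the same route the paper takes: expand $\Delta$ in doubled variables, use the multiplicativity $\Delta(\x,\x';\y)=\Delta(\x;\y)\Delta(\x';\y)$ (resp. $\Delta(\x;\y,\y')=\Delta(\x;\y)\Delta(\x;\y')$), and compare coefficients via the Basis Theorems — exactly the ``formal consequence of the Cauchy identities'' argument the paper sketches, with the second part of (1) being literally the product formula \eqref{eqn:Productwh{p}^L(y)} already established in the proof of Theorem \ref{thm:BasisTheoremwh{p}^L(y)wh{q}^L(y)}. Your extra paragraph on the legitimacy of comparing coefficients (working with $\x_n$, then $n\to\infty$, and using the $\L_*$-grading to keep sums finite) is a sound and slightly more explicit account of a point the paper treats in passing.
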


By the Basis Theorems $($Corollary $\ref{cor:BasisTheoremP^L(x)Q^L(x)}$ and  Theorem $\ref{thm:BasisTheoremwh{p}^L(y)wh{q}^L(y)})$,  
we can define the $\L$-bilinear pairing between the rings $\Gamma^{\L}_{+} (\x)$ and $\Gamma_{\L}(\y)$, 
\begin{equation*} 
  [ -, -]:  \Gamma^{\L}_{+}(\x)  \times \Gamma_{\L}(\y) \longrightarrow \L, 
\end{equation*} 
by setting 
     $[ Q^{\L}_{\lambda} (\x), \wh{p}^{\L}_{\mu}(\y) ]  = \delta_{\lambda, \,  \mu}$. 
This pairing induces an $\L$-module homomorphism  
\begin{equation*} 
  \kappa:  \Gamma^{\L}_{+}(\x)  \longrightarrow \Hom_{\L} (\Gamma_{\L}(\y), \L), \quad 
  F \longmapsto  \kappa (F) = [F, -].  
\end{equation*} 
Using the Basis Theorems (Corollary \ref{cor:BasisTheoremP^L(x)Q^L(x)} and 
Theorem \ref{thm:BasisTheoremwh{p}^L(y)wh{q}^L(y)}) again, one can show that 
$\kappa$ is an isomorphism of  $\L$-modules.  Furthermore, both $\Gamma^{\L}_{+}(\x)$ and 
$\Hom_{\L} (\Gamma_{\L}(\y), \L)$ have a Hopf algebra structure over $\L$. 
Using the above duality (Proposition \ref{prop:Duality}), one can also show that 
$\kappa$ is actually an isomorphism of Hopf algebras over $\L$.  
As we remarked earlier,  
we know that $\Gamma_{\L}(\y) \cong \Gamma_{*}^{MU}  \cong MU_{*}(\Omega Sp)$, 
and therefore $\Hom_{\L} (\Gamma_{\L}(\y), \L) \cong MU^{*}(\Omega Sp)$ as 
Hopf algebras over $\L$. Thus we have the following isomorphism of Hopf algebras over $\L$:  
\begin{equation*} 
  \Gamma^{\L}_{+}(\x) \cong \Gamma^{*}_{MU} \cong MU^{*}(\Omega Sp). 
\end{equation*} 
Similarly we define the pairing  
\begin{equation*}  
  [-, -]:  \Gamma^{\L}(\x)  \times \Gamma_{\L}^{+}  (\y) \longrightarrow \L, 
\end{equation*} 
by setting $[P^{\L}_{\lambda}(\x),  \wh{q}^{\L}_{\mu}(\y)] = \delta_{\lambda, \, \mu}$.  
By means of this pairing and the same argument as above, we have the following isomorphism 
of Hopf algebras over $\L$: 
\begin{equation*} 
   \Gamma^{\L}(\x)  \cong  {\Gamma'}^{*}_{MU}  \cong MU^{*}(\Omega_{0} SO). 
\end{equation*} 
Summing up the results so far, we have the following: 
\begin{theorem}   \label{thm:MU_*(OmegaSp)MU_*(Omega_0SO)MU^*(OmegaSp)MU^*(Omega_0SO)} 
There is a symmetric function realization  as Hopf algebras over $\L_{*}:$ 
\begin{equation*} 
\begin{array}{rllll} 
  MU_{*}(\Omega Sp)     
& \cong   & {\Gamma'}_{*}^{MU} \cong  \Gamma_{\L}(\y) 
= \L[\wh{p}^{\L}_{1} (\y), \wh{p}^{\L}_{3}(\y), \ldots, \wh{p}^{\L}_{2i-1}(\y), \ldots]   \medskip \\
& = &  \displaystyle{\bigoplus_{\lambda \in \mathcal{SP}}}  \L \, \wh{p}^{\L}_{\lambda}(\y) \;  \subset \;  \Lambda_{\L}(\y),   \medskip \\
  MU_{*}(\Omega_{0} SO) 
& \cong  & \Gamma_{*}^{MU} \cong   \Gamma_{\L}^{+}(\y)  = \L[\wh{q}^{\L}_{1}(\y), \wh{q}^{\L}_{2}(\y), \ldots, \wh{q}^{\L}_{i}(\y), \ldots]/(\wh{q}^{\L}(T) \wh{q}^{\L}(\overline{T}) = 1)  \medskip \\
& = &  \displaystyle{\bigoplus_{\lambda \in \mathcal{SP}}}   \L \, \wh{q}^{\L}_{\lambda}(\y)  \; 
\subset \;  \Lambda_{\L}(\y).  \medskip 
\end{array}  
\end{equation*} 
Dually, there is a symmetric function realization as Hopf algebras over $\L^{*}$ 
$:$ 
\begin{equation*} 
\begin{array}{rllll} 
  MU^{*}(\Omega Sp)   &  \cong & \Gamma^{*}_{MU} \cong  \Gamma^{\L}_{+}(\x)  = \displaystyle{\prod_{\lambda \in \mathcal{SP}}}  \L \, Q^{\L}_{\lambda} (\x)  \; \subset \; \Lambda^{\L}(\x), \medskip \\
  MU^{*}(\Omega_{0} SO) & \cong &  {\Gamma'}^{*}_{MU} \cong \Gamma^{\L} (\x)  = \displaystyle{\prod_{\lambda \in \mathcal{SP}}}  \L \, P^{\L}_{\lambda} (\x) \; \subset \; \Lambda^{\L}(\x).   \medskip 
\end{array}
\end{equation*} 
\end{theorem}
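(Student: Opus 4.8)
\textbf{Proof proposal for Theorem \ref{thm:MU_*(OmegaSp)MU_*(Omega_0SO)MU^*(OmegaSp)MU^*(Omega_0SO)}.}

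The plan is to assemble the statement from four ingredients already in place: (i) the topological identifications of $MU_{*}(\Omega Sp)$, $MU_{*}(\Omega_{0}SO)$, $MU^{*}(\Omega Sp)$, $MU^{*}(\Omega_{0}SO)$ with the rings $\Gamma^{E}_{*}$, ${\Gamma'}^{E}_{*}$, $\Gamma^{*}_{E}$, ${\Gamma'}^{*}_{E}$ specialized to $E=MU$ (Propositions \ref{prop:IdentificationE_*(Omega_0SO)E_*(OmegaSp)} and \ref{prop:IdentificationE^*(OmegaSp)E^*(Omega_0SO)}); (ii) the fact that under $MU_{*}=\L_{*}$ one has $\wh{p}^{\L}_{k}(\y)=\wh{p}^{MU}_{k}(\y)$ and $\wh{q}^{\L}_{k}(\y)=\wh{q}^{MU}_{k}(\y)$ for the one-row indices, which was checked in the Remark following \eqref{eqn:wh{q}^L_k=2wh{p}^L_k+...}, so that $\Gamma_{\L}(\y)={\Gamma'}^{MU}_{*}$ and $\Gamma^{+}_{\L}(\y)=\Gamma^{MU}_{*}$ as Hopf algebras over $\L$; (iii) the Basis Theorems (Corollary \ref{cor:BasisTheoremP^L(x)Q^L(x)} and Theorem \ref{thm:BasisTheoremwh{p}^L(y)wh{q}^L(y)}), which give the displayed free-module decompositions $\Gamma^{\L}(\x)=\prod_{\lambda}\L\,P^{\L}_{\lambda}(\x)$, $\Gamma^{\L}_{+}(\x)=\prod_{\lambda}\L\,Q^{\L}_{\lambda}(\x)$, $\Gamma_{\L}(\y)=\bigoplus_{\lambda}\L\,\wh{p}^{\L}_{\lambda}(\y)$, $\Gamma^{+}_{\L}(\y)=\bigoplus_{\lambda}\L\,\wh{q}^{\L}_{\lambda}(\y)$; and (iv) the duality statement (Proposition \ref{prop:Duality}) coming from the Cauchy identities \eqref{eqn:CauchyIdentities}.

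First I would record the homology half. Combining (i) with $E=MU$ and (ii), we get isomorphisms of Hopf algebras over $\L_{*}$
\[
MU_{*}(\Omega Sp)\cong{\Gamma'}^{MU}_{*}\cong\Gamma_{\L}(\y),\qquad
MU_{*}(\Omega_{0}SO)\cong\Gamma^{MU}_{*}\cong\Gamma^{+}_{\L}(\y),
\]
and the explicit polynomial/quotient presentations together with the free $\L$-basis $\{\wh{p}^{\L}_{\lambda}(\y)\}$ (resp.\ $\{\wh{q}^{\L}_{\lambda}(\y)\}$) indexed by strict partitions follow from Theorem \ref{thm:BasisTheoremwh{p}^L(y)wh{q}^L(y)}; the inclusion into $\Lambda_{\L}(\y)$ is by construction. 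For the cohomology half I would first observe that $\Gamma^{*}_{MU}\cong\Gamma^{\L}_{+}(\x)$ and ${\Gamma'}^{*}_{MU}\cong\Gamma^{\L}(\x)$. The identification $\Gamma^{*}_{MU}\cong\Gamma^{\L}_{+}(\x)$ is the more delicate one: by definition $\Gamma^{*}_{MU}=E^{*}[[\tilde q^{E}_{i}]]/(\tilde q^{E}(T)\overline{\tilde q^{E}}(T)=1)$, and one should match this with the $\L$-supersymmetric ring $\Gamma^{\L}_{+}(\x)$ using that $Q^{\L}_{\lambda}(\x)$ form a formal $\L$-basis (Corollary \ref{cor:BasisTheoremP^L(x)Q^L(x)}) and that the generators $Q^{\L}_{(k)}(\x)=(x_{1}\pf x_{1})x_{1}^{k-1}$ in one variable reduce, via \eqref{eqn:Delta(x_1;y)}, to exactly the objects appearing in Definition \ref{df:tilde{q}^E} for $E=MU$. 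For ${\Gamma'}^{*}_{MU}$, which was \emph{defined} as the graded $E_{*}$-dual of $\Gamma^{E}_{*}$, I would invoke the pairing $[-,-]\colon\Gamma^{\L}(\x)\times\Gamma^{+}_{\L}(\y)\to\L$ with $[P^{\L}_{\lambda}(\x),\wh q^{\L}_{\mu}(\y)]=\delta_{\lambda\mu}$, show via the two Basis Theorems that the induced $\kappa\colon\Gamma^{\L}(\x)\to\Hom_{\L}(\Gamma^{+}_{\L}(\y),\L)$ is an $\L$-module isomorphism, and then upgrade it to a Hopf-algebra isomorphism using Proposition \ref{prop:Duality}(2); since $\Gamma^{+}_{\L}(\y)\cong MU_{*}(\Omega_{0}SO)$, dualizing gives $\Gamma^{\L}(\x)\cong MU^{*}(\Omega_{0}SO)$, and symmetrically $\Gamma^{\L}_{+}(\x)\cong MU^{*}(\Omega Sp)$.

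The main obstacle I anticipate is the compatibility of \emph{Hopf-algebra} structures under $\kappa$, i.e.\ verifying that the topologically defined coproduct on $MU^{*}(\Omega Sp)$ (dual to the Pontryagin product on $MU_{*}(\Omega Sp)$) is carried by $\kappa$ precisely to the coproduct on $\Gamma^{\L}_{+}(\x)$ induced from $\Gamma^{\L}_{+}$ as a subring of $\Lambda^{\L}(\x)$ — equivalently, that the structure constants $c^{\lambda}_{\mu,\nu}$ in the product of $Q^{\L}_{\lambda}(\x)$'s really do govern the coproduct of $\wh q^{\L}_{\lambda}(\y)$'s as in Proposition \ref{prop:Duality}. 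This is exactly the content of the Cauchy identity argument used in the proof of Theorem \ref{thm:BasisTheoremwh{p}^L(y)wh{q}^L(y)}, so the strategy is to run that $\Delta(\x,\x';\y)$-versus-$\Delta(\x;\y)\Delta(\x';\y)$ bookkeeping once more, now reading off coproducts rather than products; the finiteness needed for well-definedness is guaranteed by the grading convention on $\L$ (negative cohomological degrees), which forces all the relevant sums to be finite. Everything else is a matter of quoting the earlier results and chasing the two commuting triangles of isomorphisms.
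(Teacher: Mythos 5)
Your overall skeleton — combine Propositions \ref{prop:IdentificationE_*(Omega_0SO)E_*(OmegaSp)} and \ref{prop:IdentificationE^*(OmegaSp)E^*(Omega_0SO)} with $E=MU$, the one-row identifications $\wh{p}^{\L}_k(\y)=\wh{p}^{MU}_k(\y)$ and $\wh{q}^{\L}_k(\y)=\wh{q}^{MU}_k(\y)$, the Basis Theorems, and the Cauchy duality — is the paper's. There is, however, a genuine error in your proposed route for the identification $\Gamma^{*}_{MU}\cong\Gamma^{\L}_{+}(\x)$. You claim that the generators $Q^{\L}_{(k)}(\x)=(x_1\pf x_1)x_1^{k-1}$ in one variable reduce, via \eqref{eqn:Delta(x_1;y)}, to the $\tilde q^{MU}_k$ of Definition \ref{df:tilde{q}^E}, but this fails already at $k=1$: $Q^{\L}_{(1)}(x_1)=x_1\pf x_1=2x_1+a_{1,1}x_1^2+\cdots$, whereas $\tilde q^{MU}_1$ restricted to one variable equals $x_1-\overline{x}_1=2x_1-a_{1,1}x_1^2+\cdots$, and these disagree at second order. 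Equation \eqref{eqn:Delta(x_1;y)} is about the Cauchy kernel $\Delta(x_1;\y)$ and has nothing to do with $\tilde q^E_k(\x)$. Indeed, the paper's concluding remarks at the end of Section \ref{sec:DUFSPQF} explicitly state that no general formula relating $\tilde q^{MU}_k(\x)$ to $Q^{\L}_{\lambda}(\x)$ under this isomorphism is known, so a direct generator-matching argument is not available for this part.

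The repair is the route you yourself gesture at with "symmetrically $\Gamma^{\L}_{+}(\x)\cong MU^{*}(\Omega Sp)$," and it is what the paper actually does for \emph{both} cohomology identifications. One defines both pairings, $[Q^{\L}_{\lambda}(\x),\wh p^{\L}_{\mu}(\y)]=\delta_{\lambda,\mu}$ and $[P^{\L}_{\lambda}(\x),\wh q^{\L}_{\mu}(\y)]=\delta_{\lambda,\mu}$, uses the Basis Theorems (Corollary \ref{cor:BasisTheoremP^L(x)Q^L(x)} and Theorem \ref{thm:BasisTheoremwh{p}^L(y)wh{q}^L(y)}) to see that the induced maps $\kappa\colon\Gamma^{\L}_{+}(\x)\to\Hom_{\L}(\Gamma_{\L}(\y),\L)$ and $\kappa\colon\Gamma^{\L}(\x)\to\Hom_{\L}(\Gamma^{+}_{\L}(\y),\L)$ are $\L$-module isomorphisms, upgrades both to Hopf isomorphisms via Proposition \ref{prop:Duality}, and then composes with the graded duals of the already-established homology identifications. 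The isomorphism $\Gamma^{*}_{MU}\cong\Gamma^{\L}_{+}(\x)$ is thus obtained as the corollary of the chain $\Gamma^{\L}_{+}(\x)\cong\Hom_{\L}(\Gamma_{\L}(\y),\L)\cong MU^{*}(\Omega Sp)\cong\Gamma^{*}_{MU}$, not by matching generating series. Apart from this misstep, the remainder of your proposal — in particular your observation that the Hopf-compatibility of $\kappa$ is exactly the content of Proposition \ref{prop:Duality}, and that finiteness of the relevant sums follows from the grading convention on $\L$ — is exactly the paper's argument.
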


\subsection{Concluding remarks}
\begin{enumerate} 
\item In \S $\ref{subsec:E-cohomologySchurP,Q-functions}$, we introduced the 
functions $\tilde{q}^{MU}_{k}(\x)  \in \Gamma^{*}_{MU} \; (k = 1, 2, \ldots)$, 
and described the ring $\Gamma^{*}_{MU}$ in terms of these functions.    
At present, we have not been able to obtain 
a general formula relating the functions $\tilde{q}^{MU}_{k}(\x) \; (k = 1, 2, \ldots)$ 
and $Q^{\L}_{\lambda}(\x) \; (\lambda \in \mathcal{SP})$ under the 
isomorphism $\Gamma^{*}_{MU} \cong \Gamma^{\L}_{+}(\x)$.

\item  By the result of Buch \cite{Buc2002}, we  observe  that 
the {\it stable Grothendieck  polynomials} $G_{\lambda}(\x)$, $\lambda \in \mathcal{P}$, 
represent the Schubert classes of the $K$-theory (more precisely, $K$-cohomology) 
of the infinite Grassmannain $BU \simeq \Omega SU$.   
Dual functions  $g_{\lambda}(\y)$, $\lambda \in \mathcal{P}$, called the {\it dual stable Grothendieck 
polynomials},  are defined  and studied by Lam-Pylyavskyy \cite[\S 9.1]{Lam-Pyl2007}. 
By the construction, these dual functions $\{ g_{\lambda} (\y) \}$ 
represent the Schubert classes of the $K$-homology 
$K_{*}(\Omega SU)$.  
There is a ``type $C$'' analogue  of the above story: 
 If we specialize $a_{1, 1}  = \beta$, $a_{i, j} = 0$ for all $(i, j) \neq (1, 1)$
in Definition $\ref{df:Definitionwh{p}^Lwh{q}^L}$, the resulting functions 
will be denoted by  $gp_{\lambda}(\y)$ and $gq_{\lambda}(\y)$, $\lambda \in \mathcal{SP}$. 
By definition, these functions are dual to the $K$-theoretic factorial Schur $P$- and $Q$-functions 
$GP_{\lambda} (\x)$ and $GQ_{\lambda}(\x)$ due to Ikeda-Naruse \cite[Definition 2.1 and \S 6.1]{Ike-Nar2013}. As shown in that paper, the functions $\{ GP_{\lambda}(\x) \}$
and $\{ GQ_{\lambda} (\x) \}$  represent the Schubert classes for 
the 
$K$-cohomology of infinite 
maximal isotropic Grassmannians $Sp/U \simeq \Omega Sp$ and $SO/U \simeq \Omega_{0} SO$
$($see  Ikeda-Naruse  \cite[Theorem 8.3, Corollary 8.1]{Ike-Nar2013}$)$. 
Therefore  
the functions $\{ gp_{\lambda} (\y) \}$ and $\{ gq_{\lambda} (\y) \}$ 
are expected to represent the Schubert classes for the $($non-equivariant$)$ 
$K$-homology  $K_{*}(\Omega Sp)$ and $K_{*}(\Omega_{0} SO)$ of the loop 
spaces $\Omega Sp$ and $\Omega_{0} SO$. 
We return to this problem  elsewhere.    
\end{enumerate} 

\section{Appendix}
\subsection{Universal factorial Schur functions}   \label{subsec:UFSF2}  
In \S \ref{subsec:UFSF}, we defined the {\it universal factorial Schur functions} 
in the $n$-variables 
$s^{\L}_{\lambda}(\x_{n}|\b)$, $\lambda  \in \mathcal{P}_{n}$. 
In order to consider the limit function as $n \rightarrow \infty$, 
we need to modify its definition.  In fact,  we are able to generalize the 
{\it double Schur function}   $s_{\lambda} (x || a)$  and the 
{\it dual Schur function} $\wh{s}_{\lambda}(y||a)$ due to  Molev  \cite[\S 2.1, \S 3.1]{Mol2009} 
in the universal setting. 
In this appendix, we only exhibit the definition and basic property of our functions. Details will be  discussed elsewhere.  
Here we use a doubly infinite sequence  
$\b_{\Z}  = 
(\ldots,b_{-2},b_{-1},b_0,b_{1},b_{2},\ldots)$  in place of 
$\b = (b_{1}, b_{2}, \ldots)$.
First we introduce another variant of the ordinary $k$-th power $t^{k}$ (see \S \ref{subsec:DefinitionP^L(x|b)Q^L(x|b)} and Molev \cite[p.7]{Mol2009}). For a fixed positive integer $n$, we define  
\begin{equation*} 
  [t \,|| \, \b_{\Z}]_{n}^{k} :=  \prod_{i=1}^{k} (t \pf b_{n + 1 - i}) 
    = (t \pf b_{n}) (t \pf b_{n-1}) \cdots (t \pf b_{n + 1 - k}), 
\end{equation*} 
where we set $[t \, || \, \b_{\Z}]_{n}^{0} := 1$.  For a partition $\lambda = (\lambda_{1}, \lambda_{2}, \ldots, \lambda_{r})$, we set 
\begin{equation*} 
  [\x \, || \, \b_{\Z}]_{n}^{\lambda} 
 :=  \prod_{i=1}^{r}  [x_{i} \, || \, \b_{\Z}]_{n}^{\lambda_{i}}. 
\end{equation*} 
 
\begin{defn}  [Universal factorial Schur functions]    \label{df:Definitions^L(x||b)}  
For a partition 
$\lambda  = (\lambda_{1}, \lambda_{2}, \ldots, \lambda_{n})  
\in \mathcal{P}_{n}$,
we define
\begin{equation*} 
   s^{\L}_{\lambda}(\x_{n} \, || \, \b_{\Z}) = 
   s^{\L}_{\lambda}(x_{1}, \ldots, x_{n} \, || \, \b_{\Z}) 
:=\displaystyle{
\sum_{w  \in S_{n}}}  
 w   \left( 
      \dfrac{[\x \, || \, \b_{\Z}]_{n}^{\lambda + \rho_{n-1}} }
            {  \prod_{1\leq i < j \leq n}  (x_i  \pf   \overline{x}_j)}
    \right),
\end{equation*} 
where $\rho_{n-1} = (n-1, n-2, \ldots, 1, 0)$. 
\end{defn} 
\noindent
This is a symmetric formal power series with coefficients in $\L$ in the variables 
$x_{1}, \ldots, x_{n}$ and $b_{n}, b_{n-1}, \ldots, b_{2 - \lambda_{1}}$.  
One sees immediately from the Definitions   \ref{df:Definitions^L(x|b)} and \ref{df:Definitions^L(x||b)}
that $s^{\L}_{\lambda}(\x_{n} |\b)$  turns into  $s^{\L}_{\lambda}( \x_{n} \, || \, \b_{\Z})$ 
by changing the parameters $b_{i} \rightarrow b_{n - i + 1} \; (i = 1, 2, \ldots)$, and 
vice versa.

As with the case of $s^{\L}_{\lambda}(\x_{n}|\b)$ (see  Proposition \ref{prop:VanishingPropertys^L(x|b)}),  the functions $s^{\L}_{\lambda}(\x_{n} \, || \, \b_{\Z})$ also  satisfy the vanishing property. 
Given a partition $\mu  \in \mathcal{P}_{n}$, we introduce the sequence 
$\overline{\b}_{I - \mu}$ by  
\begin{equation*} 
 \overline{\b}_{I - \mu} :=(\overline{b}_{1-\mu_{1}},  \overline{b}_{2- \mu_{2}},  \ldots, 
                            \overline{b}_{i - \mu_{i}}, \ldots).
\end{equation*} 

\begin{prop}  [Vanishing  Property] 
Let $\lambda, \mu \in \mathcal{P}_{n}$. Then we have 
\begin{equation*} 
  s^{\L}_{\lambda} (\overline{\b}_{I  - \mu} \, || \, \b_{\Z}) 
  = \left \{ 
    \begin{array}{llllll} 
          &  0           \quad   &   \text{if} \quad   \mu  \not\supset \lambda, \medskip \\
          &  \displaystyle{\prod_{(i, j) \in \lambda}}  
             (\overline{b}_{i-\lambda_{i}}  \pf b_{ {}^{t} \! \lambda_{j}-j+1} )  
    \quad 
                          &   \text{if} \quad  \mu = \lambda.    \medskip 
    \end{array}  
    \right. 
\end{equation*}

%
\end{prop}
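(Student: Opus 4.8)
The plan is to follow the proof of the vanishing property for $s^{\L}_{\lambda}(\x_{n}|\b)$ (Proposition \ref{prop:VanishingPropertys^L(x|b)}) essentially verbatim, translating it through the parameter substitution $b_{i} \rightsquigarrow b_{n-i+1}$ that, as noted just above the statement, carries $s^{\L}_{\lambda}(\x_{n}|\b)$ into $s^{\L}_{\lambda}(\x_{n}\,||\,\b_{\Z})$. Concretely, I would first expand the definition as a sum over $S_{n}$,
\begin{equation*}
  s^{\L}_{\lambda}(\x_{n}\,||\,\b_{\Z})
  = \sum_{w\in S_{n}} \frac{\prod_{i=1}^{n}[x_{w(i)}\,||\,\b_{\Z}]_{n}^{\lambda_{i}+n-i}}{\prod_{1\le i<j\le n}(x_{w(i)}\pf\overline{x}_{w(j)})},
\end{equation*}
and then substitute $\x_{n}=\overline{\b}_{I-\mu}$, i.e.\ $x_{i}=\overline{b}_{i-\mu_{i}}$.

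For the first case, $\mu\not\supset\lambda$: choose $k$ with $\mu_{k}<\lambda_{k}$; for any $w\in S_{n}$ there is an index $1\le j\le k$ with $w(j)\ge k$, so $\mu_{w(j)}\le\mu_{k}<\lambda_{k}\le\lambda_{j}$. One then checks that the factor $[x_{w(j)}\,||\,\b_{\Z}]_{n}^{\lambda_{j}+n-j}$ contains the term $x_{w(j)}\pf b_{n-(n - w(j) + \mu_{w(j)})}=x_{w(j)}\pf b_{w(j)-\mu_{w(j)}}$ among its factors (using the definition $[t\,||\,\b_{\Z}]_{n}^{k}=\prod_{i=1}^{k}(t\pf b_{n+1-i})$ and the inequality on exponents), which vanishes on setting $x_{w(j)}=\overline{b}_{w(j)-\mu_{w(j)}}$ since $t\pf\overline{t}=0$. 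Hence every summand vanishes. For the case $\mu=\lambda$, I argue that all summands with $w\ne e$ vanish: pick $k$ with $w(k)>k$, so $\lambda_{w(k)}\le\lambda_{k}$ and again the relevant factor of $[x_{w(k)}\,||\,\b_{\Z}]_{n}^{\lambda_{k}+n-k}$ is killed; then evaluate the $w=e$ term directly and perform the cancellation between numerator and denominator, exactly as in the proof of Proposition \ref{prop:VanishingPropertys^L(x|b)}, producing $\prod_{(i,j)\in\lambda}(\overline{b}_{i-\lambda_{i}}\pf b_{{}^{t}\!\lambda_{j}-j+1})$.

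The only genuine bookkeeping point — and the step most likely to need care — is to get the index shifts straight: matching the running index in $[t\,||\,\b_{\Z}]_{n}^{k}=\prod_{i=1}^{k}(t\pf b_{n+1-i})$ against the substitution $x_{w(i)}=\overline{b}_{w(i)-\mu_{w(i)}}$ so that the vanishing factor and the final product come out with the subscripts $i-\lambda_{i}$ and ${}^{t}\!\lambda_{j}-j+1$ claimed in the statement, rather than their reflections. Since the substitution $b_{i}\leftrightarrow b_{n-i+1}$ is an order-reversing bijection on the parameter indices, the safest route is to cite Proposition \ref{prop:VanishingPropertys^L(x|b)} and simply transport the identity $s^{\L}_{\lambda}(\overline{\b}_{\mu+\rho_{n}}|\b)=\prod_{(i,j)\in\lambda}(\overline{b}_{\lambda_{i}+n-i+1}\pf b_{n+j-{}^{t}\!\lambda_{j}})$ through this relabeling, checking that $\overline{\b}_{\mu+\rho_{n}}$ in the $|\b$-world becomes $\overline{\b}_{I-\mu}$ in the $||\,\b_{\Z}$-world and that the right-hand product acquires the stated form. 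I would present the direct computation as the main argument for self-containedness, and remark that it also follows formally from the earlier proposition via the parameter change.
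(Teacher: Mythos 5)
Your proposal is correct and matches the paper's implied approach: the paper offers no explicit proof here, relying on the remark just before the statement that the substitution $b_{i}\mapsto b_{n-i+1}$ carries $s^{\L}_{\lambda}(\x_{n}|\b)$ to $s^{\L}_{\lambda}(\x_{n}\,||\,\b_{\Z})$, and you correctly verify that this relabeling sends $\overline{\b}_{\mu+\rho_{n}}$ to $\overline{\b}_{I-\mu}$ and transports the product formula of Proposition~\ref{prop:VanishingPropertys^L(x|b)} to the stated one. Your direct computation is also sound (modulo a harmless off-by-one in the parenthetical aside, where the running index should be $i=n+1-w(j)+\mu_{w(j)}$ inside $[t\,||\,\b_{\Z}]_{n}^{k}=\prod_{i=1}^{k}(t\pf b_{n+1-i})$, though the final index $w(j)-\mu_{w(j)}$ comes out right).
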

Since the functions $\{ s^{\L}_{\lambda} (\x_{n} \, || \, \b_{\Z})\}_{n \geq 1}$
have the stability property under the evaluation map $x_{n} = \overline{b}_{n}$, 
we can  take limit $n  \to \infty$ to  obtain the limit function 
$s^{\L}_{\lambda} (\x||\,  \b_{\Z})$.

Using the Cauchy identity and an analogous argument that we did in \S  \ref{sec:DUFSPQF}, 
we can define the dual functions $\wh{s}^{\L}_{\lambda}(\y || \b_{\Z})$ for  $\lambda \in \mathcal{P}_{n}$. 
These functions are a generalization of 
Molev's   dual Schur functions  
$\widehat{s}_\lambda(y||a)$
in  the universal setting.    
Here we use  one more set of variables $\y = (y_{1}, y_{2}, \ldots)$.  
\begin{defn}  [Dual universal factorial Schur functions]  
For a partition $\lambda \in \mathcal{P}_{n}$, 
we define $\widehat{s}^{\L}_{\lambda} (\y \, || \, \b_{\Z})$ 
 by the following identity $($Cauchy identity$):$ 
\begin{equation*} 
\prod_{i=1}^{n}\prod_{j=1}^{\infty}\frac{1-\overline{b}_{i} y_{j}}{1- x_{i} y_{j}}
=
\sum_{\lambda\in \mathcal{P}_{n}}
s^\L_\lambda(\x_{n}  \, ||\, \b_{\Z}) \, \widehat{s}^{\L}_{\lambda} (\y \, || \, \b_{\Z}).
\end{equation*}  
\end{defn} 
\noindent
For a geometric meaning of Molev's double and dual  Schur functions, 
readers are referred to  Lam-Shimozono  \cite{Lam-Shi2013}.

\subsection{Root data}     
\subsubsection{Type $A_{\infty}$}  \label{subsubsec:RootDeta(A)} 
\quad  \\
\underline{Weyl group of type $A_{\infty}$} \quad 
Let $S_{\infty}  := \displaystyle{\lim_{ \substack{\rightarrow \\ n} } } \,   S_{n}$ 
be the {\it infinite} 
symmetric group, that is,  the Weyl group of type $A_{\infty}$: 
$W(A_{\infty}) \cong S_{\infty}$. 
 An   element $w$ of $S_{\infty}$ is 
identified  with a permutation of the set of positive integers $\N := \{ 1, 2, \ldots \}$
such that $w(i) = i$ for all but a finite number of $i \in \N$.  
We shall use the {\it one-line notation}  $w =   w(1) \, w(2) \, \cdots$ 
to denote an element $w \in S_{\infty}$.   
The group $S_{\infty}$ is also a Coxeter group generated by the {\it simple 
reflections} $\{ s_{i} \}_{i \in I}$, where  the index set $I = \{ 1, 2, \ldots \}$. 
More explicitly, the simple reflections are given by 
the {\it simple transpositions}, i.e.,  $s_{i} = (i \; i+1) \; (i \geq 1)$.  
The defining relations of $\{ s_{i} \}_{i \in I}$ are given by 
\begin{equation*} 
\begin{array}{rlll} 
    s_{i}^{2}              & = &  1  \quad  & (i \geq 1), \medskip \\
    s_{i} s_{i + 1} s_{i}  & =  & s_{i + 1} s_{i} s_{i + 1}  \quad & (i \geq 1), \medskip \\
             s_{i}s_{j}    & =  & s_{j} s_{i}                \quad & (i, j \geq 1, \; 
             |i - j| \geq 2). \medskip 
\end{array} 
\end{equation*}

\vspace{0.3cm}  
\underline{Root system of type $A_{\infty}$}   \quad 
We fix notation about the root system of type $A_{\infty}$. 
Let $L$ denote a free $\Z$-module with a basis $\{ t_{i} \}_{i \geq 1}$. 
For $j > i \geq 1$, we define the 
 {\it positive root} $\alpha_{j, i}$ to be $\alpha_{j, i} := t_{j} - t_{i}$, and 
 denote the set of  all positive roots  by $\Delta^{+}_{A}$ \footnote{
 In what follows, we  often drop the subscript $A$ for brevity when there is no fear of confusion. 
}, namely 
\begin{equation*}
  \Delta^{+}_{A} :=  \{ \alpha_{j, i} =  t_{j} -  t_{i} \; | \; j > i \geq 1 \}  \subset L. 
\end{equation*} 
The set of {\it negative roots} is defined by $\Delta^{-}_{A} := - \Delta^{+}_{A}
= \{ - \alpha \;  | \; \alpha \in \Delta^{+}_{A}  \}$. We also set 
$\Delta_{A} := \Delta^{+}_{A} \coprod \Delta^{-}_{A}$ and call it the {\it root system of type $A_{\infty}$}.  
The following elements of $\Delta^{+}_{A}$ are called the {\it simple roots}: 
\begin{equation*} 
  \alpha_{i} := \alpha_{i + 1, i} =  t_{i + 1} - t_{i} \quad (i \geq 1). 
\end{equation*} 
The Weyl group $S_{\infty}$ acts on the  lattice $L$ by the usual permutation on $t_{i}$'s, 
and hence on the root system $\Delta$.  The action of an element $w \in S_{\infty}$ 
on a root $\alpha \in \Delta$ will be denoted by $w \cdot \alpha$ or $w (\alpha)$ 
in the sequel. 
For a positive root $\alpha \in \Delta^{+}$, we have $\alpha = w (\alpha_{i})$ 
for some $i \in I$ and some $w \in S_{\infty}$. Then we set 
$s_{\alpha} = s_{w(\alpha_{i})} = w s_{i} w^{-1}$ 
(thus for a simple root $\alpha_{i}$ itself, we have $s_{\alpha_{i}} = s_{i} \; (i \in I)$).   

\vspace{0.3cm}  
\underline{Grassmannian elements} \quad 
\begin{defn} 
 An  element    $w \in S_{\infty}$ is called a {\it Grassmannian element}   
 of  {\it descent $n$} if the condition 
 \begin{equation*} 
   w(1) < w(2) < \cdots < w(n), \; w(n + 1) < w(n + 2) < \cdots
 \end{equation*}  
 is satisfied  for some fixed positive integer $n$. 
\end{defn} 
\noindent
The set of all Grassmannian elements of  descent $n$   will be denoted by 
$S_{\infty}^{(n)}$ in the sequel.  Note that $S_{\infty}^{(n)}$ is equal to 
the set of elements $w  \in S_{\infty}$ sucht that $\ell (w s_{i}) > \ell (w)$ 
for $i \neq n$, where $\ell (w)$ is the length of $w$. 
In other words, $S_{\infty}^{(n)}$ is the set of {\it minimal length  coset representatives} 
of the quotient group $S_{\infty}/(S_{n} \times S_{\infty})$, 
where  $S_{n} \times S_{\infty}$ 
 is a subgroup of $S_{\infty}$ generated by the elements $\{ s_{i} \}_{I \setminus \{ n \}}$.   
It is well known that there is a bijection between the set
$\mathcal{P}_{n}$ of partitions of length $\leq n$ and 
$S^{(n)}_{\infty}$
. To be precise, the bijection is given as follows: 
For $w \in S^{(n)}_{\infty}$, we define a partition 
$\lambda_{w}   = ((\lambda_{w})_{1},  (\lambda_{w})_{2}, \ldots, (\lambda_{w})_{n}) \in \mathcal{P}_{n}$ 
by 
\begin{equation*} 
  (\lambda_{w})_{i} := w(n + 1 - i) - (n + 1  - i) \; (1 \leq i \leq n). 
\end{equation*} 
Conversely, given a partition $\lambda \in \mathcal{P}_{n}$, we can construct 
a Grassmannian  permutation $ w_{\lambda}$ of descent $n$ by the following manner.  
First note that when considered as a Young diagram, $\lambda$ is contained in the 
rectangle $n \times (N - n)$ for sufficiently large $N$, and we identify $\lambda$
 with   a path  starting from the 
south-west corner to the north-east corner of the rectangle. 
We assign numbers $1, 2, \ldots, N$ to each step. For example, for $
\lambda = (4, 2, 1, 0)$ with $n = 4$, $N = 10$,  we have the following picture:

\setlength{\unitlength}{1mm}
     \begin{picture}(150,35)
        
        \put(60, 29){\line(1,0){36}}
        \put(60, 23){\line(1,0){36}}
        \put(60, 17){\line(1,0){36}}
        \put(60, 11){\line(1,0){36}}
        \put(60, 5){\line(1,0){36}}

        \multiput(60,29)(6,0){7}{\line(0,-1){6}}
        \multiput(60,23)(6,0){7}{\line(0,-1){6}}
        \multiput(60,17)(6,0){7}{\line(0,-1){6}}
        \multiput(60,11)(6,0){7}{\line(0,-1){6}}

        \linethickness{2pt} 
        \put(84, 29){\line(1,0){12}}
        \put(84,29){\line(0,-1){6}} 
        \put(72, 23){\line(1,0){12}}
        \put(72,23){\line(0,-1){6}} 
        \put(66, 17){\line(1,0){6}} 
        \put(66, 17){\line(0, -1){6}}  
        \put(60, 11){\line(1,0){6}} 
        \put(60, 11){\line(0, -1){6}}

        \put(91, 30){$10$}
        \put(86, 30){$9$}
        \put(85, 25){$8$}
        \put(80, 24){$7$}
        \put(74, 24){$6$} 
        \put(73, 19){$5$} 
        \put(68, 18){$4$} 
        \put(67, 13){$3$}
        \put(62, 12){$2$} 
        \put(61, 7){$1$} 
   \end{picture}

\noindent  
If the assigned numbers of the vertical steps are $i_{1} < i_{2} < \cdots < i_{n}$, 
and those of the horizontal steps are $j_{1} < j_{2} <  \cdots < j_{N-n}$, 
then the corresponding Grassmannian permutation is 
\begin{equation*}
   w_{\lambda}   =  i_{1} \, i_{2} \,  \ldots \,  i_{n} \;  j_{1} \,  j_{2} \,  \ldots \,  j_{N-n}. 
\end{equation*}    
More explicity, we have 
\begin{equation}   \label{eqn:Number1-N}  
  i_{k} = \lambda_{n - k + 1} + k \;   (1 \leq k \leq n),  \quad   
  j_{k} = n + k  - {}^{t} \! \lambda_{k}  \;    (1 \leq k \leq N-n), 
\end{equation} 
where ${}^{t} \! \lambda$ is the conjugate of $\lambda$.  
In the above example, we have $w_{\lambda}  = 1 \, 3 \,  5 \,  8 \; 2 \,  4 \,  6 \, 7 \,  9 \,  10$. 
Note that the set $\mathcal{P}_{n}$  (resp. $S^{(n)}_{\infty}$ ) 
is a partially ordered set given by the containment 
$\lambda \subset \mu$ of partitions (resp. the Bruhat-Chevalley ordering), and 
 the above bijection preserves these partial orderings.

 Furthermore a   reduced expression of $w_{\lambda}$ can be obtained 
 by the following manner.  
For  each box (cell) $\alpha = (i, j) \in \lambda$,  the {\it content} of $\alpha$ 
is defined to be $c(\alpha) := j - i$ 
. We fill in  each box $\alpha = (i, j) \in \lambda$ with the number 
$n + c(\alpha) = n   - i + j$. For the above example ($\lambda = (4, 2, 1)$, $n = 4$),  the 
numbering is given by the following picture:

\setlength{\unitlength}{1mm}
     \begin{picture}(150,30)

        \put(60, 24){\line(1,0){24}}
        \put(60, 18){\line(1,0){24}}
        \put(60, 12){\line(1,0){12}}
        \put(60, 6){\line(1,0){6}}

        \multiput(60,24)(6,0){5}{\line(0,-1){6}}
        \multiput(60,18)(6,0){3}{\line(0,-1){6}}
        \multiput(60,12)(6,0){2}{\line(0,-1){6}}

        \put(62, 20){4} 
        \put(68, 20){5} 
        \put(74, 20){6} 
        \put(80, 20){7} 
        \put(62, 14){3} 
        \put(68, 14){4} 
        \put(62, 8){2}

   \end{picture}

\noindent
We read the entries of the boxes of the Young diagram of $\lambda$ from 
right to left starting from  the bottom row 
 to the top  row   and obtain the sequence $i_{1} \; i_{2} \; \cdots  \; i_{|\lambda|}$. 
We then let $s_{i_{1}} s_{i_{2}} \cdots s_{i_{|\lambda|}}$ be the product 
of the corresponding simple reflections, which gives a reduced expression 
of $w_{\lambda}$.  For the above example, we have 
\begin{equation*} 
  w_{\lambda} = 1 \, 3 \, 5 \, 8 \; 2 \, 4 \, 6 \, 7 \,  9 \,  10 
  = s_{2} \cdot s_{4} s_{3} \cdot s_{7} s_{6} s_{5} s_{4}. 
\end{equation*}

\vspace{0.3cm}   
\underline{Action of $S_{\infty}$ on $\mathcal{P}_{n}$}  \quad  
 By means of the above bijection   $\mathcal{P}_{n}  \overset{\sim}{\longrightarrow}   S^{(n)}_{\infty}, \; 
\lambda \longmapsto w_{\lambda}$, the group $S_{\infty}$ acts naturally on 
 the set $\mathcal{P}_{n}$.   
We only describe  the action of the simple reflections 
 $s_{i} \; (i \in I)$ on partitions $\lambda \in \mathcal{P}_{n}$. 
 Given a partition  $\lambda \in \mathcal{P}_{n}$, 
 a box $\alpha  = (i, j)  \in \lambda$ (resp. $\alpha = (i, j) \not\in \lambda$ and $i \leq n$) 
 is {\it removable}    (resp. {\it addable}) 
if  $\lambda \setminus \{ \alpha \}$ (resp.  $\lambda \cup \{ \alpha \}$)   
 is again a Young diagram of a partition in $\mathcal{P}_{n}$, 
 i.e., $(i, j + 1) \not\in \lambda$ and $(i + 1, j) \not\in \lambda$
 (resp. $(i, j-1) \in \lambda$ and $(i-1, j)  \in \lambda$). 
  Furthermore, we fill in  each 
  box $\alpha  = (i, j) \in \lambda$ with   the number $n + c(\alpha) = n + j - i$. 
Then 
  $\lambda$ is called {\it $k$-removable}  (resp. {\it $k$-addable}) 
  if there is a removable  box  $\alpha \in \lambda$  (resp. an addable box $\alpha \not\in \lambda$) 
  such that $c(\alpha) = k  - n$.  
  Under the above convention, the action of the simple reflections $s_{i} \; (i \in I)$ 
  on partitions $\lambda \in \mathcal{P}_{n}$ is given by the following manner: 
  \begin{enumerate} 
  \item $s_{i} \lambda < \lambda$ if and only if $\lambda$ is $i$-removable. 
  \item $s_{i} \lambda > \lambda$ if and only if $\lambda$ is $i$-addable. 
  \end{enumerate}  
  Otherwise $s_{i}$ acts trivially on $\lambda$. 
  For a general $w \in S_{\infty}$, write $w = s_{i_{1}} s_{i_{2}} \cdots s_{i_{r}}$ 
  as a product of simple reflections, and apply the above process successively 
  to a partition $\lambda$ to obtain the action $w \cdot \lambda$.

Let  $\lambda \in \mathcal{P}_{n}$ be a partition, and 
$w_{\lambda} \in S_{\infty}^{(n)}$ the corresponding Grassmannian 
element.   Define its {\it inversion set} by 
\begin{equation*} 
  \mathrm{Inv} \, (\lambda) := \{ \alpha \in \Delta^{+}_{A}  \; | \;  s_{\alpha} \lambda < \lambda \}
 = \{ \alpha \in  \Delta^{+}_{A}  \; | \; s_{\alpha} w_{\lambda} < w_{\lambda} \}. 
\end{equation*}  
In view of the above action of $S_{\infty}$ on $\lambda$ 
(or more directly by considering the condition $s_{\alpha} w_{\lambda} < w_{\lambda}$), 
 we can describe the inversion set 
$\mathrm{Inv} \, (\lambda)$ explicitly for a given $\lambda \in \mathcal{P}_{n}$.





        
 
\begin{equation}  \label{eqn:Inv(lambda)(A)}  
\begin{array}{rllll} 
  \mathrm{Inv} \, (\lambda)   
 & = &  \{ t_{\lambda_{i}  + n- i + 1} -  t_{n + j - {}^{t} \! \lambda_{j}}   \; | \; 
    (i, j)  \in \lambda   \}   \medskip \\
 & = & \{  t_{w_{\lambda} (n + 1 - i)}   - t_{w_{\lambda}(n + j)}  \; | \;  (i, j) \in \lambda \}.   \medskip 
\end{array}  
\end{equation}

\vspace{0.3cm} 
\underline{Euler classes}   \quad  
Lastly we  define a map  
$e: L \longrightarrow \L[[\b]]$ 
by setting $e (t_{i}) := b_{i} \; (i \geq 1)$ and by the rule 
$e (\alpha + \alpha') := e(\alpha) \pf e (\alpha')$ for $\alpha, \alpha' \in L$. 
Note that by definition, we have $e(-\alpha) = \overline{e(\alpha)}$ for $\alpha \in L$. 
For the simple root  $\alpha_{i} = t_{i + 1} - t_{i} \; (i \geq 1)$, we have 
\begin{equation*} 
  e(\alpha_{i}) = e(t_{i + 1} - t_{i})  = b_{i + 1} \pf \overline{b}_{i} \quad 
  (i \geq 1). 
\end{equation*} 
In particular, for a partition $\lambda \in \mathcal{P}_{n}$, it follows from 
(\ref{eqn:Inv(lambda)(A)}) that 
\begin{equation}    \label{eqn:Product(e(-alpha))(A)}  
\begin{array}{rllll} 
  \displaystyle{\prod_{\alpha \in \mathrm{Inv} \, (\lambda)}}  e (-\alpha) 
  &  = &  \displaystyle{\prod_{(i, j) \in \lambda}}   (\overline{b}_{\lambda_{i} + n  - i + 1}  \pf 
                                 b_{n + j - {}^{t} \! \lambda_{j}})   \medskip \\
  & = &   \displaystyle{\prod_{(i, j) \in \lambda}}   (\overline{b}_{w_{\lambda}(n + 1 - i)}  \pf 
                                 b_{w_{\lambda}(n + j)}).  \medskip 
\end{array} 
\end{equation}

\subsubsection{Type $B_{\infty}$, $C_{\infty}$, $D_{\infty}$}   \label{subsec:RootDeta(BCD)}   
\quad \\
\underline{Weyl groups of type $X_{\infty}$}  \quad 
Let $X = B, C$, or $D$,  and $W = W(X_{\infty})$ be the Weyl group of type $X_{\infty}$. 
This is a Coxeter group  generated by the simple reflections $\{ s_{i} \}_{i \in J}$, 
where the index set $J = I  \sqcup \{ 0 \} 
  = \{ 0, 1, 2, \ldots \}$ for  $X  = B, C$, 
and $J =  I \sqcup  \{ \hat{1} \} = \{ \hat{1}, 1, 2, \ldots \}$ for $X =  D$.  
If $W$ is of type $C_{\infty}$ (or $B_{\infty})$, the defining relations of 
$\{ s_{i} \}_{i \in J}$ are give by 
\begin{equation*} 
\begin{array}{rlllll} 
   s_{i}^{2}            &=  &  1  \quad   & (i = 0, 1, 2, \ldots), \medskip \\
   s_{i} s_{i + 1}s_{i} &=  &  s_{i+1} s_{i} s_{i+1}   \quad & (i \geq 1), \medskip \\
   s_{i} s_{j}          &=  &  s_{j} s_{i}   \quad & (i, j \geq 1, \; |i  - j| \geq 2), \medskip \\
   s_{0}s_{1}s_{0}s_{1} &=  &  s_{1}s_{0}s_{1}s_{0},   \quad &  s_{0}s_{i} = s_{i}s_{0}  
   \quad (i \geq 1).   \medskip  
\end{array} 
\end{equation*} 
If $W$ is of type $D_{\infty}$, the defining relations  are given by 
\begin{equation*} 
\begin{array}{rllll} 
   s_{i}^{2}     & = & 1  \quad  & (i = \hat{1}, 1, 2, \ldots), \medskip \\
    s_{i} s_{i + 1}s_{i} &=  &  s_{i+1} s_{i} s_{i+1}   \quad & (i \geq 1), \medskip \\
   s_{i} s_{j}          &=  &  s_{j} s_{i}   \quad & (i, j \geq 1, \; |i  - j| \geq 2), \medskip \\
   s_{\hat{1}} s_{2} s_{\hat{1}} &= & s_{2}s_{\hat{1}} s_{2},  \quad 
   & s_{\hat{1}} s_{j} = s_{j} s_{\hat{1}} \quad (j \neq 2).  \medskip 
\end{array} 
\end{equation*} 
It is also well  known that the Weyl group $W(X_{\infty})$ of type $X_{\infty}$ 
can be realized as  a (sub) group of {\it signed} or {\it barred}  permutations. 
As before, let $\N = \{ 1, 2, \ldots \}$ be the set of 
positive integers.  Denote by $\overline{\N} = \{ \overline{1}, \overline{2}, \ldots \}$
 a ``negative'' copy of $\N$ (thus $\overline{\overline{i}} = i$ for $i \in \N$). 
 A {\it signed permutation} $w$ of $\N$ 
 is defined as a bijection on the set $\N \cup \overline{\N}$ such that 
 $\overline{w(i)} = w(\overline{i})$ for all $i \in \N$ and 
$w (i) = i$ for all but a finite number of $i$.  
Denote by $\overline{S}_{\infty}$ the group of all signed permutations. 
We shall use the one-line notation $w = w(1) \, w(2) \, \cdots$ to denote 
an element $w \in \overline{S}_{\infty}$
(we have only to specify $w(i)$ for $i \in \N$ because of the 
condition $w (\overline{i}) = \overline{w(i)}$).  
Then the above-mentioned realization is given by the following identifications:
\begin{equation*} 
  s_{0}  = (1 \; \overline{1}),  \quad 
  s_{i}  = (i \; i + 1) (\overline{i + 1} \; \overline{i}) \;\quad  \text{for} \; i \geq 1. 
\end{equation*} 
Then $W(B_{\infty})$ and $W(C_{\infty})$ can be identified with $\overline{S}_{\infty}$. 
Further the simple reflection $s_{\hat{1}}$ is identified with $s_{0}s_{1}s_{0}$, 
in other words, $s_{\hat{1}} = (\overline{2} \; 1)(\overline{1} \; 2)$. 
Then $W(D_{\infty})$ can be identified with the subgroup $\overline{S}_{\infty, +}$ of 
$\overline{S}_{\infty}$ generated by $s_{\hat{1}}$ and $s_{i} \; (i \geq 1)$.

\vspace{0.3cm}  
\underline{Root systems of type $X_{\infty}$}  \quad 
Let $L$ denote a   free $\Z$-module with a basis $\{ t_{i} \}_{i \geq 1}$. 
The set of  positive roots is  defined respectively  by 
\begin{equation*} 
\begin{array}{llll} 
  & \text{Type} \; B_{\infty}:   \quad \Delta^{+}_{B} &=  & \{ t_{i} \; | \; i \geq 1 \}  \cup 
                                 \{  t_{j} \pm t_{i} \; | \; j >  i \geq 1 \},  \medskip \\
  & \text{Type} \; C_{\infty}:  \quad \Delta^{+}_{C}  &=  & \{ 2t_{i} \; | \; i \geq 1 \} \cup 
                                 \{  t_{j} \pm t_{i} \; | \; j > i \geq 1 \}, \medskip \\
  & \text{Type} \; D_{\infty}:  \quad \Delta^{+}_{D}  &=  & \{ t_{j} \pm t_{i} \; | \; j > i \geq 1 \}.  \medskip 
\end{array} 
\end{equation*} 
The set of negative roots is defined by $\Delta^{-}_{X} 
:= -\Delta^{+}_{X} = \{ -\alpha \; | \; \alpha \in 
\Delta^{+}_{X} \}$, and set $\Delta_{X} := \Delta^{+}_{X} \coprod \Delta^{-}_{X}$. 
The following elements of $\Delta^{+}_{X}$ are the simple roots: 
\begin{equation}  \label{eqn:SimpleRoots(BCD)}   
\begin{array}{llll} 
 & \text{Type} \; B_{\infty}:  \quad \alpha_{0} = t_{1}, \quad \alpha_{i} = t_{i + 1} - t_{i} \; (i \geq 1), \medskip \\
 & \text{Type} \; C_{\infty}:  \quad \alpha_{0} = 2t_{1}, \quad \alpha_{i} = t_{i + 1} - t_{i} \; (i \geq 1), \medskip \\
 & \text{Type} \; D_{\infty}:  \quad \alpha_{\hat{1}} = t_{1} + t_{2}, \quad \alpha_{i} = t_{i + 1} - t_{i} \; (i\geq 1). \medskip 
\end{array} 
\end{equation}

\vspace{0.3cm}  
\underline{Grassmannian elements}    \quad  
We introduce the Grassmannian elements in the case $X  = B, C$, or $D$. 
For simplicity, we  only  deal with the case $X = B, C$. For $X = D$, 
we need  an appropriate modification (see e.g., Ikeda-Mihalcea-Naruse \cite[\S 3.4]{IMN2011}, 
Ikeda-Naruse \cite[\S 4.3]{Ike-Nar2013}).  
\begin{defn}  
  An element $w  \in \overline{S}_{\infty}$ is called a Grassmannian element 
  if the condition 
  \begin{equation*} 
       w(1) < w(2) < \cdots < w(i) < \cdots   
  \end{equation*} 
  is satisfied,  where the ordering is given by 
   \begin{equation*} 
     \cdots < \overline{m} < \cdots < \overline{2}   < \overline{1} < 1 < 2 < \cdots < m < \cdots. 
   \end{equation*} 
\end{defn} 
\noindent 
Let $n $ be a fixed positive integer.  
We are concerned with a  Grassmannian element $w \in \overline{S}_{\infty}$ satisfying the condition 
  \begin{equation*}
        w(1) < w(2) < \cdots < w(r) < 1, \quad 
        \overline{1} < w(r + 1) < w(r + 2) < \cdots
   \end{equation*} 
for some $1 \leq r \leq n$.  In other words, it is a Grassmannian element in $\overline{S}_{\infty}$ 
whose first $r$ values $w(1), w(2), \ldots, w(r) \; (r \leq n)$ are negative.  
The set of all such  Grassmannian elements is  denoted by $\overline{S}_{\infty}^{(n)}$
in the sequel.  
Then there is a bijection between the set $\mathcal{SP}_{n}$ of strict partitions of length $\leq n$ 
and $\overline{S}^{(n)}_{\infty}$. 
The bijection is given explicitly  as follows: 
For $w \in \overline{S}^{(n)}_{\infty}$, let $1 \leq r \leq n$ be the number such that 
$w(1) < \cdots < w(r) < 1$ and $\overline{1} < w(r + 1) < w(r + 2) < \cdots$. 
Then we define an $r$-tuple of positive integers $\lambda_{w} = ((\lambda_{w})_{1}, (\lambda_{w})_{2}, \ldots, (\lambda_{w})_{r})$ by $(\lambda_{w})_{i} :=  \overline{w(i)}$ 
for $1 \leq i \leq r$.  By the above ordering, we see immediately that 
$\lambda_{w}$ is a strict partition of length $\leq n$, i.e., $\lambda_{w} \in \mathcal{SP}_{n}$.  
Conversely, for a strict partition $\lambda = (\lambda_{1}, \ldots, \lambda_{r})$ of 
length $r \leq n$,  we define the element $w_{\lambda}  \in \overline{S}_{\infty}^{(n)}$ 
by $w_{\lambda}(i) = \overline{\lambda}_{i} \; (1 \leq i \leq r)$, 
and the remaining values $w_{\lambda} (j) \; (j \geq r + 1)$
 are given by the increasing sequence of 
positive integers from $\N \setminus \{ \lambda_{r},  \lambda_{r-1}, \ldots, \lambda_{1} \}$.  
 For example, the  Grassmannian element $w = \overline{6}  \, \overline{4}  \,  \overline{3} \, \overline{1} \, 2 \,  5 \, 7  \cdots$  corresponds to  the strict partition $\lambda = (6, 4, 3, 1)$. 
Note that the set $\mathcal{SP}_{n}$ (resp.  $\overline{S}_{\infty}^{(n)}$) is a 
partially ordered set given by the containment $\lambda \subset \mu$ of strict partitions 
(resp. Bruhat-Chevalley ordering), and the above bijection preserves these 
partial orderings. 

Furthermore   a   reduced expression of $w_{\lambda}$ can be obtained by the following 
manner.  
First we associate to each strict partition $\lambda = (\lambda_{1}, \ldots, \lambda_{r})$, 
$\lambda_{1} > \lambda_{2} > \cdots > \lambda_{r} > 0$ the {\it shifted Young diagram} 
\begin{equation*} 
  D'(\lambda) := \{(i, j) \in \Z^{2} \; | \; 1 \leq i \leq r, \; 
     i \leq j  \leq \lambda_{i} + i - 1 \}. 
\end{equation*} 
For example,

\setlength{\unitlength}{1mm}
     \begin{picture}(150,30)

        \put(60, 24){\line(1,0){24}}
        \put(60, 18){\line(1,0){24}}
        \put(66, 12){\line(1,0){12}}
        \put(72, 6){\line(1,0){6}}

        \multiput(60,24)(6,0){5}{\line(0,-1){6}}
        \multiput(66,18)(6,0){3}{\line(0,-1){6}}
        \multiput(72,12)(6,0){2}{\line(0,-1){6}}

   \end{picture}

\noindent  
is the shifted Young diagram of a strict partition $\lambda = (4, 2, 1)$.  
We shall identify a strict partition $\lambda$ with its shifted Young diagram 
if there is no fear of confusion.  
For each box $\alpha = (i, j) \in D'(\lambda)$, 
we define its content $c(\alpha)  \in I \sqcup \{ 0 \}$ 
 by 
$c(\alpha) := j - i$ (for $X = B, C$).    
We fill in the number $c(\alpha)$ 
to each box $\alpha \in  D'(\lambda)$.  For example, for $\lambda = (4, 2, 1)$, 
the numbering is given by the following picture:

\setlength{\unitlength}{1mm}
     \begin{picture}(150,34)

        \put(60, 28){\line(1,0){24}}
        \put(60, 22){\line(1,0){24}}
        \put(66, 16){\line(1,0){12}}
        \put(72, 10){\line(1,0){6}}

        \multiput(60,28)(6,0){5}{\line(0,-1){6}}
        \multiput(66,22)(6,0){3}{\line(0,-1){6}}
        \multiput(72,16)(6,0){2}{\line(0,-1){6}}
       
        \put(66, 4){Type $B$, $C$}

        \put(62, 24){$0$}
        \put(68, 24){$1$} 
        \put(74, 24){$2$} 
        \put(80, 24){$3$}  
        
        \put(68, 18){$0$} 
        \put(74, 18){$1$}
        \put(74, 12){$0$}

   \end{picture}

\noindent  
We read the entries  of the boxes of the shifted Young diagram of $\lambda$ 
from  right to left starting from the bottom row  to the top row
 and obtain the sequence $i_{1} \, i_{2}, \ldots, i_{|\lambda|}$. 
We let $s_{i_{1}} s_{i_{2}} \cdots s_{i_{|\lambda|}}$ be the product of 
the corresponding simple reflections, which gives  a reduced expression 
of $w_{\lambda}$.  For the above example, we have 
\begin{equation*} 
  w_{\lambda} =  \overline{4} \, \overline{2} \, \overline{1} 3 \cdots = 
  s_{0} \cdot s_{1} s_{0} \cdot s_{3} s_{2} s_{1} s_{0}. 
\end{equation*}

\vspace{0.3cm}  
\underline{Action of the Weyl group $W(X_{\infty})$ on $\mathcal{SP}_{n}$}    \quad  
By means of the above bijection $\mathcal{SP}_{n} \overset{\sim}{\longrightarrow} 
\overline{S}_{\infty}^{(n)}$, $\lambda \longmapsto w_{\lambda}$, the 
group $\overline{S}_{\infty}$ acts on the set $\mathcal{SP}_{n}$. 
We shall  describe the action of the simple reflections $s_{i} \; (i \in I \sqcup \{ 0 \})$
on  strict partitions $\lambda \in \mathcal{SP}_{n}$.  
Given a strict partition $\lambda \in \mathcal{SP}_{n}$, 
a box $\alpha = (i, j) \in \lambda$  (resp. $\alpha  = (i, j) \not\in \lambda$ and $i \leq n$) 
is removable  (resp. addable) if 
$\lambda \setminus \{ \alpha \}$  (resp.  $\lambda \cup \{ \alpha \}$) 
is again a shifted Young diagram 
of a strict partition in $\mathcal{SP}_{n}$, i.e., $j = \lambda_{i} + i - 1$ and 
$\lambda_{i + 1} \leq \lambda_{i} - 2$ (resp. $j = \lambda_{i} + i$ and 
$\lambda_{i} \leq \lambda_{i-1} - 2$).  
Furthemore, we fill in  each 
box $\alpha = (i, j) \in \lambda$ with   the number $c(\alpha) = j - i$. 
  Then $\lambda$ is called 
$k$-removable  (resp.  $k$-addable)  
if there is a removable box $\alpha \in \lambda$  (resp. an addable box $\alpha \not\in \lambda$)
such that $c(\alpha) = k$.  
Under the above covention, the action of the simple reflections $s_{i} \; (i \in I \sqcup \{ 0 \})$ 
on strict partitions $\lambda \in \mathcal{SP}_{n}$ is given by the following manner: 
\begin{enumerate} 
\item $s_{i} \lambda < \lambda$ if and only if $\lambda$ is $i$-removable. 
\item $s_{i} \lambda > \lambda$ if and only if $\lambda$ is $i$-addable. 
\end{enumerate} 
Otherwise $s_{i}$ acts trivially on $\lambda$. 
For a general $w \in \overline{S}_{\infty}$, write $w = s_{i_{1}} s_{i_{2}} \cdots s_{i_{r}}$ 
as a product of simple reflections, and apply the above process successively to 
a strict partition $\lambda$ to obtain the action $w \cdot \lambda$.

Let $\lambda  \in \mathcal{SP}_{n}$ be a strict partition, and $w_{\lambda} \in \overline{S}_{\infty}^{(n)}$ the corresponding Grassmannian element. 
Define its inversion set by 
\begin{equation*} 
   \mathrm{Inv}_{X}   (\lambda) = \{ \alpha \in \Delta^{+}_{X} \; | \; s_{\alpha} \lambda < \lambda \} 
     = \{ \alpha \in \Delta^{+}_{X} \; | \; s_{\alpha} w_{\lambda} < w_{\lambda} \}. 
\end{equation*} 
In view of the above action of $\overline{S}_{\infty}$ on $\lambda$, 
we can describe the inversion set $\mathrm{Inv}_{X}   (\lambda)$ explicitly for 
a given $\lambda \in \mathcal{SP}_{n}$.   
For instance, the set $\mathrm{Inv}_{C} (\lambda)$ for a strict partition 
$\lambda  = (\lambda_{1}, \lambda_{2}, \ldots, \lambda_{r}) 
\in \mathcal{SP}_{n}$, $\ell (\lambda) = r$, 
 is described as follows: 
Let $w_{\lambda}  \in \overline{S}_{\infty}^{(n)}$ be the corresponding 
Grassmannian element.   Recall that 
 $w_{\lambda} (i)  = \overline{\lambda}_{i} \; (1 \leq i \leq r)$
and  $w_{\lambda} (j) \; (j \geq r + 1)$ are obtained by the increasing sequence 
of positive integers from $\N \setminus \{ \lambda_{r}, \lambda_{r-1}, \ldots, \lambda_{1} \}$. 
From this, we have 
\begin{equation}    \label{eqn:Inv(lambda)(C)}  
\begin{array}{llll} 
  \mathrm{Inv}_{C} (\lambda) 
      & =  \{  \;  
             t_{\overline{w_{\lambda} (i)}} + t_{\overline{w_{\lambda} (j)}} \; | \; 
  (i, j) \in \lambda \}  \medskip \\
       &= \{ t_{\lambda_{i}} + t_{\lambda_{j}} \; (1 \leq i \leq r, \;  i \leq j \leq r)  \}  \medskip \\
       & \hspace{0.1cm}     
       \cup  \;   \{  t_{\lambda_{i}} + t_{\overline{j}}  \;   (1 \leq i \leq r, \; 
                   1 \leq j \leq \lambda_{i} - 1, \; j \neq \lambda_{i + 1}, \ldots, \lambda_{r})  
        \} . \medskip 
\end{array}  
\end{equation} 
Here  $\lambda$ is identified with  its associated   shifted    diagram, and 
$t_{\overline{i}}$ for $i \in \N$  is understood to be  $-t_{i}$.

\vspace{0.3cm}  
\underline{Euler classes}    \quad  
We define a map $e: L \longrightarrow \L[[\b]]$ by setting 
$e(t_{i}) := b_{i} \; (i \geq 1)$ and by the rule $e(\alpha + \alpha')  := e(\alpha) \pf e (\alpha')$ 
for $\alpha, \alpha' \in L$.  
For the simple roots $\alpha_{i} \; (i \geq 0)$ and $\alpha_{\hat{1}}$ given in 
(\ref{eqn:SimpleRoots(BCD)}), we have 
\begin{equation*} 
\begin{array}{llll} 
 & \text{Type} \; B_{\infty}:  \quad  e (\alpha_{0})  =  b_{1}, 
    \quad   e(\alpha_{i})  = b_{i + 1}  \pf  \overline{b}_{i} \; (i \geq 1), \medskip \\
 & \text{Type} \; C_{\infty}:  \quad  e( \alpha_{0}) = b_{1} \pf b_{1}, 
    \quad   e(\alpha_{i})  =  b_{i + 1}  \pf  \overline{b}_{i} \; (i \geq 1), \medskip \\
 & \text{Type} \; D_{\infty}:  \quad   e(\alpha_{\hat{1}}) = b_{1}  \pf  b_{2}, 
   \quad   e(\alpha_{i}) = b_{i + 1} \pf   \overline{b}_{i} \; (i\geq 1). \medskip 
\end{array} 
\end{equation*} 
In particular, for a strict partition $\lambda \in \mathcal{SP}_{n}$, it follows from 
(\ref{eqn:Inv(lambda)(C)}) that 
\begin{equation}   \label{eqn:Prodct(e(-alpha))(C)}  
\begin{array}{llll}  
  \displaystyle{\prod_{\alpha \in \mathrm{Inv}_{C}    \, (\lambda)}}   e(-\alpha)
  & =  \displaystyle{\prod_{(i, j) \in \lambda }}  (b_{w_{\lambda}(i)}   \pf b_{w_{\lambda}(j)})    \medskip \\
  &=   \displaystyle{\prod_{i=1}^{r}}   
    \left ( \prod_{j = i}^{r}  (\overline{b}_{\lambda_{i}}  \pf  \overline{b}_{\lambda_{j}})  
  \cdot \prod_{ \substack{  1 \leq j \leq \lambda_{i} - 1 \\  
                 j \neq \lambda_{p} \; \text{for} \; i + 1 \leq p \leq r  } }  (\overline{b}_{\lambda_{i}} \pf  b_{j} )  \right ).  \medskip 
\end{array}  
\end{equation} 
Notice that this value is equal to the specialization 
$Q^{\L}_{\lambda}(\overline{\b}_{\lambda}|\b)$ in Proposition \ref{prop:VanishingProperty(P^LQ^L)}. 
Similarly, an analogous result holds for $P^{\L}_{\lambda}(\x_{n}|\b)^{+}$.


\end{document}